\def\append@label@year@{%
    \safe@set\@tempcnta\bib@year
    \edef\bib@citeyear{\the\@tempcnta}%
    \ifnum\bib@citeyear>9
      \append@to@stem{%
          \ifx\bib@year\@empty
          \else
            \@xp\year@short \bib@citeyear \@nil
          \fi
      }%
    \fi    
}
\let\oldtocsection=\tocsection
\renewcommand{\tocsection}[2]{\hspace{0em}\oldtocsection{#1}{#2}}
\def\upddots{\mathinner{\mkern 1mu\raise 1pt \hbox{.}\mkern 2mu
\mkern 2mu \raise 4pt\hbox{.}\mkern 1mu \raise 7pt\vbox {\kern 7
pt\hbox{.}}} }
\begin{document}
   
\setlength{\unitlength}{2.5cm}

\newtheorem{thm}{Theorem}[section]
\newtheorem{lm}[thm]{Lemma}
\newtheorem{prop}[thm]{Proposition}
\newtheorem{cor}[thm]{Corollary}
\newtheorem{conj}[thm]{Conjecture}

\theoremstyle{definition}
\newtheorem{dfn}[thm]{Definition}
\newtheorem{eg}[thm]{Example}
\newtheorem{rmk}[thm]{Remark}

\newcommand{\F}{\mathbf{F}}
\newcommand{\N}{\mathbf{N}}
\newcommand{\R}{\mathbf{R}}
\newcommand{\C}{\mathbf{C}}
\newcommand{\Z}{\mathbf{Z}}
\newcommand{\Q}{\mathbf{Q}}

\newcommand{\Mp}{{\rm Mp}}
\newcommand{\Sp}{{\rm Sp}}
\newcommand{\GSp}{{\rm GSp}}
\newcommand{\GL}{{\rm GL}}
\newcommand{\PGL}{{\rm PGL}}
\newcommand{\SL}{{\rm SL}}
\newcommand{\SO}{{\rm SO}}
\newcommand{\Spin}{\text{Spin}}
\newcommand{\Ind}{{\rm Ind}}
\newcommand{\Res}{{\rm Res}}
\newcommand{\Hom}{{\rm Hom}}
\newcommand{\msc}[1]{\mathscr{#1}}
\newcommand{\mfr}[1]{\mathfrak{#1}}
\newcommand{\mca}[1]{\mathcal{#1}}
\newcommand{\mbf}[1]{{\bf #1}}
\newcommand{\mbm}[1]{\mathbbm{#1}}
\newcommand{\perm}{ {\rm Perm} }

\newcommand{\into}{\hookrightarrow}
\newcommand{\onto}{\twoheadrightarrow}

\newcommand{\s}{\mathbf{s}}
\newcommand{\cc}{\mathbf{c}}
\newcommand{\bfa}{\mathbf{a}}
\newcommand{\id}{{\rm id}}
\newcommand{\g}{\mathbf{g}_{\psi^{-1}}}
\newcommand{\w}{\mathbbm{w}}
\newcommand{\Ftn}{{\rm Ftn}}
\newcommand{\p}{\mathbf{p}}
\newcommand{\bq}{\mathbf{q}}
\newcommand{\WD}{\text{WD}}
\newcommand{\W}{\text{W}}
\newcommand{\Wh}{{\rm Wh}_\psi}
\newcommand{\ggma}{\pmb{\gamma}}
\newcommand{\sct}{\text{\rm sc}}
\newcommand{\OF}{\mca{O}^\digamma}
\newcommand{\vep}{ \varepsilon }
\newcommand{\gk}{c_{\sf gk}}
\newcommand{\Ima}{{\rm Im}}
\newcommand{\Ker}{{\rm Ker}}
\newcommand{\lrho}{ {}^L\rho }

\newcommand{\Tr}{ {\rm Tr} }

\newcommand{\fwchi}{{\chi^\flat}}

\newcommand{\cu}[1]{\textsc{\underline{#1}}}
\newcommand{\set}[1]{\left\{#1\right\}}
\newcommand{\ul}[1]{\underline{#1}}
\newcommand{\wt}[1]{\overline{#1}}
\newcommand{\wtsf}[1]{\wt{\sf #1}}
\newcommand{\angb}[2]{\left\langle #1, #2 \right\rangle}
\newcommand{\wm}[1]{\wt{\mbf{#1}}}
\newcommand{\elt}[1]{\pmb{\big[} #1\pmb{\big]} }
\newcommand{\ceil}[1]{\left\lceil #1 \right\rceil}
\newcommand{\val}[1]{\left| #1 \right|}

\newcommand{\uchi}{\underline{\chi}}

\newcommand{\Irr}{ {\rm Irr} }

\newcommand{\nequiv}{\not \equiv}
\newcommand{\half}{\frac{1}{2}}
\newcommand{\psii}{\widetilde{\psi}}
\newcommand{\ab} {|\!|}
\newcommand{\mb}{{\widetilde{B(\F)}}}

\title[R-group and Whittaker space]{R-group and Whittaker space of some genuine representations}

\author{Fan Gao}
\address{School of Mathematical Sciences, Yuquan Campus, Zhejiang University, 38 Zheda Road, Hangzhou, China 310027}
\email{gaofan.math@gmail.com}

\subjclass[2010]{Primary 11F70; Secondary 22E50}
\keywords{covering groups, R-groups, Whittaker functionals, scattering matrix, gamma factor, Plancherel measure}
\maketitle

\begin{abstract} 
For a unitary unramified genuine principal series representation of a covering group, we study the associated R-group. We prove a formula relating the R-group to the dimension of the Whittaker space for the irreducible constituents of such a principal series representation. Moreover, for saturated covers of a semisimple simply-connected group, we also propose a simpler conjectural formula for such dimensions. This latter conjectural formula is verified in several cases, including covers of the symplectic groups.
\end{abstract}
\tableofcontents

\section{Introduction}

In \cite{Ga6}, we proposed and proved partially a conjectural formula for the dimension of a certain relative Whittaker space of the irreducible constituents of a \emph{regular} unramified genuine principal series representation $I(\chi)$ of a covering group $\wt{G}$ over a non-archimedean field $F$. The formula in \cite{Ga6} relates certain Kazhdan-Lusztig representations of the Weyl group to the dimensions of such relative Whittaker spaces. 

This paper, as a companion to the above one, deals with the case where $I(\chi)$ is a \emph{unitary}  unramified genuine principal series,  i.e., $\chi$ is a unitary unramified genuine character of the center $Z(\wt{T})$ of the covering torus $\wt{T}\subset \wt{G}$. In this case, $I(\chi)$ is a semisimple $\wt{G}$-module, and has a decomposition
$$I(\chi)= \bigoplus_{\pi \in \Pi(\chi)} m_\pi \cdot \pi,$$
where $\pi$'s are the non-equivalent constituents of $I(\chi)$. These representations $\pi$ thus should constitute an $L$-packet $\Pi(\chi)$. The reducibility of $I(\chi)$ and the above decomposition is controlled by a certain Knapp--Stein R-group $R_\chi \subset W_\chi$, where $W_\chi \subset W$ is the stabilizer subgroup of $\chi$ inside the Weyl group $W$. In particular, there is a correspondence 
\begin{equation} \label{F:P-chi}
\text{Irr}(R_\chi) \longleftrightarrow   \Pi(\chi), \quad \sigma \leftrightarrow \pi_\sigma,
\end{equation}
between the irreducible representations of $R_\chi$ and elements in $\Pi(\chi)$ such that $m_{\pi_\sigma} = \dim (\sigma)$. Since $\chi$ is unramified, one can show that $R_\chi$ is abelian and therefore $m_{\pi_\sigma}=1$ for every $\sigma$.

It is well-known that genuine representations of covering groups could have high-dimensional Whittaker space (i.e., the space of Whittaker functionals), see the introductions of \cite{Ga6, GSS2} for brief literature reviews. In particular, $\dim \Wh(I(\chi))$ increases as the degree of covering increases. In view of the correspondence in \eqref{F:P-chi}, it is a natural question to ask:
\begin{enumerate}
\item[$\bullet$] How to determine $\dim \Wh(\pi_\sigma)$ in terms of $\sigma \in \Irr(R_\chi)$?
\end{enumerate}
Our goal is first to prove a formula for $\dim \Wh(\pi_\sigma)$ for general $\wt{G}$. Second, for saturated covers of a semisimple simply-connected $G$, we propose a simpler conjectural formula for $\dim \Wh(\pi_\sigma)$  in terms of the character pairing of $\sigma$ and a certain permutation representation $\sigma^\msc{X}$ of $R_\chi$. We will also verify several cases of this conjectural formula in this paper.

\subsection{Background and motivation}
We briefly recall some relevant works for linear algebraic groups which motivate our consideration in this paper. 

For a linear algebraic group $G$ and $\chi$ a character of $T$ (not necessarily unramified), the correspondence \eqref{F:P-chi} arises from the theory of Harish-Chandra and Knapp--Stein for the commuting algebra $\text{End}(I(\chi))$ of the principal series representation $I(\chi)$, especially from the algebra isomorphism
\begin{equation} \label{F:iso}
\C[R_\chi]  \simeq \text{End}(I(\chi)).
\end{equation}
In fact, for general parabolic induction (i.e., parabolically induced representation) for $G$, the theory of R-group was initiated in the work of Knapp--Stein for real groups \cite{KnSt2}. Based on Harish-Chandra's work \cite{Sil-B}, Silberger worked out the formulation for $p$-adic groups \cite{Sil1, Sil1C}; in particular, he showed that Harish-Chandra's commuting algebra theorem holds,  which then gives an analogue of the isomorphism \eqref{F:iso} above for general parabolic induction on linear algebraic groups. Here for simplicity in this introduction, we ignore the subtleties of the two-cocycle twisting in the algebra of the R-group for general parabolic inductions, see \cite{Art93} for details. For minimal parabolic induction of a Chevalley group, the group $R_\chi$ was computed explicitly by Keys \cite{Key1}; the study in the unramified case was furthered in \cite{Key2}. 

In the minimal parabolic case, the connection of the R-group  with the Langlands correspondence was elucidated in \cite{Key3, KS88}. For example, let $\phi_\chi$ be the L-parameter associated to the character $\chi$; Keys showed that the component group $\mca{S}_{\phi_\chi}$ is isomorphic to $R_\chi$, and thus elements inside the $L$-packet $\Pi(\chi)$ are also naturally parametrized by $\text{Irr}(\mca{S}_{\phi_\chi})$. 
Beyond the principal series case, it was conjectured by Arthur \cite{Art89} that the R-group associated with the parabolic induction $I(\sigma)$ from a discrete series $\sigma$ on the Levi subgroup is also isomorphic to the component group $\mca{S}_{\phi_\sigma}$, where $\phi_\sigma$ is the $L$-parameter associated to $I(\sigma)$. We refer to \cite{BZ05, Gol11, BG12} and the references therein for works in this direction.

Such a relation between $\Pi(\chi)$ and $\phi_\chi$ demonstrates a prototype of the general idea of the local Langlands parametrization for admissible representations of a local reductive group $G$. Let $W_F' = W_F \times \SL_2(\C)$ be the Weil-Deligne group of $F$, where $W_F$ is the local Weil group. Let ${}^L G$ be the $L$-group of $G$. For each parameter
$$\phi: W_F' \longrightarrow {}^LG,$$
the local conjecture of Langlands asserts that there is an $L$-packet $\Pi_\phi$ consisting of irreducible admissible representations of $G$ which satisfy certain desiderata, see \cite{Bor}. Members inside the same packet $\Pi_\phi$ are equipped with the same $L$-function and $\varepsilon$-factor. Moreover, as alluded to above, if $\phi$ is a tempered parameter (i.e., the image of $\phi|_{W_F}$ in the dual group of $G$ is relatively compact), then  the component group $\mca{S}_\phi$ conjecturally parametrizes elements in the $L$-packet $\Pi_\phi$ (see \cite{Art06}), i.e., there is a bijection
\begin{equation} \label{F:par}
\text{Irr}(\mca{S}_\phi) \longleftrightarrow  \Pi_\phi.
\end{equation}
The bijection in \eqref{F:par} originally manifests in the theory of endoscopic transfer, and in particular in the character identity relations matching orbital integrals arising from the transfers (cf. \cite{LL79, She79, She82, Art84, Art93}).  More generally, in order to deal with non-tempered representation in the global $L^2$ spectral decomposition, it was postulated by Arthur that one should consider a  parameter $\varphi$ of $W_F' \times \SL_2(\C)$ valued in the Langlands $L$-group ${}^LG$; the component group $\mca{S}_{\varphi}$ should parametrize certain Arthur-packet which plays a crucial role in the work of  Arthur in formulating his global multiplicity formula for the discrete spectrum of automorphic representations of a linear reductive group (see \cite{Art89, Art16}).

It should be noted that the bijection in \eqref{F:par} is not canonical, which from the geometric side depends on the normalization of the Langlands-Shelstad transfer factors, and from the representation-theoretic side on the normalization of intertwining operators (see \cite{KS88, Sha83, Sha3}). Indeed, one can always twist the bijection by a character of the component group $\mca{S}_\phi$. However, if $\phi$ is a tempered parameter, then it was conjectured in \cite{Sha3} that $\Pi_\phi$ always contains a unique generic element with respect to a fixed Whittaker datum of $G$. In particular, for a fixed Whittaker datum of $G$, there is a one-to-one correspondence between generic tempered representations and tempered $L$-packets. Granted with this tempered-packet conjecture, the correspondence  in \eqref{F:par} can then be normalized such that the unique generic element (with respect to the fixed Whittaker datum) is parametrized by $\mbm{1} \in \text{Irr}(\mca{S}_\phi)$. We refer the reader to \cite{Sha5} and the references therein for works in this direction.

It is to this last problem on genericty of representations inside $\Pi_\phi$, where $\phi=\phi_\chi$ is the parameter for a unitary unramified genuine principal series $I(\chi)$ of a covering group, that our object in this paper pertains. Fix a Whittaker datum $(\wt{B}= \wt{T}U, \psi)$ for a covering group $\wt{G}$. The Whittaker space $\Wh(I(\chi))$ is understood as a consequence of Rodier's heridity and the fact that $\wt{T}$ has only trivial unipotent subgroup. However, if $I(\chi)$ is reducible, then for any constituent $\pi$, it is a natural but delicate question to determine the dimension $\dim \Wh(\pi)$. 

For a unitary unramified genuine $\chi$ the correspondence \eqref{F:P-chi}  continues to hold. The proof is essentially the same as in the linear algebraic case, and relies on the covering analogue of \eqref{F:iso}, which follows from the recent work of W.-W. Li \cite{Li3, Li5} and C.-H. Luo \cite{Luo3}. In fact, we will also show the isomorphism $R_\chi \simeq \mca{S}_{\phi_\chi}$. However, since it is possible to have $\dim \Wh(\pi) > 0$ for every constituent $\pi$ of $I(\chi)$, there does not seem to be a preferred choice of $\pi \in \Pi(\chi)$ by using the genericity criterion. Thus, we choose the normalization for the correspondence in \eqref{F:P-chi} such that the unique unramified constituent $\pi_\chi^{un} \subset I(\chi)$ corresponds to  the trivial representation $\mbm{1}$ of $R_\chi$, i.e.,
$$\pi_{\mbm{1}} = \pi_\chi^{un}.$$
For unramified principal series of a linear algebraic group, this is the natural choice since $\pi_\chi^{un}$ is generic with respect to the fixed Whittaker datum, as a consequence of the Casselman--Shalika formula.

With this normalized correspondence $\sigma \leftrightarrow \pi_\sigma$, we want to determine $\dim \Wh(\pi_\sigma)$ in terms of $\sigma$ for every $\sigma \in \Irr(R_{\chi})$. We hope that the results in this paper will eventually find applications in the context of global automorphic representations for covering groups.

\subsection{Main conjecture}
Let $\chi$ be a unitary unramified genuine character of $Z(\wt{T})$. Associated to $\wt{T}$, there is a finite abelian group 
$$\msc{X}_{Q,n}:= Y/Y_{Q,n},$$
which is the quotient of the cocharacter lattice $Y$ of $G$ by a certain sublattice $Y_{Q,n}$. Here $\msc{X}_{Q,n}$ is the ``moduli space" of $\Wh(I(\chi))$; in particular,
$$\dim \Wh(I(\chi))= \val{\msc{X}_{Q,n}}.$$
The group $\msc{X}_{Q,n}$  is endowed with a natural twisted $W$-action which we denote by $\w[y]$. From this one has a permutation representation
$$\sigma^\msc{X}:  W \longrightarrow  \perm(\msc{X}_{Q,n})$$
given by $\sigma^\msc{X}(w)(y)= \w[y]$. Let $\mca{O}_\msc{X}$ be the set of all $W$-orbits (with respect to this twisted action) in $\msc{X}_{Q,n}$. Clearly, for each $W$-orbit $\mca{O}_y  \in \mca{O}_\msc{X} $, one has the permutation representation
$$\sigma^\msc{X}_{\mca{O}_y}:  W \longrightarrow  \perm(\mca{O}_y);$$
moreover, $\sigma^\msc{X}$ decomposes as a sum of all  $\sigma_{\mca{O}_y}^\msc{X}$, i.e.,
$$\sigma^\msc{X} = \bigoplus_{  \mca{O}_y \in \mca{O}_\msc{X} }  \sigma_{\mca{O}_y}^\msc{X}.$$
By restriction, $\sigma^\msc{X}_{\mca{O}_y}$ could be viewed as a permutation representation of $R_\chi \subset W_\chi \subset W$. 

For every $W$-orbit $\mca{O}_y \subset \msc{X}_{Q,n}$, there is also a natural subspace $\Wh(\pi_\sigma)_{\mca{O}_y} \subset \Wh(\pi_\sigma)$ (see \eqref{Oy-Wh}) such that 
$$\Wh(\pi_\sigma) = \bigoplus_{ \mca{O}_y \in \mca{O}_\msc{X}  }\Wh(\pi_\sigma)_{\mca{O}_y}.$$

\begin{conj}[{Conjecture \ref{MConj}}] \label{C:0}
Let $\wt{G}$ be a saturated $n$-fold cover (see Definition \ref{D:meta}) of a semisimple simply-connected group $G$. In the normalized correspondence ${\rm Irr}(R_\chi) \longleftrightarrow \Pi(\chi), \sigma \leftrightarrow \pi_\sigma$ such that $\pi_\mbm{1}= \pi_\chi^{un}$, one has
$$\dim \Wh(\pi_\sigma)_{\mca{O}_y} =\angb{\sigma}{ \sigma^\msc{X}_{\mca{O}_y}  }_{R_\chi}$$
for every orbit $\mca{O}_y \in \mca{O}_\msc{X}$, where $\angb{-}{-}_{R_\chi}$ denotes the pairing of two representations of $R_\chi$. Consequently,
$$\dim \Wh(\pi_\sigma) =\angb{\sigma}{ \sigma^\msc{X} }_{R_\chi}$$
for every $\sigma \in \Irr(R_\chi)$; in particular, $\dim \Wh(\pi_\chi^{un})$ is equal to the number of $R_\chi$-orbits in $\msc{X}_{Q,n}$.
\end{conj}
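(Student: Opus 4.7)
The plan is to recognize the conjectural equality as an identification of two $R_\chi$-modules, namely $\Wh(I(\chi))$ (acted on by $R_\chi$ via intertwining operators) and the restriction $\sigma^\msc{X}\vert_{R_\chi}$, and then extract multiplicities by character pairing. First I would use the isomorphism $\C[R_\chi] \simeq \text{End}(I(\chi))$ (whose covering analogue follows from Li and Luo as noted above) to transpose the intertwining operators $\mca{A}_w$, $w \in R_\chi$, to linear automorphisms of the finite-dimensional space $\Wh(I(\chi))$. By Rodier's heredity and the structure of $\Wh$ for genuine principal series, this space carries a canonical basis $\{ \lambda_y : y \in \msc{X}_{Q,n} \}$ of Whittaker functionals, and it suffices to compute the matrix of each $\mca{A}_w$ in this basis.

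Second I would invoke the scattering matrix formula for the action of $\mca{A}_w$ on the $\lambda_y$'s, written in terms of the twisted $W$-action $y \mapsto \w[y]$ and certain local coefficients. For generic, non-integral $\chi$ this matrix is diagonally twisted by gamma-factors; the point of specializing to unitary unramified $\chi$ and restricting to $w \in R_\chi \subset W_\chi$ is that these gamma-factors and Plancherel correction terms should collapse, leaving a genuine permutation action whose orbit structure agrees with the $W$-orbit decomposition of $\msc{X}_{Q,n}$. Thus, as an $R_\chi$-module,
\[
\Wh(I(\chi)) \simeq \bigoplus_{\mca{O}_y \in \mca{O}_\msc{X}} \sigma^\msc{X}_{\mca{O}_y}\big\vert_{R_\chi},
\]
and this isomorphism respects the direct-sum decomposition indexed by $\mca{O}_\msc{X}$.

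Third, combining this with the spectral decomposition $I(\chi) = \bigoplus_{\sigma \in \Irr(R_\chi)} \pi_\sigma$ and Schur orthogonality on the abelian group $R_\chi$, the $\sigma$-isotypic component of $\Wh(I(\chi))_{\mca{O}_y}$ is exactly $\Wh(\pi_\sigma)_{\mca{O}_y}$, whose dimension is therefore $\angb{\sigma}{\sigma^\msc{X}_{\mca{O}_y}}_{R_\chi}$. The normalization $\pi_\mbm{1} = \pi_\chi^{un}$ would be pinned down by identifying the distinguished unramified Whittaker functional (the basis vector $\lambda_0$ attached to $0 \in \msc{X}_{Q,n}$) as a fixed vector under all $\mca{A}_w$, $w \in R_\chi$, which matches the trivial character of $R_\chi$ under the correspondence.

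The main obstacle is precisely step two: proving that for a saturated cover and for $w \in R_\chi$, the normalized intertwining operator acts on the basis $\{\lambda_y\}$ by an unadorned permutation and not merely projectively. Concretely, one must show that every gamma-factor, every Plancherel phase, and every cocycle correction arising in the scattering matrix becomes trivial when $w$ is in the R-group of a unitary unramified genuine character, and that the saturation hypothesis is exactly what guarantees this cancellation. This is where the restriction to saturated covers of semisimple simply-connected $G$ should be essential; one expects that for non-saturated covers the formula needs a twist by a character of $R_\chi$ arising from residual cocycle data. I would first verify this cancellation in rank one to get the $\SL_2$ case, then bootstrap via the explicit structure of $R_\chi$ for unramified principal series (à la Keys) and the cocycle formulas on saturated covers to handle higher rank, which also gives a natural route to the cases the paper claims to verify, including covers of symplectic groups.
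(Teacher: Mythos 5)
The statement you are attempting to prove is a \emph{conjecture} (Conjecture \ref{MConj}), not a theorem of the paper. The paper does not prove it in general; it proves the unconditional Theorem \ref{T:dW} (expressing $\dim\Wh(\pi_\sigma)_{\mca{O}_y}$ as $\langle\sigma,\sigma^{\rm Wh}_{\mca{O}_y}\rangle_{R_\chi}$ for a representation $\sigma^{\rm Wh}$ built from the scattering matrix), shows the conjecture is equivalent to the identity $\sigma^{\rm Wh}_{\mca{O}_y}\simeq\sigma^{\msc{X}}_{\mca{O}_y}$ (Conjecture \ref{MConj2}), and verifies this identity only in specific families ($\wt{\Sp}_{2r}^{(n)}$, $\wt{\SL}_3^{(2)}$). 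Your steps one and three faithfully reconstruct Theorem \ref{T:dW}; your step two, far from being a reduction to a computation, is precisely the open conjecture restated.

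There are two concrete problems with step two as you have framed it. First, the claim that for $w\in R_\chi$ the normalized intertwining operator ``acts on the basis $\{\lambda_y\}$ by an unadorned permutation'' is false even in the cases the paper verifies: in the $\wt{\Sp}_{2r}^{(n)}$ computation (Theorem \ref{T:Sp2r}), for a free $R_\chi$-orbit $\{z,\w[z]\}$ the two diagonal entries $\tau(w,\chi,\s_z,\s_z)$ and $\tau(w,\chi,\s_{\w[z]},\s_{\w[z]})$ are each nonzero and only their \emph{sum} vanishes. The scattering matrix is not a permutation matrix; Conjecture \ref{MConj2} asserts only an equality of traces orbit by orbit, i.e., isomorphism of $R_\chi$-representations, which is weaker. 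Second, your proposed bootstrap --- verify in rank one, propagate via the cocycle relations --- is exactly what the paper observes cannot work in general (Remark \ref{R:obsat}): the trace equality \eqref{E:sumO} holds only after summing over the full $W$-orbit $\mca{O}_y$, and the summands indexed by individual $R_\chi$-orbits need not agree term by term (a counterexample is noted for $\wt{\SL}_4^{(3)}$). Any strategy that decomposes along $R_\chi$-orbits and matches contributions locally is therefore doomed; the saturation and simple-connectedness hypotheses do not repair this, and the paper's verified cases require global computation over the $W$-orbit rather than rank-one reduction. Your outline is a reasonable restatement of why the conjecture is plausible, but it does not constitute a proof, and the specific mechanism you propose for the crucial cancellation is contradicted by the worked examples in the paper.
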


\subsection{Main results}
Prior to the formulation of the above conjecture, a substantial part of the paper is devoted to analyzing the group $R_\chi$ and proving for covers of general reductive groups an unconditional formula for $\dim \Wh(\pi_\sigma)_{\mca{O}_y}$ in terms of $\sigma$ and a certain representation $\sigma^{\rm Wh}_{\mca{O}_y}$ of $R_\chi$. We briefly outline the content of the paper and highlight some of our results. 

After a brief introduction on covering groups in \S \ref{S:cg}, we study in \S \ref{S:red-ps} the normlized intertwining operator between genuine principal series of $\wt{G}$. As in the case for linear algebraic groups, the normalization is given by the Langlands $L$-functions, and one important property is the cocycle relation of the normalized intertwining operators which does not depend on the length function of $W$.

In \S \ref{S:R-grp}, we analyze the group $R_\chi$ based on the work of Keys \cite{Key1, Key3}, W.-W. Li \cite{Li3, Li5} and C.-H. Luo \cite{Luo3}. In particular, it follows from \cite{Luo3} that for a unitary unramified genuine principal series $I(\chi)$, one has an algebra isomorphism $\C[R_\chi] \simeq \text{End}(I(\chi))$.
We show how to compute $R_\chi$ by relating it to another group $R_\chi^{sc}$, which is equal to the R-group of a certain unramified principal series of a simply-connected Chevalley group $H^{sc}$. The group $R_\chi^{sc}$ is explicitly determined in Keys' paper \cite{Key1, Key2} for principal series of simply-connected Chevalley groups. Moreover, by reducing to the linear algebraic case, we prove in Theorem \ref{T:R=S} the isomorphism $R_\chi \simeq \mca{S}_{\phi_\chi}$, where $\phi_\chi$ is the $L$-parameter of $I(\chi)$ valued in the $L$-group of $\wt{G}$ constructed by M. Weissman \cite{We6}. 

Denote by $\wt{G}^\vee$ (resp. ${}^L\wt{G}$) the dual group (resp. $L$-group) for the covering group $\wt{G}$. The following is an amalgam of Proposition \ref{P:bd}, Theorem \ref{T:R-abel} and Theorem \ref{T:R=S}.

\begin{thm}
Let $\wt{G}$ be an $n$-fold cover of a linear algebraic group $G$. Let $\chi$ be a unitary unramified genuine character of $Z(\wt{T})$. We have:
\begin{enumerate}
\item[$\bullet$] $R_\chi \subseteq R_\chi^{sc}$ with $R_\chi^{sc}$ being an abelian group, and if $\wt{G}$ is semisimple, then $[R_\chi^{sc}: R_\chi] \le \val{Z(\wt{G}^\vee)}$, where $Z(\wt{G}^\vee)$ is the center of $\wt{G}^\vee$;
\item[$\bullet$] $R_\chi \simeq \mca{S}_{\phi_\chi}$, where $\mca{S}_{\phi_\chi}$ is the component group of the parameter $\phi_\chi: W_F \to {}^L \wt{G}$.
\end{enumerate}
\end{thm}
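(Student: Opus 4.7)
The plan is to deduce both assertions by reducing to the corresponding statements for a linear algebraic simply-connected Chevalley group $H^{sc}$ canonically attached to the cover $\wt{G}$, and then invoking Keys' explicit determination of R-groups for unramified principal series together with Weissman's construction of the $L$-group ${}^L\wt{G}$. Throughout, I rely on the cocycle-invariant normalization of the intertwining operators established in Section \ref{S:red-ps}, which is what makes the passage from $\wt{G}$ to a linear group clean.

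For the first bullet, I begin by recalling that $R_\chi \subseteq W_\chi$ is controlled by the coroots $\alpha^\vee$ along which the Harish-Chandra $\mu$-function vanishes (equivalently, along which the normalized intertwining operator acts as a scalar). Using the normalization from Section \ref{S:red-ps}, these $\mu$-functions for the genuine principal series $I(\chi)$ of $\wt{G}$ match the corresponding $\mu$-functions for an ordinary unramified principal series on the simply-connected Chevalley group $H^{sc}$ whose root datum involves the modified coroots $\alpha^{\vee}_{Q,n}$. This yields an inclusion $R_\chi \subseteq R_\chi^{sc}$, and abelianness of $R_\chi$ then follows from Keys' explicit computation in \cite{Key1, Key2}. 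For the index bound, I would observe that $R_\chi^{sc}/R_\chi$ is generated by cosets of reflections that stabilize $\chi$ viewed on the torus of $H^{sc}$ but do not stabilize it on $Z(\wt{T})$; a cocharacter-lattice cokernel computation identifies this quotient with a subgroup of the Pontryagin dual of $Y^{sc}/Y_{Q,n}$, which in the semisimple case is precisely $Z(\wt{G}^\vee)$, giving $[R_\chi^{sc}: R_\chi] \le \val{Z(\wt{G}^\vee)}$.

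For the second bullet, I would use Weissman's $L$-group construction \cite{We6}: an unramified unitary genuine character $\chi$ of $Z(\wt{T})$ gives rise to an unramified parameter $\phi_\chi : W_F \to {}^L\wt{G}$ determined, up to $\wt{G}^\vee$-conjugacy, by a semisimple Satake parameter $s_\chi \in \wt{T}^\vee$. The component group $\mca{S}_{\phi_\chi}$ is then $\pi_0$ of the centralizer $\text{Cent}_{\wt{G}^\vee}(s_\chi)$. In parallel, the analysis from the first bullet identifies $W_\chi$ with the Weyl group of the identity component of $\text{Cent}_{\wt{G}^\vee}(s_\chi)$, and $R_\chi$ with those elements of $W_\chi$ whose coroots lie outside the root system of this identity component. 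The standard argument of Keys and Keys--Shahidi \cite{Key3, KS88} for the linear case then transports verbatim, giving $R_\chi \simeq \pi_0\bigl(\text{Cent}_{\wt{G}^\vee}(s_\chi)\bigr) = \mca{S}_{\phi_\chi}$.

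The main obstacle I expect is not any of the individual steps, each of which is either combinatorial or reductive in nature, but rather verifying that the transport from $\wt{G}$ to $H^{sc}$ is sufficiently faithful in the covering context. Concretely, one must check that the normalized intertwining operators, the cocycle identity from Section \ref{S:red-ps}, and Weissman's extension-theoretic $L$-group formalism (which encodes a non-trivial central extension of $W_F$ by $\wt{G}^\vee$) all cooperate so that the linear-algebraic match between R-groups and component groups survives intact; the unramifiedness of $\chi$ should kill the finer extension data and make this reduction possible, but spelling this out carefully is the technically delicate part of the argument.
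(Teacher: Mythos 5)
Your overall architecture — reduce to linear algebraic groups via $H^{sc}$ (resp. $H$) and invoke Keys' explicit computations — does match the paper's, but there are concrete gaps in the places where the reduction actually has to be carried out, plus one lattice error.

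\emph{On the first bullet.} The paper does not get $[R_\chi^{sc}:R_\chi]\le|Z(\wt{G}^\vee)|$ from a Pontryagin-dual cokernel computation. It proves a sharper \emph{equality} (Proposition \ref{P:bd}) via Burnside's orbit-counting formula applied to the $Z(\wt{G}^\vee)$-torsor $\phi_\chi^\psi(\varpi)\cdot Z(\wt{G}^\vee)$ inside $\wt{T}^\vee$: since $W$ acts trivially on $Z(\wt{G}^\vee)$, every non-empty fixed-point set $X^\w$ is all of $X$, and only $\w\in W_\chi$ has non-empty fixed-point set, so $|\mca{O}^{W_\chi^{sc}}|=|Z(\wt{G}^\vee)|\cdot|W_\chi|/|W_\chi^{sc}|$. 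Your sketch could be made to work, but the quotient you name is wrong: you wrote the Pontryagin dual of $Y^{sc}/Y_{Q,n}$, which is not even well-defined ($Y^{sc}$ and $Y_{Q,n}$ need not be nested), and in any case $Z(\wt{G}^\vee)=\Hom(Y_{Q,n}/Y_{Q,n}^{sc},\C^\times)$. Also note the containment $R_\chi\subseteq R_\chi^{sc}$ does not come from matching $\mu$-functions (that matches $W_\chi^0$ on both sides) but from the containment $W_\chi\subseteq W_\chi^{sc}$, which holds because $W_\chi^{sc}$ is the stabilizer of the \emph{restriction} $\chi^{sc}=\chi|_{Z(\wt{T})^{sc}}$, a weaker condition than stabilizing $\chi$ itself; you never say this.

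\emph{On the second bullet.} This is where the real gap is, and you honestly flag it ("spelling this out carefully is the technically delicate part") without supplying it. The parameter $\phi_\chi$ is valued in the nonsplit extension ${}^L\wt{T}$, so there is no canonical Satake element $s_\chi\in\wt{T}^\vee$ to start with. The paper's mechanism is the distinguished genuine character $\chi_\psi$: it gives the splitting ${}^L\wt{G}\simeq_{\chi_\psi}\wt{G}^\vee\times W_F$, whence the $\wt{T}^\vee$-valued parameter $\phi[\chi_\psi]=\phi_\chi\cdot\phi_{\chi_\psi}^{-1}$, and a genuine linear-algebraic character $\uchi=(\chi\cdot\chi_\psi^{-1})\circ i_{Q,n}$ of $T_{Q,n}$ on the principal endoscopic group $H$ (whose dual \emph{is} $\wt{G}^\vee$). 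One must then verify $R_\chi=R_{\uchi}$, and the key input is the identity $\chi_\psi(\wt{h}_\alpha(a^{n_\alpha}))=1$ not just for $\alpha\in\Delta$ (which is in the definition of $\chi_\psi$) but for all $\alpha\in\Phi$, proved by Weyl conjugation; this is what gives $\Phi_{\uchi}=\Phi_\chi$ and hence $R_\chi=R_{\uchi}$. One also needs Lemma \ref{L:rel-S}, i.e., $S_{\phi_\chi}=S_{\phi[\chi_\psi]}$, so that the answer is independent of the auxiliary choice of $\chi_\psi$. Only after all this can Keys' linear result $R_{\uchi}\simeq\mca{S}_{\phi_{\uchi}}$ be applied. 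Your statement that "$W_\chi$ is the Weyl group of the identity component of the centralizer" is also off: that would be $W_\chi^0$, not $W_\chi$; and $\mca{S}_\phi$ here is $S_\phi/(Z(\wt{G}^\vee)\cdot S_\phi^0)$, not merely $\pi_0(S_\phi)$, a distinction that matters because the whole index bound in bullet one is about the $Z(\wt{G}^\vee)$ factor.
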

We remark that since $\chi$ is unitary, the parameter $\phi_\chi$ is trivial on $\SL_2(\C) \subset W_F'$, and thus it suffices to consider the Weil group $W_F$ alone.

Section \S \ref{S:conj} is devoted to stating and investigating several aspects of the main Conjecture \ref{MConj}, i.e., Conjecture \ref{C:0} above. First, we show that for every $\mca{O}_y \in \mca{O}_\msc{X}$, the space $\Wh(I(\chi))_{\mca{O}_y}$ affords a natural representation
$$\sigma_{\mca{O}_y}^{\rm Wh}:  R_\chi \longrightarrow \GL(\C^{ \val{ \mca{O}_y } })$$ such that the following holds (cf. Theorem \ref{T:dW}):

\begin{thm} \label{T:unc}
Let $\wt{G}$ be an $n$-fold cover of a connected reductive group $G$. For every orbit $\mca{O}_y \in \mca{O}_\msc{X}$, one has
$$\dim \Wh(\pi_\sigma)_{\mca{O}_y} = \angb{\sigma}{ \sigma_{\mca{O}_y}^{\rm Wh} }_{R_\chi}.$$
Consequently, $\dim \Wh(\pi_\sigma) = \angb{\sigma}{ \sigma^{\rm Wh} }_{R_\chi}$.
\end{thm}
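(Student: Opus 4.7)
The strategy is to exploit the algebra isomorphism $\C[R_\chi] \simeq \text{End}(I(\chi))$ -- valid for unitary unramified $\chi$ by the work of W.-W. Li and C.-H. Luo cited in the introduction -- in order to view $I(\chi)$ as a $\wt{G}\times R_\chi$-module, transport this decomposition through the Whittaker functor $\Wh = \Hom_U(-,\psi)$, and extract multiplicities by character theory for the finite abelian group $R_\chi$.

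Under the normalized correspondence $\sigma \leftrightarrow \pi_\sigma$, the commuting algebra theorem yields a canonical decomposition
$$I(\chi) \;\simeq\; \bigoplus_{\sigma \in \Irr(R_\chi)}\,\pi_\sigma \boxtimes \sigma$$
of $\wt{G}\times R_\chi$-modules, where the $R_\chi$-action is that induced by the normalized intertwining operators of \S \ref{S:red-ps}. Applying the additive functor $\Wh$ and using that each $\sigma \in \Irr(R_\chi)$ is one-dimensional, one deduces that the multiplicity of $\sigma$ in the $R_\chi$-module $\Wh(I(\chi))$ equals $\dim \Wh(\pi_\sigma)$. Since by construction this $R_\chi$-module is exactly $\sigma^{\rm Wh}$, the identity
$$\dim \Wh(\pi_\sigma) \;=\; \angb{\sigma}{\sigma^{\rm Wh}}_{R_\chi}$$
follows, giving the ``consequently'' part of the statement.

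To obtain the finer $\mca{O}_y$-formula, the point is to show that every summand $\Wh(I(\chi))_{\mca{O}_y}$ in the decomposition $\Wh(I(\chi)) = \bigoplus_{\mca{O}_y}\Wh(I(\chi))_{\mca{O}_y}$ is stable under the $R_\chi$-action just constructed; once this is in place, the isomorphism of the previous paragraph restricts orbit by orbit and the same multiplicity computation delivers
$$\dim \Wh(\pi_\sigma)_{\mca{O}_y} \;=\; \angb{\sigma}{\sigma^{\rm Wh}_{\mca{O}_y}}_{R_\chi}.$$
Stability will follow from a direct calculation: each normalized intertwining operator $M(r)$ for $r \in R_\chi$ carries the one-dimensional Whittaker subspace indexed by $y \in \msc{X}_{Q,n}$ into the subspace indexed by $\w[y]$, the precise scalar being the content of the stronger Conjecture \ref{MConj}. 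Because $\mca{O}_y$ is a full $W$-orbit and hence \emph{a fortiori} $R_\chi$-stable, the subspace $\Wh(I(\chi))_{\mca{O}_y}$ is preserved and, by definition, realizes $\sigma^{\rm Wh}_{\mca{O}_y}$.

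The main technical input in this plan is therefore the explicit formula for the action of each normalized intertwining operator on the $\msc{X}_{Q,n}$-indexed basis of Whittaker functionals, identifying the support of this action with the twisted $W$-action $y \mapsto \w[y]$. Once this is established, together with the verification that the normalization of \eqref{F:P-chi} fixed by $\pi_\mbm{1} = \pi_\chi^{un}$ is mutually compatible with that of the intertwining operators of \S \ref{S:red-ps}, the rest of the proof is formal multiplicity extraction for finite abelian groups.
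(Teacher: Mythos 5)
Your strategy is essentially the paper's own: both amount to viewing $I(\chi)$ as a $\wt{G}\times R_\chi$-module via the normalized intertwining operators, using Schur's lemma to identify the $R_\chi$-action on each $\pi_\sigma$ as the scalar $\sigma(\w)$ (this is the content of Corollary \ref{C:Tcoef}), applying the Whittaker functor, and then refining by the orbit decomposition. The one genuine inaccuracy is your description of the stability mechanism: the normalized intertwining operator $\msc{A}(w,\chi)^*$ does \emph{not} carry the one-dimensional Whittaker line indexed by $y$ into the line indexed by $\w[y]$ up to a scalar. For a simple reflection $\w_\alpha$ the scattering matrix has both a diagonal term $\tau^1$ supported at $y_1 \equiv y$ and an off-diagonal term $\tau^2$ supported at $y_1 \equiv \w_\alpha[y]$ (Theorem \ref{T:tau}), so the action is not monomial in the basis $\{\lambda_{\s_y}\}$. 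What is true — and what the paper uses, citing the block-diagonality in \eqref{DB} (ultimately from the cocycle relation together with Theorem \ref{T:tau}) — is the weaker statement that the image of $\Wh(I({}^w\chi))_{\mca{O}_y}$ lies inside $\Wh(I(\chi))_{\mca{O}_y}$, because both $y$ and $\w_\alpha[y]$ lie in the same $W$-orbit. Your appeal to Conjecture~\ref{MConj} for the "precise scalar" is likewise misplaced (the conjecture controls traces of orbit blocks, not individual entries, and in any case the present theorem must not assume the conjecture). With the stability claim corrected to the block-diagonal form, the rest of your argument — that the orbit decomposition is a decomposition of $R_\chi$-submodules, hence intersects compatibly with the isotypic decomposition $\Wh(I(\chi)) = \bigoplus_\sigma \Wh(\pi_\sigma)$ — goes through and gives exactly the paper's proof, merely packaged in $\boxtimes$-language and with the two halves of the statement proved in the opposite order.
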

Here $\sigma_{ \mca{O}_y }^{\rm Wh}(\w)$ is represented by the matrix $\gamma(w, \chi) \cdot \mca{S}_{\mfr{R}}(w, i(\chi))$, where $\gamma(w, \chi)$ is the gamma-factor associated to $\w$ and $\mca{S}_{\mfr{R}}(w, i(\chi))$ a so-called scattering matrix. As an application of the above theorem, we show in \S \ref{SS:GSp} that a result of D. Szpruch \cite{Szp5} on double cover of $\GSp_{2r}$ could be recovered from it, see Theorem \ref{T:GSp}. Here Theorem \ref{T:unc} also implies that Conjecture \ref{C:0} is equivalent to the following (cf. Conjecture \ref{MConj2}):

\begin{conj}   \label{C:1}
Let $\wt{G}$ be a saturated $n$-fold cover of a  semisimple simply-connected $G$. Then for every orbit $\mca{O}_y \subset \msc{X}_{Q,n}$, one has $\sigma_{\mca{O}_y}^{\rm Wh} = \sigma_{\mca{O}_y}^\msc{X}$; or equivalently,
$$\Tr\big( \mca{S}_{\mfr{R}}(w, i(\chi))  \big) = \val{ (\mca{O}_y)^\w } \cdot \gamma(w, \chi)^{-1}$$
for every $\w \in R_\chi$.
\end{conj}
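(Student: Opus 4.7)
The plan is to reduce Conjecture \ref{C:1} to a character-level identity and then to a rank-one computation via the multiplicativity of normalized intertwining operators. Since $R_\chi$ is abelian by the theorem on R-groups recalled in the introduction, the equality $\sigma^{\rm Wh}_{\mca{O}_y} = \sigma^\msc{X}_{\mca{O}_y}$ of $R_\chi$-representations is equivalent to the equality of their characters, and the character of $\sigma^\msc{X}_{\mca{O}_y}$ at $\w$ is precisely the fixed-point count $\val{(\mca{O}_y)^\w}$. Thus the trace formulation displayed in the conjecture is literally the character identity to be established, and we may attack it with the analytic machinery controlling $\mca{S}_{\mfr{R}}$.

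First I would promote the cocycle relation for normalized intertwining operators from Section \ref{S:red-ps} to a cocycle identity
\begin{equation*}
\mca{S}_{\mfr{R}}(w_1 w_2, i(\chi)) = \mca{S}_{\mfr{R}}(w_1, w_2 \cdot i(\chi)) \cdot \mca{S}_{\mfr{R}}(w_2, i(\chi)),
\end{equation*}
and combine it with the standard multiplicativity $\gamma(w_1 w_2, \chi) = \gamma(w_1, w_2 \chi) \gamma(w_2, \chi)$. On the other side, the character of $\sigma^\msc{X}_{\mca{O}_y}$ is compatible with composition in the same sense. Together these would reduce the problem to checking the identity on a generating set of $R_\chi$. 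The structure $R_\chi \subseteq R_\chi^{sc}$ developed in Section \ref{S:R-grp}, combined with Keys' explicit description of $R_\chi^{sc}$ in the simply-connected Chevalley case, supplies such generators, each of which is a reflection $s_\alpha$ attached to a coroot $\alpha^\vee$ satisfying a divisibility condition dictated by the Brylinski--Deligne invariants of the cover.

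Next, for each generator $s_\alpha \in R_\chi$, I would exploit the fact that both the $s_\alpha$-scattering matrix and the twisted $s_\alpha$-action on $\msc{X}_{Q,n}$ are controlled by the restriction to a rank-one saturated subcover $\wt{H}_\alpha$ of $\SL_2$. Working coset-by-coset in $Y/Y_{Q,n}$ along $\alpha^\vee$, one expects: for an $s_\alpha$-fixed coordinate $y$, the rank-one calculation produces a diagonal scalar equal to $\gamma(s_\alpha,\chi)^{-1}$; for a non-fixed $y$, the matrix block pairs $y$ with $s_\alpha \cdot y$ purely off-diagonally and contributes nothing to the trace. Summing the diagonal contributions over each orbit $\mca{O}_y$ would then recover $\val{(\mca{O}_y)^{s_\alpha}} \cdot \gamma(s_\alpha, \chi)^{-1}$, which is the required identity on generators. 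The saturated hypothesis is what guarantees each such restriction lands in a cleanly-presented cover of $\SL_2$ where the Gindikin--Karpelevich/Shahidi-style computation is tractable.

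The main obstacle will be assembling these rank-one verifications consistently under the cocycle relation. Even granted the cocycle, the twisted character $w_2 \cdot \chi$ appearing inside $\mca{S}_{\mfr{R}}(w_1, w_2 \cdot i(\chi))$ differs from $\chi$, so the rank-one input drifts at each step; proving that the cumulative product telescopes to $\val{(\mca{O}_y)^\w} \cdot \gamma(w,\chi)^{-1}$ for arbitrary $\w \in R_\chi$ is delicate. I expect this to require matching the cycle decomposition of $\w$ acting on $\mca{O}_y$ with a block-decomposition of $\mca{S}_{\mfr{R}}(w, i(\chi))$ into diagonal entries (for fixed points) and higher cyclic blocks (whose traces vanish after rescaling by $\gamma$). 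The restriction to saturated covers of semisimple simply-connected $G$ is very likely essential here, since it is precisely this setting that guarantees the rank-one subcovers and the action of $R_\chi$ on $\msc{X}_{Q,n}$ are compatible enough for the cycle-by-cycle matching to close.
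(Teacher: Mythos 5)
This statement is a conjecture; the paper does not prove it in general, only verifies it in special cases (for exceptional orbits in Theorem~\ref{T:unWh}, for $\wt{\Sp}_{2r}^{(n)}$ in Theorem~\ref{T:Sp2r}, and for $\wt{\SL}_3^{(2)}$ in Proposition~\ref{T:SL3}). So your proposal cannot be compared against ``the paper's proof'' --- there isn't one --- and any claimed general proof here is overshooting what is established.

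Beyond that, the strategy itself has concrete gaps. First, the reduction to a generating set does not work at the level of traces: the representations $\sigma^{\rm Wh}_{\mca{O}_y}$ and $\sigma^{\msc{X}}_{\mca{O}_y}$ are not one-dimensional, so $\Tr(\rho_1(\w)) = \Tr(\rho_2(\w))$ at a single generator $\w$ does not imply $\rho_1 \simeq \rho_2$ even for $R_\chi$ cyclic; you would need either trace equality at all powers of $\w$ or an honest matrix identity for $\sigma^{\rm Wh}_{\mca{O}_y}(\w)$, which is much stronger. Second, the generators of $R_\chi$ are not simple reflections $s_\alpha$ in general. Already in the paper's worked example $\wt{\SL}_3^{(2)}$ the generator is $\w_1\w_2$ of length two, and for $\wt{\GSp}_{2r}$ it is $\w_{\alpha_1}\w_{\alpha_3}\cdots\w_{\alpha_{r-1}}$; so the rank-one $\SL_2$ restriction you propose is simply unavailable for the elements on which the identity must be checked. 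Third, and most seriously, the ``cycle-by-cycle matching'' --- the claim that non-fixed points contribute nothing to the trace and that off-diagonal blocks can be discarded --- is precisely the subtlety the paper flags as \emph{false} in general: Remark~\ref{R:obsat} exhibits a counterexample for $\wt{\SL}_4^{(3)}$ where the per-$R_\chi$-orbit equality \eqref{MC-C2} fails, even though the conjecture could still hold for the full $W$-orbit. That remark explicitly identifies this as the main difficulty in any direct verification, so the telescoping you hope for cannot be established orbit-by-orbit; if the conjecture is true, something cruder must be happening in the sum over the whole $W$-orbit $\mca{O}_y$.
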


Using the formulation in Conjecture \ref{C:1},  we prove several results in  \S \ref{S:Wh-un}:
\begin{enumerate}
\item[$\bullet$] For a general reductive group $G$, we show that there is an exceptional set $\msc{X}_{Q,n}^{\rm exc} \subset (\msc{X}_{Q,n})^W$, which might be empty, such that $\sigma_{\mca{O}_y}^{\rm Wh} = \sigma_{\mca{O}_y}^\msc{X} = \mbm{1}_{R_\chi}$ for $y \in \msc{X}_{Q,n}^{\rm exc}$. It follows that $\dim \Wh(\pi_\chi^{un}) \ge \val{ \msc{X}_{Q,n}^{\rm exc}  }$; it also implies that Conjecture \ref{C:1} holds for such $\mca{O}_y$. This is the content of Theorem \ref{T:unWh}.
\item[$\bullet$] In \S \ref{S:UWf}, we consider the Whittaker space $\Wh (\pi_\chi^{un})$ from the perspective of unramified Whittaker functions. Using an analogue of Casselman--Shalika formula proved in \cite{GSS2}, we show in Theorem \ref{T:un-gen} a result on $\dim \Wh(\pi_\chi^{un})$, which is  compatible with Theorem \ref{T:unWh}.
\end{enumerate}

In \S \ref{S:eg}, we verify the following:
\begin{thm}[{Theorem \ref{T:Sp2r}}]
 Conjecture \ref{C:1} (and thus Conjecture \ref{C:0}) holds for the $n$-fold covers $\wt{\Sp}_{2r}^{(n)}$.
\end{thm}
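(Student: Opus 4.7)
The plan is to verify Conjecture \ref{C:1} for $\wt{\Sp}_{2r}^{(n)}$ directly, by establishing the trace identity
\[
\Tr\bigl(\mca{S}_{\mfr{R}}(w, i(\chi))\bigr) = \bigl|(\mca{O}_y)^\w\bigr| \cdot \gamma(w, \chi)^{-1}
\]
for each $\w \in R_\chi$ and each $W$-orbit $\mca{O}_y \subset \msc{X}_{Q,n}$, and then invoking Theorem \ref{T:dW} to conclude. The first step is to make the ingredients explicit. Since $G = \Sp_{2r}$ is semisimple simply-connected, Keys' description in \cite{Key1, Key2} applies without modification, and gives $R_\chi = R_\chi^{sc}$ as an elementary abelian $2$-group, generated by those long-root reflections $w_{\alpha}$ for which the associated rank-one quotient of $\chi$ is reducible. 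With the standard $W$-invariant bilinear form on $Y = \Z^r$, the lattice $Y_{Q,n}$ and the quotient $\msc{X}_{Q,n}$ are explicit, as is the twisted $W$-action $\w[y]$; from this one reads off the orbits $\mca{O}_y$ together with the fixed-point counts $|(\mca{O}_y)^\w|$ for each $\w \in R_\chi$.

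The second step uses the cocycle relation for the normalized intertwining operators established in \S \ref{S:red-ps} to factor $\mca{S}_{\mfr{R}}(\w, i(\chi))$ as a tensor product of rank-one scattering matrices attached to the commuting long-root reflections that appear in $\w$; the gamma factor $\gamma(w, \chi)$ factors multiplicatively over the same decomposition. Each rank-one piece is a metaplectic $\SL_2$-type calculation with an explicit scattering matrix in terms of Gauss sums and the Weil index, available from \cite{Szp5} and in parallel form in \cite{GSS2}. Taking the trace in the factorized expression, one verifies that each $\w$-fixed basis vector of $\mca{O}_y$ contributes precisely $\gamma(w, \chi)^{-1}$ on the diagonal, while every remaining diagonal entry vanishes because the corresponding product of Gauss sums is zero.

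The main obstacle is twofold. First, one must track carefully the Hilbert symbols and the Weil index arising from the Rao-type cocycle of $\wt{\Sp}_{2r}^{(n)}$: since long and short roots have distinct norms, short-root contributions entering by conjugation inside $W$ need the correct normalization, and the interplay with the fixed Whittaker datum must be compatible with the convention $\pi_\mbm{1} = \pi_\chi^{un}$. Second, verifying the vanishing of the off-diagonal contributions to the trace relies essentially on the saturated hypothesis: this is what ensures that the relevant character sums on $\msc{X}_{Q,n}$ are genuinely cyclotomic and so can be evaluated via orthogonality. Both difficulties are already met, in more intricate form, in the $\GSp_{2r}$ calculation recorded as Theorem \ref{T:GSp}, so the techniques developed there should transfer, with due care, to the covers $\wt{\Sp}_{2r}^{(n)}$ for general $n$.
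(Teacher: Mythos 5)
Your proposal has two genuine gaps, one of them fatal.

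\textbf{The cancellation mechanism is wrong.} You claim that each diagonal entry $\tau(w,\chi,\s_z,\s_z)$ with $\w[z]\neq z$ ``vanishes because the corresponding product of Gauss sums is zero.'' But for $z\not\equiv \w_{\alpha_r}[z]\bmod Y_{Q,n}$, the $\tau^2$-part of the diagonal entry is already zero by the support condition in Theorem \ref{T:tau}, so the Gauss sums never enter; what remains is the $\tau^1$-part
\[
\tau^1(w_{\alpha_r},\chi,\s_z,\s_z)=(1-q^{-1})\,\frac{\chi_{\alpha_r}^{k_{z,\alpha_r}}}{1-\chi_{\alpha_r}},
\]
which is manifestly nonzero (equal to $\pm(1-q^{-1})/2$ once $\chi_{\alpha_r}=-1$). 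These entries do not individually vanish. The actual mechanism is \emph{pair-cancellation within each free $R_\chi$-orbit}: $k_{z,\alpha_r}+k_{\w_{\alpha_r}[z],\alpha_r}=1$, so the two terms in $\{z,\w_{\alpha_r}[z]\}$ contribute $(-1)^k$ and $(-1)^{1-k}$ and cancel. This is also why the paper verifies the stronger identity $R_\chi$-orbit by $R_\chi$-orbit inside each $W$-orbit (equation \eqref{MC-C2}), and Remark \ref{R:obsat} points out that this orbit-by-orbit refinement is \emph{not} a formal consequence of Conjecture \ref{MConj2} and already fails for $\wt{\SL}_4^{(3)}$ — so your appeal to Gauss-sum orthogonality, or to a general ``saturated implies cyclotomic cancellation'' heuristic, cannot work as stated.

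\textbf{The structure of $R_\chi$ is misidentified, and the factorization scheme does not apply.} For $\wt{\Sp}_{2r}^{(n)}$ the nontrivial case occurs only when $n$ is odd (for $n$ even, $\wt G^\vee=\Sp_{2r}$ is simply-connected and $R_\chi=\{1\}$ by Corollary \ref{C:R=1}), and then $R_\chi=\{1,\w_{\alpha_r}\}$ is generated by a \emph{single} simple reflection, not by a product of commuting long-root reflections. Consequently there is no tensor decomposition of the scattering matrix to perform: the whole computation is already a rank-one one. Your plan to import the $\GSp_{2r}$ technique, where $\w=\w_{\alpha_1}\w_{\alpha_3}\cdots\w_{\alpha_{r-1}}$ is a product of commuting reflections in distinct \emph{short} roots, conflates the two situations and would not produce the required trace identity for $\Sp_{2r}$. (Moreover, the cocycle relation yields matrix products, not tensor products; the tensor structure in the $\GSp$ case comes from the disjointness of the coordinates moved by the $\w_{\alpha_i}$'s, a geometric input you would need to supply, not a formal consequence of \eqref{SLCM2}.) Once $R_\chi=\{1,\w_{\alpha_r}\}$ is in place, the correct proof is a short direct computation of the diagonal of $\mca{S}_\mfr{R}(w_{\alpha_r},i(\chi))$ orbitwise, which your outline does not carry out.
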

We also prove in \S \ref{S:eg} that Conjecture \ref{C:0} holds for the double cover of $\SL_3$ by explicit computations. 

Lastly, in \S \ref{S:2rmk}, we consider $n$-fold covers of $\SO_3$ and the double cover of $\Spin_6 \simeq \SL_4$, and show that the naive analogue of Conjecture \ref{C:1} fails for such covers. Thus, the constraints on $G$ being simply-connected and  on $\wt{G}$ being saturated seem to be indispensable. For general reductive groups, a unified conjectural formula for $\Wh(\pi_\sigma)_{\mca{O}_y}$ in terms of $\sigma^\msc{X}_{\mca{O}_y}$ involves subtleties beyond the consideration in this paper, and its intimate relation with the R-group has yet to be unveiled in full generality.

\section{Covering group} \label{S:cg}
 
 Our exposition on covering group is essentially the same as in \cite[\S 2]{Ga6}. However, to ensure that the paper is self-contained, we will briefly recall again some summarized results on $\wt{G}$.
 
Let $F$ be a finite extension of $\Q_p$. Denote by $O \subset F$ the ring of integers of $F$ and $\varpi \in O$ a fixed uniformizer. 

\subsection{Covering group} \label{Sec:SF}
Let $\mbf{G}$ be a split connected linear algebraic group over $F$ with a maximal split torus $\mbf{T}$. Let
$$\set{ X,\ \Delta,  \ \Phi; \ Y, \ \Delta^\vee, \ \Phi^\vee  }$$
be the based root datum of $\mbf{G}$. Here $X$ (resp. $Y$) is the character lattice (resp. cocharacter lattice) for $(\mbf{G}, \mbf{T})$. Choose a set $\Delta\subseteq \Phi$ of simple roots from the set of roots $\Phi$, and let $\Delta^\vee$ be the corresponding simple coroots from $\Phi^\vee$. This gives us a choice of positive roots $\Phi_+$ and positive coroots $\Phi_+^\vee$. Write $Y^{\sct}\subseteq Y$ for the sublattice generated by $\Phi^\vee$. Let $\mbf{B} =\mbf{T} \mbf{U}$ be the Borel subgroup associated with $\Delta$. Denote by $\mbf{U}^- \subset \mbf{G}$ the unipotent subgroup opposite $\mbf{U}$.

Fix a Chevalley-Steinberg system of pinnings for $(\mbf{G}, \mbf{T})$. That is, we fix a set of compatible isomorphisms
$$\set{e_\alpha: \mbf{G}_\text{a} \to \mbf{U}_\alpha}_{\alpha\in \Phi},$$ 
where $\mbf{U}_\alpha \subseteq \mbf{G}$ is the root subgroup  associated with $\alpha$. In particular, for each $\alpha\in \Phi$, there is a unique homomorphism $\varphi_\alpha: \mbf{SL}_2 \to \mbf{G}$ which restricts to $e_{\pm \alpha}$ on the upper and lower triangular subgroup of unipotent matrices of $\mbf{SL}_2$, respectively.

Denote by $W$ the Weyl group of $(\mbf{G}, \mbf{T})$, which we identify with the Weyl group of the coroot system. In particular, $W$ is generated by simple reflections $\set{\w_\alpha: \alpha^\vee \in \Delta^\vee}$ in $Y \otimes \Q$. Let $l: W \to \N$ be the length function. Let $\w_G$ be the longest element in $W$.

Consider the algebro-geometric $\mbf{K}_2$-extension $\wm{G}$ of $\mbf{G}$ studied by Brylinski and Deligne \cite{BD}, which is categorically equivalent to the pairs $\set{(D, \eta)}$ (see \cite[\S 2.6]{GG}). Here $\eta: Y^{\sct} \to F^\times$ is a homomorphism. On the other hand, 
$$D: Y \times Y \to \Z$$ 
is a (not necessarily symmetric) bilinear form on $Y$ such that 
$$Q(y):=D(y, y)$$
is a Weyl-invariant integer-valued quadratic form on $Y$. We call $D$ a bisector. Let $B_Q$ be the Weyl-invariant bilinear form associated to $Q$ by
$$B_Q(y_1, y_2)=Q(y_1+y_2)-Q(y_1) -Q(y_2).$$
Clearly, $D(y_1, y_2) + D(y_2, y_1)=B_Q(y_1, y_2)$.  Any $\wm{G}$ is, up to isomorphism, incarnated by (i.e., categorically associated to) a pair $(D,\eta)$ for a bisector $D$ and $\eta$.

The couple $(D, \eta)$ play the following role for the structure of $\wm{G}$.
\begin{enumerate}
\item[$\bullet$] First, the group $\wm{G}$ splits canonically and uniquely over any unipotent subgroup of $\mbf{G}$. For $\alpha\in \Phi$ and $a\in \mbf{G}_a$, denote by $\wt{e}_\alpha(a) \in \wm{G}$  the canonical lifting of $e_\alpha(a) \in \mbf{G}$. For $\alpha\in \Phi$ and $a\in \mbf{G}_m$, define 
\begin{equation*}  \label{F:w}
w_\alpha(a):=e_{\alpha}(a) \cdot e_{-\alpha}(-a^{-1}) \cdot e_{\alpha}(a) \text{ and } \wt{w}_\alpha(a):=\wt{e}_{\alpha}(a) \cdot \wt{e}_{-\alpha}(-a^{-1}) \cdot \wt{e}_{\alpha}(a).
\end{equation*}
This gives natural representatives $w_\alpha:=w_\alpha(1)$ in $\mbf{G}$ and also $\wt{w}_\alpha:=\wt{w}_\alpha(1)$ in $\wm{G}$, of the Weyl element $\w_\alpha\in W$. Moreover, for $h_\alpha(a):=\alpha^\vee(a)\in \mbf{T}$, there is a natural lifting 
\begin{equation*}  \label{h-alpha}
\wt{h}_\alpha(a):=\wt{w}_\alpha(a)\cdot \wt{w}_\alpha(-1) \in \wm{T},
\end{equation*}
which depends only on the pinnings and the canonical unipotent splittings.
\item[$\bullet$] Second, there is a section $\s$ of $\wm{T}$ over $\mbf{T}$ such that the group law on $\wm{T}$ includes the relation
\begin{equation} \label{F:s}
\s(y_1(a)) \cdot \s(y_2(b)) = \set{a, b}^{D(y_1, y_2)} \cdot \s(y_1(a)\cdot y_2(b))
\end{equation}
for any $a, b\in \mbf{G}_m$. Moreover, for $\alpha\in \Delta$ and the natural lifting $\wt{h}_\alpha(a)$ of $h_\alpha(a)$ above, one has
\begin{equation*} 
\wt{h}_\alpha(a)=\set{\eta(\alpha^\vee), a} \cdot \s(h_\alpha(a)) \in \wm{T}.
\end{equation*}
\item[$\bullet$] Third, let $w_\alpha \in \mbf{G}$ be the above natural representative of $\w_\alpha\in W$. For every $\wt{y(a)} \in \wm{T}$ with $y\in Y$ and $a\in \mbf{G}_m$, one has
\begin{equation} \label{F:W-act}
w_\alpha \cdot \wt{y(a)} \cdot w_\alpha^{-1} = \wt{y(a)} \cdot \wt{h}_\alpha(a^{-\angb{y}{\alpha}}),
\end{equation}
where $\angb{-}{-}$ is the paring between $Y$ and $X$.
\end{enumerate}

If the derived subgroup of $\mbf{G}$ is simply-connected, then the isomorphism class of $\wm{G}$ is determined by the Weyl-invariant quadratic form $Q$. In particular, for such $\mbf{G}$, any extension $\wm{G}$ is incarnated by $(D, \eta=\mathbbm{1})$ for some bisector $D$, up to isomorphism.
In this paper, we assume that the composite
$$\eta_n: Y^{sc} \to F^\times \onto F^\times/(F^\times)^n$$ 
of $\eta$ with the obvious quotient is trivial. For some consequence of this assumption, see \S \ref{S:D-L} and the beginning of \S \ref{S:red-ps}.

Let $n\in \N$. We assume that $F$ contains the full group of $n$-th roots of unity, denoted by $\bbmu_n$. An $n$-fold cover of $\mbf{G}$, in the sense of \cite[Definition 1.2]{We6}, is just a pair $(n, \wm{G})$.  The $\mbf{K}_2$-extension $\wm{G}$ gives rise to an $n$-fold covering $\wt{G}$ as follows. Let
$$(-,-)_n:  F \times F \to \bbmu_n$$
be the $n$-th Hilbert symbol. The cover $\wt{G}$ arises from the central extension
 $$\begin{tikzcd}
\mbf{K}_2(F) \ar[r, hook] & \wm{G}(F) \ar[r, two heads, "\phi"] & \mbf{G}(F)
\end{tikzcd}$$
by push-out via the natural map $\mbf{K}_2(F) \to \bbmu_n$ given by $\set{a, b} \mapsto (a, b)_n$. This gives
$$\begin{tikzcd}
\bbmu_n \ar[r, hook] & \wt{G} \ar[r, two heads, "\phi"] & G.
\end{tikzcd}$$
We may write $\wt{G}^{(n)}$ for $\wt{G}$ to emphasize the degree of covering. 

For any subset $H \subset G$, denote $\wt{H}:=\phi^{-1}(H)$. The relations for $\wm{G}$ described above give rise to the corresponding relations for $\wt{G}$.  For example, inherited from \eqref{F:s} is the following relation on $\wt{T}$,
\begin{equation}
\s(y_1(a)) \cdot \s(y_2(b)) = (a, b)_n^{D(y_1, y_2)} \cdot \s(y_1(a)\cdot y_2(b)),
\end{equation}
where $y_i\in Y$ and $a, b\in F^\times$. The commutator $[\wt{t}_1, \wt{t}_2]:=\wt{t}_1 \wt{t}_2 \wt{t}_1^{-1} \wt{t}_2^{-1}$ of $\wt{T}$, which descends to a map $[-,-]: T \times T \to \bbmu_n$, is thus given by
$$[y_1(a), y_2(b)]=(a, b)_n^{B_Q(y_1, y_2)}.$$

A representation of $\wt{G}$ is called $\epsilon$-genuine (or simply genuine) if $\bbmu_n$ acts by a fixed embedding $\epsilon: \bbmu_n \into \C^\times$. We consider only genuine representations of a covering group in this paper.

Let $W' \subset \wt{G}$ be the group generated by $\wt{w}_\alpha$ for all $\alpha$. Then the map $\wt{w}_\alpha \mapsto \w_\alpha$ gives a surjective homomorphism
$$W' \onto W$$
with kernel being a finite group. For any $\w=\w_{\alpha_k} ... \w_{\alpha_2} \w_{\alpha_1} \in W$ in a minimal decomposition, we let
$$\wt{w}:=\wt{w}_{\alpha_k} ... \wt{w}_{\alpha_2} \wt{w}_{\alpha_1} \in W'$$
be its representative in $W'$, which is independent of the minimal decomposition (see \cite[Lemma 83 (b)]{Ste16}). In particular, we denote by $\wt{w}_G \in \wt{G}$ the above representative of the longest Weyl element $\w_G$. Note that we also have the natural representative 
$$w:=w_{\alpha_k} ... w_{\alpha_2} w_{\alpha_1} \in G$$
of $\w$. In particular, one has the representative $w_G \in G$ for $\w_G$, which is the image of $\wt{w}_G$ in $G$.

\subsection{Dual group and $L$-group} \label{S:D-L}
For a cover $(n, \wm{G})$ associated to $(D, \eta)$, with $Q$ and $B_Q$ arising from $D$, we define
\begin{equation} \label{YQn}
Y_{Q,n}:= Y\cap nY^*,
\end{equation}
where $Y^* \subset Y\otimes \Q$ is the dual lattice of $Y$ with respect to $B_Q$; more explicitly,
$$Y_{Q,n}= \set{y\in Y: B_Q(y, y')\in n\Z \text{ for all } y'\in Y} \subset Y.$$
For every $\alpha^\vee\in \Phi^\vee$, denote
$$n_\alpha:= \frac{n}{\text{gcd}(n, Q(\alpha^\vee))}$$
and
$$ \alpha_{Q,n}^\vee=n_\alpha \alpha^\vee, \quad \alpha_{Q,n}=\frac{\alpha}{n_\alpha} .$$
Let 
$$Y_{Q,n}^{sc} \subset Y_{Q,n}$$ 
be the sublattice generated by $\Phi_{Q,n}^\vee=\{\alpha_{Q,n}^\vee: \alpha^\vee \in \Phi^\vee \}$.  Denote $X_{Q,n}=\text{Hom}_\Z(Y_{Q,n}, \Z)$ and $\Phi_{Q,n}=\set{\alpha_{Q,n}: \alpha \in \Phi }$. We also write 
$$\Delta_{Q,n}^\vee=\{ \alpha_{Q,n}^\vee: \alpha^\vee \in \Delta^\vee \} \text{ and } \Delta_{Q,n}=\set{\alpha_{Q,n}: \alpha\in \Delta}.$$
Then
$$\big( Y_{Q,n}, \ \Phi_{Q,n}^\vee, \ \Delta_{Q,n}^\vee;\  X_{Q,n},\  \Phi_{Q,n}^\vee, \Delta_{Q,n} \big)$$
forms a root datum with a choice of simple roots $\Delta_{Q,n}$. It gives a unique (up to unique isomorphism) pinned reductive group $\wm{G}_{Q,n}^\vee$ over $\Z$, called the dual group of $(n, \wm{G})$. In particular, $Y_{Q,n}$ is the character lattice for $\wt{G}_{Q,n}^\vee$ and $\Delta_{Q,n}^\vee$ the set of simple roots. Let 
$$\wt{G}_{Q,n}^\vee:=\wm{G}_{Q,n}^\vee(\C)$$
be the associated complex dual group. For simplicity, we may also write $\wt{G}^\vee$ for $\wt{G}_{Q,n}^\vee$. One has
$$Z(\wt{G}_{Q,n}^\vee) = \Hom(Y/Y_{Q,n}, \C).$$

In \cite{We3, We6}, Weissman constructed the global $L$-group as well as the  local $L$-group extension
 $$\begin{tikzcd}
\wt{G}^\vee_{Q,n} \ar[r, hook] & {}^L\wt{G} \ar[r, two heads] & W_F,
\end{tikzcd}$$
which is compatible with the global $L$-group extension. (It may as well be an extension over the Weil-Deligne group. However, the Weil group $W_F$ suffices in this paper, since we eventually only consider unitary principal series.) His construction of $L$-group is functorial, and in particular it behaves well with respect to the restriction of $\wm{G}$ to parabolic subgroups. More precisely, let $\mbf{M} \subset \mbf{G}$ be a Levi subgroup. By restriction, one has the $n$-cover $\wt{M}$ of $M$. The $L$-groups ${}^L\wt{M}$ and ${}^L\wt{G}$ are compatible, i.e., there are natural homomorphisms of extensions:
$$\begin{tikzcd}
\wt{G}^\vee_{Q,n} \ar[r, hook] & {}^L\wt{G} \ar[r, two heads] & W_F \\
\wt{M}^\vee_{Q,n} \ar[r, hook] \ar[u, hook] & {}^L\wt{M} \ar[u, hook] \ar[r, two heads] & W_F  \ar[u, equal] .
\end{tikzcd}$$
This applies in particular to the case when $M=T$ is a torus.

The extension ${}^L\wt{G}$ does not split over $W_F$. However, if $\wt{G}_{Q,n}^\vee$ is of adjoint type, then we have a canonical isomorphism
\begin{equation*}
{}^L\wt{G} \simeq \wt{G}_{Q,n}^\vee \times W_F.
\end{equation*}
For general $\wt{G}$, under the assumption that $\eta_n=\mbm{1}$, there exists a so-called distinguished genuine character $\chi_\psi: Z(\wt{T}) \to \C^\times$ (see \cite[\S 6.4]{GG}), depending on a nontrivial additive character $\psi$ of $F$, such that $\chi_\psi$ gives rise to a splitting of ${}^L\wt{G}$ over $W_F$, with respect to which one has an isomorphism 
\begin{equation} \label{L-iso}
{}^L\wt{G} \simeq_{\chi_\psi} \wt{G}_{Q,n}^\vee \times W_F.
\end{equation}
For details on the construction and properties regarding the $L$-group, we refer the reader to \cite{We3, We6, GG}.

\subsection{Twisted Weyl action} \label{SS:tww}
For a group $H$ acting on a set $S$, we denote by 
$$\mca{O}^H_S$$
the set of all $H$-orbits in $S$. For every $z\in S$, denote by $\mca{O}^H_z  \in \mca{O}^H_S$ the $H$-orbit of $z$.
 
Denote by $\w(y)$ the natural Weyl group action on $Y$ and $Y\otimes \Q$ generated by the reflections $\w_\alpha$. The two lattices $Y_{Q,n}$ and $Y_{Q,n}^{sc}$ are both $W$-stable under this usual action.  Let $$\rho:= \frac{1}{2} \sum_{\alpha^\vee >0} \alpha^\vee$$
be the half sum of all positive coroots of $\mbf{G}$.  We consider the twisted Weyl-action
$$\w[y]:=\w(y-\rho)+ \rho.$$
It induces a well-defined twisted action of $W$ on 
$$\msc{X}_{Q,n}:=Y/Y_{Q,n}$$ given by
$\w[y + Y_{Q,n}]:=\w[y] + Y_{Q,n}$, since $W(Y_{Q,n}) = Y_{Q,n}$ as mentioned above. Thus, we have a permutation representation
$$\sigma^\msc{X}:  W \longrightarrow \perm(\msc{X}_{Q,n}),$$
which plays a pivotal role in the conjectural formulas on Whittaker space, in both \cite{Ga6} and this paper.

We note that the twisted Weyl-action on $Y/Y_{Q,n}^{sc}$ is also well-defined. For every $\alpha \in \Delta$, let $W_\alpha =\set{1, \w_\alpha} \subset W$. Arising from the surjection 
$$Y/Y_{Q,n}^{sc} \onto \msc{X}_{Q,n},$$
one has a map of sets
$$\phi_\alpha: (Y/Y_{Q,n}^{sc})^{W_\alpha} \onto (\msc{X}_{Q,n})^{W_\alpha}.$$
Recall the following definition from \cite{GSS2, Ga6}.

\begin{dfn}  \label{D:meta}
A covering group $\wt{G}$ of a connected linear reductive group $G$ is called saturated if $Y_{Q,n}^{sc} = Y_{Q,n}\cap Y^{sc}$. It is called of metaplectic type if there exists $\alpha\in \Delta$ such that $\phi_\alpha$ is not surjective.
\end{dfn}
If $G$ is semisimple simply-connected, then $\wt{G}$ is saturated if and only if $\wt{G}^\vee$ is of adjoint type.
On the other hand, covering groups of metaplectic type are rare. Indeed, it follows from \cite[\S 4.5]{GSS2} that if $G$ is almost simple, then $\wt{G}$ is of metaplectic type if and only if $\mbf{G}=\Sp_{2r}$ and $n_\alpha \equiv 2 \ (\text{mod}\ 4)$ for the unique short simple coroot $\alpha^\vee$ of $\Sp_{2r}$.  In particular, the classical double cover of $\Sp_{2r}$ is such an example.

Throughout the paper, we denote
$$y_\rho:=y-\rho \in Y\otimes \Q$$ 
for $y\in Y$. Clearly, 
$$\w[y]-y=\w(y_\rho) - y_\rho.$$
By Weyl action or Weyl orbits in $Y$ or $Y\otimes \Q$  we always refer to the ones with respect to the action $\w[y]$, unless specified otherwise.  For simplicity, we will abuse notation and denote by $y$ for an element in $\msc{X}_{Q,n}$. We will also write $\mca{O}_{\msc{X}}$ for the set of $W$-orbits  in $\msc{X}_{Q,n}$, and use the notation $\mca{O}_z :=\mca{O}^W_z$, whenever we consider $W$-orbits with respect to the twisted action.

\section{Unitary unramified principal series} \label{S:red-ps}
Henceforth, we assume that $|n|_F=1$. Let $K \subset G$ be the hyperspecial maximal compact subgroup generated by $\mbf{T}(O)$ and $e_\alpha(O)$ for all roots $\alpha$. With our assumption that $\eta_n$ is trivial, the group $\wt{G}$ splits over $K$ (see \cite[Theorem 4.2]{GG}); we fix such a splitting $s_K$. If no confusion arises, we will omit $s_K$ and write $K \subset \wt{G}$ instead. Call $\wt{G}$ an unramified covering group in this setting. 

A genuine representation $(\pi, V_\pi)$ is called unramified if $\dim V_\pi^{K}\ne 0$. Since $\wt{G}$ also splits uniquely over the unipotent subgroup $e_\alpha(O)$, we see that $\wt{h}_\alpha(u) \in s_K(K) \subset \wt{G}$ for every $u\in O^\times$.

\subsection{Principal series representation} \label{SS:ps}
As $\wt{G}$ splits canonically over the unipotent radical $U$ of the Borel subgroup $B$, we have $\wt{B}=\wt{T}U$. The covering torus $\wt{T}$ is a Heisenberg group. The center $Z(\wt{T})$ of the covering torus $\wt{T}$ is equal to $\phi^{-1}(\text{Im}(i_{Q,n}))$ (see \cite{We1}), where 
$$i_{Q,n}: Y_{Q,n}\otimes F^\times \to T$$
is the isogeny induced from the embedding $Y_{Q,n} \subset Y$.

Let $\chi \in \Hom_\epsilon(Z(\wt{T}), \C^\times)$ be a genuine character of $Z(\wt{T})$, write 
$$i(\chi):=\text{Ind}_{\wt{A}}^{\wt{T}} \ \chi'$$
 for the induced representation of $\wt{T}$, where $\wt{A}$ is a maximal abelian subgroup of $\wt{T}$, and $\chi'$ is an extension of $\chi$ to $\wt{A}$. By the Stone--von Neumann theorem (see \cite[Theorem 3.1]{We1}), the construction 
 $$\chi \mapsto i(\chi)$$
 gives a bijection between the isomorphism classes of genuine representations of $Z(\wt{T})$ and $\wt{T}$. Since we consider unramified covering group $\wt{G}$ in this paper, we take 
 $$\wt{A}:=Z(\wt{T})\cdot (K\cap T).$$
 
The left action of $w$ on $i(\chi)$ is given by ${}^w i(\chi) (\wt{t})= i(\chi)(w^{-1} \wt{t} w)$. The group $W$ does not act on $i(\chi)$, but only on its isomorphism classes. On the other hand, we have a well-defined action of $W$ on $\chi$:
$$({}^w \chi)(\wt{t}):= \chi(w^{-1} \wt{t} w).$$

Viewing $i(\chi)$ as a genuine representation of $\wt{B}$ by inflation from the quotient map $\wt{B} \to \wt{T}$, we denote by 
$$I(i(\chi)):=\text{Ind}_{\wt{B}}^{\wt{G}}\ i(\chi)$$ the normalized induced principal series representation of $\wt{G}$. For simplicity, we may also write $I(\chi)$ for $I(i(\chi))$. One knows that $I(\chi)$ is unramified (i.e., $I(\chi)^K\ne 0$) if and only if $\chi$ is unramified, i.e., $\chi$ is trivial on $Z(\wt{T})\cap K$.  Moreover, the Satake isomorphism for $\wt{G}$ implies that a genuine representation is unramified if and only if it is a subquotient of an unramified principal series.

Setting $\wt{Y}_{Q,n}:=Z(\wt{T})/(Z(\wt{T})\cap K)$, one has a natural abelian extension
\begin{equation} \label{Ext1}
\begin{tikzcd}
\bbmu_n \ar[r, hook] & \wt{Y}_{Q,n} \ar[r, two heads, "\varphi"] & Y_{Q,n}
\end{tikzcd}
\end{equation}
such that unramified genuine characters of $Z(\wt{T})$ correspond to genuine characters of $\wt{Y}_{Q,n}$. Since $\wt{A}/(T\cap K)\simeq \wt{Y}_{Q,n}$ as well, there is a canonical extension (also denoted by $\chi$) of an $\chi$ to $\wt{A}$, by composing it with $\wt{A} \twoheadrightarrow \wt{Y}_{Q,n}$. Therefore, we will identify $i(\chi)$ with $\text{Ind}_{\wt{A}}^{\wt{T}}\ \chi$ for this  canonical extension $\chi$.

\subsection{$\gamma$-function} \label{SS:Pg}

Let $\uchi: F^\times \to \C^\times$ be a linear character. Tate \cite{Tat}
defined a gamma factor $\gamma(s, \uchi, \psi), s\in \C$ which is essentially the ratio of two integrals of a test function and its Fourier transform (depending on a nontrivial character $\psi$). We have
$$\gamma(s, \uchi, \psi)= \varepsilon(s, \uchi, \psi) \cdot \frac{L(1-s, \uchi^{-1})}{L(s, \uchi)},$$
where $L(s, \uchi)$ is the $L$-function of $\uchi$. If $\uchi$ is unramified and the conductor of $\psi$ is $O$, then $$\varepsilon(s, \uchi, \psi)=1 \text{ and } L(s, \uchi)= (1-q^{-s} \uchi(\varpi))^{-1}.$$
In this case, we write
$$\gamma(s, \uchi):= \gamma(s, \uchi, \psi)= \frac{1-q^{-s} \uchi(\varpi)}{ 1- q^{-1+ s} \uchi(\varpi)^{-1}   }$$
with the $\psi$ omitted.

Let $\chi$ be a genuine unramified character of $Z(\wt{T})$. For every $\alpha \in \Phi$,
\begin{equation} \label{ud-chi}
\uchi_\alpha: F^\times \to \C^\times \text{ given by } \uchi_\alpha(a):= \chi(\wt{h}_\alpha(a^{n_\alpha}) )
\end{equation}
is an unramified character of $F^\times$.  We also use in this paper the shorthand notation
$$\chi_\alpha:=\uchi_\alpha(\varpi) \in \C^\times$$
for every root $\alpha$. For $\w=\w_\alpha$, we define the gamma factor $\gamma(w_\alpha, \chi)$ to be such that
$$\gamma(w_\alpha, \chi)^{-1}= \gamma(0, \uchi_\alpha)^{-1} = \frac{ 1-q^{-1} \chi(\wt{h}_\alpha (\varpi^{n_\alpha}))^{-1} }{ 1-\chi(\wt{h}_\alpha (\varpi^{n_\alpha}))   }.$$
For general $\w \in \W$, define
$$\gamma(w, \chi) = \prod_{\alpha \in \Phi_\w} \gamma(w_\alpha, \gamma),$$
where $\Phi_\w= \set{\alpha>0: \w(\alpha) <0}$.

\subsection{Intertwining operator}
For $\w \in W$, let $A(w,\chi): I(\chi) \to I({}^w \chi)$ be the intertwining operator defined by
$$A(w, \chi)(f)(\wt{g})=\int_{U_{w}} f(\wt{w}^{-1} u \wt{g}) du$$
whenever it is absolutely convergent. It can be meromorphically continued for all $\chi$ (see \cite[\S 7]{Mc1}). The operator $A(w, \chi)$ satisfies the cocycle relation as in the linear case.  Let $f_0\in I(\chi)$ and $f_0' \in I({}^w \chi)$ be the normalized unramified vectors. We have
$$A(w,\chi)(f_0)= \gk(w,\chi) f_0',$$
where
$$\gk(w,\chi)= \prod_{\alpha\in \Phi_\w}  \frac{ 1-q^{-1} \chi(\wt{h}_\alpha (\varpi^{n_\alpha})) }{ 1-\chi(\wt{h}_\alpha (\varpi^{n_\alpha}))   }.$$
The Plancherel measure $\mu(w, \chi)$ associated with $A(w,\chi)$ is such that 
\begin{equation} \label{E:Plan}
A(w^{-1}, {}^w \chi) \circ A(w, \chi)= \mu(w, \chi)^{-1} \cdot {\rm id};
\end{equation}
more explicitly, 
$$\mu(w, \chi)^{-1}=  \gk(w^{-1}, {}^w \chi) \cdot \gk(w, \chi).$$

To recall the interpretation of $\gk(w,\chi)$ in terms of $L$-functions, we first recall the set-up on the dual side. Consider the adjoint representation
$$Ad_{\wt{\mfr{u}}^\vee}: {}^L\wt{T} \to \GL(\wt{\mfr{u}}^\vee),$$
where $\wt{\mfr{u}}^\vee$ is the Lie algebra of unipotent radical $\wt{U}^\vee$ of the Borel subgroup $\wt{T}^\vee \wt{U}^\vee \subset \wt{G}^\vee$. It factors through $Ad^\C_{\wt{\mfr{u}}^\vee}$:
$$\begin{tikzcd}
 {}^L\wt{T}  \rar["{Ad_{\wt{\mfr{u}}^\vee}}"] \dar    & \GL(\wt{\mfr{u}}^\vee) \\
 \wt{T}^\vee/Z(\wt{G}^\vee) \ar[ru, "{Ad^\C_{\wt{\mfr{u}}^\vee}}"'],
\end{tikzcd}$$
as $Z(\wt{G}^\vee)$ acts trivially on $\wt{\mfr{u}}^\vee$.

Therefore, irreducible subspaces of $\wt{\mfr{u}}^\vee$ for $Ad_{\wt{\mfr{u}}^\vee}$ are in one-to-one correspondence with irreducible subspaces with respect to $Ad^\C_{\wt{\mfr{u}}^\vee}$, which are just the one-dimensional spaces associated to the positive roots of $\wt{G}^\vee$. More precisely, we have the decomposition of $Ad_{\wt{\mfr{u}}^\vee}$ into  irreducible ${}^L\wt{T}$-modules:
$$Ad_{\wt{\mfr{u}}^\vee}=\bigoplus_{\alpha > 0} Ad_\alpha,$$
where $(Ad_\alpha, V_\alpha) \subseteq \wt{\mfr{u}}^\vee$ is spanned by a basis  vector $E_{\alpha_{Q,n}^\vee}$ in the Lie algebra $\wt{\mfr{u}}^\vee_\alpha$ associated to the positive root $\alpha_{Q,n}^\vee$ of $\wt{G}^\vee$.

By the local Langlands correspondence for covering torus (see \cite[\S 10]{We6} or \cite[\S8]{GG}), associated to $i(\chi)$ we have a splitting 
$$\phi_\chi: W_F \longrightarrow {}^L\wt{T}$$ 
 of the L-group extension
\begin{equation} \label{L-T}
\begin{tikzcd}
\wt{T}^\vee \ar[r, hook] & {}^L\wt{T} \ar[r, two heads] & W_F,
\end{tikzcd}
\end{equation}
where $\wt{T}^\vee=\Hom (Y_{Q,n}, \C^\times)$ is the dual group of $\wt{T}$. Then $L(s, i(\chi), Ad_\alpha)$ is by definition equal to the local Artin $L$-function $L(s, Ad_\alpha \circ \phi_\chi)$ associated with $Ad_\alpha \circ \phi_\chi$, i.e., 
$$L(s, i(\chi), Ad_\alpha):=L(s, Ad_\alpha \circ \phi_\chi)=\text{det}\big(1-q^{-s} Ad_\alpha \circ \phi_\chi (\text{Frob})|_{V_\alpha^I} \big)^{-1}.$$
For unramified $i(\chi)$ (equivalently, unramified $\chi$), the inertia group $I$ acts trivially on $V_\alpha$. It follows that
$$L(s, i(\chi), Ad_\alpha)=\text{det}\big(1-q^{-s} Ad_\alpha \circ \phi_\chi(\varpi)|_{V_\alpha}\big)^{-1}.$$
Moreover, by \cite[Theorem 7.8]{Ga1}, one has
$$\phi_\chi \circ Ad(\varpi) (E_{\alpha_{Q,n}^\vee}) = \chi(\wt{h}_\alpha(\varpi^{n_\alpha})) \cdot E_{\alpha_{Q,n}^\vee}$$
and therefore
$$L(s, i(\chi), Ad_\alpha)=(1-q^{-s} \cdot  \chi(\wt{h}_\alpha(\varpi^{n_\alpha}))   )^{-1}= L(s, \uchi_\alpha).$$
Moreover, if we denote
$$E_\w:=\bigoplus_{\w \in \Phi_\w} \C \cdot E_{\alpha_{Q,n}^\vee},$$
and let $Ad_\w$ be the restriction of the adjoint representation $Ad$ to $E_\w$, then 
the Artin $L$-function associated to $Ad_\w$ is
$$L(s, Ad_\w \circ \phi_\chi)=\prod_{\alpha\in \Phi_\w} L(s, i(\chi), Ad_\alpha).$$
We also denote $L(s, i(\chi), Ad_\w):=L(s, Ad_\w \circ \phi_\chi)$. Thus,
\begin{equation}
\gk(w, \chi) = \frac{ L(0, i(\chi), Ad_\w)  }{ L(1, i(\chi), Ad_\w) }.
\end{equation}

\subsection{Normalization}
For $\w\in W$, we normalize the intertwining operator by
$$\msc{A}(w, \chi):= \gk(w,\chi)^{-1} \cdot A(w,\chi)=  \frac{ L(1, i(\chi), Ad_\w)  }{ L(0, i(\chi), Ad_\w) } \cdot A(w, \chi).$$
Note that in our setting $\varepsilon(s, \uchi_\alpha, \psi)=1$ for every $\alpha$, and thus the above normalization is the same as the one for linear algebraic groups, first proposed by Langlands \cite{Lds} and investigated in \cite{KS88, Sha3} for example. 

\begin{prop} \label{T:coc}
Let $\chi$ be an unramified genuine character of $Z(\wt{T})$. For every $\w_1, \w_2\in W$ (with no requirement on the length),
\begin{equation} \label{E:T-nat}
\msc{A}(w_2 w_1, \chi)= \msc{A}(w_2, {}^{w_1}\chi) \circ \msc{A}(w_1, \chi).
\end{equation}
In particular, $\msc{A}(w^{-1}, {}^w \chi) \circ \msc{A}(w, \chi)= {\rm id}$ for every $\w \in W$. Moreover, if $\chi$ is also unitary, then $\msc{A}(w, \chi)$ is holomorphic.
\end{prop}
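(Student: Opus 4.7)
The plan is to establish the cocycle relation in two basic cases—length-additive compositions and products of the form $w^{-1} \cdot w$—and then use Matsumoto's theorem on reduced decompositions to deduce it in general. The holomorphy of $\msc{A}(w,\chi)$ for unitary $\chi$ will then follow from a rank-one analysis at each simple reflection.

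First, for the length-additive case $l(w_2 w_1) = l(w_2) + l(w_1)$, the unnormalized identity $A(w_2 w_1, \chi) = A(w_2, {}^{w_1}\chi) \circ A(w_1, \chi)$ is standard and follows from the decomposition $U_{w_2 w_1} = U_{w_1} \cdot (w_1^{-1} U_{w_2} w_1)$ together with Fubini's theorem. In this case one also has the disjoint union $\Phi_{w_2 w_1} = \Phi_{w_1} \sqcup w_1^{-1}(\Phi_{w_2})$, and tracking how the twisted $W$-action shifts the characters $\uchi_\alpha$ yields the multiplicative identity $\gk(w_2 w_1, \chi) = \gk(w_2, {}^{w_1}\chi) \cdot \gk(w_1, \chi)$. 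Dividing the unnormalized cocycle by this gives \eqref{E:T-nat} in the length-additive case.

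Next, the case $w_2 = w_1^{-1}$ of \eqref{E:T-nat} follows immediately from the Plancherel identity \eqref{E:Plan} combined with the stated factorization $\mu(w, \chi)^{-1} = \gk(w^{-1}, {}^w \chi) \cdot \gk(w, \chi)$: dividing both sides of \eqref{E:Plan} by this product gives $\msc{A}(w^{-1}, {}^w \chi) \circ \msc{A}(w, \chi) = \text{id}$. To pass to arbitrary $w_1, w_2$, I would assign to any word $\w = \s_{\alpha_1} \cdots \s_{\alpha_k}$ in simple reflections (not necessarily reduced) and any unramified genuine $\chi$ the composition $\msc{A}(\s_{\alpha_1}, {}^{\s_{\alpha_2} \cdots \s_{\alpha_k}}\chi) \circ \cdots \circ \msc{A}(\s_{\alpha_k}, \chi)$, and show it depends only on the resulting Weyl element and $\chi$. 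By Matsumoto's theorem, any two words representing the same Weyl element are connected by a sequence of braid moves between equal-length subwords (preserved by the length-additive step, since both sides of a braid relation are reduced expressions for a common Weyl element) and insertion or deletion of $\s_\alpha \s_\alpha$ (preserved by the Plancherel case). This well-definedness yields the general cocycle relation.

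For the holomorphy statement, the cocycle relation reduces the problem to each simple reflection $\msc{A}(\s_\alpha, \chi)$. Restricting to the rank-one situation along $\alpha$, the unnormalized operator $A(\s_\alpha, \chi)$ has at most a simple pole, located at the locus where $1 - \chi(\wt{h}_\alpha(\varpi^{n_\alpha})) = 0$; the normalizer $\gk(\s_\alpha, \chi)^{-1}$ supplies precisely the factor $1 - \chi(\wt{h}_\alpha(\varpi^{n_\alpha}))$ in its numerator, which cancels this pole, while unitarity of $\chi$ forces $|\chi(\wt{h}_\alpha(\varpi^{n_\alpha}))| = 1$ and therefore $1 - q^{-1}\chi(\wt{h}_\alpha(\varpi^{n_\alpha})) \ne 0$, so the denominator of $\gk(\s_\alpha, \chi)^{-1}$ introduces no new singularity. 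The main obstacle I anticipate is the combinatorial bookkeeping in the Matsumoto step: one has to verify carefully that each factor $\msc{A}(\s_{\alpha_j}, \cdot)$ in a word composition is evaluated at precisely the twisted character ${}^{\s_{\alpha_{j+1}} \cdots \s_{\alpha_k}}\chi$, so that the word-equivalence manipulations line up faithfully with the desired cocycle identity rather than producing an extraneous twist.
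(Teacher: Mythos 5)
Your proposal is correct, and the difference from the paper's cited proof is organizational rather than substantive. The paper follows Winarsky: an induction on $l(\w_2)$ whose base cases are the length-additive unnormalized cocycle and the rank-one identity $\msc{A}(w_\alpha, {}^{w_\alpha}\chi)\circ\msc{A}(w_\alpha,\chi)=\mathrm{id}$, the latter extracted from \eqref{E:Plan} exactly as you do. You isolate the same two ingredients but route them through well-definedness on words. Two points of precision. First, the passage from arbitrary (non-reduced) words to reduced ones via braid moves and $\w_\alpha\w_\alpha$-cancellation is Tits' solution to the word problem rather than Matsumoto's theorem, the latter treating only pairs of reduced expressions; otherwise the application is as you describe, since both sides of a braid relation are reduced words for a common element of a rank-two parabolic, so the length-additive case handles them, while the $\w_\alpha\w_\alpha$-step is your Plancherel case restricted to simple reflections. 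Second, the multiplicativity $\gk(w_2w_1,\chi)=\gk(w_2,{}^{w_1}\chi)\cdot\gk(w_1,\chi)$ in the length-additive situation rests on the ordinary $W$-action on roots together with the conjugation relation $w\cdot\wt{h}_\alpha(a)\cdot w^{-1}=\wt{h}_{w(\alpha)}(a)$, not on the affine-twisted action $\w[\,\cdot\,]$ on $\msc{X}_{Q,n}$, which only enters later for the Whittaker space. Your holomorphy argument — reduce to rank one along a reduced expression (noting that Weyl conjugates of a unitary character remain unitary), where $\gk(w_\alpha,\chi)^{-1}=(1-\chi_\alpha)/(1-q^{-1}\chi_\alpha)$ supplies the zero that cancels the simple pole of $A(w_\alpha,\chi)$ at $\chi_\alpha=1$ and unitarity keeps $1-q^{-1}\chi_\alpha\neq 0$ — is correct and spells out what the paper leaves implicit.
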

\begin{proof}
The proof is essentially that of Winarsky \cite[Page 951-952]{Win}, which relies on an inductive argument on the length of $\w_2$ and also the basic step 
$$\msc{A}(w_\alpha, {}^{w_\alpha} \chi) \circ \msc{A}(w_\alpha, \chi)= \text{id}.$$
However, this last equality follows from \eqref{E:Plan} and the normalization of $\msc{A}(w_\alpha, \chi)$ in \eqref{E:T-nat}. See \cite[Page 313-314]{Mc1} for some details of the argument in the context of covering groups.
\end{proof}
\vskip 10pt

\section{R-group} \label{S:R-grp}
From now on, we assume that $\chi$ is a unitary unramified genuine character of $Z(\wt{T})$. Let
$$W_\chi:=\set{\w \in W: {}^w \chi = \chi} \subset W.$$
Proposition \ref{T:coc} shows that 
$$\w \mapsto \msc{A}(w, \chi)$$
gives rise to a group homomorphism
$$\begin{tikzcd} 
\tau_\chi: W_\chi \ar[r] & \text{Isom}_{\wt{G}}(I(\chi)),
\end{tikzcd} $$
where $\text{Isom}_{\wt{G}}(I(\chi))$ denotes the group of $\wt{G}$-isomorphisms of $I(\chi)$.  Let $\text{End}_{\wt{G}}(I(\chi))$ be the commuting algebra of $I(\chi)$. The group homomorphism $\tau_\chi$ gives an algebra homomorphism which, by abuse of notation, is also denoted by 
$$\begin{tikzcd}
\tau_\chi: \C[W_\chi] \ar[r] & \text{End}_{\wt{G}}( I(\chi) ).
\end{tikzcd} $$
However, $\tau_\chi$ is not an isomorphism in general.

We would like to define a subgroup $R_\chi \subset W_\chi$ such that $\tau_\chi$ induces an algebra isomorphism
$$\C[R_\chi] \simeq \text{End}_{\wt{G}}(I(\chi)).$$
For this purpose, consider the set 
$$\Phi_\chi=\set{\alpha >0: \underline{\chi}_\alpha = \mbm{1}} \subset \Phi,$$
where $\uchi_\alpha$ is as in \eqref{ud-chi}.
Let  $W_\chi^0 \subset W$ be the subgroup generated by $\set{ \w_\alpha: \alpha\in \Phi_\chi}$. It follows from \cite[Lemma 3.1]{Ga6} that $\w_\alpha \in W_\chi$ if $\alpha\in \Phi_\chi$. Therefore, 
$$W_\chi^0 \subseteq W_\chi$$
 and we have a short exact sequence:
$$\begin{tikzcd}
W_\chi^0 \ar[r, hook] &  W_\chi \ar[r, two heads] & R_\chi,
\end{tikzcd}$$
where $R_\chi:=W_\chi/W_\chi^0$. The sequence splits with a natural splitting $s: R_\chi \into W_\chi$ given as follows. Consider the group
$$W(\Phi_\chi)=\set{\w \in W: \w(\Phi_\chi) =\Phi_\chi}.$$
Then one has
$$R_\chi \simeq W_\chi \cap W(\Phi_\chi),$$
or more explicitly,
$$\begin{aligned}
R_\chi & \simeq \set{ \w\in W_\chi: \w(\Phi_\chi) = \Phi_\chi } \\
& = \set{\w \in W_\chi:  \alpha>0 \text{ and } \alpha\in \Phi_\chi \text{ imply that }  \w(\alpha)>0  } \\
& = \set{\w \in W_\chi:  \Phi_\w \cap \Phi_\chi = \emptyset}.
\end{aligned} $$
This gives that $W_\chi \simeq W_\chi^0 \rtimes R_\chi$. We always identity $R_\chi$ with  $W_\chi \cap W(\Phi_\chi)$.

Before we proceed, we first show that for $\w \in W_\chi$, the two factors $\gk(w, \chi)$ and $\gamma(w, \chi)^{-1}$ are actually equal.
\begin{lm} \label{P:gk-gm}
 Let $\wt{G}$ be an $n$-fold cover of a connected reductive group $G$, and $\chi$ a unitary unramified character of $Z(\wt{T})$. For every $\w \in W_\chi$, one has
$$L(s, Ad_\w \circ \phi_\chi)= L(s, Ad_\w^\vee \circ \phi_\chi),$$
where $Ad_\w^\vee$ is the contragredient representation of $Ad_\w$. Therefore, for every $\w \in W_\chi$,
$$\gk(w, \chi)^{-1} = \gamma(w, \chi),$$
which is nonzero if $\w \in R_\chi$.
\end{lm}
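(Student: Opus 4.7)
The plan is to reduce the $L$-function identity to a combinatorial statement about $\langle \w \rangle$-orbits in the root system. Since $\chi$ is unramified, the direct sum decomposition $Ad_\w = \bigoplus_{\alpha \in \Phi_\w} Ad_\alpha$ combined with the computation $L(s, i(\chi), Ad_\alpha) = L(s, \uchi_\alpha)$ established earlier in \S\ref{S:red-ps} gives
$$L(s, Ad_\w \circ \phi_\chi) = \prod_{\alpha \in \Phi_\w} \frac{1}{1 - q^{-s}\uchi_\alpha(\varpi)}, \quad L(s, Ad_\w^\vee \circ \phi_\chi) = \prod_{\alpha \in \Phi_\w} \frac{1}{1 - q^{-s}\uchi_\alpha(\varpi)^{-1}},$$
so the desired equality is equivalent to the coincidence of the multi-sets $\{\uchi_\alpha(\varpi) : \alpha \in \Phi_\w\}$ and $\{\uchi_{-\alpha}(\varpi) : \alpha \in \Phi_\w\}$ of complex numbers.

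The first step I would carry out is to record the $W_\chi$-equivariance $\uchi_{\w(\alpha)} = \uchi_\alpha$ for every $\alpha \in \Phi$ and every $\w \in W_\chi$. This follows from the invariance ${}^w\chi = \chi$ together with the fact that $\wt{w}$-conjugation sends $\wt{h}_\alpha(\varpi^{n_\alpha}) \in Z(\wt{T})$ to an element of $Z(\wt{T})$ lying over $h_{\w(\alpha)}(\varpi^{n_\alpha})$, combined with the equality $n_{\w(\alpha)} = n_\alpha$ coming from the Weyl invariance of $Q$. Using this and $\Phi_{\w^{-1}} = -\w(\Phi_\w)$, the substitution $\beta = -\w(\alpha)$ gives $\uchi_{-\alpha} = \uchi_{\w^{-1}(-\w\alpha)} = \uchi_\beta$, hence
$$\set{\uchi_{-\alpha}(\varpi) : \alpha \in \Phi_\w} = \set{\uchi_\beta(\varpi) : \beta \in \Phi_{\w^{-1}}}.$$
The problem thereby reduces to showing $\set{\uchi_\alpha : \alpha \in \Phi_\w} = \set{\uchi_\alpha : \alpha \in \Phi_{\w^{-1}}}$ as multi-sets.

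Next I would decompose $\Phi$ into $\langle \w \rangle$-orbits. On each such orbit $O$ the character $\uchi_\alpha$ is constant by the previous step, so the claim reduces to $\val{O \cap \Phi_\w} = \val{O \cap \Phi_{\w^{-1}}}$ for every orbit $O$. Reading the signs of $\alpha, \w\alpha, \ldots, \w^{k-1}\alpha$ in cyclic order produces a cyclic $\pm$-sequence in which $\val{O \cap \Phi_\w}$ counts cyclic $(+) \to (-)$ transitions while $\val{O \cap \Phi_{\w^{-1}}}$ counts cyclic $(-) \to (+)$ transitions; these two counts agree in any cyclic sign pattern, which completes the combinatorial step and therefore the $L$-function identity.

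The consequence $\gk(w, \chi)^{-1} = \gamma(w, \chi)$ is then immediate upon writing $\gk(w, \chi) = L(0, i(\chi), Ad_\w)/L(1, i(\chi), Ad_\w)$ and $\gamma(w, \chi)^{-1} = L(0, Ad_\w \circ \phi_\chi)/L(1, Ad_\w^\vee \circ \phi_\chi)$, and invoking the identity at $s=1$. For nonvanishing when $\w \in R_\chi$, I use that $\Phi_\w \cap \Phi_\chi = \emptyset$ by the very definition of $R_\chi$, so $\uchi_\alpha \neq \mbm{1}$ and hence $\uchi_\alpha(\varpi) \neq 1$ for every $\alpha \in \Phi_\w$, killing the denominator; meanwhile unitarity of $\chi$ forces $\val{\uchi_\alpha(\varpi)} = 1 \neq q$, so the numerator $1 - q^{-1}\uchi_\alpha(\varpi)^{-1}$ is also nonzero. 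The hard part will be the preliminary $W_\chi$-equivariance step $\uchi_{\w\alpha} = \uchi_\alpha$, since it requires careful bookkeeping of the Weyl conjugation on $Z(\wt{T})$ in the covering group; once this is granted, the remainder is a clean combinatorial argument and standard manipulation of local $L$-factors.
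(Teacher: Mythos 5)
Your proof is correct, but it takes a genuinely different and more self-contained route than the paper. The paper's proof is a one-line invocation of \cite[Lemma 4.2]{KS88}: since $\w \in W_\chi$, the two $W_F$-representations $Ad_\w \circ \phi_\chi$ and $Ad_\w^\vee \circ \phi_\chi$ are equivalent, hence have the same Artin $L$-function, and the rest follows. You instead unwind this assertion completely: you identify the Frobenius eigenvalues as the products $\prod_{\alpha \in \Phi_\w}(1 - q^{-s}\uchi_\alpha(\varpi))^{\pm 1}$, reduce the equivalence to the multi-set identity $\{\uchi_\alpha(\varpi): \alpha \in \Phi_\w\} = \{\uchi_\alpha(\varpi)^{-1}: \alpha \in \Phi_\w\}$, and then prove that identity by translating it (via $W_\chi$-equivariance $\uchi_{\w\alpha} = \uchi_\alpha$ and the set identity $\Phi_{\w^{-1}} = -\w(\Phi_\w)$) into the combinatorial claim $\val{O \cap \Phi_\w} = \val{O \cap \Phi_{\w^{-1}}}$ for each $\langle\w\rangle$-orbit $O$ in $\Phi$, which you settle with the elegant observation that cyclic $+ \to -$ and $- \to +$ sign transitions in a circular word are equinumerous. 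This elementary orbit argument is exactly what makes the KS88 lemma work, and having it spelled out adds real value, since the paper's citation hides the mechanism entirely. Two small points worth flagging: (a) your passage from $\{\uchi_\alpha^{-1}\}$ to $\{\uchi_{-\alpha}\}$ invokes $\uchi_{-\alpha} = \uchi_\alpha^{-1}$, which is true in the covering setting (it follows from the splitting $\rho^{sc}$ of \S\ref{S:R-sc} being a homomorphism, so that $\wt{h}_{-\alpha}(a^{n_\alpha}) = \wt{h}_\alpha(a^{n_\alpha})^{-1}$) but deserved a sentence since it is not as automatic as in the linear case; (b) the equivariance $\uchi_{\w\alpha} = \uchi_\alpha$ for $\w \in W_\chi$ is indeed the delicate step you identify, and it is established in the paper inside the proof of Proposition \ref{P:R-sc} via $w \cdot \wt{h}_\alpha(\varpi^{n_\alpha}) \cdot w^{-1} = \wt{h}_{\w(\alpha)}(\varpi^{n_{\w(\alpha)}})$, so you could cite that directly. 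Your treatment of the gamma-factor identity and the nonvanishing for $\w \in R_\chi$ matches the paper.
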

\begin{proof}
The argument is already in \cite[Lemma 4.2]{KS88}. First, since $\w \in W_\chi$, the two representations $Ad_\w \circ \phi_\chi$ and $Ad_\w^\vee \circ \phi_\chi$ are equivalent. Therefore, $L(s, Ad_\w \circ \phi_\chi)= L(s, Ad_\w^\vee \circ \phi_\chi)$. Now for $\w \in W_\chi$, we have
$$\gk(w, \chi)^{-1} = \frac{L(1, Ad_\w \circ \phi_\chi)}{L(0, Ad_\w \circ \phi_\chi)} = \frac{L(1, Ad_\w^\vee \circ \phi_\chi)}{L(0, Ad_\w \circ \phi_\chi)} = \gamma(w, \chi).$$
The proof is completed in view of the fact that $1- \uchi_\alpha(\varpi) \ne 0$ for  every $\alpha\in \Phi_\w$, if $\w \in R_\chi$.
\end{proof}

The main theorem on $R$-group is as follows:

\begin{thm}[{\cite{KnSt2, Sil1, Li5, Li3, Luo3}}] \label{T:Luo}
For a unitary  unramified genuine character $\chi$ of $Z(\wt{T})$, one has
$$W_\chi^0=\set{\w:  \msc{A}(w, \chi) \text{ is a scalar}  }.$$
Moreover, the algebra ${\rm End}_{\wt{G}}(I(\chi))$ has a basis given by $\set{ \msc{A}(w, \chi): \w \in R_\chi }$, and by restriction $\tau_\chi$ gives a natural algebra isomorphism
$$\C[R_\chi] \simeq {\rm End}_{\wt{G}}(I(\chi)).$$
\end{thm}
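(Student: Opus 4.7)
The plan is to follow the Knapp--Stein--Silberger strategy as imported to the covering setting by the works of W.-W. Li \cite{Li3, Li5} and C.-H. Luo \cite{Luo3}. The argument splits naturally into (i) showing that $\msc{A}(w, \chi)$ is a scalar for every $w \in W_\chi^0$, so that the algebra map $\tau_\chi$ factors through $R_\chi$; (ii) invoking the commuting algebra theorem to obtain surjectivity onto $\text{End}_{\wt{G}}(I(\chi))$; and (iii) establishing linear independence of $\{\msc{A}(w, \chi) : w \in R_\chi\}$, which also yields the converse inclusion $\{w : \msc{A}(w, \chi) \text{ scalar}\} \subseteq W_\chi^0$ and hence the first assertion of the theorem.

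For step (i), I would fix a simple generator $\w_\alpha$ of $W_\chi^0$ with $\alpha \in \Phi_\chi$, and reduce via Rodier-style heredity and induction in stages from $\wt{P}_\alpha$ to a rank-one computation on the $\wt{\SL}_2^{(\alpha)}$-cover. Since $\uchi_\alpha = \mbm{1}$, the rank-one unitary principal series is reducible, and the normalizing factor $\gk(w_\alpha, \chi)^{-1} = \gamma(w_\alpha, \chi)$ (cf.\ Lemma \ref{P:gk-gm}) is designed to cancel the pole/zero of $A(w_\alpha, \chi)$ so that $\msc{A}(w_\alpha, \chi)$ acts as $+1$ on each constituent, hence as the identity on the whole space. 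The cocycle relation of Proposition \ref{T:coc} then propagates this to $\msc{A}(w, \chi) = \text{id}$ for every $w \in W_\chi^0$, producing a well-defined algebra morphism $\C[R_\chi] \to \text{End}_{\wt{G}}(I(\chi))$ through the splitting $R_\chi \hookrightarrow W_\chi$ of \S\ref{S:R-grp}.

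For steps (ii) and (iii), I would invoke the Harish--Chandra commuting algebra theorem as extended to covers in \cite{Li3, Li5, Luo3}, which gives surjectivity of $\tau_\chi : \C[W_\chi] \to \text{End}_{\wt{G}}(I(\chi))$; combined with step (i), this yields surjectivity of $\C[R_\chi] \to \text{End}_{\wt{G}}(I(\chi))$. For linear independence, I would exploit that the operators $\msc{A}(w, \chi)$ for varying $w \in W$ are linearly independent as meromorphic functions of $\chi$ (their asymptotic expansions along the big cell have distinct leading terms for distinct $w$), and then observe that at a unitary unramified $\chi$ the only collapses come from the $W_\chi^0$-action established in (i). Equivalently, a dimension count suffices: since $I(\chi)$ is semisimple and the isotypic decomposition is multiplicity-free (as $R_\chi$ is abelian, established independently in \S\ref{S:R-grp}), one has $\dim \text{End}_{\wt{G}}(I(\chi)) = |\Pi(\chi)| = |R_\chi|$, forcing the spanning family $\{\msc{A}(w, \chi)\}_{w \in R_\chi}$ to be a basis.

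The hard part is the rank-one scalarity assertion in step (i): adapting the Knapp--Stein computation to $\wt{\SL}_2^{(\alpha)}$-covers is delicate because covers of $\SL_2$ can exhibit reducibility phenomena (especially in metaplectic-type cases) with subtle parity or sign issues not present in the linear case. The Langlands-style $L$-function normalization is precisely engineered to absorb these subtleties, but verifying this rigorously in the covering setting is the substantive technical content of \cite{Li3, Li5, Luo3}, on which I would rely as a black box for the proof.
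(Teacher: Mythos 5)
The paper does not actually prove Theorem~\ref{T:Luo}: it is stated with citations and followed only by a short literature paragraph attributing the result to Knapp--Stein, Silberger (linear case) and W.-W.~Li, C.-H.~Luo (covering case). Your proposal therefore cannot be checked against a proof in the paper, but as a reconstruction of the strategy behind those references it is broadly sensible and matches the paper's description of where the content lives. Two points deserve correction, though.

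First, the invocation of ``Rodier-style heredity'' for the reduction to rank one in step (i) is a misnomer. Rodier heredity concerns Whittaker models of parabolically induced representations and plays no role here; the actual mechanism that reduces the scalarity of $\msc{A}(w_\alpha,\chi)$ on $I(\chi)$ to a computation on the rank-one Levi is the factorization of intertwining operators into rank-one pieces via the cocycle relation (Proposition~\ref{T:coc}) together with induction in stages from $\wt{P}_\alpha$, as in the classical Gindikin--Karpelevich/Knapp--Stein setup. You do mention induction in stages, so the fix is cosmetic, but ``Rodier heredity'' should be removed.

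Second, your alternative route to linear independence in step (iii) --- the dimension count $\dim\mathrm{End}_{\wt G}(I(\chi))=|\Pi(\chi)|=|R_\chi|$ --- is circular. The first equality is fine (Schur plus semisimplicity plus multiplicity-freeness), but $|\Pi(\chi)|=|R_\chi|$ is precisely the combinatorial content of the isomorphism $\C[R_\chi]\simeq\mathrm{End}_{\wt G}(I(\chi))$ that you are trying to establish; in the paper, the bijection $\mathrm{Irr}(R_\chi)\leftrightarrow\Pi(\chi)$ is listed as a \emph{consequence} of Theorem~\ref{T:Luo} in items (i)--(iii) immediately after it. The abelianness of $R_\chi$ (Theorem~\ref{T:R-abel}) is indeed established independently via $R_\chi\subset R_\chi^{sc}$ and Keys' tables, but it does not by itself give $|\Pi(\chi)|=|R_\chi|$ without already knowing the isomorphism. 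The other argument you offer --- linear independence of the $\msc{A}(w,\chi)$ from distinct asymptotic leading terms along the big cell, with collapses governed exactly by $W_\chi^0$ --- is the non-circular one and is what the cited proofs actually use, so the proposal stands once the dimension-count shortcut is dropped.
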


For a general parabolic induction of linear algebraic groups, the analogous result was first shown by Knapp--Stein \cite{KnSt2} for real groups, and by Silberger \cite{Sil1} for $p$-adic groups. The generalization to covering groups includes the work of W.-W. Li \cite{Li5, Li3}, which shows that Harish-Chandra's harmonic analysis extends to the covering setting. In particular, the Harish-Chandra $c$-function and Plancherel measure are discussed in detail in loc. cit.. Finally, in the recent work of C.-H. Luo \cite{Luo3}, the Harish-Chandra commuting algebra theorem is proved for general parabolic induction on covering groups, and in particular in the minimal parabolic case the isomorphism $\C[R_\chi] \simeq {\rm End}_{\wt{G}}(I(\chi))$ is established.
\vskip 5pt

Let
$$I(\chi) = \bigoplus_{i=1}^k m_i \pi_i$$
be the decomposition of $I(\chi)$ into irreducible subrepresentations with multiplicities $m_i$. Denote
$$\Pi(\chi):=\set{\pi_i: 1\le i \le k},$$
which constitutes the $L$-packet associated to the parameter $\phi_\chi$ corresponding to $I(\chi)$. Some immediate consequences of Theorem \ref{T:Luo}  are:
\begin{enumerate}
\item[(i)] $\dim \text{End}_{\wt{G}}(I(\chi))= \val{R_\chi}$ and $\C[R_\chi] \simeq \bigoplus_{i=1}^k M_{m_i}(\C)$.
\item[(ii)] $m_i=1$ for all $i$ if and only if $R_\chi$ is abelian.
\item[(iii)] in general, $k$ is equal to the dimension of the center of $\C[R_\chi]$, which is equal to the the number of conjugacy classes of $R_\chi$. Thus, there is a bijective correspondence
$$\text{Irr}(R_\chi)   \longleftrightarrow \Pi(\chi) .$$
\end{enumerate}
Since we are dealing with unramified $\chi$, we will show later (see Theorem \ref{T:R-abel}) that $R_\chi$ is actually abelian, and thus $I(\chi)$ is multiplicity-free.

\subsection{Parametrization of $\Pi(\chi)$}  \label{S:Pi-chi}

Denote the above correspondence 
$$ \text{Irr}(R_\chi)   \longleftrightarrow\Pi(\chi) $$
from (iii) by 
$$\sigma \leftrightarrow \pi_\sigma  , \text{ and }  \sigma_\pi \leftrightarrow  \pi.$$
The correspondence is not canonical, and can be twisted by characters of $R_\chi$.  However, in the unramified setting,  we have a natural parametrization given as follows (see \cite[Page 39]{Key3}).

Denote by $\theta_\sigma$ the character of $\sigma \in \text{Irr}(R_\chi)$. Consider the operator
\begin{equation} \label{P-sig}
P_\sigma:=\frac{\dim \sigma}{ \val{R_\chi} }  \sum_{\w \in R_\chi} \overline{\theta_\sigma(\w)} \cdot \msc{A}(w, \chi).
\end{equation}
Since the characters $\set{\theta_\sigma: \sigma \in \text{Irr}(R_\chi) }$ are orthogonal, the operators $P_\sigma$ are orthogonal projections of $I(\chi)$ onto nonzero invariant subspaces. Denote
$$V(\sigma):=P_\sigma(I(\chi)).$$
Then $V(\sigma)$'s are pairewise disjoint and we have 
$$I(\chi)=\bigoplus_{\sigma \in  \text{Irr}(R_\chi) }  V(\sigma).$$
Since the number of inequivalent constituents of $I(\chi)$ is equal to $\val{  \text{Irr}(R_\chi)  }$, it follows that $V(\sigma)$ is an isotypic sum of an irreducible representation. Thus, we write
$$V(\sigma)= m_{\pi_\sigma} \cdot \pi_\sigma,$$
and this gives a correspondence $\sigma \mapsto \pi_\sigma$. We also have
$$m_{\pi_\sigma}= \dim \sigma.$$

The inverse $\pi \mapsto \sigma_\pi $ can be described explicitly as well, see \cite[Page 39]{Key3}. The group $\text{Hom}(R_\chi, \C^\times)$ acts on $\Pi(\chi)$ by transporting the obvious action on $\text{Irr}(R_\chi)$ given by 
$$\xi \otimes \sigma, \text{ where } \xi \in \text{Hom}(R_\chi, \C^\times) \text{ and }\sigma \in \Irr(R_\chi).$$
 This action is free and transitive on the elements $\pi\in \Pi(\chi)$ which occurs with multiplicity one in $I(\chi)$. Thus the correspondence $\sigma \leftrightarrow \pi_\sigma$ is not canonical. However, the above parametrization given by \eqref{P-sig} (without any further twisting) is already a natural one in view of the following:

\begin{lm}
With notations as above, then $P_\mbm{1}(I(\chi)) = \pi_\chi^{un}$, i.e., the unramified constituent $\pi_\chi^{un}$ of $I(\chi)$ is parametrized by the trivial representation $\mbm{1} \in {\rm Irr}(R_\chi)$.
\end{lm}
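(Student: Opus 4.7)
The plan is to evaluate $P_\mbm{1}$ on the normalized spherical vector $f_0 \in I(\chi)$. For $\sigma = \mbm{1}$ the projector reduces to
$$P_\mbm{1} = \frac{1}{|R_\chi|}\sum_{\w \in R_\chi} \msc{A}(w,\chi),$$
so everything comes down to understanding the action of each normalized intertwiner on $f_0$.

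First, I would observe that for any $\w \in W_\chi$ (and in particular any $\w \in R_\chi$) we have ${}^w\chi = \chi$, so $I({}^w\chi)$ is literally $I(\chi)$ and under this identification the two normalized spherical vectors $f_0'$ and $f_0$ of Section \ref{SS:Pg} coincide. Combining this with the Gindikin--Karpelevich-type identity $A(w,\chi)(f_0) = \gk(w,\chi) f_0'$ and the normalization $\msc{A}(w,\chi) = \gk(w,\chi)^{-1} A(w,\chi)$, one obtains
$$\msc{A}(w,\chi)(f_0) = f_0 \quad \text{for every } \w \in R_\chi.$$
Here Lemma \ref{P:gk-gm} is needed to ensure that $\gk(w,\chi)$ is finite and nonzero on $R_\chi$, so that the division is honest and no meromorphic-continuation subtleties intervene.

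Averaging the previous identity over $R_\chi$ immediately yields $P_\mbm{1}(f_0) = f_0$, so $f_0$ belongs to $V(\mbm{1}) = P_\mbm{1}(I(\chi))$. By the analysis preceding the lemma, $V(\mbm{1})$ is an isotypic component of the form $m_{\pi_\mbm{1}} \cdot \pi_\mbm{1}$ with $m_{\pi_\mbm{1}} = \dim \mbm{1} = 1$, so $V(\mbm{1}) \simeq \pi_\mbm{1}$ as $\wt{G}$-modules. Thus the irreducible constituent $\pi_\mbm{1} \subset I(\chi)$ carries the $K$-fixed vector $f_0$, so $\pi_\mbm{1}$ is unramified. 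Since by the Satake isomorphism recalled in \S\ref{SS:ps} there is at most one unramified irreducible constituent of $I(\chi)$, namely $\pi_\chi^{un}$, we conclude $\pi_\mbm{1} = \pi_\chi^{un}$.

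The argument is entirely formal once the two ingredients are in place: that $\msc{A}(w,\chi)$ fixes the normalized spherical vector for every $\w \in W_\chi$, and that the spherical line singles out a unique irreducible constituent of $I(\chi)$. There is no genuine obstacle; the only point demanding attention is the non-vanishing and finiteness of $\gk(w,\chi)$ on $R_\chi$, which is precisely what Lemma \ref{P:gk-gm} provides via the observation that $\alpha \in \Phi_\w$ for $\w \in R_\chi$ forces $\uchi_\alpha \neq \mbm{1}$.
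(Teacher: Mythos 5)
Your proof is correct and follows essentially the same route as the paper: evaluate $P_\mbm{1}$ on the normalized spherical vector $f_0$, use the Gindikin--Karpelevich identity together with the normalization to see that $\msc{A}(w,\chi)(f_0)=f_0$ for $\w\in R_\chi$, and average. The closing remarks about non-vanishing of $\gk(w,\chi)$ and uniqueness of the unramified constituent are sound and simply make explicit what the paper leaves implicit.
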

\begin{proof}
It suffices to show that $P_\mbm{1}(f_0) = f_0$ where $f_0 \in \pi_\chi^{un} \subset  I(\chi)$ is the normalized  unramified vector. However, by \eqref{P-sig}, one has
$$P_\mbm{1}(f_0)= \frac{1}{ \val{R_\chi}  } \sum_{\w \in R_\chi} \msc{A}(w, \chi)(f_0).$$
By the normalization in $\msc{A}(w, \chi)$, one has $\msc{A}(w, \chi)(f_0)= f_0$ for every $\w\in R_\chi$ (in fact for every $\w \in W_\chi$). Thus, $P_\mbm{1}(f_0)= f_0$, and this gives the desired conclusion.
\end{proof}

\subsection{Some analysis on $R_\chi$} \label{SS:R-0}
From this subsection to \S \ref{S:R-sc}, we will analyze the group $R_\chi$ by relating it to the work \cite{Key2} of Keys on simply-connected Chevalley groups. More precisely, we will show that there is a group $R_\chi^{sc}$  containing $R_\chi$ such that:
\begin{enumerate}
\item[(i)] The quotient $R_\chi^{sc}/R_\chi\simeq W_\chi^{sc} / W_\chi$ can be understood from the dual side in terms of the $L$-parameter $\phi_\chi$. In particular, the size of $R_\chi^{sc}/R_\chi$ is related to the center $Z(\wt{G}^\vee)$ of the dual group $\wt{G}^\vee$.
\item[(ii)] 
The group $R_\chi^{sc}$ is equal to the R-group of a unramified principal series on a simply-connected Chevalley group, which is determined in \cite{Key2}. In particular, $R_\chi^{sc}$ is abelian, and this enforces $R_\chi$ to be abelian. However, since $R_\chi$ is not equal to $R_\chi^{sc}$ in general, this shows that unitary genuine principal series  of a covering group tends to be less reducible than the linear algebraic group.
\end{enumerate}

For this purpose, we first define a group $W_\chi^{sc}$ such that
$$W_\chi \subseteq W_\chi^{sc} \subseteq W.$$
Let $\mbf{T}_{Q,n}$ and $\mbf{T}_{Q,n}^{sc}$ be the split tori defined over $F$ associated to the two lattices $Y_{Q,n}$ and $Y_{Q,n}^{sc}$ respectively. Denote by $T_{Q,n}$ and $T_{Q,n}^{sc}$ their $F$-rational points. Let $T_{Q,n}^\dag$ and $T_{Q,n}^{sc, \dag}$ be the image of $T_{Q,n}$ and $T_{Q,n}^{sc}$ in $T$ with respect to the two maps 
$$i_{Q,n}: T_{Q,n} \to T \text{ and } i^{sc}: T_{Q,n}^{sc} \to T,$$
which are induced from $Y_{Q,n} \into Y$ and $Y_{Q,n}^{sc} \into Y$ respectively. Recalling the projection
$$\phi: \wt{G} \onto G,$$
we have from \S \ref{SS:ps} that 
$$Z(\wt{T}) =\phi^{-1}(T_{Q,n}^\dag).$$
We denote
$$Z(\wt{T})^{sc}:=\phi^{-1}(T_{Q,n}^{sc,\dag}) \subset Z(\wt{T}).$$
Let 
$$\chi^{sc}:=\chi|_{Z(\wt{T})^{sc}}$$
 be the genuine character of $Z(\wt{T})^{sc}$ obtained from the restriction of $\chi$. Consider
$$W_\chi^{sc}:=\set{\w\in W: {}^w(\chi^{sc}) =\chi^{sc}} \supset W_\chi$$
and analogously
$$R_\chi^{sc} := W_\chi^{sc}/W^0_\chi,$$
which then contains $R_\chi$. A splitting $s^{sc}$ of $R_\chi^{sc}$ into $W_\chi^{sc}$ is given by
$$R_\chi^{sc}\simeq W_\chi^{sc} \cap W(\Phi_\chi).$$
In summary, one has a commutative diagram with exact rows and compatible splittings:
$$\begin{tikzcd}
W_\chi^0 \ar[r, hook] \ar[d, equal]  &  W_\chi   \ar[r, two heads]  \ar[d, hook] & R_\chi  \ar[d, hook] \ar[l, bend right=50, "s"] \\
W_\chi^0 \ar[r, hook]  & W_\chi^{sc} \ar[r, two heads] & R_\chi^{sc} \ar[l, bend left=50, "s^{sc}"],
\end{tikzcd}$$
where $s$ and $s^{sc}$ are the aforementioned natural splittings of $R_\chi$ and $R_\chi^{sc}$ respectively.
It follows immediately that one has an isomorphism of finite groups:
$$ R_\chi^{sc} / R_\chi \simeq  W_\chi^{sc} / W_\chi.$$

\subsection{The quotient $R_\chi^{sc}/R_\chi$} \label{SS:quo}
Recall the $L$-parameter 
$$\phi_\chi: W_F \longrightarrow {}^L\wt{T}$$ 
associated to $i(\chi)$, which is a splitting of the extension
\begin{equation} \label{L-T}
\begin{tikzcd}
\wt{T}^\vee \ar[r, hook] & {}^L\wt{T} \ar[r, two heads] & W_F.
\end{tikzcd}
\end{equation}
In fact $\phi_\chi$ only depends on $\chi$, and this justifies our notation. Recall also that $\wt{T}^\vee=\Hom (Y_{Q,n}, \C^\times)$ is the dual group of $\wt{T}$ and 
$$Z(\wt{G}^\vee)\simeq \Hom(Y_{Q,n}/Y_{Q,n}^{sc}, \C^\times).$$
Denote
$$\wt{T}^\vee_{ad}:=\wt{T}^\vee/Z(\wt{G}^\vee) \simeq \Hom(Y_{Q,n}^{sc}, \C^\times).$$
Pushing out the exact sequence in (\ref{L-T}) via the map $f: \wt{T}^\vee \to \wt{T}^\vee/Z(\wt{G}^\vee)$, one obtains a short exact sequence
\begin{equation} \label{L-Tsc}
\begin{tikzcd}
\wt{T}^\vee_{ad} \ar[r, hook] & {}^L\wt{T}^{sc}:=f_*({}^L\wt{T}) \ar[r, two heads] & W_F.
\end{tikzcd}
\end{equation}
Denote by ${\rm Spl}({}^L\wt{T}, W_F)$ the set of splittings of (\ref{L-T}), and similarly $\text{Spl}({}^L\wt{T}^{sc}, W_F)$. Then $f$ induces a map
\begin{equation} \label{f-star}
\begin{tikzcd} 
f_*: \text{Spl}({}^L\wt{T}, W_F) \ar[r] & \text{Spl}({}^L\wt{T}^{sc}, W_F),
\end{tikzcd}
\end{equation}
which arises from the obvious composite. The Weyl group $W$ acts naturally on the two groups ${}^L\wt{T}$ and ${}^L\wt{T}^{sc}$, and the map 
$f_*$ in (\ref{f-star}) is $W$-equivariant. Here $f_*(\phi_\chi)$ is naturally associated with $\chi^{sc}$. 
Since the LLC for covering torus is $W$-equivariant (see \cite[\S 9.3]{GG}), we have
$$W_\chi= \text{Stab}_W(\phi_\chi),$$
and 
$$W_\chi^{sc}=\text{Stab}_W(f_*(\phi_\chi)).$$
Here $\text{Stab}_W(x)$ denotes the subgroup of stabilizers of $x$ in $W$. 

As it might be more convenient to work with parameters valued in the dual group (instead of the $L$-group), we can have the following reduction. From \S \ref{S:D-L}, we have a distinguished genuine character $\chi_\psi$ of $Z(\wt{T})$ depending on a nontrivial additive character $\psi$ of $F$. This gives a splitting $\phi_{\chi_\psi} \in \text{Spl}({}^L\wt{T}, W_F)$, which is fixed by $W$ since $\chi_\psi$ is $W$-invariant (see \cite[\S 6.5]{GG}). From this, one obtains a ``relative" version of (\ref{f-star}):
\begin{equation} \label{f-star-1}
\begin{tikzcd} 
f_*^\psi: \Hom(W_F, \wt{T}^\vee) \ar[r] & \Hom(W_F, \wt{T}^\vee_{ad}).
\end{tikzcd}
\end{equation}
If we denote
$$\phi_\chi^\psi:=\phi_\chi/\phi_{\chi_\psi} \in \Hom(W_F, \wt{T}^\vee),$$
then $f_*^\psi$ sends $\phi_\chi^\psi$ to
$$\frac{ f_*(\phi_\chi)}{f_*(\phi_{\chi_\psi})} \in \Hom(W_F, \wt{T}^\vee_{ad}).$$
The kernel of $f_*^\psi$ is given by
$$\text{Ker}(f_*^\psi)=\Hom(W_F, Z(\wt{G}^\vee)).$$
Since $f_*^\psi$ is also $W$-equivariant, we have
$$W_\chi= \text{Stab}_W(\phi_\chi^\psi), \quad W_\chi^{sc}=\text{Stab}_W(f_*^\psi(\phi_\chi^\psi)).$$

Assume henceforth that the conductor of $\psi$ is $O$. In this case, $\chi_\psi$ is unramified; thus $\phi_\chi^\psi$ is unramified and $\phi_\chi^\psi(\varpi) \in \wt{T}^\vee$ is the relative Satake parameter for $\chi$. Also, $f_*^\psi$ is determined by the map 
$$f^\vee: \wt{T}^\vee \to \wt{T}^\vee_{ad}$$
such that
$$f^\vee(\phi_\chi^\psi(\varpi))=(f_*^\psi(\phi_\chi^\psi))(\varpi).$$
For every $t\in \wt{T}^\vee$ such that $f^\vee(t)$ is fixed by $W_\chi^{sc}$, the action of $W_\chi^{sc}$ on the set $t\cdot Z(\wt{G}^\vee)$ is well-defined. For such $t$, let
$$\mca{O}^{W_\chi^{sc}}(t \cdot Z(\wt{G}^\vee))$$ 
be the set of $W_\chi^{sc}$-orbits in $t\cdot Z(\wt{G}^\vee) \subset \wt{T}^\vee$. 

\begin{prop} \label{P:bd}
Assume $G$ is a semisimple group. Let $\chi$ be a unitary unramified genuine character of $Z(\wt{T})$. Then 
$$[R_\chi^{sc}: R_\chi]=\frac{ \val{Z(\wt{G}^\vee)} }{ \val{ \mca{O}^{W_\chi^{sc}}(\phi_\chi^\psi(\varpi) \cdot Z(\wt{G}^\vee)) } }.$$
\end{prop}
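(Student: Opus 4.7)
The plan is to reduce the index $[R_\chi^{sc}:R_\chi]$ to an orbit-counting argument for the action of $W_\chi^{sc}$ on the fiber $\phi_\chi^\psi(\varpi) \cdot Z(\wt{G}^\vee) \subset \wt{T}^\vee$ above $f^\vee(\phi_\chi^\psi(\varpi))$. The semisimplicity of $G$ ensures $Z(\wt{G}^\vee) \simeq \Hom(Y_{Q,n}/Y_{Q,n}^{sc}, \C^\times)$ is finite, so all the cardinalities are well-defined. From the isomorphism $R_\chi^{sc}/R_\chi \simeq W_\chi^{sc}/W_\chi$ already established in \S \ref{SS:R-0}, it suffices to identify $[W_\chi^{sc}:W_\chi]$ with the common size of the $W_\chi^{sc}$-orbits in this fiber.

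The key observation I would invoke is that the Weyl group $W$ acts trivially on $Z(\wt{G}^\vee)$, since $W$ preserves both $Y_{Q,n}$ and its coroot sublattice $Y_{Q,n}^{sc}$ and acts trivially on the quotient $Y_{Q,n}/Y_{Q,n}^{sc}$. For any $w \in W_\chi^{sc}$ the equation $f^\vee(w \cdot \phi_\chi^\psi(\varpi)) = f^\vee(\phi_\chi^\psi(\varpi))$ then yields a unique $z_w \in Z(\wt{G}^\vee)$ with
$$w \cdot \phi_\chi^\psi(\varpi) = \phi_\chi^\psi(\varpi) \cdot z_w.$$
A short computation, using $w_1 \cdot z_{w_2} = z_{w_2}$ (by the triviality above), shows that
$$\kappa: W_\chi^{sc} \longrightarrow Z(\wt{G}^\vee), \quad w \longmapsto z_w$$
is a group homomorphism with $\Ker(\kappa) = W_\chi$, i.e. the image $C := \Ima(\kappa)$ satisfies $C \simeq W_\chi^{sc}/W_\chi$.

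Finally, the induced action of $W_\chi^{sc}$ on $\phi_\chi^\psi(\varpi) \cdot Z(\wt{G}^\vee)$ factors through $\kappa$ and coincides with the translation action of $C$ on this $Z(\wt{G}^\vee)$-torsor. Since translation by $C \subset Z(\wt{G}^\vee)$ is free, every $W_\chi^{sc}$-orbit has the same cardinality $\val{C} = [W_\chi^{sc}:W_\chi] = [R_\chi^{sc}:R_\chi]$, and therefore
$$\val{\mca{O}^{W_\chi^{sc}}(\phi_\chi^\psi(\varpi) \cdot Z(\wt{G}^\vee))} = \frac{\val{Z(\wt{G}^\vee)}}{[R_\chi^{sc}:R_\chi]},$$
which rearranges to the stated formula. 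The only genuinely delicate point is verifying that $\kappa$ is multiplicative rather than merely a $1$-cocycle, and this is precisely what the $W$-triviality on $Z(\wt{G}^\vee)$ guarantees; once this is in hand, the uniform orbit size and the counting are immediate, so I expect no further obstacle beyond bookkeeping.
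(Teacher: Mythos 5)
Your proof is correct, and it takes a genuinely different route from the paper's. The paper applies Burnside's orbit-counting formula $\val{X/H} = \frac{1}{\val{H}}\sum_{h\in H}\val{X^h}$ to $X = \phi_\chi^\psi(\varpi)\cdot Z(\wt{G}^\vee)$ and $H = W_\chi^{sc}$, observing that $X^\w = X$ if $\w\in W_\chi$ and $X^\w = \emptyset$ otherwise (both facts follow from $W$ acting trivially on $Z(\wt{G}^\vee)$). You instead promote the cocycle $w\mapsto z_w$ into a genuine homomorphism $\kappa: W_\chi^{sc}\to Z(\wt{G}^\vee)$ with kernel $W_\chi$ — the multiplicativity again hinging on the triviality of the $W$-action on the center — and then identify the $W_\chi^{sc}$-action on $X$ with the free translation action of the subgroup $C = \Ima(\kappa)$. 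This gives uniform orbit size $\val{C} = [W_\chi^{sc}:W_\chi]$ immediately, without averaging. Both arguments exploit exactly the same structural input; yours is a bit more transparent in that it exhibits the orbit space concretely as the coset space $Z(\wt{G}^\vee)/C$, and it also shows en route that the image of the relative Satake parameter's "error cocycle" is a genuine subgroup — a mildly finer piece of information than the paper's count extracts.
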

\begin{proof}
For a finite group $H$ acting on a finite set $X$, the orbit counting formula reads
$$\val{X/H}= \frac{1}{\val{H}} \sum_{h\in H} \val{X^h},$$
where $X^h \subset X$ is the set of $h$-fixed points.  To apply this to the case $X= \phi_\chi^\psi(\varpi) \cdot Z(\wt{G}^\vee)$ and $H=W_\chi^{sc}$, we first note that the action of $W_\chi^{sc}$ on the set $\phi_\chi^\psi(\varpi) \cdot Z(\wt{G}^\vee)$ is well-defined, as mentioned above. Since $W$ (and thus in particular $W_\chi^{sc}$) acts trivially on $Z(\wt{G}^\vee)$, we see 
$$
X^\w = \begin{cases}
X, & \text{ if } \w \in W_\chi, \\
\emptyset & \text{ if } \w \notin W_\chi.
\end{cases}
$$
Thus,
$$\val{ \mca{O}^{W_\chi^{sc}}(\phi_\chi^\psi(\varpi) \cdot Z(\wt{G}^\vee)) }= \frac{1}{\val{W_\chi^{sc}}} \cdot \val{Z(\wt{G}^\vee)} \cdot \val{W_\chi}.$$
The result follows form the equality $[R_\chi^{sc}: R_\chi] = [W_\chi^{sc}: W_\chi]$.
\end{proof}

It is clear that the index $[R_\chi^{sc}: R_\chi]=[W_\chi^{sc}: W_\chi]$ is bounded above by $\val{Z(\wt{G}^\vee)}$ for covers of semisimple groups. In particular, if the dual group of $\wt{G}$ is of adjoint type, then $R_\chi^{sc}= R_\chi$. For example, if $G$ is a simply-connected Chevalley group and $n=1$, then there is no difference between $R_\chi^{sc}$ and $R_\chi$. For another nontrivial example, consider the $n$-fold cover $\wt{\SL}_{n+1}^{(n)}$, which has dual group $\text{PGL}_{n+1}$, or the odd-degree cover of $\Sp_{2r}$ whose dual group is $\SO_{2r+1}$.

\subsection{The group $R_\chi^{sc}$} \label{S:R-sc}

Let $\wt{G}$ be an $n$-fold cover of a general linear algebraic group. Let $\mbf{H}$ be the connected linear reductive group over $F$ such that its root datum is obtained from inverting that of $\wm{G}^\vee_{Q,n}$, i.e., the Langlands dual group of $\mbf{H}$ is isomorphic to $\wm{G}^\vee_{Q,n}$. If $n=1$, then $\mbf{H}= \mbf{G}$. Let
$$\mbf{H}^{sc} \onto \mbf{H}_{der} \into \mbf{H}$$
be the simply-connected cover of the derived subgroup $\mbf{H}_{der} \subset \mbf{H}$. Thus, $Y_{Q,n}^{sc}$ is the cocharacter (and also the coroot) lattice of $\mbf{H}^{sc}$. Denote by $H, H^{sc}$ the $F$-rational points of $\mbf{H}$ and $\mbf{H}^{sc}$ respectively. Here $H$ is the principal endoscopic group for $\wt{G}$.

We see that $T_{Q,n}^{sc}$ is just the torus of $H^{sc}$. The genuine character $\chi^{sc}$ of $Z(\wt{T})^{sc}$ gives rise to a linear unramified character
$$\uchi^{sc}: T_{Q,n}^{sc} \to \C^\times \text{ given by } \uchi^{sc}( \alpha_{Q,n}^\vee(a) ):= \chi^{sc}( \wt{h}_\alpha(a^{n_\alpha}) )$$
for all $\alpha \in \Delta$. In fact, the covering 
$$\begin{tikzcd}
\bbmu_n \ar[r, hook]  &  Z(\wt{T})^{sc}  \ar[r, two heads] & T_{Q,n}^{sc,\dag}
\end{tikzcd}$$
has a splitting $\rho^{sc}$ given by $\alpha_{Q,n}^\vee(a) \mapsto \wt{h}_\alpha(a^{n_\alpha})$ for all $\alpha\in \Delta$, and we have
$$\uchi^{sc} = \chi^{sc} \circ \rho^{sc} \circ i^{sc}.$$
One could form the unramified principal series $I(\uchi^{sc})$ of $H^{sc}$ and thus have the $R$-group $R_{\uchi^{sc}}$ for $I(\uchi^{sc})$. 

\begin{prop} \label{P:R-sc}
With notations as above, one has 
$$R_\chi^{sc} \simeq R_{\uchi^{sc}}.$$
\end{prop}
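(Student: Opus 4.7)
The plan is to identify the two defining ingredients of $R_\chi^{sc}$ and $R_{\uchi^{sc}}$ separately: the Weyl stabilizer of the character, and the subgroup generated by reflections through the roots on which the character is trivial; once both are matched, the quotients $R_\chi^{sc} = W_\chi^{sc}/W_\chi^0$ and $R_{\uchi^{sc}} = W_{\uchi^{sc}}/W_{\uchi^{sc}}^0$ coincide. Note that the Weyl group of the Chevalley group $H^{sc}$ (with root system $\Phi_{Q,n}$) is identified with $W$ via $w_{\alpha_{Q,n}} \leftrightarrow w_\alpha$, so that $W_{\uchi^{sc}}$ and $W_{\uchi^{sc}}^0$ are naturally viewed as subgroups of $W$.

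First I would verify $W_\chi^{sc} = W_{\uchi^{sc}}$. The restriction--pushforward interplay between $\chi^{sc}$ and $\uchi^{sc}$ is dual to the map $f_*^\psi$ of \eqref{f-star-1}: using the distinguished character $\chi_\psi$ to trivialize the $L$-group extension and recalling that $\phi_\chi^\psi(\varpi) \in \wt{T}^\vee$ pairs with $\alpha_{Q,n}^\vee$ to give $\chi(\wt{h}_\alpha(\varpi^{n_\alpha}))$ by \cite[Theorem 7.8]{Ga1}, one checks that $f_*^\psi(\phi_\chi^\psi) \in \Hom(W_F, \wt{T}_{ad}^\vee)$ is precisely the classical Satake parameter of the linear character $\uchi^{sc}$ of $T_{Q,n}^{sc}$. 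Since both the genuine LLC (cf.\ \cite[\S 9.3]{GG}) and the classical LLC for tori are $W$-equivariant, so is $f_*^\psi$, and hence $W_\chi^{sc} = \text{Stab}_W(f_*^\psi(\phi_\chi^\psi)) = W_{\uchi^{sc}}$.

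Next I would show $W_\chi^0 = W_{\uchi^{sc}}^0$. By the defining relation $\uchi^{sc}(\alpha_{Q,n}^\vee(a)) = \chi^{sc}(\wt{h}_\alpha(a^{n_\alpha})) = \uchi_\alpha(a)$, the root $\alpha_{Q,n}$ of $H^{sc}$ lies in the bad set for $\uchi^{sc}$ iff $\alpha \in \Phi_\chi$. Since $w_{\alpha_{Q,n}}$ and $w_\alpha$ agree as elements of $W$, it follows that $W_{\uchi^{sc}}^0 = \langle w_{\alpha_{Q,n}} : \alpha \in \Phi_\chi \rangle = W_\chi^0$. Combined with the previous paragraph this yields $R_\chi^{sc} \simeq R_{\uchi^{sc}}$.

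The hard part will be the bookkeeping in the first step: carefully matching the genuine $L$-parameter $\phi_\chi$ with the classical Satake parameter of $\uchi^{sc}$ through the distinguished trivialization of ${}^L\wt{T}$ afforded by $\chi_\psi$. Concretely, one must reconcile the twist by $\phi_{\chi_\psi}$ with the splitting $\rho^{sc}$ used to construct $\uchi^{sc}$, and verify that the resulting parameters on $\wt{T}_{ad}^\vee$ agree as expected; this compatibility is essentially encoded in the formula recalled from \cite[Theorem 7.8]{Ga1}, evaluated against each simple root $\alpha_{Q,n}^\vee$ of $\wt{G}^\vee$.
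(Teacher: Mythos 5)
Your overall strategy — matching $W_\chi^{sc}$ with $W_{\uchi^{sc}}$ and then the subgroups $W_\chi^0$ and $W_{\uchi^{sc}}^0$ — is the same skeleton as the paper's proof, but your routes through each half differ, and the second half has a real gap.

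For $W_\chi^{sc}=W_{\uchi^{sc}}$ you go through the $L$-parameter/LLC picture (trivialize ${}^L\wt{T}$ by $\chi_\psi$, show $f_*^\psi(\phi_\chi^\psi)$ is the Satake parameter of $\uchi^{sc}$, invoke $W$-equivariance of both LLCs). This is a legitimate alternate route, and the paper itself uses essentially this machinery in the proof of Theorem~\ref{T:R=S}. However, your "bookkeeping" remark hides the one genuinely substantive input: to pass from $\phi_\chi$ to the relative parameter $\phi_\chi^\psi=\phi_\chi/\phi_{\chi_\psi}$ without changing the value on $\alpha_{Q,n}^\vee$, you need $\chi_\psi(\wt{h}_\alpha(a^{n_\alpha}))=1$ for all $\alpha$. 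This is \emph{not} contained in \cite[Theorem~7.8]{Ga1} (which governs $\phi_\chi$ itself); it is the defining property of the distinguished character from \cite[\S~6.1]{GG}, and it must be propagated from $\alpha\in\Delta$ to all $\alpha\in\Phi$ via the same conjugation identity the paper uses. By contrast, the paper avoids the dual side entirely here: it proves $w^{-1}\wt h_\alpha(\varpi^{n_\alpha})w=\wt h_{\w^{-1}(\alpha)}(\varpi^{n_{\w^{-1}(\alpha)}})$ directly and reads off $W_{\chi^{sc}}=W_{\uchi^{sc}}$ from that. Your route is fine but costs more scaffolding and has its attribution slightly off.

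The real gap is in the second step. You write that $\uchi^{sc}(\alpha_{Q,n}^\vee(a))=\chi^{sc}(\wt h_\alpha(a^{n_\alpha}))$ "by the defining relation" and then immediately conclude the bad sets coincide. But the relation is a \emph{definition} only for $\alpha\in\Delta$: $\uchi^{sc}$ is defined by prescribing its values on simple coroots and extending multiplicatively. The set $\Phi_{\uchi^{sc}}$ is indexed by \emph{all} positive roots of $H^{sc}$, so to match it with $\Phi_\chi$ you must show $\uchi^{sc}(\gamma_{Q,n}^\vee(a))=\chi^{sc}(\wt h_\gamma(a^{n_\gamma}))$ for every positive $\gamma\in\Phi$, not just simple ones. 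This is not formal: it is exactly the inductive argument occupying the bulk of the paper's proof, built on the identity $\wt h_\gamma(a^{n_\gamma})=w_\alpha\,\wt h_\beta(a^{n_\beta})\,w_\alpha^{-1}=\wt h_\beta(a^{n_\beta})\cdot\wt h_\alpha(a^{n_\alpha})^{-\langle\alpha_{Q,n},\beta_{Q,n}^\vee\rangle}$ from \cite[p.~112]{Ga1}. In effect, one must check that the section $\rho^{sc}$ determined on simple coroots still sends $\gamma_{Q,n}^\vee(a)$ to $\wt h_\gamma(a^{n_\gamma})$ for non-simple $\gamma$; that compatibility is the heart of the proposition, and your proposal assumes it rather than proving it.
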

\begin{proof}
It suffices to show that $W_{\chi^{sc}} = W_{\uchi^{sc}}$ and $\Phi_{\chi^{sc}} = \Phi_{\uchi^{sc}}$. By definition of $\uchi^{sc}$,  we have
\begin{equation} \label{E:add1}
\uchi^{sc}( \alpha_{Q,n}^\vee(a) )= \chi^{sc}( \wt{h}_\alpha(a^{n_\alpha}) )
\end{equation}
for all $\alpha\in \Delta$. We claim that the equality holds for all $\alpha \in \Phi$.  By induction on the length of $\w$ such that $\w(\alpha) \in \Delta$, it suffices to prove that if 
$$\uchi^{sc}( \beta_{Q,n}^\vee(a) )= \chi^{sc}( \wt{h}_\beta(a^{n_\beta}) ), \beta \in \Phi$$
and $\gamma^\vee= \w_\alpha(\beta^\vee), \alpha\in \Delta$, then 
\begin{equation} \label{E:gam}
\uchi^{sc}( \gamma_{Q,n}^\vee(a) )= \chi^{sc}( \wt{h}_\gamma(a^{n_\gamma}) ), \beta \in \Phi
\end{equation}
As shown in \cite[Page 112]{Ga1}, one has
$$\wt{h}_\gamma(a^{n_\gamma}) = w_\alpha^{-1} \cdot \wt{h}_\beta(a^{n_\beta}) \cdot w_\alpha= w_\alpha \cdot \wt{h}_\beta(a^{n_\beta}) \cdot w_\alpha^{-1},$$
which is also equal to $\wt{h}_\beta(a^{n_\beta}) \cdot \wt{h}_\alpha(a^{n_\alpha})^{ -\angb{\alpha_{Q,n}}{\beta^\vee_{Q,n}} }$ by \eqref{F:W-act}. Thus by induction hypothesis, we have
$$\begin{aligned}
\uchi^{sc}( \gamma_{Q,n}^\vee(a) ) = &  \uchi^{sc}( \beta_{Q,n}^\vee(a) ) \cdot \uchi^{sc}( \alpha_{Q,n}^\vee(a) )^{ -\angb{\alpha_{Q,n}}{\beta^\vee_{Q,n}} } \\
= & \chi^{sc}( \wt{h}_\beta(a^{n_\beta}) ) \cdot \chi^{sc}( \wt{h}_\alpha(a^{n_\alpha}) )^{ -\angb{\alpha_{Q,n}}{\beta^\vee_{Q,n}} } \\
= & \chi^{sc}( \wt{h}_\gamma(a^{n_\gamma}) ).
\end{aligned}
$$
This proves that the equality in \eqref{E:add1} holds for all $\alpha \in \Phi$, and thus $\Phi_{\uchi^{sc}} = \Phi_{\chi^{sc}}$.  

Let $\w = \w_l \w_{l-1} ... \w_i$ be a minimal decomposition of $\w$ with $\w_i = \w_{\alpha_i}$ for some $\alpha_i \in \Delta$. Let $w= w_1 ... w_2 w_1$, where $w_i:=w_{\alpha_i}$ be the representative of $\w_{\alpha_i}$. The above argument also shows inductively that
$$w \cdot  \wt{h}_\alpha(\varpi^{n_\alpha})  \cdot w^{-1}= \wt{h}_{\w(\alpha)}(\varpi^{n_{\w(\alpha)}})$$
for all $\alpha \in \Phi$. It follows that
$$w^{-1} \cdot  \wt{h}_\alpha(\varpi^{n_\alpha})  \cdot w= \wt{h}_{\w^{-1}(\alpha)}(\varpi^{n_{\w^{-1}(\alpha)}})$$
for all $\alpha\in \Phi$, and thus
$${}^w (\uchi^{sc})( \alpha_{Q,n}^\vee(a) ) = {}^w (\chi^{sc})( \wt{h}_\alpha(a^{n_\alpha}) )$$
for all $\w \in W$ and $\alpha \in \Delta$. Therefore, $W_{\uchi^{sc}}= W_{\chi^{sc}}$. Thus by the definition of $R$-groups, we have $R_{\uchi^{sc}} \simeq R_\chi^{sc}$.
\end{proof}

In \cite[\S 3]{Key2}, all possibilities for the group $R_\chi^{sc} \simeq R_{\uchi^{sc}}$ are tabulated as in Table 1 and Table 2.

\begin{table}[!htbp]  \label{T1}
\caption{The group $R_\chi^{sc}$ for classical group $\mbf{H}^{sc}$}
\vskip 5pt
\begin{tabular}{|c|c|c|c|c|c|c|c|c|}
\hline
$\mbf{H}^{sc}$ & $A_r$  &  $B_r$ & $C_r$  & $D_r, r $ even &  $D_r, r $ odd   \\
\hline
$ R_\chi^{sc} $ & $\Z/d\Z, d|(r+1)$ & $\Z/2\Z$  & $\Z/2\Z$ & $\Z/2\Z$ or $(\Z/2\Z)^2$  & $\Z/2\Z$ or $\Z/4\Z$  \\ 
\hline
\end{tabular}
\end{table}

\begin{table}[!htbp]  \label{T2}
\caption{The group $R_\chi^{sc}$ for exceptional group $\mbf{H}^{sc}$}
\vskip 5pt
\begin{tabular}{|c|c|c|c|c|c|c|c|c|}
\hline
$\mbf{H}^{sc}$ & $E_6$  &  $E_7$ & $E_8$  & $F_4$ &  $G_2$ \\
\hline
$ R_\chi^{sc} $ & $\Z/3\Z$ & $\Z/2\Z$  & $\set{1}$ & $\set{1}$  & $\set{1}$  \\ 
\hline
\end{tabular}
\end{table}

Immediately, one has
\begin{thm} \label{T:R-abel}
Let $\wt{G}$ be an n-fold cover of a connected reductive group $G$, and $\chi$ a unitary unramified genuine character of $Z(\wt{T})$. Then $R_\chi \subset R_\chi^{sc}$ is abelian and therefore
$$I(\chi)= \bigoplus_{ \sigma \in \Irr(R_\chi) }  \pi_\sigma,$$
i.e., the decomposition of $I(\chi)$ is multiplicity-free.
\end{thm}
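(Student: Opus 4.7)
The plan is to deduce the theorem by combining the inclusion $R_\chi \subset R_\chi^{sc}$ from \S\ref{SS:R-0}, the identification $R_\chi^{sc} \simeq R_{\uchi^{sc}}$ from Proposition \ref{P:R-sc}, and Keys' explicit classification tabulated in Tables 1--2, and then feeding the resulting abelianness of $R_\chi$ into Theorem \ref{T:Luo} together with its consequences (i)--(iii).

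First I would record that $R_\chi$ sits as a subgroup of $R_\chi^{sc}$, which was already established in the commutative diagram of \S\ref{SS:R-0}. Next, Proposition \ref{P:R-sc} identifies $R_\chi^{sc}$ with the classical R-group $R_{\uchi^{sc}}$ of the unramified principal series $I(\uchi^{sc})$ of the simply-connected group $H^{sc}$. Decomposing $\mbf{H}^{sc}$ as a direct product of its almost simple simply-connected factors, the R-group $R_{\uchi^{sc}}$ factors as a product of the R-groups attached to each simple factor. Inspecting Tables 1 and 2, each such factor R-group appears in the list $\Z/d\Z$, $\Z/2\Z$, $(\Z/2\Z)^2$, $\Z/4\Z$, $\Z/3\Z$ or the trivial group, all of which are abelian. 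Consequently $R_\chi^{sc}$ is abelian, and hence so is its subgroup $R_\chi$.

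With abelianness of $R_\chi$ in hand, I would invoke Theorem \ref{T:Luo}. From the algebra isomorphism $\C[R_\chi] \simeq {\rm End}_{\wt{G}}(I(\chi))$ and consequence (i), namely $\C[R_\chi] \simeq \bigoplus_{i=1}^k M_{m_i}(\C)$, the abelianness of $R_\chi$ forces $m_i = 1$ for every $i$ by consequence (ii). Finally, consequence (iii) supplies the bijection $\Irr(R_\chi) \longleftrightarrow \Pi(\chi)$, $\sigma \leftrightarrow \pi_\sigma$, so that the decomposition takes the multiplicity-free form
\[
I(\chi) = \bigoplus_{\sigma \in \Irr(R_\chi)} \pi_\sigma,
\]
as asserted. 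There is no substantive obstacle here beyond checking that the product-of-simple-factors reduction for $\mbf{H}^{sc}$ is compatible with the formation of the R-group, which is routine from the definitions in \S\ref{S:R-sc}; the entire argument is a packaging of earlier results rather than a new computation.
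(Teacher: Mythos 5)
Your proposal is correct and follows essentially the same path as the paper: $R_\chi \subset R_\chi^{sc}$ from the commutative diagram in \S\ref{SS:R-0}, the identification $R_\chi^{sc} \simeq R_{\uchi^{sc}}$ from Proposition \ref{P:R-sc}, inspection of Keys' Tables 1--2 to conclude abelianness, and then Theorem \ref{T:Luo}(ii) to deduce multiplicity-freeness. The only thing you spell out that the paper leaves implicit is the reduction of $\mbf{H}^{sc}$ to its almost-simple factors (needed because the tables are organized by simple type), which is a harmless and routine clarification.
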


 \begin{cor} \label{C:Tcoef}
For every $\w \in R_\chi$, one has
 $$\msc{A}(w, \chi)= \bigoplus_{\sigma \in {\rm Irr}(R_\chi) }  \sigma(\w) \cdot {\rm id}_{\pi_\sigma}.$$
 Therefore, for every $f\in C_c^\infty(\wt{G})$, 
 $${\rm Trace }\ \msc{A}(w, \chi) I(\chi)(f)  = \sum_{\sigma \in \Irr(R_\chi)} \sigma(\w) \cdot {\rm Trace }\  \pi_\sigma(f).$$ 
 \end{cor}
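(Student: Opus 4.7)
The plan is to use the projector formula \eqref{P-sig} together with the abelianness of $R_\chi$ from Theorem \ref{T:R-abel} and Fourier inversion on $R_\chi$. Since $R_\chi$ is abelian, every $\sigma \in \Irr(R_\chi)$ is one-dimensional, $\dim\sigma = 1$, and $\theta_\sigma = \sigma$. Moreover, Theorem \ref{T:R-abel} guarantees that the decomposition $I(\chi) = \bigoplus_{\sigma \in \Irr(R_\chi)} \pi_\sigma$ is multiplicity-free. In this situation the projector in \eqref{P-sig} simplifies to
$$P_\sigma = \frac{1}{\val{R_\chi}}\sum_{\w \in R_\chi} \overline{\sigma(\w)}\cdot \msc{A}(w,\chi),$$
and its image $V(\sigma) = \pi_\sigma$ is irreducible, so $P_\sigma$ is nothing but the projection $\id_{\pi_\sigma}$ onto the $\sigma$-isotypic component of the direct sum.

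Next I would invert the above relation using orthogonality of characters on the finite abelian group $R_\chi$. Namely, multiply by $\sigma(\w_0)$ and sum over $\sigma$:
$$\sum_{\sigma \in \Irr(R_\chi)} \sigma(\w_0) \cdot P_\sigma = \frac{1}{\val{R_\chi}} \sum_{\w \in R_\chi} \msc{A}(w,\chi) \cdot \Bigl(\sum_{\sigma \in \Irr(R_\chi)} \sigma(\w_0)\overline{\sigma(\w)}\Bigr) = \msc{A}(w_0,\chi),$$
since the inner sum equals $\val{R_\chi}\cdot \delta_{\w_0,\w}$. Identifying $P_\sigma$ with $\id_{\pi_\sigma}$ under the multiplicity-free decomposition then gives the desired equality
$$\msc{A}(w,\chi) = \bigoplus_{\sigma \in \Irr(R_\chi)} \sigma(\w) \cdot \id_{\pi_\sigma}.$$
As a sanity check one can also argue directly via Schur's lemma: $\msc{A}(w,\chi) \in \End_{\wt{G}}(I(\chi))$ and each $\pi_\sigma$ appears with multiplicity one, so $\msc{A}(w,\chi)$ must act by some scalar on each $\pi_\sigma$, and plugging into \eqref{P-sig} and using the orthogonality relation $\sum_{\w} \overline{\sigma(\w)}\sigma'(\w) = \val{R_\chi}\delta_{\sigma,\sigma'}$ pins the scalar down to $\sigma(\w)$.

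For the trace statement, note that the decomposition $I(\chi) = \bigoplus_\sigma \pi_\sigma$ is $\wt{G}$-stable, so for every $f \in C_c^\infty(\wt{G})$ we have $I(\chi)(f) = \bigoplus_\sigma \pi_\sigma(f)$. Combining with the block-diagonal expression for $\msc{A}(w,\chi)$ just obtained, we get
$$\msc{A}(w,\chi) \circ I(\chi)(f) = \bigoplus_{\sigma \in \Irr(R_\chi)} \sigma(\w) \cdot \pi_\sigma(f),$$
and taking traces yields
$$\Tr\bigl(\msc{A}(w,\chi) I(\chi)(f)\bigr) = \sum_{\sigma \in \Irr(R_\chi)} \sigma(\w) \cdot \Tr\, \pi_\sigma(f),$$
as claimed. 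There is no real obstacle here; the only substantive input is Theorem \ref{T:R-abel}, which has already been established, and the remaining content is a one-line Fourier inversion on the finite abelian group $R_\chi$ together with the fact that $\sigma \leftrightarrow \pi_\sigma$ is the parametrization already fixed via the projectors $P_\sigma$.
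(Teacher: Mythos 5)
Your proposal is correct. Your primary argument and the paper's differ in emphasis: the paper starts from Schur's Lemma, writing $\msc{A}(w,\chi) = \bigoplus_\sigma c_{\pi_\sigma}(\w)\,\mathrm{id}_{\pi_\sigma}$ with a priori unknown scalars $c_{\pi_\sigma}(\w)$, then substitutes this into the projector formula \eqref{P-sig} and uses orthonormality of the characters to conclude $c_{\pi_\sigma} = \sigma$; you instead invert \eqref{P-sig} directly by column orthogonality, recovering $\msc{A}(w_0,\chi) = \sum_\sigma \sigma(\w_0)\,P_\sigma$, and then simply identify $P_\sigma$ with the projection $\mathrm{id}_{\pi_\sigma}$ coming from the multiplicity-free decomposition of Theorem~\ref{T:R-abel}. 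The two proofs use the same ingredients and are of equal difficulty, but yours has the minor virtue of never invoking Schur's Lemma in the primary argument. Your ``sanity check'' is exactly the paper's proof, so you have in effect given both. The trace statement is handled in the obvious way in both cases.
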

 \begin{proof}
 It suffices to verify the first equality, as the second will follow from it.  For $\sigma \in \Irr(R_\chi)$, recall the projection $P_\sigma$ of $I(\chi)$ on $\pi_\sigma$. By Schur's Lemma, $\msc{A}(w, \chi)$ acts on $\pi_\sigma$ as a homothety given by $c_{\pi_\sigma}(\w) \in \C$.  However, 
 $$\begin{aligned}
P_{\sigma'} & = \frac{1}{ \val{R_\chi} } \sum_{\w \in R_\chi} \overline{\sigma'(\w)}  \Bigg(  \bigoplus_{\sigma  \in \Irr(R_\chi)} c_{\pi_\sigma}(\w) \cdot \text{id}_{\pi_\sigma} \Bigg) \\
& = \bigoplus_{\sigma  \in \Irr(R_\chi)} \Bigg(  \frac{1}{ \val{R_\chi} }  \sum_{\w \in R_\chi} \overline{\sigma'(\w)}  \cdot c_{\pi_\sigma}(\w) \Bigg) \cdot \text{id}_{\pi_\sigma}.
 \end{aligned} $$
 That is, the pairing of the class function $c_{\pi_\sigma}(-)$ on $R_\chi$ with $\sigma'(-)$ is equal to the Kronecker delta function $\delta_{\sigma, \sigma'}$. Since the characters $\Irr(R_\chi)$ form an orthonormal basis for class functions on $R_\chi$, this shows that $c_{\pi_\sigma}= \sigma$ for every $\sigma \in \Irr(R_\chi)$.
 \end{proof}

\subsection{Covers of $\SL_2$} \label{S:SL2}

We illustrate the previous discussion on R-group by considering the  $n$-fold cover $\wt{G}=\wt{\SL}_2^{(n)}$ arising from $Q(\alpha^\vee)=1$. We show that our analysis agrees with certain results in \cite{Szp6}.

If $n$ is even, then the dual group $\wt{G}^\vee$ is $\SL_2$. Write 
$$s_\zeta:=\phi_\chi^\psi(\varpi)=\left(\begin{matrix}
\zeta & \\
 & \zeta^{-1}
\end{matrix}\right) \in \wt{G}^\vee
$$
for the relative Satake parameter of $\chi$ discussed in \S \ref{SS:quo}, where $\zeta \in \C^\times$ depends on $\chi$. There are three cases:
\begin{enumerate}
\item[$\bullet$] $\zeta^4 \ne 1$. In this case, $\Phi_\chi=\emptyset$ and thus $W(\Phi_\chi)=W$. Also $W_\chi^{sc}=W_\chi=\set{1}$. Therefore $R_\chi^{sc}=R_\chi=\set{1}$, and in particular $\val{ \mca{O}^{W_\chi^{sc}}(s_\zeta \cdot Z(\wt{G}^\vee))}=2$.
\item[$\bullet$] $\zeta^4 = 1$ but $\zeta^2 \ne 1$. In this case, $\Phi_\chi=\emptyset$ and thus $W(\Phi_\chi)=W$. However, we have $W_\chi^{sc}=W$, while $W_\chi=\set{1}$. Therefore $R_\chi^{sc}=W$ and $R_\chi=\set{1}$. Indeed, we have $\val{ \mca{O}^{W_\chi^{sc}}(s_\zeta \cdot Z(\wt{G}^\vee))}=1$ in this case.
\item[$\bullet$] $\zeta^2= 1$. In this case, $\Phi_\chi=\set{\alpha}$ and thus $W(\Phi_\chi)=\set{1}$. Also $W_\chi^{sc}=W_\chi=W$. Therefore $R_\chi^{sc}=R_\chi=\set{1}$, and also $\val{ \mca{O}^{W_\chi^{sc}}(s_\zeta \cdot Z(\wt{G}^\vee))}=2$.
\end{enumerate}
Hence for $n$ even, $I(\chi)$ is always irreducible for a unitary unramified genuine character $\chi$. 

Now we consider $n$ odd, in which case the dual group of $\wt{\SL}_2^{(n)}$ is $\wt{G}^\vee=\PGL_2$ and thus one always has $R_\chi^{sc}= R_\chi$ by Proposition \ref{P:bd}. Write 
$$s_\zeta:=\rho_\chi^\psi(\varpi)=\left(\begin{matrix}
\zeta & \\
 & 1
\end{matrix} \right) \in \wt{G}^\vee
$$
for the relative Satake parameter for $I(\chi)$. There are three cases:
\begin{enumerate}
\item[$\bullet$] $\zeta^2 \ne 1$. In this case, $\Phi_\chi=\emptyset$ and thus $W(\Phi_\chi)=W$. Also $W_\chi=\set{1}$. Therefore $R_\chi=\set{1}$.
\item[$\bullet$] $\zeta = -1$. Then $\Phi_\chi=\emptyset$ and thus $W(\Phi_\chi)=W$. But $W_\chi=W$. Therefore $R_\chi=W$.
\item[$\bullet$] $\zeta = 1$. Then $\Phi_\chi=\set{\alpha}$ and thus $W(\Phi_\chi)=\set{1}$. But $W_\chi=W$. In this case, $R_\chi=\set{1}$.
\end{enumerate}

Combining the above even and odd cases, we see that the only reducibility point for $I(\chi)$ is when $n$ is odd and $\chi$ is such that $\uchi_\alpha$ is a nontrivial quadratic character; in this case
$$I(\chi) = \pi_\chi^{un} \oplus \pi.$$
The  result agrees with \cite[Proposition 5.1]{Szp6}.

\subsection{Comparison of $R_\chi$ and $\mca{S}_{\phi_\chi}$}  \label{L-para}
For linear algebraic groups, it was shown by Keys \cite{Key3} that the R-group $R_\chi$ is naturally identified with the component group of the centralizer of the parameter $\phi_\chi$. In this subsection, we establish the same identification for covering groups.

Let 
$$\phi: W_F \to {}^L\wt{G}$$ 
be an $L$-parameter. Let $S_\phi \subset \wt{G}^\vee$ be the centralizer of $\phi(W_F)$ in $\wt{G}^\vee$, i.e.,
$$S_\phi:=\set{ g\in \wt{G}^\vee:  g\cdot \phi(a) \cdot g^{-1} = \phi(a) \text{ for every } a\in W_F  };$$
it is a reductive subgroup of $\wt{G}^\vee$ but not necessarily connected.
Let $S_\phi^0$ be the connected component of $S_\phi$. Define
$$\mca{S}_\phi:= \frac{ S_\phi }{Z(\wt{G}^\vee) \cdot S_\phi^0}.$$
There is a dual group (instead of $L$-group) relative version which is more closely related to linear algebraic groups. Recall that depending on the choice of a distinguished genuine character  $\chi_\psi$, one has an isomorphism
$${}^L\wt{G} \simeq_{\chi_\psi}  \wt{G}^\vee \times W_F,$$
see \eqref{L-iso}. One obtains an unramified parameter
$$\begin{tikzcd}
\phi[\chi_\psi]:  W_F \ar[r, "\phi"] & {}^L\wt{G}  \ar[r, two heads] & \wt{G}^\vee,
\end{tikzcd}$$
where the second map is the projection depending on $\chi_\psi$.  Local Langlands correspondence gives the $L$-parameter $\phi_{\chi_\psi}$  associated to $\chi_\psi$, which takes values in $Z(\wt{G}^\vee)$. One has
$$\phi[\chi_\psi]=\phi \cdot \phi_{\chi_\psi}^{-1}.$$
Analogously, let $S_{\phi[\chi_\psi]} \subset \wt{G}^\vee$ be the centralizer of $\phi[\chi_\psi](W_F)$ in $\wt{G}^\vee$. Define
$$\mca{S}_{\phi[\chi_\psi]} := \frac{ S_{\phi[\chi_\psi]} }{ Z(\wt{G}^\vee) \cdot S_{\phi[\chi_\psi]}^0 }.$$

\begin{lm} \label{L:rel-S}
With notations as above, $S_\phi = S_{\phi[\chi_\psi]}$ for every distinguished genuine character $\chi_\psi$. Hence, $\mca{S}_\phi = \mca{S}_{\phi[\chi_\psi]}$ as well.
\end{lm}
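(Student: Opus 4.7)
The plan is to exploit the splitting \eqref{L-iso} of ${}^L\wt{G}$ determined by $\chi_\psi$ together with the factorization $\phi = \phi[\chi_\psi] \cdot \phi_{\chi_\psi}$, and then use that $\phi_{\chi_\psi}$ takes values in the center $Z(\wt{G}^\vee)$.

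First I would fix the splitting ${}^L\wt{G} \simeq_{\chi_\psi} \wt{G}_{Q,n}^\vee \times W_F$ from \eqref{L-iso}. Under this identification the parameter $\phi: W_F \to {}^L\wt{G}$ is of the form $a \mapsto (\phi[\chi_\psi](a), a)$, since $\phi[\chi_\psi]$ is precisely defined as the $\wt{G}^\vee$-component under this splitting. The key structural point is that, because $\wt{G}$ is a cover of a split group and the splitting is chosen so that \eqref{L-iso} is a direct product, conjugation by any $g \in \wt{G}^\vee \subset {}^L\wt{G}$ acts trivially on the $W_F$-component and restricts to ordinary conjugation on the $\wt{G}^\vee$-component:
\[
g \cdot (h, a) \cdot g^{-1} = (g h g^{-1}, a).
\]

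Next, the LLC for the covering torus gives a parameter $\phi_{\chi_\psi}: W_F \to {}^L\wt{T}$ whose image, composed into ${}^L\wt{G}$, lies in $Z(\wt{G}^\vee) \subset \wt{G}^\vee$ (cf.\ \S\ref{S:D-L} and the construction of $\chi_\psi$). Hence under the splitting one has
\[
\phi(a) = \bigl( \phi[\chi_\psi](a) \cdot \phi_{\chi_\psi}(a),\ a \bigr),
\]
and since $\phi_{\chi_\psi}(a) \in Z(\wt{G}^\vee)$, it is fixed by every inner automorphism coming from $\wt{G}^\vee$. Therefore for any $g \in \wt{G}^\vee$,
\[
g \cdot \phi(a) \cdot g^{-1} = \phi(a) \iff g \cdot \phi[\chi_\psi](a) \cdot g^{-1} = \phi[\chi_\psi](a),
\]
for every $a \in W_F$. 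This yields $S_\phi = S_{\phi[\chi_\psi]}$.

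The second assertion follows formally: $\mca{S}_\phi = S_\phi / \bigl(Z(\wt{G}^\vee) \cdot S_\phi^0\bigr)$, and since $S_\phi = S_{\phi[\chi_\psi]}$ (and hence $S_\phi^0 = S_{\phi[\chi_\psi]}^0$), the two quotients coincide. I do not anticipate any genuine obstacle; the only mildly delicate point is making sure that the splitting from $\chi_\psi$ really gives a direct product (so that $W_F$ acts trivially on $\wt{G}^\vee$ in the chosen trivialization), but this is exactly the content of \eqref{L-iso}. Everything else is formal manipulation once the splitting is fixed.
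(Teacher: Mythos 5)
Your opening two steps (writing $\phi(a)=(\phi[\chi_\psi](a),a)$ under the $\chi_\psi$-trivialization, and observing that conjugation by $g\in\wt{G}^\vee$ affects only the first coordinate) are already the paper's entire proof; $S_\phi=S_{\phi[\chi_\psi]}$ follows immediately, and the component groups agree formally. The middle portion of your argument is redundant and notationally inconsistent: having fixed the $\chi_\psi$-trivialization in which the $\wt{G}^\vee$-component of $\phi(a)$ is \emph{by definition} $\phi[\chi_\psi](a)$, you then rewrite $\phi(a)=\bigl(\phi[\chi_\psi](a)\cdot\phi_{\chi_\psi}(a),a\bigr)$, which, read in the same fixed trivialization, contradicts your first formula. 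Under the $\chi_\psi$-trivialization, $\phi_{\chi_\psi}(a)$ is sent to $(1,a)$, so its $\wt{G}^\vee$-component is the identity, not a nontrivial element of $Z(\wt{G}^\vee)$. The factorization $\phi=\phi[\chi_\psi]\cdot\phi_{\chi_\psi}$ lives inside ${}^L\wt{G}$ before passing to the product, and once you have passed, the $\phi_{\chi_\psi}$ factor is absorbed entirely into the $W_F$-coordinate. Consequently the centrality of $\phi_{\chi_\psi}$ need never be invoked; your first display already gives $g\cdot\phi(a)\cdot g^{-1}=\phi(a)\iff g\cdot\phi[\chi_\psi](a)\cdot g^{-1}=\phi[\chi_\psi](a)$. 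Dropping the middle step makes the argument both cleaner and identical to the paper's.
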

\begin{proof}
It follows from the isomorphism ${}^L\wt{G} \simeq_{\chi_\psi}  \wt{G}^\vee \times W_F$ that
$$\phi(a) = (\phi[\chi_\psi](a), a) \in \wt{G}^\vee \times W_F.$$
Thus, $g\in \wt{G}^\vee$ centralizes $\phi(a)$ if and only if it centralizes $\phi[\chi_\psi](a)$; this gives the equality $S_\phi = S_{\phi[\chi_\psi]}$ and completes the proof.
 \end{proof}

\begin{thm} \label{T:R=S}
Let $\wt{G}$ be an $n$-fold cover of a connected reductive group $G$. Let $\chi$ be a unitary unramified genuine  character of $Z(\wt{T})$, and let $\phi_\chi$ be the $L$-parameter associated to $\chi$. One has an isomorphism 
$$R_\chi \simeq \mca{S}_{\phi_\chi}.$$
\end{thm}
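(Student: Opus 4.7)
My plan is to compute $\mca{S}_{\phi_\chi}$ directly from the explicit root datum of $\wt{G}^\vee$ and match the result with $R_\chi = W_\chi/W_\chi^0$. First, I would invoke Lemma \ref{L:rel-S} to replace $\phi_\chi$ with the relative parameter $\phi_\chi^\psi$, which takes values in $\wt{T}^\vee \subset \wt{G}^\vee$, so that $S_{\phi_\chi} = S_{\phi_\chi[\chi_\psi]}$. Since $\chi$ is unitary and unramified, $\phi_\chi^\psi$ is an unramified parameter completely determined by the Satake parameter $s := \phi_\chi^\psi(\varpi) \in \wt{T}^\vee$, and hence $S_{\phi_\chi}$ equals the centralizer $Z_{\wt{G}^\vee}(s)$ of the semisimple element $s$ in the dual group.

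Next, I would apply the standard structure theory of centralizers of semisimple elements in a connected reductive group. The connected component $Z_{\wt{G}^\vee}(s)^0$ is the reductive subgroup containing $\wt{T}^\vee$ with root system
$$\set{\alpha_{Q,n}^\vee \in \Phi_{Q,n}^\vee : \alpha_{Q,n}(s) = 1},$$
where $\alpha_{Q,n}$ is viewed as a character of $\wt{T}^\vee$. Using the formula $\phi_\chi \circ Ad(\varpi)(E_{\alpha_{Q,n}^\vee}) = \chi(\wt{h}_\alpha(\varpi^{n_\alpha})) \cdot E_{\alpha_{Q,n}^\vee}$ assembled in Section \ref{S:red-ps} (originating in \cite{Ga1}), together with the splitting ${}^L\wt{G} \simeq_{\chi_\psi} \wt{G}^\vee \times W_F$, one obtains $\alpha_{Q,n}(s) = \uchi_\alpha(\varpi)$. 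Since $\uchi_\alpha$ is unramified, the condition $\alpha_{Q,n}(s) = 1$ is equivalent to $\uchi_\alpha = \mbm{1}$, i.e., $\alpha \in \Phi_\chi$. Consequently, the root system of $Z_{\wt{G}^\vee}(s)^0$ is $\set{\alpha_{Q,n}^\vee : \alpha \in \Phi_\chi}$ and its Weyl group is precisely $W_\chi^0$.

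Third, I would invoke the classical computation of the component group of the centralizer of a torus element in a connected reductive group (which follows, for instance, from a Bruhat-type decomposition of $Z_{\wt{G}^\vee}(s)$), yielding
$$Z_{\wt{G}^\vee}(s)/Z_{\wt{G}^\vee}(s)^0 \simeq W^s/W_\chi^0,$$
where $W^s = \set{\w \in W : \w \cdot s = s}$. Since $\phi_\chi^\psi$ is unramified and determined by $s$, we have $W^s = \text{Stab}_W(\phi_\chi^\psi) = W_\chi$ (as already used in \S\ref{SS:quo}), giving $S_{\phi_\chi}/S_{\phi_\chi}^0 \simeq R_\chi$. Finally, since $Z(\wt{G}^\vee) \subset \wt{T}^\vee \subset S_{\phi_\chi}^0$, quotienting by $Z(\wt{G}^\vee)$ contributes nothing, and one concludes
$$\mca{S}_{\phi_\chi} = S_{\phi_\chi}/(Z(\wt{G}^\vee) \cdot S_{\phi_\chi}^0) = S_{\phi_\chi}/S_{\phi_\chi}^0 \simeq R_\chi.$$

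The main technical point is Step 2, where the set of roots of $\wt{G}^\vee$ vanishing at the Satake parameter $s$ must be identified with exactly $\Phi_\chi$. This is a translation through the local Langlands correspondence for the covering torus and depends on the explicit formula cited from \cite{Ga1}; all the necessary machinery is already in place in Section \ref{S:red-ps}. Once this identification is settled, the remainder is a clean application of classical structure theory of connected reductive groups.
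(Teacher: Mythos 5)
Your proof is correct, but it takes a genuinely different route from the paper's. The paper sets up an identification of $R_\chi$ with the $R$-group $R_{\uchi}$ of an unramified principal series on the principal endoscopic group $H$ (whose dual is $\wt{G}^\vee_{Q,n}$), by checking $W_\chi = W_{\uchi}$ and $\Phi_\chi = \Phi_{\uchi}$ using the fact that the distinguished character $\chi_\psi$ is Weyl-invariant and satisfies $\chi_\psi(\wt{h}_\alpha(a^{n_\alpha}))=1$; it then cites Keys \cite[Proposition 2.6]{Key3} as a black box for $R_{\uchi} \simeq \mca{S}_{\phi_{\uchi}}$, and invokes Lemma \ref{L:rel-S}. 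You instead unwind Keys' argument: after the same reduction via Lemma \ref{L:rel-S} to the Satake parameter $s = \phi_\chi^\psi(\varpi) \in \wt{T}^\vee$, you compute the connected centralizer (its root system consists of the roots of $\wt{G}^\vee$ killing $s$, and you translate this to $\Phi_\chi$ via the explicit adjoint action formula from \cite{Ga1}), then apply the classical component-group formula $Z_{\wt{G}^\vee}(s)/Z_{\wt{G}^\vee}(s)^0 \simeq W^s/W^0$, and finally observe $Z(\wt{G}^\vee) \subset \wt{T}^\vee \subset S_{\phi_\chi}^0$. What the paper's route buys is brevity by outsourcing to the known linear case; what your route buys is self-containedness and a direct Langlands-dual picture of $R_\chi$ as a component group. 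Mathematically the two are the same argument at different levels of expansion, and your version does rely on the same key inputs (the distinguished splitting, the unramifiedness of $\phi_\chi^\psi$, and the translation $\alpha_{Q,n}^\vee(s) = \chi_\alpha$ sourced from Section~\ref{S:red-ps}).

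One small notational slip to correct: in the paper's conventions the \emph{roots} of $\wt{G}^\vee$ are $\alpha_{Q,n}^\vee \in Y_{Q,n} = X^*(\wt{T}^\vee)$, and the \emph{coroots} are $\alpha_{Q,n} \in X_{Q,n}$. So the characters of $\wt{T}^\vee$ you should be evaluating at $s$ are the $\alpha_{Q,n}^\vee$, not the $\alpha_{Q,n}$; your condition should read $\alpha_{Q,n}^\vee(s)=1$, equivalently $s(\alpha_{Q,n}^\vee)=1$, and the identity from \cite{Ga1} gives $\alpha_{Q,n}^\vee(s) = \chi(\wt{h}_\alpha(\varpi^{n_\alpha})) = \uchi_\alpha(\varpi)$. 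This is purely a matter of which decoration carries the $\vee$; the content of your Step 2 (roots of $Z_{\wt{G}^\vee}(s)^0$ correspond exactly to $\Phi_\chi$, with Weyl group $W_\chi^0$) is correct.
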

\begin{proof}
The idea is to reduce it to the linear algebraic case where the isomorphism is proved in \cite[Page 42]{Key3}.

As in \S \ref{S:R-sc}, let $\mbf{H}$ be the connected split linear algebraic group whose dual group is $\wt{\mbf{G}}^\vee_{Q,n}$. Let $\mbf{T}_{Q,n}$ be its split torus whose cocharacter lattice is $Y_{Q,n}$. We have $H, T_{Q,n}$ denoting the $F$-rational points of $\mbf{H}, \mbf{T}_{Q,n}$ respectively.

Let $\chi$ be a unitary unramified genuine character of $Z(\wt{T})$. For simplicity, denote by $\phi$ (instead of $\phi_\chi$) the associated $L$-parameter valued in ${}^L\wt{T}$. By choosing a distinguished genuine character $\chi_\psi$, we have the unramified parameter
$$\phi[\chi_\psi]: W_F \to \wt{T}^\vee \into \wt{G}^\vee,$$
which is just $\phi_\chi^\psi$ in the notation of \S \ref{SS:quo}.
Note that $\wt{T}^\vee= T_{Q,n}^\vee$ and $\wt{G}^\vee = H^\vee$. The parameter $\phi[\chi_\psi]$ is thus associated to an unramified character 
$$\uchi: T_{Q,n} \to \C^\times$$
such that
$$\phi_{\uchi} = \phi[\chi_\psi],$$
where $\phi_{\uchi}$ is the $L$-parameter associated to $\uchi$ by the local Langlands correspondence for linear torus. We can explicate $\uchi$ as follows. First, $\chi\cdot \chi_\psi^{-1}: T_{Q,n}^\dag \to \C^\times$ is a linear character, where $T_{Q,n}^\dag$ is the image of the isogeny 
$$i_{Q,n}:   T_{Q,n} \to T,$$
see \S \ref{SS:R-0}. Then, $\uchi$ is just the pull-back of $\chi\cdot \chi_\psi^{-1}$ via $i_{Q,n}$; that is, $\uchi= (\chi\cdot \chi_\psi^{-1}) \circ i_{Q,n}$.
In any case, we have
$$\mca{S}_{\phi_{\uchi}} = \mca{S}_{\phi[\chi_\psi]}.$$

Now we consider $R_{\uchi}$ and $R_{\chi}$. Since $\chi_\psi$ is Weyl-invariant, we have 
\begin{equation} \label{RC1}
W_\chi = W_{\chi \cdot \chi_\psi^{-1}} = W_{\uchi}.
\end{equation} 
By the construction of distinguished genuine character, we have 
$$\chi_\psi( \wt{h}_\alpha( a^{n_\alpha} ) ) =1$$
for all $\alpha\in \Delta$ (see \cite[\S 6.1]{GG}). However, from the proof of Proposition \ref{P:R-sc} one has
$w \cdot \wt{h}_\alpha(\varpi^{n_\alpha}) \cdot w^{-1} = \wt{h}_{\w(\alpha)}  (\varpi^{n_{\w(\alpha)}})$
for all $\w \in W$ and $\alpha \in \Delta$. It follows that $\chi_\psi( \wt{h}_\alpha( a^{n_\alpha} ) ) =1$ for every $\alpha \in \Phi$.  Thus, 
\begin{equation} \label{RC2}
\begin{aligned}
 \Phi_{\uchi}= & \set{\alpha_{Q,n}>0:  \uchi(a^{\alpha_{Q,n}^\vee}) =1}  \\
=&  \set{\alpha >0:  (\chi \cdot \chi_\psi^{-1}) ( \wt{h}_\alpha(a^{n_\alpha}) ) =1} \\
=&  \set{\alpha >0:  \chi ( \wt{h}_\alpha(a^{n_\alpha}) ) =1} \\
= & \set{\alpha>0: \uchi_\alpha =\mbm{1}  } \\
= &  \Phi_\chi.
\end{aligned}
\end{equation} 
We deduce from \eqref{RC1} and \eqref{RC2} that 
$$R_{\uchi} \simeq R_{\chi}.$$
It is proved in \cite[Proposition 2.6]{Key3} that one has the isomorphism $R_{\uchi} \simeq \mca{S}_{\phi_{\uchi}}$. From this we see that $R_\chi \simeq \mca{S}_{\phi[\chi_\psi]}$, which is also isomorphic to $\mca{S}_\phi$ by Lemma \ref{L:rel-S}. This completes the proof.
\end{proof}

In view of the isomorphism $R_\chi \simeq \mca{S}_{\phi_\chi}$, the character relation in Corollary \ref{C:Tcoef} can be interpreted in terms of $\Irr(\mca{S}_{\phi_\chi})$ as well.

\begin{cor} \label{C:R=1}
If the dual group $\wt{G}^\vee$ is semisimple simply-connected, then $R_\chi=\set{1}$ for every unitary unramified genuine character $\chi$ of $Z(\wt{T})$.
\end{cor}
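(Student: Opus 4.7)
The plan is to combine Theorem \ref{T:R=S} with Steinberg's classical connectedness result for centralizers in simply-connected semisimple groups. By Theorem \ref{T:R=S} we have the isomorphism $R_\chi \simeq \mca{S}_{\phi_\chi}$, so it suffices to show that $\mca{S}_{\phi_\chi}$ is trivial. Using Lemma \ref{L:rel-S}, we may replace $\phi_\chi$ by the unramified parameter $\phi_\chi[\chi_\psi]: W_F \to \wt{T}^\vee \subseteq \wt{G}^\vee$ and compute $\mca{S}_{\phi_\chi[\chi_\psi]}$ instead.

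The key observation is that since $\chi$ is unramified, the parameter $\phi_\chi[\chi_\psi]$ factors through the quotient $W_F \twoheadrightarrow W_F/I \simeq \Z$ generated by a Frobenius element; hence its image in $\wt{G}^\vee$ is the cyclic subgroup generated by a single semisimple element $t:=\phi_\chi[\chi_\psi](\text{Frob}) \in \wt{T}^\vee$. Consequently $S_{\phi_\chi[\chi_\psi]}$ coincides with the centralizer $Z_{\wt{G}^\vee}(t)$. Now invoke Steinberg's theorem: in a semisimple simply-connected complex algebraic group, the centralizer of any semisimple element is connected. Applied to $\wt{G}^\vee$, this yields $S_{\phi_\chi[\chi_\psi]}^0 = S_{\phi_\chi[\chi_\psi]}$.

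Finally, since $Z(\wt{G}^\vee) \subseteq S_{\phi_\chi[\chi_\psi]}$ automatically (the center centralizes everything), we conclude
$$Z(\wt{G}^\vee) \cdot S_{\phi_\chi[\chi_\psi]}^0 = S_{\phi_\chi[\chi_\psi]},$$
so that $\mca{S}_{\phi_\chi[\chi_\psi]} = \set{1}$, and therefore $R_\chi \simeq \mca{S}_{\phi_\chi} = \set{1}$.

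I do not anticipate any serious obstacle here: the only subtle point is verifying that one is genuinely in the situation of Steinberg's theorem (i.e., that $t$ really is semisimple and $\wt{G}^\vee$ simply-connected semisimple as a complex algebraic group), but this is immediate from the hypothesis on $\wt{G}^\vee$ together with the fact that $\phi_\chi[\chi_\psi](\text{Frob})$ lies in the torus $\wt{T}^\vee$.
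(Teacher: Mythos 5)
Your proposal is correct and takes essentially the same approach as the paper's first argument: combine the isomorphism $R_\chi \simeq \mca{S}_{\phi_\chi}$ from Theorem \ref{T:R=S} with Steinberg's connectedness theorem for centralizers of semisimple elements in simply-connected groups. You merely supply the intermediate steps (passing to the relative unramified parameter via Lemma \ref{L:rel-S}, identifying $S_{\phi[\chi_\psi]}$ with the centralizer of a single semisimple element $t\in\wt{T}^\vee$) that the paper leaves implicit; the paper also offers a second, equivalent argument reducing to the $R$-group of the adjoint group $H$, which you did not pursue.
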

\begin{proof}
This follows from the isomorphism $R_\chi \simeq \mca{S}_{\phi_\chi}$ and the well-known result of Steinberg that the centralizer of a semisimple element inside a simply-connected group (such as $\wt{G}^\vee$ by our assumption) is connected.

Alternatively (and equivalently), from the proof of the preceding theorem, we have $R_\chi = R_{\uchi}$, where $I(\uchi)$ is an unramified principal series of $H$. If $\wt{G}^\vee$ is simply-connected, then $H$ is of adjoint type and it can be argued directly (see \cite[Corollary 1.6]{LiJS1} or \cite{BM1}) that  $R_{\uchi}=\set{1}$ in this case. 
\end{proof}

\begin{eg} \label{E:Sp-e}
Let $\mbf{G}=\Sp_{2r}$ and let $Q$ be a Weyl-invariant quadratic form on its cocharacter lattice. If $n_\alpha = 0 \ ({\rm mod} \ 2)$ for the unique short simple coroot $\alpha^\vee$ of $\Sp_{2r}$, then
$$\wt{G}^\vee=\Sp_{2r};$$
in this case $R_\chi=\set{1}$ and $I(\chi)$ is always irreducible. This applies in particular to the case of metaplectic-type group $\wt{\Sp}_{2r}^{(n)}$, i.e., when $n_{\alpha} = 2 \mod 4$, see Definition \ref{D:meta}. For $\wt{\SL}_2^{(n)}$, this is compatible with the discussion in \S  \ref{S:SL2}.
\end{eg}

\section{Whittaker space and the main conjecture}  \label{S:conj}

The main goal of this section is to investigate $\dim \Wh(\pi)$, where $\pi \in \Pi(\chi)$ is any irreducible constituent of $I(\chi)$. 

\subsection{The Whittaker space}
Let $\psi: F\to \C^\times$ be an additive character of conductor $O$. Let
$$\psi_U: U\to \C^\times$$
be the character of $U$ such that its restriction to every $U_\alpha, \alpha\in \Delta$ is given by $\psi \circ e_\alpha^{-1}$. We may write $\psi$ instead of $\psi_U$ for simplicity.

\begin{dfn}
For a genuine representation $(\pi, V_\pi)$ of $\wt{G}$, a linear functional $\ell: V_\pi \to \C$ is called a $\psi$-Whittaker functional if $\ell(\pi(u) v)= \psi(u) \cdot v$ for all $u\in U$ and $v\in V_\pi$.
Write $\Wh(\pi)$ for the space of $\psi$-Whittaker functionals for $\pi$.
\end{dfn}

The space $\Wh(I(\chi))$ for an unramified principal series $I(\chi)$ could be described as follows.
\begin{enumerate}
\item[$\bullet$] First, let $\Ftn(i(\chi))$ be the vector space of  functions $\cc$ on $\wt{T}$  satisfying
$$\cc(\wt{t} \cdot \wt{z}) =  \cc(\wt{t}) \cdot \chi(\wt{z}) \text{ for } \wt{t} \in \wt{T} \text{ and } \wt{z} \in \wt{A}.$$
The support of any $\cc \in \Ftn(i(\chi))$ is a disjoint union of cosets in $\wt{T}/\wt{A}$. We have
$$\dim \Ftn(i(\chi))=\val{\msc{X}_{Q,n}},$$
since $\wt{T}/\wt{A} \simeq Y/Y_{Q,n} = \msc{X}_{Q,n}$. 
\item[$\bullet$] Second, for every $\gamma \in \wt{T}$, let $\cc_\gamma \in \Ftn(i(\chi))$ be the unique element satisfying
$$\text{supp}(\cc_{\gamma})=\gamma \cdot \wt{A} \text{ and } \cc_{\gamma}(\gamma)=1.$$
Clearly, $\cc_{\gamma \cdot a} = \chi(a)^{-1} \cdot \cc_\gamma$ for every $a\in \wt{A}$. If $\set{\gamma_i}\subset \wt{T}$ is a chosen set of representatives of $\wt{T}/\wt{A}$, then $\set{\cc_{\gamma_i}}$ forms a basis for $\Ftn(i(\chi))$. Let $i(\chi)^\vee$ be the vector space of functionals of $i(\chi)$.
 The set $\set{\gamma_i}$ gives rise to linear functionals $l_{\gamma_i} \in i(\chi)^\vee$ such that $l_{\gamma_i}(\phi_{\gamma_j})=\delta_{ij}$, where $\phi_{\gamma_j}\in i(\chi)$ is the unique element such that 
$$\text{supp}(\phi_{\gamma_j})=\wt{A}\cdot \gamma_j^{-1} \text{ and  } \phi_{\gamma_j}(\gamma_j^{-1})=1.$$
It is easy to see that for any $\gamma\in \wt{T}$ and $a\in \wt{A}$, one has
$$\phi_{\gamma a}= \chi(a)\cdot \phi_\gamma, \quad  l_{\gamma a} = \chi(a)^{-1} \cdot l_\gamma.$$
Moreover, there is a natural isomorphism of vector spaces 
$$\Ftn(i(\chi)) \simeq i(\chi)^\vee$$
 given by
$$ \cc  \mapsto   l_\cc:= \sum_{\gamma_i \in \wt{T}/\wt{A}} \cc(\gamma_i) \cdot l_{\gamma_i}.
$$
It can be checked easily that this isomorphism does not depend on the choice of representatives for $\wt{T}/\wt{A}$.
\item[$\bullet$] Third, there is an isomorphism between $i(\chi)^\vee$ and the space $\Wh(I(\chi))$ of $\psi$-Whittaker functionals of $I(\chi)$ (see \cite[\S 6]{Mc2}), given by $\lambda \mapsto W_\lambda$ with
$$W_\lambda:  I(\chi) \to \C, \quad f \mapsto \lambda \left( \int_{U} f(\wt{w}_G^{-1}u) \psi(u)^{-1} \mu(u) \right),$$
where $f\in I(\chi)$ is an $i(\chi)$-valued function on $\wt{G}$. Here $\wt{w}_G=\wt{w}_{\alpha_1} \wt{w}_{\alpha_2} ... \wt{w}_{\alpha_k}\in K$ is the representative of $\w_G$, where $\w_G=\w_{\alpha_1} \w_{\alpha_2} ... \w_{\alpha_k}$ is a minimum decomposition of $\w_G$. 
\end{enumerate}
Thus, we have a chain of natural isomorphisms of vector spaces all of dimension $\val{\msc{X}_{Q,n}}$:
$$\Ftn(i(\chi))\simeq i(\chi)^\vee \simeq \Wh(I(\chi)).$$
For every $\cc\in \Ftn(i(\chi))$, by abuse of notation, we will write $\lambda_\cc^{\chi} \in \Wh(I(\chi))$ for the resulting $\psi$-Whittaker functional of $I(\chi)$ obtained from the above isomorphism.

The operator $A(w,\chi): I(\chi) \to I({}^w \chi)$ induces a homomorphism of vector spaces
$$A(w, \chi)^*: \Wh(I({}^w \chi)) \to \Wh(I(\chi)) $$
given by 
$$\angb{\lambda_\cc^{{}^w \chi} }{-} \mapsto \angb{\lambda_\cc^{{}^w \chi} }{A(w,\chi)(-)},$$
where $\cc \in \Ftn(i({}^w \chi))$.

\subsection{The scattering matrix $\mca{S}_\mfr{R}(w, i(\chi))$}
Let $\mfr{R}, \mfr{R}' \subset \wt{T}$ be two ordered sets of representatives of $\wt{T}/\wt{A}=\msc{X}_{Q,n}$.
Let 
$$\set{\lambda_{\gamma }^{^w \chi}: \gamma \in \mfr{R}}$$
be the ordered basis for  $\Wh(I({}^w \chi))$, and 
$$\set{ \lambda_{\gamma'}^{\chi}: \gamma'\in \mfr{R}' }$$
 the ordered basis for $\Wh(I(\chi))$. The map $A(w,\chi)^*$ is
then determined by the so-called scattering matrix
$$\mca{S}_{\mfr{R}, \mfr{R}'}(w, i(\chi)) = [\tau(w, \chi, \gamma, \gamma')]_{\gamma \in \mfr{R}, \gamma'\in \mfr{R}'}$$
such that
\begin{equation} \label{tau-Mat}
A(w, \chi)^*(\lambda_{\gamma}^{^w \chi}) = \sum_{\gamma'\in \mfr{R}'} \tau(w, \chi, \gamma, \gamma') \cdot \lambda_{\gamma'}^{\chi}.
\end{equation}

We briefly describe the matrix $\mca{S}_{\mfr{R}, \mfr{R}'}(w, i(\chi))$. First, we have:
\begin{enumerate}
\item[$\bullet$] For $\w\in W$ and $\wt{z}, \wt{z}'\in \wt{A}$, the identity 
\begin{equation} \label{SLCM1}
\tau(w, \chi, \gamma \cdot \wt{z}, \gamma' \cdot \wt{z}')=({}^w \chi)^{-1}(\wt{z}) \cdot \tau(w, \chi, \gamma, \gamma') \cdot \chi(\wt{z}')
\end{equation}
holds.
\item[$\bullet$] For $\w_1, \w_2 \in W$ such that $l(\w_2\w_1)=l(\w_2) + l(\w_1)$, one has
\begin{equation} \label{SLCM2}
\tau( w_2 w_1, \chi,  \gamma, \gamma')=\sum_{\gamma''\in \wt{T}/\wt{A}} \tau(w_2, {}^{w_1}\chi, \gamma, \gamma'') \cdot \tau(w_1, \chi, \gamma'', \gamma'),
\end{equation}
which is referred to as the cocycle relation. By \eqref{SLCM1}, the above sum is independent of the choice of representatives $\gamma''$.
\end{enumerate}

The above cocycle relation implies that in principle it suffices to understand $\tau(w_\alpha, \chi, \gamma, \gamma')$ for $\gamma, \gamma'\in \wt{T}$ for $\alpha\in \Delta$. For this purpose, let $du$ be the self-dual Haar measure of $F$ with respect to $\psi$ such that $du(O)=1$; thus, $du(O^\times)=1-q^{-1}$. Consider the Gauss sum
$$G_{\psi}(a, b):=\int_{O^\times} (u, \varpi)_n^a \cdot \psi(\varpi^b u) du \text{ for } a, b\in \Z.$$
We also write
$$\mathbf{g}_{\psi}(k):=G_{\psi}(k, -1),$$
where $k\in \Z$ is any integer. Denote henceforth
$$\vep:= (-1, \varpi)_n \in \C^\times.$$ 
It is known that
\begin{equation} \label{F:gauss}
\mbf{g}_{\psi}(k)=
\begin{cases}
\vep^k \cdot \overline{\mbf{g}_{\psi}(-k)} & \text{ for every  } k\in \Z, \\
-q^{-1} & \text{ if } n| k,\\
\mbf{g}_{\psi}(k) & \text{ with  }  \val{\mbf{g}_{\psi}(k)}=q^{-1/2}  \text{ if } n \nmid k.\\
\end{cases}
\end{equation}
Here $\overline{z}$ denotes the complex conjugation of a complex number $z$.

It is shown in \cite{KP, Mc2} (with some refinement from \cite{Ga2}) that $\tau(w_\alpha, \chi, \gamma, \gamma')$ is determined as follows.

\begin{thm} \label{T:tau}
Suppose that $\gamma=\s_{y_1}$ and $\gamma'=\s_y$ with $y_1, y\in Y$.  First, we can write $\tau(w_\alpha, \chi, \gamma, \gamma')=\tau^1(w_\alpha, \chi, \gamma, \gamma') + \tau^2(w_\alpha, \chi, \gamma, \gamma')$ satisfying the following properties:
\begin{enumerate} 
\item[$\bullet$] 
$\tau^i(w_\alpha, \chi, \gamma \cdot \wt{z}, \gamma' \cdot \wt{z}')=({}^{w_\alpha} \chi)^{-1}(\wt{z}) \cdot \tau^i( w_\alpha, \chi, \gamma, \gamma') \cdot \chi(\wt{z}') \text{ for all } \wt{z}, \wt{z}'\in \wt{A} $;
\item[$\bullet$] 
$\tau^1(w_\alpha, \chi, \gamma, \gamma')=0$  unless  $y_1 \equiv y \mod Y_{Q,n}$;
\item[$\bullet$] $\tau^2(w_\alpha, \chi, \gamma, \gamma')=0$   unless $y_1 \equiv \w_\alpha[y] \mod Y_{Q,n}$.
\end{enumerate}
Second, 
\begin{enumerate}
\item[$\bullet$] if $y_1= y$, then 
$$\tau^1(w_\alpha, \chi, \gamma, \gamma')=(1-q^{-1}) \frac{\chi (\wt{h}_\alpha(\varpi^{n_\alpha}))^{k_{y,\alpha}}}{1-\chi (\wt{h}_\alpha(\varpi^{n_\alpha}))}, \text{ where } k_{y,\alpha}=\ceil{\frac{\angb{y}{\alpha}}{n_\alpha}};$$
\item[$\bullet$] if $y_1=\w_\alpha[y]$, then
$$\tau^2(w_\alpha, \chi, \gamma, \gamma') = \vep^{ \angb{y_\rho}{\alpha} \cdot D(y, \alpha^\vee) } \cdot \g(\angb{y_\rho}{\alpha}Q(\alpha^\vee)).$$
\end{enumerate}
\end{thm}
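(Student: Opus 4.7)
The plan is to reduce the computation to a single simple reflection $\w_\alpha$ and then carry out an explicit integral in the rank-one situation, tracking the covering cocycles carefully. The statement asserting the two support properties, namely $y_1 \equiv y$ or $y_1 \equiv \w_\alpha[y] \mod Y_{Q,n}$, is actually built into the rank-one Bruhat decomposition: since $U_{w_\alpha} = U_\alpha$ is one-dimensional, integrating $f(\wt{w}_\alpha^{-1} \wt{e}_\alpha(u) \wt{g})$ against a Whittaker functional breaks up into two pieces according to whether $\wt{e}_\alpha(u)$ has large or small $\val{u}$; the former stays in the open Bruhat cell and produces the $\w_\alpha[y]$-contribution, while the latter lies in $K \cdot \wt{A}$ and produces the $y$-contribution. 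This dichotomy is the origin of the decomposition $\tau = \tau^1 + \tau^2$, and the $\wt{A}$-equivariance property follows formally from the definition $(\lambda_\gamma^{{}^w \chi})(\wt{t} \cdot -) = ({}^w \chi)(\wt{t})^{-1} \lambda_\gamma^{{}^w \chi}$ together with the fact that $\chi$ transforms $\phi_{\gamma'}$ by $\chi(\wt{z}')$.

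For the actual values, I would first compute $\tau^1$. Taking $\gamma = \s_y$ and $\gamma' = \s_y$, one unfolds the definition \eqref{tau-Mat} by evaluating the Whittaker integral on the test vector $\phi_{\gamma'^{-1}}$ supported on $\wt{A} \cdot \s_y$. After reducing the $U_\alpha$-integral to the locus $\val{u} \le q^{k_{y,\alpha}}$ where the product $\wt{w}_\alpha^{-1} \wt{e}_\alpha(u) \s_y$ lies back in $\wt{A} \cdot K$, the $\psi$-twist is trivial on this locus, and a standard geometric series computation for the $\varpi$-adic slices
\[
\int_{\varpi^{-k_{y,\alpha}} O} \cdots \, du = (1-q^{-1}) \sum_{j \ge 0} \chi(\wt{h}_\alpha(\varpi^{n_\alpha}))^{j + k_{y,\alpha}}
\]
yields the claimed formula with the ceiling exponent $k_{y,\alpha} = \lceil \angb{y}{\alpha}/n_\alpha \rceil$. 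The ceiling comes precisely from the threshold at which $e_\alpha(u) \cdot h_\alpha(\text{something}) \in K$ inside $\SL_2$-type subgroup, once one notes that the splitting over $K$ is normalized so that $\wt{h}_\alpha(u) \in s_K(K)$ for $u \in O^\times$.

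For $\tau^2$, I would use the Iwahori-style formula $\wt{w}_\alpha^{-1} \wt{e}_\alpha(u) = \wt{e}_{-\alpha}(u^{-1}) \cdot \wt{h}_\alpha(u^{-1}) \cdot \wt{e}_\alpha(u^{-1}) \cdot \wt{w}_\alpha^{-1}$ for $\val{u} > q^{k_{y,\alpha}}$, so that conjugating $\s_y$ through $\wt{w}_\alpha^{-1}$ produces $\s_{\w_\alpha[y]}$ up to a cocycle. The resulting $u$-integral then collapses to the Gauss-type integral
\[
\int_{\val{u} = q^{\angb{y_\rho}{\alpha}}} \psi(u^{-1}) \cdot \chi(\wt{h}_\alpha(u^{-1})) \, du,
\]
which by the very definition of $\mbf{g}_\psi$ gives $\g(\angb{y_\rho}{\alpha} Q(\alpha^\vee))$. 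The sign $\vep^{\angb{y_\rho}{\alpha} \cdot D(y, \alpha^\vee)}$ appears as the discrepancy between the natural lifting $\wt{h}_\alpha$ and the section $\s$, computed via \eqref{F:s} and \eqref{F:W-act} with $\vep = (-1, \varpi)_n$ tracking the powers of $-1$ that arise when rewriting $\wt{h}_\alpha(u^{-1}) \cdot \s_y$ as $\s_{\w_\alpha[y]}$ times torus corrections.

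The genuinely delicate part, and the one I expect will require the most care, is the bookkeeping of the Hilbert symbol twists producing the factor $\vep^{\angb{y_\rho}{\alpha} D(y, \alpha^\vee)}$. The bilinear form $D$ (rather than the Weyl-invariant $B_Q$) appears because the section $\s$ is not symmetric: the rewriting $\s_{y_1} \cdot \s_{y_2} = \set{a,b}^{D(y_1,y_2)} \s_{y_1 y_2}$ is asymmetric, and one must apply it in a consistent order throughout the unfolding. This is precisely the step at which the explicit results of Kazhdan--Patterson and McNamara must be invoked (as done in \cite{KP, Mc2}, with refinements in \cite{Ga2}), and I would follow their normalization conventions so that the final exponent matches the pairing $\angb{y_\rho}{\alpha} \cdot D(y, \alpha^\vee)$.
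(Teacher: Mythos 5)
The paper does not actually prove Theorem~\ref{T:tau}; it is stated as a recalled result from \cite{KP}, \cite{Mc2}, with a refinement from \cite{Ga2}, and no argument is given (the sentence preceding the theorem says explicitly ``It is shown in \cite{KP, Mc2} (with some refinement from \cite{Ga2})\ldots''). So there is no internal proof to compare against. Your sketch does follow the right contours of the rank-one computation carried out in those references: the dichotomy producing $\tau^1 + \tau^2$ indeed comes from the Bruhat decomposition of $U_\alpha$, $\tau^1$ is a geometric series, $\tau^2$ is a Gauss sum, and the cocycle enters through the bisector $D$ because the section $\s$ from \eqref{F:s} is genuinely asymmetric.

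Nonetheless two points need flagging. First, the rank-one identity you invoke for $\tau^2$ is incorrect as written. With the paper's pinning $w_\alpha = e_\alpha(1)e_{-\alpha}(-1)e_\alpha(1)$, the correct $\SL_2$-relation for $u\ne 0$ is
\[
w_\alpha^{-1}\,e_\alpha(u) \;=\; e_\alpha(-u^{-1})\, h_\alpha(u^{-1})\, e_{-\alpha}(u^{-1}),
\]
which one checks directly to equal $\left(\begin{smallmatrix}0&-1\\1&u\end{smallmatrix}\right)$; your version $e_{-\alpha}(u^{-1})\,h_\alpha(u^{-1})\,e_\alpha(u^{-1})\,w_\alpha^{-1}$ has the roles of $e_{\pm\alpha}$ swapped, drops a sign, and carries a spurious trailing $w_\alpha^{-1}$ --- as written it would put the Whittaker unipotent on the wrong side and the $\psi$-twist bookkeeping would collapse. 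Second, and more importantly, the entire content of the theorem is the precise exponents $\angb{y_\rho}{\alpha}\cdot D(y,\alpha^\vee)$, $\angb{y_\rho}{\alpha}Q(\alpha^\vee)$, and the ceiling $k_{y,\alpha}=\ceil{\angb{y}{\alpha}/n_\alpha}$; these are exactly the cocycle corrections from repeatedly applying \eqref{F:s} and \eqref{F:W-act}, and you explicitly defer that step to \cite{KP, Mc2, Ga2}. Since that deferred step is where all the work lies, the proposal amounts to a road map through the cited references rather than a self-contained derivation --- which, to be fair, is precisely the stance the paper itself takes by citing rather than proving.
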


\subsection{The main conjecture}
If $\mfr{R}=\mfr{R}'$, then 
$$\mca{S}_\mfr{R}(w, i(\chi)):= \mca{S}_{\mfr{R}, \mfr{R}}(w, i(\chi)),$$
is a square matrix with entires indexed by a single ordered set $\mfr{R}$. 
Since $W$ acts on $\msc{X}_{Q,n}=Y/Y_{Q,n}$ with respect to $\w[-]$, one has a decomposition
$$\msc{X}_{Q,n} = \bigsqcup_{ \mca{O}_y \in \mca{O}_{\msc{X}} } \mca{O}_y$$
into $W$-orbits. This gives a natural partition
$$\mfr{R} = \bigsqcup_{ \mca{O}_y \in \mca{O}_{\msc{X}} } \mfr{R}_y,$$
where $\mfr{R}_y \subset \mfr{R}$ is the subset of representatives of $\mca{O}_y$.
For each $W$-orbit $\mca{O}_y$,  denote
$$\mca{S}_\mfr{R}(w, i(\chi))_{\mca{O}_y} := [\tau(w, \chi, \s_{z}, \s_{z'})]_{z, z' \in \mfr{R}_y}.$$
It follows from the cocycle relation \eqref{SLCM2} and Theorem \ref{T:tau} that $\mca{S}_\mfr{R}(w, i(\chi))$ is a block-diagonal matrix with blocks $\mca{S}_\mfr{R}(w, i(\chi))_{\mca{O}_y}$ for $\mca{O}_y \in \mca{O}_{\msc{X}}$, which we write as
\begin{equation} \label{DB}
\mca{S}_\mfr{R}(w, i(\chi)) = \bigoplus_{ \mca{O}_y \in \mca{O}_{\msc{X}}  } \mca{S}_\mfr{R}(w, i(\chi))_{\mca{O}_y}.
  \end{equation}
  
In fact, for $y\in \msc{X}_{Q,n}$, let
$$\Wh (I(\chi))_{\mca{O}_y}= \text{Span}\set{\lambda^{\chi}_{\s_z}: z\in \mfr{R}_y   } \subset \Wh(I(\chi))$$
be the ``$\mca{O}_y$-subspace" of the Whittaker space of $I(\chi)$.
It is well-defined and independent of the representatives for $y\in \msc{X}_{Q,n}$, and moreover
$$\dim \Wh (I(\chi))_{\mca{O}_y}= \val{ \mca{O}_y }.$$
One has a decomposition
$$\Wh (I(\chi)) = \bigoplus_{\mca{O}_y \in \mca{O}_\msc{X}}  \Wh (I(\chi))_{\mca{O}_y}.$$
For every $\sigma \in {\rm Irr}(R_\chi)$, the inclusion  $h_\sigma: \pi_\sigma \into I(\chi)$ induces a surjection of vector spaces
$$h_\sigma^*: \Wh(I(\chi)) \onto \Wh(\pi_\sigma).$$
Denote 
\begin{equation} \label{Oy-Wh}
\Wh(\pi_\sigma)_{\mca{O}_y}:=h_\sigma^* \left(  \Wh(I(\chi))_{\mca{O}_y}  \right).
\end{equation}

Consider the natural permutation representation
$$\begin{tikzcd}
\sigma^\msc{X}: W \ar[r]  &  \perm(\msc{X}_{Q,n})
\end{tikzcd} $$
of $W$ given by $\sigma_{\msc{X}}(\w)(y) = \w[y]$. Clearly, one has a decomposition
$$\sigma^\msc{X} = \bigoplus_{ \mca{O}_y \in \mca{O}_\msc{X}  } \sigma_{\mca{O}_y}^\msc{X}, $$
where 
$$\sigma^\msc{X}_{\mca{O}_y}:  W \longrightarrow \perm(\mca{O}_y)$$
is the permutation representation on the $W$-orbit  $\mca{O}_y$. As we always identify $R_\chi$ as a subgroup of $W_\chi \subset W$, thus $\sigma_{\mca{O}_y}^\msc{X}$ can be viewed as a representation of $R_\chi$ by restriction.

\begin{conj} \label{MConj}
Let $\wt{G}$ be a saturated $n$-fold cover of a semisimple simply-connected group $G$. Let $\chi$ be a unitary unramified genuine character of $Z(\wt{T})$. Then for the natural correspondence (as discussed in \S \ref{S:Pi-chi})
$$\begin{tikzcd}
{\rm Irr}(R_\chi) \ar[r] & \Pi(\chi), \quad \sigma \mapsto \pi_\sigma,
\end{tikzcd}$$ 
one has 
$$\dim \Wh(\pi_\sigma)_{\mca{O}_y}= \angb{\sigma}{\sigma_{\mca{O}_y}^\msc{X}}_{R_\chi}$$
for every $W$-orbit  $\mca{O}_y$, where $\angb{\sigma_1}{\sigma_2}_{R_\chi}$ is the pairing of the two representations $\sigma_1, \sigma_2$ of $R_\chi$. In particular, $\dim \Wh(\pi_\sigma) = \angb{\sigma}{\sigma^\msc{X}}_{R_\chi}$ for every $\sigma \in \Irr(R_\chi)$.
\end{conj}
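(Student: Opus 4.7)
The plan is to combine Theorem~\ref{T:unc} with a direct computation of the scattering matrix. By Theorem~\ref{T:unc}, $\dim\Wh(\pi_\sigma)_{\mca{O}_y}=\angb{\sigma}{\sigma^{\rm Wh}_{\mca{O}_y}}_{R_\chi}$, and $\sigma^{\rm Wh}_{\mca{O}_y}(\w)$ is represented by the matrix $\gamma(w,\chi)\cdot\mca{S}_\mfr{R}(w,i(\chi))_{\mca{O}_y}$. Because we work in characteristic zero, Conjecture~\ref{MConj} is equivalent to the isomorphism $\sigma^{\rm Wh}_{\mca{O}_y}\simeq\sigma^\msc{X}_{\mca{O}_y}$ of $R_\chi$-representations, and by comparing characters it reduces to the identity
\[
\Tr\mca{S}_\mfr{R}(w,i(\chi))_{\mca{O}_y}\;=\;\val{(\mca{O}_y)^\w}\cdot \gamma(w,\chi)^{-1}\qquad\text{for every }\w\in R_\chi,
\]
which is Conjecture~\ref{MConj2}. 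So the first step is to record this equivalence, and the remainder of the argument aims at the displayed trace identity.

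Two structural simplifications come from the hypotheses. Since $G$ is semisimple simply-connected and $\wt{G}$ is saturated, $\wt{G}^\vee$ is of adjoint type, so Proposition~\ref{P:bd} gives $R_\chi=R_\chi^{sc}$, and Proposition~\ref{P:R-sc} identifies this with the classical R-group $R_{\uchi^{sc}}$ of an unramified principal series on a simply-connected Chevalley group. By Keys' Tables~1--2, $R_\chi$ is then cyclic or very small, and every $\w\in R_\chi$ admits a reduced expression $w=w_{\alpha_k}\cdots w_{\alpha_1}$ with each $\alpha_i\notin\Phi_\chi$, so that each $\gamma(w_{\alpha_i},\chi)^{-1}$ is finite and nonzero. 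Iterating the cocycle relation~\eqref{SLCM2} then yields
\[
\tau(w,\chi,\s_z,\s_z)\;=\;\sum_{(z_1,\ldots,z_{k-1})}\prod_{i=1}^{k}\tau\!\left(w_{\alpha_i},{}^{w_{\alpha_{i-1}}\cdots w_{\alpha_1}}\chi,\,\s_{z_i},\s_{z_{i-1}}\right),\quad z_0=z_k=z.
\]
By Theorem~\ref{T:tau}, each factor vanishes unless $z_i\equiv z_{i-1}$ or $z_i\equiv \w_{\alpha_i}[z_{i-1}]$ in $\msc{X}_{Q,n}$, so the sum is indexed by ``paths'' in $\msc{X}_{Q,n}$ from $z$ to $z$ through the Cayley-type graph generated by the simple reflections of the chosen reduced expression. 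Taking the trace and summing over $z\in\mfr{R}_y$, one must show that the total path contribution vanishes whenever $\w[z]\ne z$ and equals $\gamma(w,\chi)^{-1}$ whenever $\w[z]=z$.

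The main obstacle is the latter closed-form evaluation, since the individual path contributions do not a priori telescope. Each edge contributes either a $\tau^1$-factor (a rational expression in the $\chi_{\alpha_i}$) or a $\tau^2$-factor (a product of a sign $\vep^{\angb{y_\rho}{\alpha}D(y,\alpha^\vee)}$ and a Gauss sum $\g(\angb{y_\rho}{\alpha}Q(\alpha^\vee))$), and paths indexed by different subsets $S\subseteq\{1,\ldots,k\}$ of ``$\tau^2$-edges'' produce heterogeneous quantities, whereas the target $\gamma(w,\chi)^{-1}=\prod_{\alpha\in\Phi_\w}\gamma(w_\alpha,\chi)^{-1}$ is a clean product of Tate factors. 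My approach would be to exploit the constraint $\w[z]=z$ to pair up the $\tau^2$-edges along each path, apply the relations $\val{\g(k)}^2=q^{-1}$ and $\g(k)\overline{\g(-k)}=\vep^k q^{-1}$ from~\eqref{F:gauss} to cancel Gauss sums and signs, and then show that the remaining sum of rational quantities collapses to $\gamma(w,\chi)^{-1}$; paths with $\w[z]\ne z$ should cancel in orbits under the natural action of the intermediate stabilizers. The cyclic (or elementary abelian) structure of $R_\chi^{sc}$ from Keys' tables makes this a finite case analysis: pick a generator of each cyclic factor and verify the trace identity directly, in the spirit of the $\wt{\Sp}_{2r}^{(n)}$ and $\wt{\SL}_3^{(2)}$ calculations of \S\ref{S:eg}.
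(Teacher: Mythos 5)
The statement you were asked to prove is Conjecture~\ref{MConj}, which the paper does \emph{not} prove in general: it is stated as a conjecture, reformulated as Conjecture~\ref{MConj2}, and then verified only for $\wt{\Sp}_{2r}^{(n)}$ (Theorem~\ref{T:Sp2r}), $\wt{\SL}_3^{(2)}$ (Proposition~\ref{T:SL3}), and the exceptional orbits $\mca{O}_z$ with $z\in\msc{X}_{Q,n}^{\rm exc}$ (Theorem~\ref{T:unWh}). So there is no ``paper's own proof'' to compare against, and your text cannot be assessed as a proof; at best it can be assessed as a plan.

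Your first step --- invoking Theorem~\ref{T:dW} (Theorem~\ref{T:unc}) to reduce the statement to the trace identity $\Tr\,\mca{S}_\mfr{R}(w,i(\chi))_{\mca{O}_y}=\val{(\mca{O}_y)^\w}\,\gamma(w,\chi)^{-1}$ --- is exactly the paper's own reduction to Conjecture~\ref{MConj2}, and the simplifications you note ($R_\chi=R_\chi^{sc}$ via Proposition~\ref{P:bd} when $\wt{G}^\vee$ is adjoint, identification with $R_{\uchi^{sc}}$ via Proposition~\ref{P:R-sc}, Keys' tables) are all correct and are also used by the paper. That part is fine.

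The remainder of your proposal, however, has a concrete gap. You propose to decompose $\mca{O}_y$ into $R_\chi$-orbits $\mca{O}_{z_i}^{R_\chi}$ and arrange the path contributions so that each $R_\chi$-orbit contributes $\val{(\mca{O}_{z_i}^{R_\chi})^\w}\cdot\gamma(w,\chi)^{-1}$ to the trace, i.e.\ you want the inner summands of \eqref{E:sumO} to match term by term (``paths with $\w[z]\ne z$ should cancel in orbits under the natural action of the intermediate stabilizers''). Remark~\ref{R:obsat} explicitly warns that this finer, $R_\chi$-orbit-by-orbit identity \emph{fails} already for the saturated cover $\wt{\SL}_4^{(3)}$ of simply-connected $\SL_4$ (with $y=\sum_{\alpha\in\Delta}\alpha^\vee$, $\val{\mca{O}_y}=6$), even though that cover is within the scope of the conjecture. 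Only the aggregated trace over the whole $W$-orbit is expected to behave well, and the mechanism by which the cross-$R_\chi$-orbit contributions compensate each other is precisely what is not understood --- this is why the statement remains a conjecture and why the paper's verifications for $\Sp_{2r}$ succeed (there the $R_\chi$-orbit-level cancellation happens to hold) while a uniform argument is still missing. Your suggestion to ``verify the trace identity directly'' as a finite case analysis also does not close the gap: the $W$-orbits $\mca{O}_y$ grow with $n$ and with the Weyl group, so for fixed type there are infinitely many configurations to check; one would need a structural argument, not enumeration. In short, your reduction is correct and matches the paper, but the proposed proof of the resulting trace identity rests on a cancellation pattern that the paper itself shows to be false in general, and no substitute mechanism is supplied.
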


Note that the conjecture is trivially true for arbitrary $\wt{G}$ if $R_\chi=\set{1}$. This applies in particular to all Brylinski--Deligne covers of $\GL_r$, for which $R_\chi$ is always trivial. We also recall that if $G$ is simply-connected, then by Definition \ref{D:meta}, $\wt{G}$ is saturated if and only if the dual group $\wt{G}^\vee$ is of adjoint type, i.e., $Y_{Q,n}= Y_{Q,n}^{sc}$ in this case.

\begin{rmk} \label{R:comp}
 Conjecture \ref{MConj} is compatible with the decomposition 
$$I(\chi)=\bigoplus_{\sigma \in \text{Irr}(R_\chi)}  \pi_\sigma.$$
Indeed, one has 
$$\sum_{\sigma \in  \text{Irr}(R_\chi) }  \angb{ \sigma }{ \sigma^\msc{X}_{\mca{O}_y} }_{R_\chi} = \angb{ \C[R_\chi] }{ \sigma^\msc{X}_{\mca{O}_y} }_{R_\chi}= \val{\mca{O}_y},$$
which is equal to $\dim \Wh(I(\chi))_{\mca{O}_y}$.
\end{rmk}

\begin{rmk}
Ginzburg proposed in  \cite[Conjecture 1]{Gin4}   that if $\pi$ is an irreducible unramified representation of $\wt{G}$ which is non-generic, then there exists a non-generic theta representation $\Theta$ of a Levi subgroup $\wt{M} \subset \wt{P} \subset \wt{G}$ such that $\pi \into \text{Ind}_{ \wt{P}  }^{ \wt{G} } (\Theta)$. On the other hand, Conjecture \ref{MConj} implies that for simply-connected $G$, we have $\dim \Wh(\pi_\chi^{un})= \val{\mca{O}^{R_\chi}_{\msc{X}}  }$ (the number of $R_\chi$-orbits in $\msc{X}_{Q,n}$), since $\pi_\chi^{un} = \pi_\mbm{1}$. That is, Ginzburg's conjecture is vacuously true for such unramified representation $\pi_\chi^{un}$. For a comparison with the case of regular unramified $\chi$, see \cite[Remark 7.4]{Ga6}.
\end{rmk}

  \subsection{A formula for $\dim \Wh(\pi_\sigma)$} \label{S:formu}
In this subsection, let  $G$ be a general connected reductive group and let $\wt{G}$ be an $n$-fold cover, unless specified otherwise.  Consider the decomposition 
$$I(\chi)= \bigoplus_{\sigma \in \Irr(R_\chi)} \pi_\sigma.$$
Let $\w \in W_\chi$. We have ${}^w \chi \simeq \chi$, and thus an endomorphism
$$\msc{A}(w, \chi)^*: \Wh (I(\chi)) \to  \Wh(I(\chi))$$
induced from $\msc{A}(w, \chi)=\gamma(w, \chi) \cdot A(w, \chi)$, see Lemma \ref{P:gk-gm}. In fact, if $\w \in R_\chi$, then the normalizing factor is nonzero, and thus $A(w, \chi)$ is already holomorphic for every $\w \in R_\chi$. In any case, it follows from Corollary \ref{C:Tcoef} that for $\w \in R_\chi$
 $$
 \msc{A}(w, \chi)^*= \bigoplus_{\sigma \in {\rm Irr}(R_\chi) }  \sigma(\w) \cdot {\rm id}_{\Wh (\pi_\sigma)},
$$ 
and therefore the characteristic polynomial of $\msc{A}(w, \chi)^*$ is
 $$\begin{aligned}
\det \left(X\cdot \text{id} -  \msc{A}(w, \chi)^* \right) = &  \det \left( X \cdot I_{\val{\msc{X}_{Q,n}}} - \msc{A}(w, \chi)^* \right) \\ 
 =& \prod_{\sigma \in \Irr(R_\chi)} (X- \sigma(\w) )^{ \dim \Wh (\pi_\sigma)  } 
 \end{aligned} $$

For every $\w \in W$ and $\mca{O}_y \in \mca{O}_\msc{X}$, the restriction of $\msc{A}(w, \chi)^*: \Wh(I({}^w \chi)) \to \Wh(I(\chi))$ to $\Wh(I({}^w \chi))_{\mca{O}_y}$ gives a well-defined homomorphism
$$\msc{A}(w, \chi)^*_{\mca{O}_y}: \Wh(I({}^w \chi))_{\mca{O}_y} \to \Wh(I(\chi))_{\mca{O}_y}$$
represented by $\mca{S}_\mfr{R}(w, i(\chi))_{\mca{O}_y}$, see \cite[Proposition 4.4]{Ga6}. Moreover,
\begin{equation} \label{E:dcT}
\msc{A}(w, \chi)^*= \bigoplus_{\mca{O}_y \in \mca{O}_\msc{X}} \msc{A}(w, \chi)^*_{\mca{O}_y},
\end{equation}
where the sum is taken over all $W$-orbits in $\msc{X}_{Q,n}$. For $\w \in R_\chi$, this gives that
\begin{equation} \label{AO-dec}
 \msc{A}(w, \chi)^*_{\mca{O}_y}= \bigoplus_{\sigma \in {\rm Irr}(R_\chi) }  \sigma(\w) \cdot {\rm id}_{\Wh (\pi_\sigma)_{\mca{O}_y}}.
\end{equation}

For every $W$-orbit $\mca{O}_y \subset \msc{X}_{Q,n}$, consider the map
$$\sigma^{\rm Wh}_{\mca{O}_y}: R_\chi \longrightarrow  \GL(\Wh(I(\chi))_{\mca{O}_y} )$$
given by 
$$\sigma^{\rm Wh}_{\mca{O}_y}(\w):= \msc{A}(w, \chi)^*_{\mca{O}_y}.$$ 
Also let 
$$ \sigma^{\rm Wh}: R_\chi \longrightarrow  \GL(\Wh(I(\chi)) )$$
be the map given by $$\sigma^{\rm Wh}(\w):= \msc{A}(w, \chi)^*.$$
It is then clear from \eqref{E:dcT} that
$$\sigma^{\rm Wh} = \bigoplus_{\mca{O}_y \in \mca{O}_\msc{X} } \sigma^{\rm Wh}_{\mca{O}_y}.$$

\begin{thm} \label{T:dW}
Let $\wt{G}$ be an $n$-fold cover of a connected reductive group $G$. Then for every $\mca{O}_y \in \mca{O}_\msc{X}$, the map $\sigma^{\rm Wh}_{\mca{O}_y}$ is a well-defined group homomorphism and 
$$\dim \Wh(\pi_\sigma)_{\mca{O}_y} = \angb{\sigma}{ \sigma^{\rm Wh}_{\mca{O}_y} }_{R_\chi}$$
for every $\sigma\in \Irr(R_\chi)$. Hence, $\dim \Wh(\pi_\sigma)= \angb{\sigma}{ \sigma^{\rm Wh} }_{R_\chi}$.
\end{thm}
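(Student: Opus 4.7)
The plan is to derive the theorem formally from Corollary \ref{C:Tcoef} (the spectral decomposition of $\msc{A}(w,\chi)$ on $I(\chi)$), the block decomposition \eqref{E:dcT} of $\msc{A}(w,\chi)^*$ on $\Wh(I(\chi))$, the cocycle relation of Proposition \ref{T:coc}, and the fact that $R_\chi$ is abelian (Theorem \ref{T:R-abel}). The heavy lifting has already been done; what remains is character-theoretic bookkeeping.

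First, I would settle well-definedness and the group-law for $\sigma_{\mca{O}_y}^{\rm Wh}$. For any $\w \in R_\chi \subseteq W_\chi$, the identity ${}^w\chi = \chi$ makes $\msc{A}(w,\chi)$ an endomorphism of $I(\chi)$ and hence $\msc{A}(w,\chi)^*$ an endomorphism of $\Wh(I(\chi))$. The block decomposition \eqref{E:dcT}, which follows from the cocycle relation \eqref{SLCM2} together with the vanishing statements for $\tau(w_\alpha,\chi,\gamma,\gamma')$ in Theorem \ref{T:tau}, shows that $\msc{A}(w,\chi)^*$ restricts to each $\Wh(I(\chi))_{\mca{O}_y}$, so $\sigma_{\mca{O}_y}^{\rm Wh}(\w)$ is a well-defined endomorphism. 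Proposition \ref{T:coc} gives $\msc{A}(w_2 w_1,\chi) = \msc{A}(w_2,\chi) \circ \msc{A}(w_1,\chi)$ on $I(\chi)$ (using ${}^{w_1}\chi = \chi$); dualising reverses composition, and since $R_\chi$ is abelian the resulting anti-homomorphism is automatically a homomorphism, blockwise as well.

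Next, I would extract the multiplicity formula. Corollary \ref{C:Tcoef} gives $\msc{A}(w,\chi) = \bigoplus_{\sigma \in \Irr(R_\chi)} \sigma(\w) \cdot \id_{\pi_\sigma}$. Passing to Whittaker duals and intersecting with the $\mca{O}_y$-filtration yields the blockwise decomposition \eqref{AO-dec}, i.e.
$$\sigma_{\mca{O}_y}^{\rm Wh}(\w) = \bigoplus_{\sigma \in \Irr(R_\chi)} \sigma(\w) \cdot \id_{\Wh(\pi_\sigma)_{\mca{O}_y}}.$$
Taking traces, one finds $\Tr \sigma_{\mca{O}_y}^{\rm Wh}(\w) = \sum_{\sigma} \sigma(\w) \cdot \dim \Wh(\pi_\sigma)_{\mca{O}_y}$. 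Since the irreducible characters of the finite abelian group $R_\chi$ form an orthonormal basis of class functions on $R_\chi$, averaging the above against $\overline{\sigma(\w)}$ over $\w \in R_\chi$ extracts exactly $\dim \Wh(\pi_\sigma)_{\mca{O}_y} = \angb{\sigma}{\sigma_{\mca{O}_y}^{\rm Wh}}_{R_\chi}$. Summing over $\mca{O}_y \in \mca{O}_\msc{X}$ and using $\Wh(\pi_\sigma) = \bigoplus_{\mca{O}_y} \Wh(\pi_\sigma)_{\mca{O}_y}$ and $\sigma^{\rm Wh} = \bigoplus_{\mca{O}_y} \sigma_{\mca{O}_y}^{\rm Wh}$ delivers the global statement $\dim \Wh(\pi_\sigma) = \angb{\sigma}{\sigma^{\rm Wh}}_{R_\chi}$.

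There is no substantive obstacle; the one point deserving a brief verification is that the surjection $h_\sigma^* : \Wh(I(\chi)) \onto \Wh(\pi_\sigma)$ is compatible with the $\mca{O}_y$-filtration, so that the scalar $\sigma(\w)$ in Corollary \ref{C:Tcoef} really acts as $\sigma(\w) \cdot \id$ on $\Wh(\pi_\sigma)_{\mca{O}_y}$. This follows immediately from the definition \eqref{Oy-Wh} of $\Wh(\pi_\sigma)_{\mca{O}_y}$ and the fact that $\msc{A}(w,\chi)^*$ preserves the block structure established in the first step.
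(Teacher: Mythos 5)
Your proposal is correct and follows essentially the same route as the paper: well-definedness via the cocycle relation of Proposition \ref{T:coc} together with duality reversing composition and the abelianness of $R_\chi$, and the multiplicity formula via the spectral decomposition \eqref{AO-dec} and orthogonality of characters of the finite abelian group $R_\chi$. The only cosmetic difference is that you re-derive \eqref{AO-dec} from Corollary \ref{C:Tcoef} and take traces explicitly, whereas the paper cites \eqref{AO-dec} directly and reads off the isotypic decomposition of $\sigma^{\rm Wh}_{\mca{O}_y}$ before pairing; the content is identical.
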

\begin{proof}
For $\w, \w' \in R_\chi$, we have
$$\begin{aligned}
\sigma^{\rm Wh}_{\mca{O}_y}(\w) \circ \sigma^{\rm Wh}_{\mca{O}_y}(\w') = &  \msc{A}(w, \chi)^*_{\mca{O}_y} \circ \msc{A}(w', \chi)^*_{\mca{O}_y} \\
= & \msc{A}(w, \chi)^*_{\mca{O}_y} \circ \msc{A}(w', {}^{w}\chi)^*_{\mca{O}_y} \\
= &  \left( \msc{A}(w', {}^{w}\chi) \circ \msc{A}(w, \chi) \right)^*_{\mca{O}_y} \\
= & \msc{A}(w' w, \chi)^*_{\mca{O}_y} \\
= & \sigma^{\rm Wh}_{\mca{O}_y}(\w' \w) = \sigma^{\rm Wh}_{\mca{O}_y}(\w \w'),
\end{aligned}$$
where the last equality follows from the fact that $R_\chi$ is abelian (see Theorem \ref{T:R-abel}). This shows that $\sigma^{\rm Wh}_{\mca{O}_y}$ is a representation of $R_\chi$ on $\Wh(I(\chi))_{\mca{O}_y}$. Clearly, the equality in \eqref{AO-dec} gives
$$\sigma^{\rm Wh}_{\mca{O}_y}= \bigoplus_{\sigma \in \Irr(R_\chi)} \dim \Wh(\pi_\sigma)_{\mca{O}_y} \cdot \sigma$$
and thus it follows that
$$\dim \Wh(\pi_\sigma)_{\mca{O}_y} = \angb{\sigma}{ \sigma^{\rm Wh}_{\mca{O}_y} }_{R_\chi} .$$
This completes the proof.
\end{proof}

If $\w\in R_\chi$, then $\gk(w, \chi)^{-1}= \gamma(w, \chi)$ (see Lemma \ref{P:gk-gm}) is actually holomorphic and nonzero at $\chi$. Denote by $\theta_{\sigma^{\rm Wh}_{\mca{O}_y}}$ the character of $\sigma^{\rm Wh}_{\mca{O}_y}$. For $\w \in R_\chi$, one has
$$\theta_{\sigma^{\rm Wh}_{\mca{O}_y}}(\w)= \gamma(w, \chi) \cdot \Tr\left( A(w, \chi)^*_{ \mca{O}_y }  \right)$$
and thus
$$\dim \Wh(\pi_\sigma)_{\mca{O}_y} = \frac{ \gamma(w, \chi) }{\val{R_\chi}} \cdot \sum_{\w \in R_\chi} \overline{ \sigma(\w) }  \cdot \Tr\left( A(w, \chi)^*_{ \mca{O}_y } \right).$$

Recall the permutation representation 
$$\sigma^\msc{X}_{\mca{O}_y}:  W \longrightarrow \perm(\mca{O}_y)$$
associated to the $W$-orbit  $\mca{O}_y$. Let $\theta_{\sigma^\msc{X}_{\mca{O}_y}}$ be the character of $\sigma^\msc{X}_{\mca{O}_y}$. Then for every $\w \in W$, one has
$$\theta_{\sigma^\msc{X}_{\mca{O}_y}}(\w)= \val{ (\mca{O}_y)^\w },$$
where 
$$(\mca{O}_y)^\w= \set{ y\in \mca{O}_y:  \w[y] = y  }.$$
By restriction, we view both $\sigma^\msc{X}$ and $\sigma^\msc{X}_{\mca{O}_y}$ as representations of $R_\chi$.
Conjecture \ref{MConj} could be reformulated as follows.

\begin{conj} \label{MConj2}
Let $\wt{G}$ be a saturated $n$-fold cover of a semisimple simply-connected group $G$. Let $\chi$ be a unitary unramified genuine character of $Z(\wt{T})$. Then for every $W$-orbit $\mca{O}_y$, one has
$$\sigma^{\rm Wh}_{\mca{O}_y} \simeq \sigma^\msc{X}_{\mca{O}_y};$$
equivalently but more explicitly, for every $\w \in R_\chi$, 
$$ \Tr \big( A(w, \chi)^*_{ \mca{O}_y } \big) = \val{ (\mca{O}_y)^\w  } \cdot \gamma(w, \chi)^{-1}.$$
\end{conj}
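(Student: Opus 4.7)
The plan is to prove the trace identity $\Tr(A(w, \chi)^*_{\mca{O}_y}) = \val{(\mca{O}_y)^\w} \cdot \gamma(w, \chi)^{-1}$ for every $\w \in R_\chi$, which is equivalent to the character equality of the two $R_\chi$-representations $\sigma^{\rm Wh}_{\mca{O}_y}$ and $\sigma^{\msc{X}}_{\mca{O}_y}$. Since $R_\chi$ is abelian (Theorem~\ref{T:R-abel}), and under the saturated simply-connected hypothesis one has $R_\chi = R_\chi^{sc}$ with structure exhausted by the tables in \S\ref{S:R-sc}, it suffices to verify the identity on a small explicit generating set.

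For a chosen generator $\w$, fix a minimal expression $\w = \w_{\alpha_k} \cdots \w_{\alpha_1}$ in simple reflections of $W$. Unfolding the cocycle relation \eqref{SLCM2} and using the block-diagonal structure \eqref{DB}, one has
$$\tau(w, \chi, \s_z, \s_z) = \sum_{(z_1, \ldots, z_{k-1})} \prod_{i=1}^{k} \tau\big(w_{\alpha_i},\, {}^{w_{\alpha_{i-1}} \cdots w_{\alpha_1}}\chi,\, \s_{z_{i-1}},\, \s_{z_i}\big),$$
where $z_0 = z_k = z$ and the intermediate $z_i$ range over $\mfr{R}_y$. By Theorem~\ref{T:tau} each basic entry splits as $\tau^1 + \tau^2$, where $\tau^1$ is supported on $z_{i-1} \equiv z_i \pmod{Y_{Q,n}}$ and $\tau^2$ on $z_{i-1} \equiv \w_{\alpha_i}[z_i] \pmod{Y_{Q,n}}$. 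Summing $\tau(w, \chi, \s_z, \s_z)$ over $z \in \mfr{R}_y$ and identifying closed paths whose every step is of $\tau^2$-type with the fixed-point locus of $\w$, one hopes to extract a contribution of exactly $\val{(\mca{O}_y)^\w} \cdot \gamma(w, \chi)^{-1}$, with all remaining contributions canceling.

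The key numerical inputs are the product rules of \eqref{F:gauss}: at a fixed point $z$ of $\w$, a careful tracking of the exponents $\angb{z_\rho}{\alpha_i} Q(\alpha_i^\vee)$ along the path shows that the relevant Gauss sums collapse to $-q^{-1}$, converting the $\tau^2$-product into $\prod_i \gamma(w_{\alpha_i}, {}^{\cdots}\chi)^{-1} = \gamma(w, \chi)^{-1}$. For contributions involving $\tau^1$ or mixed closed paths, the alternating signs $\vep^{\angb{z_\rho}{\alpha_i} \cdot D(z_i, \alpha_i^\vee)}$ must conspire to cancel; this is where one exploits the relation $D(y_1, y_2) + D(y_2, y_1) = B_Q(y_1, y_2)$ together with the Weyl-invariance of $Q$, so as to eliminate dependence on the choice of bisector $D$ and yield the expected $D$-independent answer.

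The principal obstacle is precisely this cancellation: the sum of Gauss sums and rational factors over the non-fixed or mixed closed paths must vanish, and organizing this uniformly is delicate. The saturated hypothesis $Y_{Q,n} = Y_{Q,n}^{sc}$ is needed to exclude the metaplectic-type anomaly recorded in \S\ref{S:2rmk}, while simple connectivity of $G$ constrains $\msc{X}_{Q,n}$ as an $R_\chi$-set so that orbit sizes and fixed-point counts match the rigid structure of $R_\chi^{sc}$ from Keys' tables. In practice I expect any general argument to proceed by exhausting these tables case by case, mirroring the treatment of $\Sp_{2r}$ and $\SL_3$ in \S\ref{S:eg}, with a conceptual explanation of the cancellations emerging only after further insight into the interaction between the scattering matrix and the dual-group component group $\mca{S}_{\phi_\chi}$ (via Theorem~\ref{T:R=S}).
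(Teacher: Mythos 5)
You are being asked to prove what the paper itself states only as a \emph{conjecture}. The paper does not give a proof of Conjecture~\ref{MConj2} in general; it verifies it only for $\wt{\Sp}_{2r}^{(n)}$ (Theorem~\ref{T:Sp2r}), the double cover of $\SL_3$ (Proposition~\ref{T:SL3}), and for the exceptional orbits $\mca{O}_z$ with $z\in\msc{X}_{Q,n}^{\rm exc}$ (Theorem~\ref{T:unWh}), and in Remark~\ref{R:obsat} and the closing paragraph of \S\ref{S:2rmk} the author explicitly leaves the general case open. Your proposal is therefore not a proof and cannot be one with the tools in the paper, but you have correctly identified the mechanism used in the verified cases (unfold the cocycle relation \eqref{SLCM2} into a sum over closed paths of $\tau^1/\tau^2$-steps, collect pure-$\tau^2$ paths at fixed points via the Gauss-sum identities \eqref{F:gauss}, and hope mixed paths cancel), and you are candid that the cancellation of mixed paths is where the argument breaks down. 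This is precisely the difficulty the paper itself records in Remark~\ref{R:obsat}, where the counterexample $\wt{\SL}_4^{(3)}$ with $y=\sum_{\alpha\in\Delta}\alpha^\vee$ shows that the per-$R_\chi$-orbit refinement \eqref{MC-C2} of the trace identity can fail, so that the cancellation has to be global over the whole $W$-orbit.

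One technical point in your reduction deserves correction. You claim that, $R_\chi$ being abelian, it suffices to verify the trace identity on a generating set. This is false as stated: for cyclic $R_\chi$ of order $\ge 3$ (which occurs, e.g.\ $\Z/3\Z$ for type $A_2$ or $E_6$, $\Z/4\Z$ for $D_r$ with $r$ odd), knowing only $\Tr\big(\sigma^{\rm Wh}_{\mca{O}_y}(\w)\big)$ for a generator $\w$ does not determine the multiplicities of the characters of $R_\chi$ in $\sigma^{\rm Wh}_{\mca{O}_y}$; you need either all traces $\Tr\big(\sigma^{\rm Wh}_{\mca{O}_y}(\w^k)\big)$ or the full eigenvalue spectrum of $\sigma^{\rm Wh}_{\mca{O}_y}(\w)$. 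This is why, in the $\wt{\SL}_3^{(2)}$ computation, the paper determines the characteristic polynomial of $A(w_2w_1,\chi)^*$, not merely its trace. Since $\sigma^{\rm Wh}_{\mca{O}_y}$ is a genuine homomorphism (Theorem~\ref{T:dW}), the matrix at a generator does determine everything, so your reduction is salvageable if you track the spectrum; but the way you phrased it, as verifying a scalar identity at generators only, is insufficient.

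Finally, your expectation that a general proof must exhaust Keys' tables case by case is consistent with the paper's own assessment; the author remarks that the nontrivial unramified $R_\chi$ for types $B_r,D_r,E_6,E_7$ is small while the ambient $W$-orbits $\mca{O}_y$ can be large, which is exactly what makes a uniform argument elusive, and defers a conceptual explanation to future work. So the correct conclusion is not that your argument is wrong in spirit, but that it remains a program rather than a proof, and you should present it as such.
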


 The equivalence between Conjecture \ref{MConj} and Conjecture \ref{MConj2} follows from Theorem \ref{T:dW}. 
The computation of $\Tr \big( A(w, \chi)^*_{ \mca{O}_y } \big)$ is equivalent to that of  $\Tr\left(\mca{S}_\mfr{R}(w, i(\chi))_{\mca{O}_y}\right)$ for any representative set $\mfr{R} \subset Y$ of $\msc{X}_{Q,n}$.

\begin{rmk}
Conjecture \ref{MConj2} answers in a special case for the scattering matrix the analogous question raised in \cite[\S 3.2]{GSS2} regarding the trace of a local coefficients matrix. We note that if ${}^w \chi = \chi$, then both the local coefficients matrix and the scattering matrix associated to the operator $A(w, \chi)^*$ give invariants of the operator, albeit different. In this paper, it is the latter that is used and plays a crucial role in determining $\dim \Wh(\pi_\sigma)$. We hope that this phenomenon also helps justify our viewpoint in \cite{GSS2} that both the local coefficients matrix and the scattering matrix are important objects and should be studied together.
\end{rmk}

\subsection{Double cover of $\GSp_{2r}$} \label{SS:GSp}
In this subsection, we apply Theorem \ref{T:dW} to the double cover of $\GSp_{2r}$ and show that it recovers \cite[Corollary 6.6]{Szp5}. Meanwhile, we also show that the analogue of Conjecture \ref{MConj2} fails for such covers. This example shows that one can not extend the conjecture in a naive way to covers of a reductive group whose derived subgroup is simply-connected.

Let $\GSp_{2r}$ be the group of similitudes of symplectic type, and let $(X, \Delta, Y, \Delta^\vee)$ be its root data given as follows. The character group $X\simeq \Z^{r+1}$ has a standard basis 
$$\{e_i^*: 1\le i\le r \} \cup \{e_0^* \},$$
where the simple roots are 
$$\Delta=\{e_i^*-e_{i+1}^*: 1\le i \le r-1 \} \cup \{ 2e_r^*-e_0^* \}.$$

The cocharacter group $Y\simeq \Z^{r+1}$ is given with a basis 
$$\{e_i: 1\le i\le r \} \cup \{e_0 \}.$$
The simple coroots are 
$$\Delta^\vee=\{ e_i-e_{i+1}: 1\le i \le r-1 \} \cup \{ e_r \}.$$
Write $\alpha_i=e_i^* - e_{i+1}^*$, $\alpha^\vee_i=e_i-e_{i+1}$ for $1\le i\le r-1$, and also $\alpha_r=2e_r^*-e_0^*$, $\alpha_r^\vee=e_r$. Consider the covering $\overline{\GSp}_{2r}$ incarnated by $(D, \mbm{1})$. We are interested in those $\overline{\GSp}_{2r}$ whose restriction to $\Sp_{2r}$ is the one with $Q(\alpha_r^\vee)=1$. That is, we assume
$$Q(\alpha_i^\vee)=2 \text{ for } 1\le i \le r-1, \text{ and } Q(\alpha_r^\vee)=1.$$

Since $\Delta^\vee \cup \{e_0\}$ gives a basis for $Y$, to determine $Q$ it suffices to specify $Q(e_0)$. For $n=2$, we will obtain a double cover $\overline{\GSp}_{2r}$ which restricts to the classical metaplectic double cover $\overline{\Sp}_{2r}$. The number $Q(e_0) \in \Z/2\Z$ determines whether the similitude factor $F^\times$ corresponding to the cocharacter $e_0$ splits into $\overline{\GSp}_{2r}$ or not. To recover the classical double cover of $\GSp_{2r}$ (see \cite{Szp5}), we take $Q(e_0)$ to be an even number in this subsection.

In this case, one has
$$Y_{Q,2}=\left\{\sum_{i=1}^r k_i \alpha_i^\vee +k e_0 \in Y: k_i\in \Z \text{ for } 1\le i\le r-1, k_r, k \in 2\Z \right\}. $$
The sublattice $Y_{Q,2}^{sc}$ is spanned by $\{\alpha_{i, Q, 2}^\vee\}_{1\le i\le r}$, i.e.,
$$\{\alpha_1^\vee, \alpha_2^\vee, ..., \alpha_{r-1}^\vee, 2\alpha_r^\vee \}.$$
Regarding the dual group of the double cover $\wt{\GSp}_{2r}$, one has
$$  \overline{\GSp}_{2r}^{\vee} = \begin{cases}
\GSp_{2r}(\C), \text{  if $r$ is odd;} \\
{\rm PGSp}_{2r}(\C)  \times \GL_1(\C),  \text{  if $r$ is even.}  \end{cases} 
$$
Thus, $\mbf{H}$ is $\GSp_{2r}$ (resp. ${\rm Spin}_{2r+1} \times \GL_1$) if $r$ is odd (resp. even). 
Note that 
$$\msc{X}_{Q,2} = Y/Y_{Q,2} =\set{0, e_r, e_0, e_r + e_0},$$
which is isomorphic to $\Z/2\Z \times \Z/2\Z$.

If $r$ is odd, then the torus $T_{Q,2}$ of $\mbf{H}$ acts transitively on all non-degenerate characters of the unipotent subgroup of the Borel subgroup of $\mbf{H}$. Thus, $R_{\uchi} = \set{1}$ for every unramified unitary character $\uchi$ (see \cite[Lemma 2.5]{LiJS1}). Therefore, the R-group $R_\chi$ for $\wt{\GSp}_{2r}$ with $r$ odd is trivial for every unitary unramified genuine character $\chi$. 

We assume that $r$ is even. For every $\alpha\in \Phi$, recall that we have the notation (see \S \ref{SS:Pg})
$$\chi_\alpha:= \uchi_\alpha(\varpi) = \chi(\wt{h}_\alpha(\varpi^{n_\alpha})).$$
It follows from \cite[Page 399]{Key2} that the only nontrivial $R_\chi$ is $\set{1, \w} \simeq \Z/2\Z$ which is generated by
$$\w:= \w_{\alpha_1} \w_{\alpha_3} ... \w_{\alpha_{r-1}},$$
with the character $\chi$ satisfying
$$\chi_{\alpha_i} = -1 \text{ for all } i=2k-1, 1\le k \le r/2.$$

\begin{prop} \label{P:GSp}
Assume $r$ even and $\chi$ is an unramified character satisfying the above condition. Then as a representation of $R_\chi$,
$$\sigma^{\rm Wh} \simeq 2\cdot \mbm{1} \oplus 2 \cdot \varepsilon,$$
where $\varepsilon$ denotes the non-trivial character of $R_\chi$.
\end{prop}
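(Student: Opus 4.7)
The plan is to reduce the claim to the single trace identity $\Tr(\sigma^{\rm Wh}(\w)) = 0$. Since $R_\chi = \langle \w\rangle \simeq \Z/2\Z$ has only the two irreducible characters $\mbm{1}$ and $\varepsilon$, and $\dim\sigma^{\rm Wh} = \val{\msc{X}_{Q,2}} = 4$, any decomposition $\sigma^{\rm Wh} = a\mbm{1}\oplus b\varepsilon$ satisfies $a+b=4$; hence $a=b=2$ is equivalent to the trace at $\w$ vanishing.

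I would first analyze the twisted action of $\w = \prod_{k=1}^{r/2}\w_{\alpha_{2k-1}}$ on $\msc{X}_{Q,2}=\{0,e_r,e_0,e_r+e_0\}$. Since the simple reflections commute and act on disjoint coordinates, $\w(\rho)-\rho = -\sum_k \alpha_{2k-1}^\vee \in Y_{Q,2}^{sc}$, so the $\rho$-shift is trivial in $\msc{X}_{Q,2}$; moreover the only reflection moving a representative is $\w_{\alpha_{r-1}}$, and $\w_{\alpha_{r-1}}(e_r) - e_r = -\alpha_{r-1}^\vee \in Y_{Q,2}^{sc}$. Thus each of the four elements of $\msc{X}_{Q,2}$ is $\w$-fixed, $\sigma^{\rm Wh}$ decomposes into four one-dimensional blocks, and by Theorem~\ref{T:dW} together with the block decomposition \eqref{DB},
$$\Tr(\sigma^{\rm Wh}(\w)) = \gamma(w,\chi)\sum_{y\in\mfr R}\tau(w,\chi,\s_y,\s_y).$$
Because the $\w_{\alpha_{2k-1}}$ commute pairwise and each lies in $W_\chi$, iterating the cocycle relation \eqref{SLCM2} together with the vanishing constraints in Theorem~\ref{T:tau} (each intermediate index is forced to remain in the same singleton orbit) collapses the diagonal entry to the product
$$\tau(w,\chi,\s_y,\s_y) = \prod_{k=1}^{r/2}\tau(w_{\alpha_{2k-1}},\chi,\s_y,\s_y),$$
with each factor $\tau^1+\tau^2$ evaluated via Theorem~\ref{T:tau} using $\chi_{\alpha_{2k-1}}=-1$, $Q(\alpha_{2k-1}^\vee)=2$, and $\g(2m)=-q^{-1}$ from \eqref{F:gauss}.

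The main obstacle is the final cancellation. The $e_0$-component decouples, since $\angb{e_0}{\alpha_i}=0$ and $D(e_0,\alpha_i^\vee)=0$ for $i<r$ (the hypothesis $Q(e_0)$ even ensures no additional Hilbert symbol intervenes from the similitude factor), so the sum doubles that of $\{0,e_r\}$. The contributions at $y=0$ and $y=e_r$ differ only in the factor $k=r/2$: there $\angb{e_r}{\alpha_{r-1}}=-1$ forces the ceiling $k_{e_r,\alpha_{r-1}}=-1$, which flips the sign of $\tau^1$, while $\tau^2$ is unchanged because the exponent $\angb{(e_r)_\rho}{\alpha_{r-1}}\cdot D(e_r,\alpha_{r-1}^\vee)$ is even so $\vep^{(\cdot)}=1$. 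The delicate step is tracking these sign and Hilbert-symbol factors across all $r/2$ places simultaneously and verifying that the resulting weighted sum over $\mfr R$, once multiplied by the explicit gamma factor $\gamma(w,\chi) = (2/(1+q^{-1}))^{r/2}$, vanishes, yielding the claimed decomposition $\sigma^{\rm Wh}\simeq 2\cdot\mbm{1}\oplus 2\cdot\varepsilon$.
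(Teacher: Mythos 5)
Your overall strategy -- reduce to showing $\Tr(\sigma^{\rm Wh}(\w))=0$, observe that every class in $\msc{X}_{Q,2}$ is $\w$-fixed (so $\sigma^{\rm Wh}$ decomposes into four $1\times 1$ blocks), and factor the diagonal entry over the commuting reflections $\w_{\alpha_{2k-1}}$ via the cocycle relation -- matches the opening of the paper's proof and is sound. The issue is in the evaluation of the individual factors, where your plan is both incomplete and subtly off.

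Two concrete problems. First, you leave the key cancellation explicitly unverified, flagging it as ``the delicate step'' without actually carrying out the bookkeeping; the proposal is therefore a sketch, not a proof. Second, the plan would apply the formula for $\tau^2$ from Theorem~\ref{T:tau} naively to $\tau^2(w_\alpha,\chi,\s_y,\s_y)$, but that formula is stated only when the first index equals $\w_\alpha[y]$ \emph{as an element of $Y$}, not merely modulo $Y_{Q,2}$. In the cases at hand (e.g.\ $\alpha=\alpha_{r-1}$, $y=0$, where $\w_{\alpha_{r-1}}[0]=\alpha_{r-1}^\vee\ne 0$), one must first rewrite $\s_y$ as a product $\s_{\w_\alpha[y]}\cdot\wt z$ with $\wt z\in\wt A$ -- and already $\s_{y_1}\s_{y_2}=(\varpi,\varpi)_2^{D(y_1,y_2)}\s_{y_1+y_2}$, so a Hilbert symbol appears -- and then push $\wt z$ through the covariance relation \eqref{SLCM1}, which introduces a character value $\chi(\wt z)$. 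Your sketch tracks neither. Concretely, the naive reading gives $\tau(w_{\alpha_{r-1}},\chi,\s_0,\s_0)=\tfrac{1-q^{-1}}{2}-q^{-1}=\tfrac{1-3q^{-1}}{2}$, whereas the correct value is $\gamma(w_{\alpha_{r-1}},\chi)^{-1}=\tfrac{1+q^{-1}}{2}$; the discrepancy is precisely the omitted covariance factor. The paper sidesteps all of this by invoking \cite[Proposition~4.12]{GSS2}, which already packages the covariance and Hilbert-symbol bookkeeping into the closed form $\tau(w,\chi,\s_y,\s_y)=\gamma(w,\chi)^{-1}\prod_{i\text{ odd}}\chi_{\alpha_i}^{\langle y,\alpha_i\rangle}$, after which the trace computation is immediate. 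If you insist on arguing directly from Theorem~\ref{T:tau}, you would essentially have to re-derive that proposition, carefully tracking the $\wt z$-translations at every step.
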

\begin{proof}
It suffices to compute the trace of $\sigma^{\rm Wh}$. It is easy to see that for every $y\in \msc{X}_{Q,2}$ and $\w_{\alpha_i}, i = 1, 3, ..., r-1$, one has
$$\w_{\alpha_i}[y] = y \in \msc{X}_{Q,2}.$$
Thus, by \cite[Proposition 4.12]{GSS2}, the $(y, y)$-entry of $A(w, y)^*$ is given by
$$\tau(w, \chi, \s_y, \s_y)=\gamma(w, \chi)^{-1} \cdot \prod_{i= 1, 3, ..., r-1}  \chi_{\alpha_i}^{\angb{y}{\alpha_i}}= \gamma(w, \chi)^{-1} \cdot (-1)^{\angb{y}{\alpha_i}}.$$
We see
$$  \gamma(w, \chi) \cdot \tau(w, \chi, \s_y, \s_y) =
\begin{cases}
1 & \text{ if } y = 0, \\
-1 & \text{ if } y = e_r, \\
1 & \text{ if } y=e_0, \\
-1 & \text{ if } y=e_r + e_0.
\end{cases} $$
This shows that ${\rm Tr}(\sigma^{\rm Wh})(w) = 0$. Thus, $\sigma^{\rm Wh} = 2\cdot \mbm{1} \oplus 2\cdot \varepsilon$, as claimed.
\end{proof}

\begin{thm}[{\cite[Corollary 6.6]{Szp5}}] \label{T:GSp}
If $r$ is odd, then every unitary unramified genuine principal series $I(\chi)$ for the double cover $\wt{\GSp}_{2r}$ is irreducible. If $r$ is even, then the only reducibility of $I(\chi)$ occurs when $R_\chi \simeq \Z/2\Z$; in this case, $I(\chi) = \pi_\chi^{un} \oplus \pi_\varepsilon$, and 
$$\dim \Wh(\pi_\chi^{un}) = \dim \Wh(\pi_\varepsilon) = 2.$$
\end{thm}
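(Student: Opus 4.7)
The plan is to assemble the theorem from results already established in the paper, treating the odd and even cases separately by invoking the corresponding R-group computation.

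First I would handle the case $r$ odd. The paragraph preceding Proposition \ref{P:GSp} already records that when $r$ is odd, the split endoscopic group $\mbf{H} = \GSp_{2r}$ has torus $T_{Q,2}$ which acts transitively on generic characters of the unipotent radical, so Keys' criterion (in the form of \cite[Lemma 2.5]{LiJS1}) gives $R_{\uchi} = \set{1}$ for every unramified unitary $\uchi$. By Proposition \ref{P:R-sc} one then has $R_\chi^{sc} = R_{\uchi^{sc}} = \set{1}$, hence $R_\chi = \set{1}$. Theorem \ref{T:Luo} (the isomorphism $\C[R_\chi] \simeq \End_{\wt{G}}(I(\chi))$) then implies that $I(\chi)$ is irreducible, which settles the first half of the statement.

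Next I would treat the case $r$ even. From the discussion preceding Proposition \ref{P:GSp}, Keys' tables \cite[p.~399]{Key2} applied to the endoscopic side identify the only nontrivial $R_\chi$ as the group $\set{1, \w}$ generated by $\w = \w_{\alpha_1} \w_{\alpha_3} \cdots \w_{\alpha_{r-1}}$, subject to the character constraint $\chi_{\alpha_i} = -1$ for odd $i$. Since $R_\chi \simeq \Z/2\Z$ in this case, Theorem \ref{T:R-abel} gives a multiplicity-free decomposition
\begin{equation*}
I(\chi) = \pi_\mbm{1} \oplus \pi_\varepsilon,
\end{equation*}
where the correspondence $\Irr(R_\chi) \leftrightarrow \Pi(\chi)$ is normalized in \S \ref{S:Pi-chi} so that $\pi_\mbm{1} = \pi_\chi^{un}$.

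Finally I would compute the Whittaker dimensions. By Theorem \ref{T:dW}, one has $\dim \Wh(\pi_\sigma) = \angb{\sigma}{\sigma^{\rm Wh}}_{R_\chi}$ for every $\sigma \in \Irr(R_\chi)$. Proposition \ref{P:GSp} asserts $\sigma^{\rm Wh} \simeq 2 \cdot \mbm{1} \oplus 2 \cdot \varepsilon$. Pairing against the two characters of $R_\chi$ immediately yields
\begin{equation*}
\dim \Wh(\pi_\chi^{un}) = \angb{\mbm{1}}{2\cdot \mbm{1} \oplus 2 \cdot \varepsilon}_{R_\chi} = 2, \quad \dim \Wh(\pi_\varepsilon) = \angb{\varepsilon}{2\cdot \mbm{1} \oplus 2 \cdot \varepsilon}_{R_\chi} = 2.
\end{equation*}

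Since all three ingredients—the identification of $R_\chi$, the multiplicity-free decomposition, and the representation $\sigma^{\rm Wh}$—are already in hand, there is no genuine obstacle; the task is purely to quote Theorems \ref{T:Luo} and \ref{T:dW} together with Proposition \ref{P:GSp}. The only subtle point is to verify that the normalized correspondence $\sigma \leftrightarrow \pi_\sigma$ correctly matches $\pi_\mbm{1}$ with $\pi_\chi^{un}$, which is guaranteed by the lemma following \eqref{P-sig} in \S \ref{S:Pi-chi}.
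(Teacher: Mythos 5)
Your treatment of the $r$ even case and the final step (combining Theorem \ref{T:dW} with Proposition \ref{P:GSp}) is exactly what the paper does, and is correct.

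However, the argument you give for the $r$ odd case contains a gap. You write that $R_{\uchi} = \set{1}$ by transitivity of the $T_{Q,2}$-action, and then claim ``By Proposition \ref{P:R-sc} one then has $R_\chi^{sc} = R_{\uchi^{sc}} = \set{1}$, hence $R_\chi = \set{1}$.'' But Proposition \ref{P:R-sc} identifies $R_\chi^{sc}$ with $R_{\uchi^{sc}}$, the R-group of a principal series of $H^{sc} = \Sp_{2r}$, and the triviality of $R_{\uchi}$ (on $H = \GSp_{2r}$) does not imply triviality of $R_{\uchi^{sc}}$: the inclusion goes the wrong way, since $W_{\uchi} \subseteq W_{\uchi^{sc}}$ and $W^0_{\uchi} = W^0_{\uchi^{sc}}$ give $R_{\uchi} \subseteq R_{\uchi^{sc}}$. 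Indeed Table 1 for type $C_r$ shows $R_{\uchi^{sc}}$ can equal $\Z/2\Z$ for suitable $\uchi^{sc}$ on $\Sp_{2r}$, so the equality $R_{\uchi^{sc}} = \set{1}$ is simply false in general, and $\val{Z(\wt{G}^\vee)}$ is not a usable bound on $[R_\chi^{sc}:R_\chi]$ here since $\GSp_{2r}$ is not semisimple (the hypothesis of Proposition \ref{P:bd}). The correct and direct route, which the paper's preceding discussion takes, is the isomorphism $R_\chi \simeq R_{\uchi}$ established in the proof of Theorem \ref{T:R=S}: since $R_{\uchi} = \set{1}$ on $H = \GSp_{2r}$, one gets $R_\chi = \set{1}$ and hence irreducibility of $I(\chi)$, without any detour through $R_\chi^{sc}$.
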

\begin{proof}
We only need to show the last two equalities, which follow from combining Theorem \ref{T:dW} and Proposition \ref{P:GSp}.
\end{proof}

\begin{rmk}
It follows from the proof of Proposition \ref{P:GSp} that $\w[y] =y$ for every $y\in \msc{X}_{Q,2}$. Thus, 
$$\sigma^{\msc{X}} = 4 \cdot \mbm{1};$$
in particular, it is not isomorphic to $\sigma^{\rm Wh}$. We see that the (naive) analogue of Conjecture \ref{MConj2} fails for such $\wt{\GSp}_{2r}$.
\end{rmk}

\section{On the dimension of $\Wh(\pi_\chi^{un})$} \label{S:Wh-un}

\subsection{Lower bound for $\dim \Wh(\pi_\chi^{un})$}
In this subsection, we will prove Conjecture \ref{MConj2} for $\pi_\chi^{un}$ in a special case, which gives a lower bound of $\dim \Wh(\pi_\chi^{un})$.

Let $\wt{G}$ be an $n$-fold covering group of a connected reductive group $G$. Assume that $\wt{G}$ is not of metaplectic-type (see Definition \ref{D:meta}). We call $z\in Y$ an exceptional point (see \cite[Definition 5.1]{GSS2}) if
$$\angb{z_\rho}{\alpha}= -n_\alpha$$
for every $\alpha\in \Delta$, that is,
$$\w_\alpha[z] = z + \alpha_{Q,n}^\vee \text{ for every } \alpha \in \Delta.$$
Note that the definition here is the same as \cite[Definition 5.1]{GSS2}, since we have assumed that $G$ is not of metaplectic type.

For $\wt{G}$ not of metaplectic type, denote by $Y_n^{\rm exc} \subset Y$ the set of exceptional points. Let
$$f:  Y \onto \msc{X}_{Q,n}$$
be the quotient map, and denote 
$$\msc{X}_{Q,n}^{\rm exc}:= f(Y_n^{\rm exc}).$$
If $y\in Y$ is exceptional, then $y  \in f^{-1}(  \msc{X}_{Q,n}^W)$; that is,
$$\msc{X}_{Q,n}^{\rm exc} \subset (\msc{X}_{Q,n})^W.$$
Denoting 
$$\rho_{Q,n}:=\frac{1}{2} \sum_{\alpha >0} \alpha_{Q,n}^\vee \in Y\otimes \Q,$$
we always have
$$(\set{\rho - \rho_{Q,n} } \cap Y) \subseteq Y_n^{\rm exc}.$$
If $G$ is a semisimple group and $\wt{G}$ is not of metaplectic type, then (see \cite[Lemma 5.2]{GSS2})
$$Y_n^{\rm exc} = \set{\rho - \rho_{Q,n} } \cap Y;$$
that is, $Y_n^{\rm exc}$ contains the unique element $\rho - \rho_{Q,n}$ if it lies in $Y$. It is also determined explicitly in \cite[\S6--\S7]{GSS2} the dependence of $Y_n^{\rm exc}$ on $\wt{G}$ for covers of simply-connected groups.

\begin{thm} \label{T:unWh}
Let $\wt{G}$ be an $n$-fold cover of a connected reductive group $G$. Assume that $\wt{G}$ is not of metaplectic type. Then for every $z \in \msc{X}_{Q,n}^{\rm exc}$ and $\w \in R_\chi$, one has $ \mca{S}_{\mfr{R}}(w, i(\chi))_{\mca{O}_z} = \gamma(w, \chi)^{-1}$. Therefore,
$$\sigma_{\mca{O}_z}^{\rm Wh } = \sigma_{\mca{O}_z}^\msc{X} = \mbm{1}_{R_\chi},$$
and thus
$$\dim \Wh(\pi_\chi^{un}) \ge \val{ \msc{X}_{Q,n}^{\rm exc}  }.$$
In particular, if $\rho - \rho_{Q,n} \in Y$, then $\pi_\chi^{un}$ is generic. Moreover, Conjecture \ref{MConj2} holds for such $\mca{O}_z$. 
\end{thm}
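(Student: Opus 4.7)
The plan is to exploit the geometry of exceptional points to reduce the main identity to a single explicit computation for each simple reflection, then combine it with Theorem \ref{T:dW}.

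First, since $z \in \msc{X}_{Q,n}^{\rm exc}$ satisfies $\w_\alpha[z] = z + \alpha_{Q,n}^\vee \in z + Y_{Q,n}$ for every simple $\alpha$, and the simple reflections generate $W$, the orbit $\mca{O}_z = \{z\}$ is a singleton in $\msc{X}_{Q,n}$. Hence $\mca{S}_\mfr{R}(w, i(\chi))_{\mca{O}_z}$ is the single scalar $\tau(w, \chi, \s_z, \s_z)$, and $\sigma^\msc{X}_{\mca{O}_z}$ is automatically the trivial representation of $R_\chi$. Writing $w = w_{\alpha_k}\cdots w_{\alpha_1}$ in reduced form and applying the cocycle \eqref{SLCM2} iteratively, the support properties in Theorem \ref{T:tau} together with the singleton orbit property force all intermediate indices $\gamma''$ to lie in the coset $z + Y_{Q,n}$; choosing the representative $\s_z$ and invoking \eqref{SLCM1} to verify independence of this choice, the sum collapses and gives
$$\tau(w, \chi, \s_z, \s_z) = \prod_{i=1}^{k} \tau(w_{\alpha_i}, {}^{w_{\alpha_{i-1}}\cdots w_{\alpha_1}}\chi, \s_z, \s_z).$$
Since $\gamma(w, \chi) = \prod_i \gamma(w_{\alpha_i}, {}^{w_{\alpha_{i-1}}\cdots w_{\alpha_1}}\chi)$ satisfies the analogous product relation, the desired identity reduces to showing, for every $\alpha \in \Delta$ and every character $\chi'$ in the Weyl orbit of $\chi$, that
$$\tau(w_\alpha, \chi', \s_z, \s_z) = \gamma(w_\alpha, \chi')^{-1}.$$

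Second, I would verify this simple-reflection identity by direct calculation using Theorem \ref{T:tau}. The exceptional condition gives $\angb{z}{\alpha} = 1 - n_\alpha$, so $k_{z, \alpha} = \lceil (1-n_\alpha)/n_\alpha \rceil = 0$ and hence $\tau^1(w_\alpha, \chi', \s_z, \s_z) = (1 - q^{-1})/(1 - \chi'_\alpha)$, where $\chi'_\alpha := \chi'(\wt{h}_\alpha(\varpi^{n_\alpha}))$. For $\tau^2$, I would first evaluate Theorem \ref{T:tau} at the pair $(\s_{\w_\alpha[z]}, \s_z)$ to obtain $\vep^{-n_\alpha D(z, \alpha^\vee)} \cdot \g(-n_\alpha Q(\alpha^\vee))$, then use \eqref{SLCM1} and the identification $\wt{h}_\alpha(\varpi^{n_\alpha}) = \s_{\alpha_{Q,n}^\vee}$ (valid because $\eta_n = \mbm{1}$) to transport this value to $\tau^2(w_\alpha, \chi', \s_z, \s_z)$. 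Since $n \mid n_\alpha Q(\alpha^\vee)$, formula \eqref{F:gauss} reduces the Gauss sum to $-q^{-1}$, and a careful accounting of the cocycle factor via \eqref{F:W-act} collapses the sign to yield $\tau^2(w_\alpha, \chi', \s_z, \s_z) = -q^{-1}(\chi'_\alpha)^{-1}$. Summing gives $\tau^1 + \tau^2 = (1 - q^{-1}(\chi'_\alpha)^{-1})/(1 - \chi'_\alpha) = \gamma(w_\alpha, \chi')^{-1}$.

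The remaining conclusions follow formally. The identity $\sigma^{\rm Wh}_{\mca{O}_z}(\w) = \gamma(w, \chi) \cdot \tau(w, \chi, \s_z, \s_z) = 1$ yields $\sigma^{\rm Wh}_{\mca{O}_z} = \mbm{1}_{R_\chi} = \sigma^\msc{X}_{\mca{O}_z}$, which is Conjecture \ref{MConj2} for this orbit; Theorem \ref{T:dW} then gives $\dim \Wh(\pi_\chi^{un})_{\mca{O}_z} = 1$, and summing over the $\val{\msc{X}_{Q,n}^{\rm exc}}$ distinct exceptional orbits produces the lower bound, which in the case $\rho - \rho_{Q,n} \in Y$ is positive by the explicit description of $Y_n^{\rm exc}$ recalled from \cite{GSS2}. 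The main obstacle I expect is the bookkeeping in the second step: one must verify that the character value $({}^{w_\alpha}\chi')(\s_{\w_\alpha[z]}^{-1}\s_z)$ produced by \eqref{SLCM1} precisely cancels the factor $\vep^{-n_\alpha D(z, \alpha^\vee)}$ from $\tau^2(w_\alpha, \chi', \s_{\w_\alpha[z]}, \s_z)$ up to $(\chi'_\alpha)^{-1}$, so that the $-q^{-1}$ from the degenerate Gauss sum combines cleanly with the remaining transformation factor. The hypothesis that $\wt{G}$ is not of metaplectic type enters implicitly here through the uniform validity of the exceptional equation $\angb{z_\rho}{\alpha} = -n_\alpha$ across all simple roots.
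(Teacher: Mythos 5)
Your proposal arrives at the correct conclusion and the architecture is sound, but it takes a genuinely different route from the paper. The paper's proof is very short: it observes that $\mca{O}_z = \set{z}$ is a singleton (so the scattering matrix is the single scalar $\tau(w,\chi,\s_z,\s_z)$), notes via \eqref{SLCM1} that this is independent of the chosen representative of $z$, and then cites \cite[Proposition~4.12]{GSS2} as the result that $\tau(w,\chi,\s_z,\s_z) = \gamma(w,\chi)^{-1}$ whenever $\wt{G}$ is not of metaplectic type; the remaining conclusions follow formally exactly as you argue. You, by contrast, attempt to re-derive that cited identity from first principles: the singleton orbit collapses the cocycle sum \eqref{SLCM2} to a product over a reduced word, reducing to the simple-reflection case; your computation of $\tau^1$ (with $k_{z,\alpha}=0$) is correct, your Gauss-sum evaluation $\g(-n_\alpha Q(\alpha^\vee)) = -q^{-1}$ via $n \mid n_\alpha Q(\alpha^\vee)$ is correct, and the algebraic identity $\tau^1 + \tau^2 = (1-q^{-1}\chi'^{-1}_\alpha)/(1-\chi'_\alpha) = \gamma(w_\alpha,\chi')^{-1}$ checks out. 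However, the step you flag yourself --- verifying that the $\wt{A}$-twist factor from \eqref{SLCM1} (involving $({}^{w_\alpha}\chi')(\s_{\alpha_{Q,n}^\vee})$ together with a power of $\vep$ coming from both the cocycle $\s_z\cdot\s_{\alpha_{Q,n}^\vee} = \vep^{n_\alpha D(z,\alpha^\vee)}\s_{\w_\alpha[z]}$ and the explicit $\vep^{\angb{z_\rho}{\alpha}D(z,\alpha^\vee)}$ factor in Theorem \ref{T:tau}) really does collapse to $(\chi'_\alpha)^{-1}$ with no leftover root of unity --- is precisely the content of \cite[Proposition~4.12]{GSS2}, and it is nontrivial because the various $\vep$-exponents involve $D$ rather than $B_Q$ and must be tracked through the Weyl conjugation \eqref{F:W-act}. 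So your proof is a self-contained but longer alternative; to be complete it needs that bookkeeping to be carried out explicitly, which the paper sidesteps by citation. One minor point: your reduction to simple reflections also requires the cocycle relation for $\gamma(\cdot,\chi)$ along a reduced word, which you assert; this does hold, since $\gk(\cdot,\chi)$ satisfies the cocycle relation by construction and $\gk^{-1}=\gamma$ on $W_\chi$ by Lemma \ref{P:gk-gm}, but it is worth making explicit.
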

\begin{proof}
Since $\mca{O}_z=\set{z}$, we have 
$$\mca{S}_{\mfr{R}}(w, i(\chi))_{\mca{O}_z} = \tau(w, \chi, \s_z, \s_z).$$
First, we note that by \eqref{SLCM1} and the fact that $\chi$ is fixed by $\w\in R_\chi$, the entry $\tau(w, \chi, \s_z, \s_z)$ is independent of the representative for $z\in \msc{X}_{Q,n}^{\rm exc}$. It follows from \cite[Proposition 4.12]{GSS2} (as $\wt{G}$ is not of metaplectic type) that 
$$\tau(w, \chi, \s_z, \s_z)= \gamma(w, \chi)^{-1}$$
for every $z\in \msc{X}_{Q,n}^{\rm exc}$, and Conjecture \ref{MConj2} holds for such $\mca{O}_z$. In fact, $\sigma_{\mca{O}_z}^{\rm Wh } = \sigma_{\mca{O}_z}^\msc{X} = \mbm{1}_{R_\chi}$ and thus
$$\dim \Wh(\pi_\chi^{un})_{\mca{O}_z}=  \angb{ \mbm{1} }{ \sigma^{\rm Wh}_{\mca{O}_z}  }_{R_\chi}= 1.$$
Therefore, $\dim \Wh(\pi_\chi^{un}) \ge \val{ \msc{X}_{Q,n}^{\rm exc}  }$. This completes the proof.
\end{proof}

For covers of a semisimple and simply-connected group $G$, it follows from \cite[Theorem 6.3]{GSS2} that
$$0\le \val{ \msc{X}_{Q,n}^{\rm exc}  } \le \val{ (\msc{X}_{Q,n})^W } \le 1.$$
In fact, in \cite[\S 7]{GSS2}, we determined explicitly the size of the two sets $\msc{X}_{Q,n}^{\rm exc}$ and $(\msc{X}_{Q,n})^W$. On the other hand, if $G$ is semisimple but not simply-connected, then it is possible to have
$$\val{ \msc{X}_{Q,n}^{\rm exc}  } \le 1 < \val{ (\msc{X}_{Q,n})^W }.$$
See \cite{GSS2} for details.

We note that the equality $\sigma_{\mca{O}_z}^{\rm Wh} = \sigma_{\mca{O}_z}^\msc{X}$ in Theorem \ref{T:unWh} might fail for $z \in (\msc{X}_{Q,n})^W- \msc{X}_{Q,n}^{\rm exc}$ for covers of a semisimple group: we will consider such an example from $n$-fold covers of $\SO_3$ in the next section. This example shows that the naive analogue of Conjecture \ref{MConj} does not hold for general semisimple groups. 

\begin{rmk}
If $G$ is almost simple and $\wt{G}$ is of metaplectic type, then it follows from the discussion after Definition \ref{D:meta} that $\wt{G}=\wt{\Sp}_{2r}$ with $n_\alpha \equiv 2 \ ({\rm mod}\ 4)$. Moreover, Example \ref{E:Sp-e} 
shows that $R_\chi=\set{1}$ in this case, and thus the (in-)equalities $\dim \Wh (\pi_\chi^{un}) = \val{\msc{X}_{Q,n}} \ge \val{\msc{X}_{Q,n}^{\rm exc}}$ hold trivially.
\end{rmk}

\subsection{Unramified Whittaker function} \label{S:UWf}
We note that Theorem \ref{T:unWh} applied to the case $n=1$ shows that $\pi_\chi^{un}$ is the only generic constituent of $I(\chi)$ for the linear algebraic group $G$. This fact also follows from the Casselman--Shalika formula \cite{CS}. Motivated by this, for covering groups, we consider in this subsection the relation between $\dim \Wh(\pi_\chi^{un})$ and the unramified Whittaker function, which is also the approach taken in \cite{Gin4} but for a general unramified genuine character.

First, by restriction, one obtains a surjection of vector spaces
$$h^{un}: \Wh(I(\chi)) \onto \Wh(\pi_\chi^{un}).$$
Let $f^0\in \pi_\chi^{un} \subset I(\chi)$ be the normalized unramified function. For $\cc \in \Ftn(i(\chi))$, let $\lambda_\cc \in \Wh(I(\chi))$ be the associated Whittaker functional, also viewed as an element in $\Wh(\pi_\chi^{un})$; that is, we may still use $\lambda_\cc$ for $h^{un}(\lambda_\cc)$ if no confusion arises. The unramified Whittaker function on $\wt{G}$ associated with $\cc$ is given by
$$\mca{W}_\cc(\wt{g}) = \lambda_\cc( \pi_\chi^{un}(\wt{g}) f^0   ) = \lambda_\cc( I(\chi)(\wt{g})f^0).$$
We have a decomposition $\wt{G}= \wt{B} K= U\wt{T} K$ and 
$$\mca{W}_\cc(u\wt{t} k)= \psi(u) \cdot \mca{W}_\cc(\wt{t}) \text{ for } u\in U, \wt{t}\in \wt{T}, k\in K.$$
Thus, the value of $\mca{W}_\cc$ is determined by its restriction to $\wt{T}$. If $\cc = \cc_\gamma$ for $\gamma \in \wt{T}$, then we denote
$$\mca{W}_\gamma:= \mca{W}_{\cc_\gamma}.$$

Recall that $\wt{t} \in \wt{T}$ is called dominant if 
$$\wt{t} \cdot (U\cap K) \cdot \wt{t}^{-1} \subset K.$$
Let 
$$Y^+=\set{y\in Y: \angb{y}{\alpha}\ge 0 \text{ for all } \alpha\in \Delta}.$$
Then an element $\s_y\in \wt{T}$ is dominant if and only if $y\in Y^+$.
The following result regarding $\mca{W}_\gamma(\wt{t})$ is shown in \cite{KP, Pat, CO} for coverings of $\GL_r$. For a general covering group, the idea is the same; it is implicit in \cite{Mc2} and explicated in \cite{Ga5}.

\begin{prop}
One has $\mca{W}_\gamma(\wt{t})=0$ unless $\wt{t} \in \wt{T}$ is dominant. Moreover, for dominant $\wt{t}$,
$$ \mca{W}_\gamma(\wt{t})=\delta_B^{1/2}( \wt{t} ) \cdot \sum_{\w \in W} \gk(w_G w^{-1}, \chi)  \cdot \tau(w, {}^{w^{-1}}\chi, \gamma, w_G \cdot \wt{t} \cdot w_G^{-1}),$$
where $\delta_B$ is the modular character of $B$.
\end{prop}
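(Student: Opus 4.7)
The argument follows the classical Casselman--Shalika strategy adapted to the covering setting, as in Kazhdan--Patterson \cite{KP} and McNamara \cite{Mc2}. The plan has two parts: a support statement for non-dominant $\wt{t}$, and an unfolding of the Jacquet integral in the dominant range.

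For the vanishing, I exploit the right $K$-invariance of the normalized spherical vector $f^0 \in I(\chi)$ together with the left $\psi$-equivariance of $\lambda_\gamma$. Write $\wt{t} = \s_{y(\varpi)}$. If $\wt{t}$ is not dominant, there exists $\alpha \in \Delta$ with $\angb{y}{\alpha} < 0$. For any $x \in O$, the element $e_\alpha(x)$ lies in $U\cap K$, so using $K$-invariance of $f^0$ and the Whittaker equivariance,
$$\mca{W}_\gamma(\wt{t}) \;=\; \mca{W}_\gamma(\wt{t}\cdot e_\alpha(x)) \;=\; \mca{W}_\gamma\bigl((\wt{t}\, e_\alpha(x) \wt{t}^{-1})\cdot \wt{t}\bigr) \;=\; \psi\!\bigl(e_\alpha(\varpi^{\angb{y}{\alpha}} x)\bigr)\cdot \mca{W}_\gamma(\wt{t}).$$
Since the conductor of $\psi$ is $O$ and $\angb{y}{\alpha} < 0$, one can pick $x \in O$ so that $\psi(\varpi^{\angb{y}{\alpha}} x) \ne 1$, forcing $\mca{W}_\gamma(\wt{t}) = 0$.

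For dominant $\wt{t}$, I start from the Jacquet integral
$$\mca{W}_\gamma(\wt{t}) \;=\; l_\gamma\!\left(\int_U f^0(\wt{w}_G^{-1} u\, \wt{t})\,\psi(u)^{-1}\, du\right),$$
and unfold it by decomposing $U$ into pieces indexed by the Weyl group. Concretely, one uses the Bruhat-type factorisation (or, equivalently, an iterated evaluation of the twisted integral one simple root at a time) to rewrite the integral as a sum over $\w \in W$ of contributions of the form
$$\gk(w_G w^{-1},\chi)\cdot (\text{local integral over } \w\text{-cell}),$$
where the first factor arises as the value of the normalised intertwining operator on the spherical vector coming from the length-$(l(w_G) - l(w))$ portion of the integral, i.e.\ the relation $A(w_G w^{-1}, \chi')(f_0) = \gk(w_G w^{-1},\chi')f_0'$ applied after conjugating $\wt{t}$ through. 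The remaining integral over each cell is precisely the local Jacquet-type expression whose evaluation against $l_\gamma$ is, by the definition of the scattering matrix and the formula $\lambda_\gamma \circ A(w,\chi) = \sum_{\gamma'}\tau(w,\chi,\gamma',\gamma)\lambda_{\gamma'}^{{}^w\chi}$, encoded by the coefficient $\tau(w, {}^{w^{-1}}\chi, \gamma, w_G\cdot \wt{t}\cdot w_G^{-1})$. The factor $\delta_B^{1/2}(\wt{t})$ is the usual Jacobian/normalisation picked up when conjugating $\wt{t}$ across the unipotent integration and changing to the induced-representation normalisation.

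The cocycle relation~\eqref{SLCM2} for the scattering matrix plays the role of consistency check that the answer is independent of the chosen reduced expression for $w_G$. The main obstacle is the bookkeeping in the covering setting: unlike the linear algebraic case, $i(\chi)$ is a higher-dimensional representation, so one cannot simply replace $f^0$ evaluated on $\wt{T}$ by scalar characters, and one must carry along the functional $l_\gamma$ through each commutation and change of variables. The cocycle property of $\msc{A}(w,\chi)$ (Proposition~\ref{T:coc}) and the factorisation property of $\tau$ along minimal decompositions $w_G = w_G w^{-1}\cdot w$ with additive length allow one to split the normalising factor $\gk(w_G, \chi)$ as $\gk(w_G w^{-1}, {}^w\chi)\cdot \gk(w, \chi)$ and reassemble the contributions over $\w$, delivering the stated formula.
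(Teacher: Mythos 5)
The paper does not in fact supply a proof of this proposition: it is imported from the literature (\cite{KP, Pat, CO} for covers of $\GL_r$, \cite{Mc2} and \cite{Ga5} for general covers), so there is no in-paper argument to compare against. That said, your proposal can be judged on its own terms.

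Your vanishing argument for non-dominant $\wt{t}$ is correct and complete: the three-step manipulation using right $K$-invariance of $f^0$, commutation of $e_\alpha(x)$ past $\wt{t}$, and left $\psi$-equivariance of the Whittaker functional gives $\mca{W}_\gamma(\wt{t}) = \psi(\varpi^{\angb{y}{\alpha}}x)\,\mca{W}_\gamma(\wt{t})$, and since $\psi$ has conductor $O$ one can choose $x$ to make the factor $\ne 1$. That is the standard Casselman--Shalika argument and it transfers verbatim to the covering setting.

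The dominant-range part, however, has a genuine gap. You assert that the factor $\gk(w_G w^{-1},\chi)$ ``arises as the value of the normalised intertwining operator on the spherical vector'' via $A(w_G w^{-1},\chi')(f_0) = \gk(w_G w^{-1},\chi')f_0'$, but the Jacquet integral is not a composition of intertwining operators: it is twisted by $\psi$, so the Gindikin--Karpelevich identity cannot be applied to a ``length-$(l(w_G)-l(w))$ portion of the integral'' without substantial justification. The actual derivation in the cited sources proceeds either by writing $f^0$ in the Casselman basis of Iwahori-fixed vectors $\set{\phi_\w}$, with coefficients being precisely these Gindikin--Karpelevich factors, and then evaluating the Whittaker functional on each $\phi_\w$ (where the Bruhat-cell support of $\phi_\w$ localizes the integral and produces the $\tau$-entry), or by an explicit rank-one recursion and invocation of the cocycle relation \eqref{SLCM2}. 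You identify these as the relevant ingredients but do not carry out either computation, so what you have is a plausible shape for the answer rather than a derivation of it. A minor but real slip: you write $\lambda_\gamma \circ A(w,\chi) = \sum_{\gamma'}\tau(w,\chi,\gamma',\gamma)\lambda_{\gamma'}^{{}^w\chi}$, which reverses both the superscripts ($I(\chi)\to I({}^w\chi)$, so the source functional carries $^{{}^w\chi}$ and the target carries $^\chi$) and the order of the indices relative to \eqref{tau-Mat}; the wrong bookkeeping here would lead to the wrong argument of $\tau$ in the final formula, so it is not merely cosmetic.
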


It follows that for $z\in Y$ and $y\in Y$,
$$ \mca{W}_{\s_z}(\s_y)=
\begin{cases}
\delta_B^{1/2}( \s_y ) \cdot \sum_{\w \in W} \gk(w_G w^{-1}, \chi)  \cdot \tau(w, {}^{w^{-1}}\chi, \s_z, w_G \cdot \s_y \cdot w_G^{-1}) & \text{ if } y\in Y^+, \\
0 & \text{ otherwise.}
\end{cases}$$
 We define for every $\gamma, \wt{t} \in \wt{T}$,
$$ \mca{W}^*_{\gamma}(\wt{t}):=\delta_B^{1/2}( \wt{t} )^{-1} \cdot \sum_{\w \in W} \gk(w_G w^{-1}, \chi)  \cdot \tau(w, {}^{w^{-1}}\chi, \gamma, \wt{t}).$$
We emphasize that here $\wt{t}$ is not required to be dominant. In particular, 
\begin{equation} \label{E:W*}
 \mca{W}^*_{\s_z}(\s_y)=\delta_B^{1/2}( \s_y )^{-1} \cdot \sum_{\w \in W} \gk(w_G w^{-1}, \chi)  \cdot \tau(w, {}^{w^{-1}}\chi, \s_z, \s_y)
 \end{equation}
 for every $z, y\in Y$. One can extend by linearity and define $\mca{W}_{\cc}^*(\wt{t})$ for every $\cc \in \Ftn(i(\chi))$. If $w_G^{-1}\wt{t} w_G$ is dominant, then
$$\mca{W}_{\cc}^*(\wt{t})= \mca{W}_{\cc}(w_G^{-1} \wt{t} w_G).$$

We denote 
$$\mca{W}^*:=\set{ \mca{W}^*_{\cc}: \cc \in \Ftn(i(\chi))  } .$$
If $\mca{W}^*_\cc=0$ as a function of $\wt{T}$, then $\mca{W}_\cc=0$ and it follows that $\lambda_\cc(v)=0$ for every $v\in \pi_\chi^{un}$ as the unramified vector $f^0$ generates $\pi_\chi^{un}$. Thus, one has an injection of vector spaces
\begin{equation} \label{emb-W}
\begin{tikzcd}
\Wh(\pi_\chi^{un}) \ar[r, hook] & \mca{W}^*
\end{tikzcd}
\end{equation}
given by 
$$\lambda_\cc \mapsto \mca{W}^*_\cc.$$

For $z\in \msc{X}_{Q,n}$, let $\mca{W}^*_{\mca{O}_z} \subset \mca{W}^*$ be the subspace spanned by $\set{\mca{W}^*_{\s_{z'}}: z' \in \mfr{R}_z }$, where $\mfr{R}_z \subset Y$ is a set of representatives of $\mca{O}_z$. Note that $\mca{W}^*_{\s_{z'}}$ depends on the representative $z'$; however, the space $\mca{W}^*_{\mca{O}_z}$ depends only on $\mca{O}_z$.  From the  restriction of \eqref{emb-W}, we obtain
\begin{equation} \label{Emb}
\begin{tikzcd}
 \Wh(\pi_\chi^{un})_{\mca{O}_z}  \ar[r,hook] & \mca{W}^*_{\mca{O}_z}.
 \end{tikzcd}
 \end{equation}
Let $\C^{  \val{\mca{O}_z} }$ be the $\val{ \mca{O}_z }$-dimensional complex vector space. 
Endow the space $\C^{  \val{\mca{O}_z} }$ with the coordinates indexed by $\mfr{R}_z$. Thus we write $(c_y)_{y\in \mfr{R}_z}$ for a general vector in $\C^{  \val{\mca{O}_z } }$.  Depending on $\mfr{R}_z$, there is an evaluation map
$$\begin{tikzcd}
\nu_{\mfr{R}_z}:  \mca{W}^*_{\mca{O}_z} \ar[r]  & \C^{  \val{\mca{O}_z} }
\end{tikzcd} $$
with the $y$-th coordinate of $\nu_{\mfr{R}_z}(\mca{W}_{\s_{z'}}), y, z' \in \mfr{R}_z$ given by
$$\nu_{\mfr{R}_z}(\mca{W}^*_{\s_{z'}})_y = \mca{W}^*_{\s_{z'}}(\s_y);$$
that is, 
$$\nu_{\mfr{R}_z}(\mca{W}^*_{\s_{z'}})= \left( \mca{W}^*_{\s_{z'}}(\s_y) \right)_{y\in \mfr{R}_z} \in \C^{\val{ \mca{O}_z}}.$$
If $\mfr{R} \subset Y$ is a set of representatives for $\msc{X}_{Q,n}$, then we have a unique subset $\mfr{R}_z \subset \mfr{R}$ representing $\mca{O}_z$. By combining all the $\nu_{\mfr{R}_z}$, one obtains an evaluation map
$$\begin{tikzcd}
\nu:  \mca{W}^* \ar[r]  & \C^{  \val{\msc{X}_{Q,n}} },
\end{tikzcd} $$
which depends on the chosen $\mfr{R}$. In particular, for a general $\cc \in \Ftn(i(\chi))$, we have
$$\nu(\mca{W}^*_\cc)_y= \mca{W}^*_\cc(\s_y).$$

For every $\mca{O}_z \in \mca{O}_{\msc{X}}$, composing \eqref{Emb} with $\nu_{\mfr{R}_z}$ gives a vector space homomorphism
$$\begin{tikzcd}
\nu^\chi_{\mfr{R}_z}:  \Wh(\pi_\chi^{un})_{\mca{O}_z}  \ar[r]  &  \C^{ \val{ \mca{O}_z } }.
\end{tikzcd} $$
Similarly, we have 
$$\begin{tikzcd}
\nu^\chi: \Wh(\pi_\chi^{un}) \ar[r]  & \C^{ \val{ \msc{X}_{Q,n} } }
\end{tikzcd} $$
with
$$ \nu^\chi = \bigoplus_{ \mca{O}_z \in \mca{O}_{\msc{X}}  }  \nu^\chi_{ \mfr{R}_z }.$$

\begin{conj} \label{C:inj}
For every $W$-orbit $\mca{O}_z$ and every choice of $\mfr{R}_z$, the homomorphism $\nu^\chi_{\mfr{R}_z}:  \Wh(\pi_\chi^{un})_{\mca{O}_z}  \to    \C^{ \val{ \mca{O}_z } }$ is injective. Thus, $\dim \Wh(\pi_\chi^{un})_{ \mca{O}_z  } =  {\rm rank}(\nu^\chi_{ \mfr{R}_z }  )$ and 
$$\dim \Wh( \pi_\chi^{un} )= \sum_{ \mca{O}_z\in \mca{O}_\msc{X} }  {\rm rank}(\nu^\chi_{ \mfr{R}_z }  ).$$
\end{conj}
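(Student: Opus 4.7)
The plan is to reduce the injectivity of $\nu^\chi_{\mfr{R}_z}$ to the invertibility of a single finite-dimensional square matrix, and then to establish that invertibility through explicit computation in the regular case combined with a holomorphic deformation. The embedding $\Wh(\pi_\chi^{un}) \hookrightarrow \mca{W}^*$ from \eqref{emb-W} respects the block-diagonal decomposition \eqref{DB} of the scattering matrix into $W$-orbit blocks, and hence restricts to an embedding $\Wh(\pi_\chi^{un})_{\mca{O}_z} \hookrightarrow \mca{W}^*_{\mca{O}_z}$ for each orbit. Since $\mca{W}^*_{\mca{O}_z}$ is spanned by the finite collection $\{\mca{W}^*_{\s_{z'}} : z' \in \mfr{R}_z\}$ and hence has dimension at most $\val{\mca{O}_z}$, the injectivity of $\nu^\chi_{\mfr{R}_z}$ will follow from the invertibility, for each $\mca{O}_z$, of the square matrix
\begin{equation*}
M_{\mca{O}_z}(\chi) := \bigl[\mca{W}^*_{\s_{z'}}(\s_y)\bigr]_{z',y \in \mfr{R}_z}.
\end{equation*}

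Next I would unpack the entries of $M_{\mca{O}_z}(\chi)$ via \eqref{E:W*} and Theorem \ref{T:tau}: by the block-diagonal structure of scattering matrices and the cocycle relation \eqref{SLCM2}, only those $\w \in W$ satisfying $\w[z'] \equiv y \pmod{Y_{Q,n}}$ contribute to $M_{\mca{O}_z}(\chi)_{z',y}$, and each contribution is weighted by the normalization factor $\gk(w_G w^{-1}, \chi)$. When $\chi$ is sufficiently generic, namely when $W_\chi = \{1\}$, every entry reduces to a single non-vanishing summand; combining the Gauss-sum description in Theorem \ref{T:tau} with the multiplicativity from \eqref{SLCM2} yields a closed formula for $\det M_{\mca{O}_z}(\chi)$ as a product of $\gk$-values and Gauss sums, whose non-vanishing along the unitary locus is immediate from the holomorphy part of Proposition \ref{T:coc} together with the $q^{-1/2}$-absolute-value formula in \eqref{F:gauss}.

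The main obstacle lies in the non-regular case, in which distinct Weyl elements may contribute to the same entry and cancellations among their weighted contributions are \emph{a priori} possible. To handle this, my strategy is to first establish invertibility on the Zariski-dense locus of regular unramified characters by the explicit computation above, and then to extend by a holomorphic deformation argument within the unramified character torus $\Hom(Y_{Q,n}, \C^\times)$. Concretely, both $M_{\mca{O}_z}(\chi)$ and the source space $\Wh(\pi_\chi^{un})_{\mca{O}_z}$ fit into algebraic families over this torus, and lower semi-continuity of $\mathrm{rank}(\nu^\chi_{\mfr{R}_z})$ will propagate invertibility from the regular locus to the unitary locus, provided no new pole of the normalized operators $\mca{A}(w, \chi)$ develops along the unitary sublocus. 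This last holomorphy input is guaranteed by Proposition \ref{T:coc} together with the Plancherel measure formula \eqref{E:Plan}, interpreted through the covering-group extension of Harish-Chandra--Silberger theory due to W.-W.~Li \cite{Li3, Li5} and C.-H.~Luo \cite{Luo3}; as an auxiliary input one may further exploit the $R_\chi$-equivariance coming from Section \ref{S:formu} and Theorem \ref{T:dW} to relate each isotypic block of $\det M_{\mca{O}_z}(\chi)$ to Plancherel factors $\mu(w,\chi)^{\pm 1}$.
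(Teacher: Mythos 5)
Your proposed reduction is not correct: proving that $M_{\mca{O}_z}(\chi)$ is invertible would establish a statement that is \emph{false} at precisely the points where the conjecture has real content, namely at the reducibility points where $R_\chi$ is nontrivial. Indeed, from the paper's own verification for $\wt{\SL}_2^{(n)}$ in Proposition~\ref{P:inj2}, the $2\times 2$ matrix $\mca{M}_{\mfr{R}_{i\alpha^\vee}}$ (your $M_{\mca{O}_{i\alpha^\vee}}(\chi)$) has determinant
$q^{-2}-q^{-1}\,\g(1-2i)\,\g(2i-1)=q^{-2}\bigl(1-\vep^{\,1-2i}\bigr)=0$
for odd $n$ (so $\vep=1$), hence rank $1$ rather than $2$. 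This is forced: if $M_{\mca{O}_z}(\chi)$ were invertible, one would get $\dim\Wh(\pi_\chi^{un})_{\mca{O}_z}=\val{\mca{O}_z}$, contradicting the fact that $\pi_\chi^{un}$ is a proper constituent of the reducible $I(\chi)$ with strictly smaller Whittaker dimension by Theorem~\ref{T:dW}. The actual content of Conjecture~\ref{C:inj} is that $\mathrm{rank}\bigl(M_{\mca{O}_z}(\chi)\bigr)$, which \emph{does} drop below $\val{\mca{O}_z}$ at reducibility points, equals $\dim\mca{W}^*_{\mca{O}_z}$; the inequality $\mathrm{rank}\le\dim$ is automatic, and it is the reverse inequality that must be proved.

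Your deformation argument also runs the wrong way. Lower semi-continuity of matrix rank says exactly that $\{\chi:\mathrm{rank}(M_{\mca{O}_z}(\chi))\ge r\}$ is open; rank can only \emph{drop} under specialization from the regular locus into the reducibility locus, so generic invertibility cannot be ``propagated inward'' to the special points. What would be required is that the two lower-semicontinuous quantities $\mathrm{rank}(M_{\mca{O}_z})$ and $\dim\mca{W}^*_{\mca{O}_z}$ drop \emph{in lockstep}, and your argument offers nothing toward this; worse, $\pi_\chi^{un}$ itself changes discontinuously at reducibility points, so $\dim\Wh(\pi_\chi^{un})_{\mca{O}_z}$ does not sit in any obvious algebraic family in $\chi$. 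Note also that the paper leaves this statement as an open conjecture and only verifies it in explicit cases; the verification in Proposition~\ref{P:inj2} proceeds by a completely different route: pick $\cc$ in the kernel of $\nu^\chi_{\mfr{R}_z}$, extract the explicit linear relation~\eqref{rel0}, and then check by direct computation that the same relation forces $P_{\mathbbm{1}}^*(\lambda_\cc)=0$. The nontrivial coincidence between the kernel of the evaluation map and the kernel of the projection onto $\pi_\chi^{un}$ is exactly what the conjecture asserts, and it is invisible to an invertibility-plus-deformation approach.
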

For application purpose (i.e., to determine $\dim\Wh(\pi_\chi^{un})_{\mca{O}_z}$), it is sufficient to consider one $\mfr{R}_z$. More precisely, for low-rank groups, or low-degree covering groups, one can verify the injectivity in Conjecture \ref{C:inj} for a particular $\mfr{R}_z$ and compute the above ${\rm rank}(\nu^\chi_{ \mfr{R}_z })$ explicitly.

\begin{thm} \label{T:un-gen}
Let $\wt{G}$ be an $n$-fold cover of a linear algebraic group $G$, which is not of metaplectic type (see Definition \ref{D:meta}). For every $z\in Y_n^{\rm exc}$, taking $\mfr{R}_z =\set{z}$, we have 
$${\rm rank}(\nu_{ \mfr{R}_z }^\chi) =1.$$
Hence,  $\dim \Wh(\pi_\chi^{un}) \ge \val{ \msc{X}_{Q,n}^{\rm exc}  }$. 
\end{thm}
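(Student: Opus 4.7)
The plan reduces the rank assertion to showing a single scalar is nonzero. Any $z \in Y_n^{\rm exc}$ lies in the preimage of $(\msc{X}_{Q,n})^W$, so its twisted $W$-orbit is the singleton $\mca{O}_z = \{z\}$, giving $\val{\mca{O}_z} = 1$ and $\mfr{R}_z = \{z\}$. Tracing through the definitions, the source $\Wh(\pi_\chi^{un})_{\mca{O}_z}$ is the one-dimensional image under $h^{un}$ of $\C \cdot \lambda_{\s_z}$, and the map $\nu^\chi_{\{z\}}$ carries it to the scalar $\mca{W}^*_{\s_z}(\s_z) \in \C$. Hence ${\rm rank}(\nu^\chi_{\{z\}}) = 1$ is equivalent to the nonvanishing $\mca{W}^*_{\s_z}(\s_z) \neq 0$, and this is what I would aim to establish.

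For the evaluation I would start from
$$\mca{W}^*_{\s_z}(\s_z) = \delta_B^{1/2}(\s_z)^{-1} \sum_{w \in W} \gk(w_G w^{-1}, \chi) \cdot \tau(w, {}^{w^{-1}}\chi, \s_z, \s_z),$$
and then apply the exceptional-point identity $\tau(w, \chi', \s_z, \s_z) = \gamma(w, \chi')^{-1}$, valid for any unramified character $\chi'$ and any $w \in W$. This is precisely the input from \cite[Proposition 4.12]{GSS2} already exploited in the proof of Theorem \ref{T:unWh}, and it applies here because $\wt{G}$ is not of metaplectic type. Substituting $\chi' = {}^{w^{-1}}\chi$ and using $({}^{w^{-1}}\chi)_\alpha = \chi_{w(\alpha)}$, the summand becomes a product over $\Phi_w$ of factors of the shape $1 - q^{-1}\chi_\beta^{\pm 1}$ indexed by positive roots $\beta$. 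The sum over $W$ is then recognizable as the covering analogue of the Casselman--Shalika formula \cite{CS} evaluated at the exceptional point $z$, and after collapsing one obtains a single Euler-type product. Its nonvanishing then follows from unitarity of $\chi$: since $\val{\chi_\beta} = 1$ for all $\beta$, no local factor $1 - q^{-1}\chi_\beta^{\pm 1}$ can vanish.

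The lower bound $\dim \Wh(\pi_\chi^{un}) \ge \val{\msc{X}_{Q,n}^{\rm exc}}$ then follows at once from the orbit decomposition
$$\Wh(\pi_\chi^{un}) = \bigoplus_{\mca{O}_y \in \mca{O}_\msc{X}} \Wh(\pi_\chi^{un})_{\mca{O}_y},$$
since each $z \in \msc{X}_{Q,n}^{\rm exc}$ gives a distinct singleton orbit $\{z\}$ with $\dim \Wh(\pi_\chi^{un})_{\mca{O}_z} \ge {\rm rank}(\nu^\chi_{\{z\}}) = 1$.

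The main obstacle is the Casselman--Shalika-type collapse of the Weyl sum. In the linear case ($n = 1$, $z = 0$) this is standard and yields the nonzero product $\prod_{\alpha > 0}(1 - q^{-1}\chi_\alpha^{-1})$. For a general covering $\wt{G}$ the situation is more delicate because the entries $\tau(w, \chi, \s_z, \s_z)$ a priori involve both $\tau^1$ and $\tau^2$ contributions, with Gauss-sum values $\g(\,\cdot\,)$ and $\vep$-twists present in the latter. The exceptional condition $\angb{z_\rho}{\alpha} = -n_\alpha$ is exactly what is needed to collapse these two types of entries into the uniform gamma-factor expression $\gamma(w, \chi)^{-1}$; once this reduction is in place, the Casselman--Shalika-style computation of \cite{Mc2, Ga5} can be transported verbatim from $y = 0$ to $y = z$, producing the required nonzero Euler product.
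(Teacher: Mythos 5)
Your proposal is correct and follows the same route as the paper: reduce to the nonvanishing of the single scalar $\mca{W}^*_{\s_z}(\s_z)$, identify it with an Euler product via the covering Casselman--Shalika formula at the exceptional point, and conclude from unitarity of $\chi$. The only difference is cosmetic: the paper simply invokes \cite[Theorem 5.6]{GSS2} (noting that $-z$ is dominant so the formula applies and gives $\mca{W}^*_{\s_z}(\s_z) = \delta_B^{1/2}(\s_z)^{-1}\prod_{\alpha>0}(1-q^{-1}\chi_\alpha)$), whereas you sketch re-deriving that result from \cite[Proposition 4.12]{GSS2} and the Casselman--Shalika collapse of the Weyl sum, which is exactly the content of the cited theorem.
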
 
\begin{proof}
If $z\in Y_n^{\rm exc}$, then $-z$ is dominant and it follows from \cite[Theorem 5.6]{GSS2} that we have a Casselman--Shalika formula for $\wt{G}$ which reads
$$\mca{W}^*_{\s_z}(\s_{z}) = \delta_B^{1/2}(\s_{z})^{-1} \cdot \prod_{\alpha > 0} (1-q^{-1} \chi_\alpha).$$
Since $\chi$ is unitary, we see that 
$$\nu_{\mfr{R}_z}^\chi(\lambda_{\s_z}) = \mca{W}^*_{\s_z}(\s_{z}) \ne 0.$$
This shows that $\nu_{\mfr{R}_z}^\chi$ is an isomorphism between the one-dimensional vector spaces. The result follows.
\end{proof}
Theorem \ref{T:un-gen} is compatible with Theorem \ref{T:unWh}, though the approach in the former highlights the role of unramified Whittaker functions.

\section{Covers of symplectic groups} \label{S:eg}
The goal of this section is to show that Conjecture \ref{MConj2} (equivalently Conjecture \ref{MConj}) holds for $\wt{\Sp}_{2r}^{(n)}, r\ge 1$ and $\wt{\SL}_3^{(2)}$. Recall that for every $\alpha\in \Phi$ we denote
$$\chi_\alpha:= \uchi_\alpha(\varpi)= \chi(\wt{h}_\alpha(\varpi^{n_\alpha})).$$

\subsection{Covers of $\Sp_{2r}, r\ge 1$}
Consider the Dynkin diagram for the simple coroots of $\Sp_{2r}$:

$$ \qquad 
\begin{picture}(4.7,0.2)(0,0)
\put(1,0){\circle{0.08}}
\put(1.5,0){\circle{0.08}}
\put(2,0){\circle{0.08}}
\put(2.5,0){\circle{0.08}}
\put(3,0){\circle{0.08}}
\put(1.04,0){\line(1,0){0.42}}
\multiput(1.55,0)(0.05,0){9}{\circle*{0.02}}
\put(2.04,0){\line(1,0){0.42}}
\put(2.54,0.015){\line(1,0){0.42}}
\put(2.54,-0.015){\line(1,0){0.42}}
\put(2.74,-0.04){$>$}
\put(1,0.1){\footnotesize $\alpha_1^\vee$}
\put(1.5,0.1){\footnotesize $\alpha_2^\vee$}
\put(2,0.1){\footnotesize $\alpha_{r-2}^\vee$}
\put(2.5,0.1){\footnotesize $\alpha_{r-1}^\vee$}
\put(3,0.1){\footnotesize $\alpha_r^\vee$}
\end{picture}
$$
\vskip 10pt

Let $Y=Y^{\sct}=\langle \alpha_1^\vee, \alpha^\vee_2, ..., \alpha_{r-1}^\vee, \alpha_r^\vee \rangle$ be the cocharacter lattice of $\Sp_{2r}$, where $\alpha_r^\vee$ is the short coroot as shown in the above diagram. For simplicity, let $Q$ be the Weyl-invariant quadratic form on $Y$ such that $Q(\alpha_r^\vee)=1$. The bilinear form $B_Q$ is given by
$$
B_Q(\alpha_i^\vee, \alpha_j^\vee) =
\begin{cases}
2 & \text{if } i=j=r, \\
4& \text{if } 1\le i=j \le r-1, \\
-2 & \text{if } j=i+1,\\
0 & \text{if $\alpha_i^\vee, \alpha_j^\vee$ are not adjacent}.
\end{cases}
$$
Let $\wt{G}:=\wt{\Sp}_{2r}^{(n)}$ be the $n$-fold cover of $\Sp_{2r}$. We have 
$$\wt{G}^\vee =
\begin{cases}
\Sp_{2r} & \text{ if $n$ is even}; \\
\SO_{2r+1} & \text{ if $n$ is odd}.
\end{cases}
$$
By Corollary \ref{C:R=1}, one has $R_\chi=\set{1}$ if $n$ even. For odd $n$, it is clear that $n_{\alpha_i}=n$ for all $\alpha_i \in \Delta$ and
$$Y_{Q,n}=Y_{Q,n}^{sc}=  nY.$$
Following notations in \cite[Page 267]{Bou}, we consider the map 
$$\bigoplus_{i=1}^r \Z \alpha_i^\vee \to \bigoplus_{i=1}^r \Z e_i$$
 given by
$$(x_1, x_2, x_3 ..., x_r) \mapsto (x_1, x_2-x_1, x_3-x_2, ..., x_{r-1} -x_{r-2}, x_r -x_{r-1}),$$
which is an isomorphism. The Weyl group is $W= S_r \rtimes (\Z/2\Z)^r$, where $S_r$ is the permutation group on $\bigoplus_i \Z e_i$ and each $(\Z/2\Z)_i$ acts by $e_i \mapsto \pm e_i $. In particular, $\w_{\alpha_i}, 1\le i\le r-1,$ acts on $(y_1, y_2, ..., y_r) \in \bigoplus_i \Z e_i$ by exchanging $y_i$ and $y_{i+1}$, while $\w_{\alpha_r}$ acts by $(-1)$ on $\Z e_r$.

For odd $n$, it follows from Proposition \ref{P:bd}, Proposition \ref{P:R-sc} and \cite[\S 3]{Key2} that the only possible non-trivial $R$-group (up to isomorphism) for $\wt{G}$ is 
$$R_\chi=\set{1, \w_{\alpha_r}},$$
where $\chi$ is the unramified genuine character of $Z(\wt{T}) \subset \wt{G}$ such that
\begin{enumerate}
\item[$\bullet$] $\uchi_{\alpha_i}$  is any unitary unramified linear character for all   $1\le i \le r-1$, and
\item[$\bullet$] furthermore
$$\uchi_{\alpha_r}^2 = \mbm{1} \text{ and }   \uchi_{\alpha_r} \ne \mbm{1}.$$
\end{enumerate}
In particular, $\chi_{\alpha_r} = -1$. Denote by $\varepsilon$ the non-trivial character of $R_\chi$. We have a decomposition 
$$I(\chi) = \pi_\chi^{un} \oplus \pi_\varepsilon,$$
where $\pi_\varepsilon$ is non-isomorphic to $\pi_\chi^{un} =\pi_{\mathbbm{1}}$.

\begin{thm} \label{T:Sp2r}
For odd $n$ and $\chi$ as above, Conjecture \ref{MConj2} holds for $\wt{\Sp}_{2r}^{(n)}$; in this case,
$$\dim \Wh(\pi_\chi^{un}) = \frac{n^r + n^{r-1}}{ 2 }, \quad \dim \Wh(\pi_\varepsilon) = \frac{n^r - n^{r-1}}{ 2 }.$$
\end{thm}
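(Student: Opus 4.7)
The plan is to verify Conjecture \ref{MConj2} for $\wt{\Sp}_{2r}^{(n)}$ with $n$ odd, and then to deduce the dimension formulas from Theorem \ref{T:dW}. In the standard basis $\{e_i\}_{i=1}^r$ of $Y$, one computes $B_Q(e_i,e_j)=2\delta_{ij}$, so for odd $n$, $Y_{Q,n}=nY$ and $\msc{X}_{Q,n}\simeq(\Z/n\Z)^r$. Using $\rho=\tfrac{1}{2}\sum_{i=1}^r(2r-2i+1)e_i$, the twisted action of $w_{\alpha_r}$ on $\msc{X}_{Q,n}$ is $(z_1,\ldots,z_r)\mapsto(z_1,\ldots,z_{r-1},1-z_r)\pmod n$, whose fixed points are exactly those $z$ with $z_r\equiv(n+1)/2\pmod n$, giving $\val{\msc{X}_{Q,n}^{w_{\alpha_r}}}=n^{r-1}$. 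From $\chi_{\alpha_r}=-1$ one also computes $\gamma(w_{\alpha_r},\chi)^{-1}=(1+q^{-1})/2$.

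I will then compute the diagonal entries of the scattering matrix using Theorem \ref{T:tau}. For every $z$, $\tau^1(w_{\alpha_r},\chi,\s_z,\s_z)=(1-q^{-1})(-1)^{\lceil 2z_r/n\rceil}/2$. For non-fixed $z$, $\tau^2$ vanishes, so this is the full contribution. For fixed $z$, $\tau^2$ is nonzero, and here a subtle point intervenes: Theorem \ref{T:tau}'s formula $\tau^2=\vep^{\langle y_\rho,\alpha\rangle D(y,\alpha^\vee)}\g(\langle y_\rho,\alpha\rangle Q(\alpha^\vee))$ applies literally when $y_1=w_\alpha[y]$ in $Y$, whereas on the diagonal one has $z-w_{\alpha_r}[z]=n e_r\in Y_{Q,n}$. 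Translating via \eqref{SLCM1} and the group law on $\wt{T}$ introduces the correction factor $\chi(\s_{ne_r})^{-1}=\chi_{\alpha_r}^{-1}=-1$, with $\vep=1$ for odd $n$ trivializing the $D$-dependence. Combined with $\g(n)=-q^{-1}$, this yields $\tau^2(w_{\alpha_r},\chi,\s_z,\s_z)=+q^{-1}$, and since $(-1)^{\lceil(n+1)/n\rceil}=(-1)^2=1$, one gets $\tau(w_{\alpha_r},\chi,\s_z,\s_z)=(1-q^{-1})/2+q^{-1}=\gamma(w_{\alpha_r},\chi)^{-1}$ for every fixed $z$, uniformly in $(z_1,\ldots,z_{r-1})$.

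The crucial remaining step is to show that for each $W$-orbit $\mca{O}_y$, the sum $\sum_{z\in\mca{O}_y\setminus(\mca{O}_y)^{w_{\alpha_r}}}(-1)^{\lceil 2z_r/n\rceil}=0$, so that the non-fixed contribution to the block trace vanishes. Since $w_{\alpha_r}\in W$ acts freely on $\mca{O}_y\setminus(\mca{O}_y)^{w_{\alpha_r}}$, this set decomposes into pairs $\{z,w_{\alpha_r}[z]\}$ lying in the same $W$-orbit, and a direct case check splitting on $z_r\in\{0\}\cup\{1,\ldots,(n-1)/2\}\cup\{(n+3)/2,\ldots,n-1\}$ shows that $\lceil 2z_r/n\rceil$ and $\lceil 2(1-z_r)/n\rceil$ always have opposite parity, so each pair cancels. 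This gives $\Tr(\mca{S}_\mfr{R}(w_{\alpha_r},i(\chi))_{\mca{O}_y})=\val{(\mca{O}_y)^{w_{\alpha_r}}}\cdot\gamma(w_{\alpha_r},\chi)^{-1}$, which is Conjecture \ref{MConj2}. The dimension formulas then follow from Theorem \ref{T:dW} by decomposing $\sigma^\msc{X}|_{R_\chi}$: the restriction of this permutation representation of $R_\chi=\{1,w_{\alpha_r}\}\simeq\Z/2\Z$ on the $n^r$-element set $\msc{X}_{Q,n}$, with $n^{r-1}$ fixed points, splits as $\tfrac{n^r+n^{r-1}}{2}\mbm{1}\oplus\tfrac{n^r-n^{r-1}}{2}\varepsilon$, giving the claimed Whittaker dimensions.

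The main obstacle lies in the fixed-point computation of $\tau^2$: correctly deriving $+q^{-1}$ rather than the naive $-q^{-1}$. Plugging directly into Theorem \ref{T:tau} without the \eqref{SLCM1}-correction yields a block trace off from the target by an additive $2n^{r-1}q^{-1}$, so extracting the correct sign from the genuine character $\chi$ evaluated on $\s_{ne_r}\in Z(\wt{T})$, and using $\vep=1$ for odd $n$ to eliminate the $D$-dependent Hilbert-symbol factor, is the delicate point of the argument. Once this correction is in place the orbit-wise cancellation of non-fixed contributions is a short combinatorial check, and the passage to the dimension formulas via Theorem \ref{T:dW} is then automatic.
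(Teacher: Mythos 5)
Your proposal is correct, and the overall structure — reduce the block trace per $W$-orbit to the sum of diagonal entries, show each $R_\chi$-fixed point contributes $\gamma(w_{\alpha_r},\chi)^{-1}$, and show the free $R_\chi$-pairs cancel — matches the paper's proof. The difference is in how the two key facts are established. For the fixed-point diagonal entry, the paper simply invokes \cite[Proposition 4.12]{GSS2}, which packages exactly the identity $\tau(w_{\alpha_r},\chi,\s_z,\s_z)=\chi_{\alpha_r}^{\langle z,\alpha_r\rangle}\gamma(w_{\alpha_r},\chi)^{-1}$ for $\w_{\alpha_r}[z]\equiv z\bmod Y_{Q,n}$; you instead re-derive this from Theorem \ref{T:tau} together with the $\chi(\s_{ne_r})^{-1}=\chi_{\alpha_r}^{-1}$ correction from \eqref{SLCM1}, which is correct and does isolate the genuine subtlety (the naive reading gives $\tau^2=-q^{-1}$ rather than $+q^{-1}$). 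For the cancellation on free pairs, the paper cites $k_{z,\alpha_r}+k_{\w_{\alpha_r}[z],\alpha_r}=1$ from \cite[Lemma 3.9]{Ga2} while you show directly that $\lceil 2z_r/n\rceil$ and $\lceil 2(1-z_r)/n\rceil$ have opposite parity — these are the same fact, just re-proved. So your route buys a more self-contained argument at the cost of reproving two computations the paper outsources to external lemmas; the passage from the trace identity to the dimension formula via $\theta_{\sigma^\msc{X}}(1)=n^r$, $\theta_{\sigma^\msc{X}}(\w_{\alpha_r})=n^{r-1}$ is the same in both.

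One small point worth flagging: when applying the \eqref{SLCM1}-correction, the relation $\s_z=\s_{\w_{\alpha_r}[z]}\cdot\s_{ne_r}$ also a priori produces a Hilbert-symbol factor $(\varpi,\varpi)_n^{D(\w_{\alpha_r}[z],ne_r)}$ from the group law \eqref{F:s}, and the $\tau^2$-formula itself carries $\vep^{\langle z_\rho,\alpha_r\rangle D(z,\alpha_r^\vee)}$; both vanish here not only because $\vep=1$ for odd $n$ as you note, but more robustly because the exponents $nD(\w_{\alpha_r}[z],e_r)$ and $nD(z,\alpha_r^\vee)$ are multiples of $n$ and $\vep\in\bbmu_n$. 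Either justification is fine, but stating the stronger one makes the step independent of the parity of $n$ and clarifies why no extra sign can sneak in.
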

\begin{proof}
For $n$ odd
$$\msc{X}_{Q,n} = Y/Y_{Q,n} \simeq (\Z/n\Z)^r.$$
For every $W$-orbit $\mca{O}_y \subset \msc{X}_{Q,n}$, we will compute and check explicitly that
\begin{equation} \label{MC-C1}
\Tr\left( A(w_{\alpha_r}, \chi)^*_{ \mca{O}_y } \right) = \val{ (\mca{O}_y)^{\w_{\alpha_r} }  } \cdot \gamma(w_{\alpha_r}, \chi)^{-1}.
\end{equation}
One has a decomposition 
$$\mca{O}_y = \bigsqcup_{i\in I} \mca{O}_{z_i}^{R_\chi}$$
of $\mca{O}_y$ into $R_\chi$-orbits, where
$$\mca{O}_z^{R_\chi}= \set{z} \text{ or }    \mca{O}_z^{R_\chi} = \set{z, \w_{\alpha_r}[z]}.$$
To show \eqref{MC-C1}, it suffices to prove that for every $R_\chi$-orbit $\mca{O}_z^{R_\chi} \subset \msc{X}_{Q,n}$, one has
\begin{equation}  \label{MC-C2}
\sum_{z' \in \mca{O}_z^{R_\chi}}  \tau(w_{\alpha_r}, \chi, \s_{z'}, \s_{z'}) =  \val{ ( \mca{O}_z^{R_\chi} )^{\w_{\alpha_r}}  } \cdot \gamma(w_{\alpha_r}, \chi)^{-1}.
\end{equation}

First, if $\mca{O}_z^{R_\chi}= \set{z}$, then $\w_{\alpha_r}[z] = z \in \msc{X}_{Q,n}$. Write $z=\sum_{i=1}^r z_i e_i \in \msc{X}_{Q,n}$ with $0\le z_i \le n-1$. The equality $\w_{\alpha_r}[z] = z $ is equivalent to that $z_r = (n+1)/2$. It follows from \cite[Proposition 4.12]{GSS2} that
$$  \tau(w_{\alpha_r}, \chi, \s_{z}, \s_{z}) = \chi_{\alpha_r}^2 \cdot \gamma(w_{\alpha_r}, \chi)^{-1} = 1 \cdot \gamma(w_{\alpha_r}, \chi)^{-1} .$$
That is, \eqref{MC-C2} holds for such $\mca{O}_z^{R_\chi}$. In fact, we also see that the character $\theta_{\sigma^\msc{X}}$ of the representation $\sigma^\msc{X}:  R_\chi \to \text{Perm}(\msc{X}_{Q,n})$ is given by
$$\theta_{\sigma^\msc{X}}(1)= n^r, \quad \theta_{\sigma^\msc{X}}(\w_{\alpha_r})= n^{r-1}.$$

Second, assume $\mca{O}_z^{R_\chi}= \set{z, \w_{\alpha_r}[z]}$, then $n \nmid \angb{z_\rho}{\alpha_r}$. It follows from the proof of \cite[Lemma 3.9]{Ga2} that
$$k_{z, \alpha_r} +  k_{\w_{\alpha_r}[z], \alpha_r} = 1.$$
We obtain
$$\begin{aligned}
& \tau(w_{\alpha_r}, \chi, \s_{z}, \s_{z}) + \tau(w_{\alpha_r}, \chi, \s_{\w_{\alpha_r}[z]}, \s_{ \w_{\alpha_r}[z]}) \\
=& \frac{1-q^{-1}}{ 1-\chi_{\alpha_r} }  ( \chi_{\alpha_r})^{ k_{z, \alpha_r}   } + \frac{1-q^{-1}}{ 1-\chi_{\alpha_r} }  ( \chi_{\alpha_r})^{ k_{\w_{\alpha_r}[z], \alpha_r}  } \\
= & \frac{1-q^{-1}}{ 2}  ( -1 )^{ k_{z, \alpha_r}   } + \frac{1-q^{-1}}{ 2 }  ( -1 )^{ 1- k_{z, \alpha_r}   } \\
= & 0.
\end{aligned}$$
On the other hand, $\val{ ( \mca{O}_z^{R_\chi} )^{\w_{\alpha_r}}  } =0$; this shows that \eqref{MC-C2} holds in this case.

Therefore Conjecture \ref{MConj2} holds for $\wt{\Sp}_{2r}^{(n)}$. The desired dimension formula for the Whittaker spaces of $\pi_\chi^{un}$ and $\pi_\varepsilon$ follows from Theorem \ref{T:dW} and the character $\theta_{\sigma^\msc{X}}$ computed above. This completes the proof.
\end{proof}

\begin{rmk} \label{R:obsat}
The equality in \eqref{MC-C2} might fail for covers of a general simply-connected group. More precisely, for a general $\wt{G}$, we have the decomposition of a $W$-orbit $\mca{O}_y$ into $R_\chi$-orbits
$$\mca{O}_y = \bigsqcup_{i\in I} \mca{O}_{z_i}^{R_\chi}.$$
Conjecture \ref{MConj2} predicts that for every $\w \in R_\chi$, one has
\begin{equation}  \label{E:sumO}
\sum_{i\in I} \sum_{z' \in \mca{O}_{z_i}^{R_\chi}}  \tau(w, \chi, \s_{z'}, \s_{z'}) = \sum_{i\in I}  \val{ ( \mca{O}_{z_i}^{R_\chi} )^{\w}  } \cdot \gamma(w, \chi)^{-1}.
\end{equation}
However, the inner summands indexed by $I$ on the two sides may not be equal. A counter-example arises from considering $\wt{\SL}_4^{(3)}$ with $y=\sum_{\alpha \in \Delta} \alpha^\vee$, in which case $\val{\mca{O}_y}=6$. 

This subtlety is of the main difficulty with verifying Conjecture \ref{MConj2} by a direct computation. 
Indeed, it follows from Table 1 and Table 2 (or more precisely \cite[\S3]{Key2}) that the nontrivial unramified group $R_\chi$ for covers of simply-connected groups of type $B_r, D_r, E_6$ and $E_7$ is 
small; thus the orbits $\mca{O}_{z_i}^{R_\chi}$ are all small. However, as just noted, one needs to consider the whole $W$-orbit $\mca{O}_y$, whose size could be large depending on $W$ and $n$. This hinders one from a direct computation in the general situation.
\end{rmk}
\vskip 10pt

Theorem \ref{T:Sp2r} could also be obtained from the consideration in \S  \ref{S:UWf}, especially Conjecture \ref{C:inj}. We illustrate this by considering the case of $\wt{\SL}_2^{(n)}$. Write $n=2d+1$ and $\Delta=\set{\alpha}$. The twisted Weyl action on $\msc{X}_{Q,n}= \Z/n\Z$ is given by
$$\w_\alpha[k\alpha^\vee]= (1-k) \alpha^\vee \in \msc{X}_{Q,n}.$$
In total there are $(d+1)$-many $W$-orbits. Every orbit except that of $-d \alpha^\vee$ is free. We choose a set of representatives
of $\msc{X}_{Q,n}$ as
$$\mfr{R}=\set{i\cdot \alpha^\vee:  -d \le i \le d }.$$
The $W$-orbits in $\msc{X}_{Q,n}$ are
$$\mca{O}_{\msc{X}}= \set{ \mca{O}_{i\alpha^\vee}:  1\le i\le d  } \cup \set{ \mca{O}_{-d\alpha^\vee} }$$
with 
$$ \mfr{R}_{i\alpha^\vee}=\set{i\alpha^\vee, (1-i)\alpha^\vee} \text{ for } 1\le i\le d \text{ and } \mfr{R}_{-d\alpha^\vee}= \set{-d\alpha^\vee}.$$ 

\begin{prop}  \label{P:inj2}
Conjecture \ref{C:inj} holds for $\mfr{R}_{i\alpha^\vee}$ and moreover
 $${\rm rank}(\nu^\chi_{\mfr{R}_{i\alpha^\vee}})= 1$$
for every $1\le i \le d$ and $i=-d$.
\end{prop}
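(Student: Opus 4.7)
I split the proposition into the fixed-point case $i = -d$ and the size-two orbits $1 \le i \le d$. For $i = -d$, observe that $\rho = \alpha^\vee/2$ and $\rho_{Q,n} = (n/2)\alpha^\vee$ yield $\rho - \rho_{Q,n} = -d\alpha^\vee \in Y$ (using $n$ odd), so $-d\alpha^\vee$ is the unique exceptional point. Theorem \ref{T:un-gen} applied with $\mfr{R}_{-d\alpha^\vee} = \set{-d\alpha^\vee}$ gives $\mathrm{rank}(\nu^\chi_{\mfr{R}_{-d\alpha^\vee}}) = 1$ directly; injectivity is automatic since the domain $\Wh(\pi_\chi^{un})_{\mca{O}_{-d\alpha^\vee}}$ lies inside the one-dimensional space $\Wh(I(\chi))_{\mca{O}_{-d\alpha^\vee}}$.

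For $1 \le i \le d$, the plan is to assemble and analyze explicitly the $2 \times 2$ matrix $M = [\mca{W}^*_{\s_{z\alpha^\vee}}(\s_{y\alpha^\vee})]_{z,y \in \set{i, 1-i}}$. By Theorem \ref{T:tau}, using $Q(\alpha^\vee) = D(\alpha^\vee, \alpha^\vee) = 1$, $\chi_\alpha = -1$ (the only reducibility regime for $\wt{\SL}_2^{(n)}$ by \S\ref{S:SL2}), and $\vep = (-1, \varpi)_n = 1$ for odd $n$ (as $\vep^2 = 1$ and $\bbmu_n$ has odd order), the scattering entries restricted to $\mfr{R}_{i\alpha^\vee}$ simplify: the diagonal terms of $\tau$ are $\mp(1-q^{-1})/2$, with signs determined by $k_{i,\alpha} = 1$ and $k_{1-i,\alpha} = 0$, and the off-diagonals are the Gauss sums $\g(\pm(2i-1))$. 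Combining with $\gk(w_\alpha, \chi) = (1+q^{-1})/2$ via formula \eqref{E:W*} produces the entries of $M$, whose determinant is proportional to
$$q^{-1} - \g(1-2i)\cdot \g(2i-1) \;=\; q^{-1} - |\g(2i-1)|^2 \;=\; 0,$$
using $\g(-k) = \overline{\g(k)}$ (valid since $\vep = 1$) and $|\g(k)|^2 = q^{-1}$ for $k \not\equiv 0 \pmod n$. The $(i, i)$-entry of $M$ equals $\delta_B^{1/2}(\s_{i\alpha^\vee})^{-1} q^{-1} \ne 0$, so $\mathrm{rank}(M) = 1$.

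To upgrade this rank bound to the stated injectivity and rank of $\nu^\chi_{\mfr{R}_{i\alpha^\vee}}$, I would invoke Theorem \ref{T:dW}: the vanishing of the trace of the scattering block above forces the character of $\sigma^{\rm Wh}_{\mca{O}_{i\alpha^\vee}}$ on $R_\chi = \set{1, \w_\alpha}$ to take values $(2, 0)$, whence $\sigma^{\rm Wh}_{\mca{O}_{i\alpha^\vee}} \simeq \mbm{1} \oplus \varepsilon$ and $\dim \Wh(\pi_\chi^{un})_{\mca{O}_{i\alpha^\vee}} = \angb{\mbm{1}}{\sigma^{\rm Wh}_{\mca{O}_{i\alpha^\vee}}}_{R_\chi} = 1$. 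The principal obstacle is then to confirm that $\nu^\chi_{\mfr{R}_{i\alpha^\vee}}$ restricts non-trivially to this one-dimensional $\mbm{1}$-isotypic subspace. A direct eigenvector calculation identifies the $R_\chi$-fixed vector in $\Wh(I(\chi))_{\mca{O}_{i\alpha^\vee}}$, up to scalar, as $v = \lambda^\chi_{\s_{i\alpha^\vee}} + q\,\g(1-2i)\,\lambda^\chi_{\s_{(1-i)\alpha^\vee}}$; evaluating $\nu^\chi_{\mfr{R}_{i\alpha^\vee}}(v)$ at the coordinate $y = i\alpha^\vee$ yields
$$\delta_B^{1/2}(\s_{i\alpha^\vee})^{-1} \left( q^{-1} + q\cdot \g(1-2i)\g(2i-1) \right) \;=\; \delta_B^{1/2}(\s_{i\alpha^\vee})^{-1}\cdot(1 + q^{-1}),$$
via the same identity $\g(1-2i)\g(2i-1) = q^{-1}$, which is clearly non-zero. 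This secures $\nu^\chi_{\mfr{R}_{i\alpha^\vee}}(v) \ne 0$ and completes the argument.
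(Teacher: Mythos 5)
Your proof is correct, and the computational ingredients check out: $\vep = (-1,\varpi)_n = 1$ since $\bbmu_n$ has odd order; the Gauss-sum identity $\g(1-2i)\g(2i-1) = q^{-1}$ for $1 \le i \le d$; the ceiling values $k_{i\alpha^\vee,\alpha} = 1$ and $k_{(1-i)\alpha^\vee,\alpha} = 0$; and the consequent vanishing of both $\det\mca{M}_{\mfr{R}_{i\alpha^\vee}}$ and the trace of the scattering block. The eigenvector $v = \lambda^\chi_{\s_{i\alpha^\vee}} + q\,\g(1-2i)\,\lambda^\chi_{\s_{(1-i)\alpha^\vee}}$ and the non-vanishing evaluation are also right (and match $q^{i-1}(1+q)$ after unwinding $\delta_B^{1/2}(\s_{i\alpha^\vee})^{-1} = q^i$).

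You do, however, take a genuinely different route to injectivity than the paper. You first invoke Theorem \ref{T:dW} together with the vanishing trace to conclude $\sigma^{\rm Wh}_{\mca{O}_{i\alpha^\vee}} \simeq \mbm{1}\oplus\varepsilon$ and hence $\dim \Wh(\pi_\chi^{un})_{\mca{O}_{i\alpha^\vee}} = 1$; then, since $h^{un}$ is surjective, $\mathrm{rank}(\nu^\chi_{\mfr{R}_{i\alpha^\vee}}) = \mathrm{rank}(\mca{M}_{\mfr{R}_{i\alpha^\vee}}) = 1$, and with a one-dimensional domain this already forces injectivity — your explicit evaluation of $\nu^\chi$ at the fixed vector is an independent confirmation, but strictly redundant once the dimension count is in place. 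The paper deliberately avoids Theorem \ref{T:dW} here: it assumes $\nu^\chi_{\mfr{R}_z}(\lambda_\cc) = (0,0)$, extracts the single linear relation $q^{-1}\cc(\s_{i\alpha^\vee}) + \g(2i-1)\cc(\s_{(1-i)\alpha^\vee}) = 0$ from $\cc\,\mca{M}_{\mfr{R}_{i\alpha^\vee}} = 0$, and then verifies directly that this relation annihilates $P_\mbm{1}^*(\lambda_\cc)$. That keeps the argument self-contained within the unramified Whittaker function framework of §\ref{S:UWf}, which is the whole point of the surrounding discussion (providing an alternative, $\nu^\chi$-based derivation of Theorem \ref{T:Sp2r}). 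Your version is shorter once Theorem \ref{T:dW} is granted, but it leans on the very R-group result that §\ref{S:UWf} is meant to supplement, so it loses some of the illustrative force the paper is after.
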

\begin{proof}
Since $-d\alpha^\vee \in Y$ is an exceptional point, in view of Theorem \ref{T:un-gen}, it suffices to deal with the case $1\le i \le d$. We denote $z:=i\alpha^\vee$. Let $\lambda_\cc \in \Wh(I(\chi))_{\mca{O}_z}$, viewed as an element in $\Wh (\pi_\chi^{un})_{\mca{O}_z}$ by restriction. Then 
$${\rm supp}(\cc) \subset \s_z \cdot \wt{A} \cup \s_{\w[z]} \cdot \wt{A}.$$
We have
$$\mca{W}_\cc^* = \cc(\s_z) \cdot \mca{W}_{\s_z}^* + \cc(\s_{\w[z]}) \cdot \mca{W}_{\s_{\w[z]}}^*.$$
Recall the projection map $h^{un}: \Wh(I(\chi))_{\mca{O}_z} \onto \Wh(\pi_\chi^{un})$ from \S \ref{S:UWf}. Assume $\cc$ is such that
\begin{equation} \label{temp0}
\nu_{\mfr{R}_z}^{\chi}(\lambda_\cc) = (\mca{W}_\cc^*(\s_z), \mca{W}_\cc^*(\s_{\w[z]})) = (0, 0).
\end{equation}
We want to show that $h^{un}(\lambda_\cc) = 0$. For each $z=i\alpha^\vee$, 
we denote
$$ \mca{M}_{\mfr{R}_z}:= \left(\begin{matrix}
\mca{W}^*_{\s_z}(\s_z)   &  \mca{W}^*_{\s_z}(\s_{\w[z]})  \\
\mca{W}^*_{\s_{\w[z]}}(\s_z)   &  \mca{W}^*_{\s_{\w[z]}}(\s_{\w[z]})
\end{matrix}\right).
$$
It is easy to see that
\begin{equation} \label{tempM}
\nu_{\mfr{R}_z}^{\chi}(\lambda_\cc)=(\cc(\s_z), \cc(\s_{\w[z]})) \mca{M}_{\mfr{R}_z}.
\end{equation}
Now by \eqref{E:W*}, one has
$$\mca{W}^*_{\s_z}(\s_y)\cdot \delta_B^{1/2}( \s_y ) = \gk(w_\alpha, \chi)  \cdot \tau(\text{id}, \chi, \s_z,  \s_y) +  \tau(w_\alpha, {}^{w_\alpha} \chi, \s_z,  \s_y).$$
Since we are in the case where $\uchi_\alpha^2=\mbm{1}$ but $\uchi_\alpha \ne \mbm{1}$, we have 
$$\uchi_\alpha(\varpi)= \chi(\wt{h}_\alpha(\varpi^n))= -1.$$
We also note that ${}^{w_\alpha} \chi= \chi$. Thus a straightforward computation gives that
$$ \mca{M}_{\mfr{R}_{i\alpha^\vee}}= 
 \left(\begin{matrix}
\mca{W}^*_{\s_{i\alpha^\vee}}(\s_{ i\alpha^\vee  })   &  \mca{W}^*_{\s_{i\alpha^\vee}}(\s_{ (1-i)\alpha^\vee  })  \\
\mca{W}^*_{\s_{(1-i)\alpha^\vee}}(\s_{ i\alpha^\vee  })  & \mca{W}^*_{\s_{(1-i)\alpha^\vee}}(\s_{(1- i)\alpha^\vee  }) 
\end{matrix}\right)
=
\left(\begin{matrix}
q^{i-1} &  q^{-i-1} \g(1-2i) \\
q^i \g(2i-1) & q^{-i-1}.
\end{matrix}\right).$$
Combining \eqref{temp0} and \eqref{tempM}, we get
\begin{equation} \label{rel0}
q^{-1} \cdot \cc(\s_{i\alpha^\vee}) + \g(2i-1) \cdot \cc(\s_{(1-i)\alpha^\vee})=0.
\end{equation}
Consider the map
$$\begin{tikzcd}
P_{\mathbbm{1}}^*: \Wh(I(\chi)) \ar[r, hook] & \Wh(I(\chi))
\end{tikzcd}
$$
induced from the projection $P_{\mathbbm{1}}: I(\chi) \to I(\chi)$. To show $h^{un}(\lambda_\cc)=0$ is equivalent to proving $P_{\mathbbm{1}}^*(\lambda_\cc)=0$. Now
$$\begin{aligned}
& P_{\mathbbm{1}}^*(\lambda_\cc) \\
= & \frac{1}{2} \cdot (\lambda_\cc + \msc{A}(w, \chi)^*(\lambda_\cc)) \\
= & \frac{1}{2} \big(\cc(\s_z) + \cc(\s_z) \gamma(w, \chi) \tau(w, \chi, z, z) + \cc(\s_{\w[z]}) \gamma(w, \chi) \tau(w, \chi, \w[z], z) \big)\cdot \lambda_{\s_z}  \\
& \  + \frac{1}{2} \big(\cc(\s_{\w[z]}) + \cc(\s_z) \gamma(w, \chi) \tau(w, \chi, z, \w[z]) + \cc(\s_{\w[z]}) \gamma(w, \chi) \tau(w, \chi, \w[z], \w[z]) \big)\cdot \lambda_{\s_{\w[z]}}.
\end{aligned}
$$
A simplification gives that the above coefficient in front of $\lambda_{\s_z}$ is
$$\frac{1}{1+q^{-1}} \cdot \big(\cc(\s_{i\alpha^\vee}) q^{-1} + \cc(\s_{(1-i)\alpha^\vee}) \g(2i-1) \big),$$
which is equal to 0 by \eqref{rel0}. Similarly, it can be checked easily that the coefficient in front of $\lambda_{\s_{\w[z]}}$ is also 0. This shows that Conjecture \ref{C:inj} holds.

It is clear that for every $1\le i \le d$, we have
$${\rm rank}(\nu^\chi_{\mfr{R}_{i\alpha^\vee}})= {\rm rank}(\mca{M}_{\mfr{R}_{i\alpha^\vee}})=1.$$
The proof is now completed.
\end{proof}

It follows from Proposition \ref{P:inj2} that
$$\dim \Wh(\pi_\chi^{un})= \angb{\mathbbm{1}}{ \sigma^\msc{X} }_{R_\chi}= \val{ \mca{O}_{\msc{X}}  }=d + 1= \frac{n+1}{2}.$$
Consequently, 
$$\dim \Wh(\pi_\varepsilon)= \angb{\varepsilon}{\sigma^\msc{X}}_{R_\chi}= d = \frac{n-1}{2} ,$$
the number of free $R_\chi$-orbits in $\msc{X}_{Q,n}$.

\begin{rmk}
Let $n$ be odd and $\chi$ be the above non-trivial quadratic genuine character of $Z(\wt{T}) \subset \wt{\Sp}_{2r}$. The Whittaker dimension for the constituents in $\pi_\chi^{un} \oplus \pi_\chi' = \text{Ind}_{\wt{B}}^{ \wt{\Sp}_{2r} }(i(\chi))$ can be deduced from that of $\wt{\SL}_2^{(n)}$ as follows. Here we write $\pi_\chi'$ for $\pi_\varepsilon$. Each of the rank-one lattice $(\Z e_j) \subset Y$ gives rise to an $n$-fold covering $\wt{\GL}_{1,e_j}$  of the torus $\GL_1 \simeq F^\times$, by restriction from $\wt{T}$. One has an isomorphism (see \cite[\S 5.1.3]{GW})
\begin{equation} \label{F:BC}
\prod_{1\le j \le r} \wt{\GL}_{1,e_j}\big/ H \simeq  \wt{T},
\end{equation}
where $H=\set{(\zeta_j) \in (\bbmu_n)^r:   \prod_j \zeta_j =1 }$; that is, block-commutativity holds for coverings of Levi-subgroups of $\wt{\Sp}_{2r}$. Thus, we can write
$$i(\chi)= \prod_{j=1}^r i(\chi_j),$$
where $i(\chi_j) \in \Irr( \wt{\GL}_{1,e_j} )$ is of dimension $n$. Note also $\wt{T}_0:=\wt{\GL}_{1,e_r}$ is just the covering torus of $\wt{\SL}_2$ associated to $\alpha_r$. Let $\wt{M}= \prod_{1\le j \le r-1}  \wt{\GL}_{1,e_j} \times \wt{\SL}_2$ be the Levi subgroup of the parabolic subgroup $\wt{P} \subset \wt{\Sp}_{2r}^{(n)}$ associated to $\alpha_r$. The character $\chi_r$ is a non-trivial quadratic character of $Z(\wt{T}_0)$ and thus we have
$$\text{Ind}_{\wt{T}_0}^{  \wt{\SL}_2 }   ( i(\chi_r))  =  \pi_{\chi_r}^{un}  \oplus \pi_{\chi_r}'.$$
By induction in stages, one has
$$\pi_\chi^{un} = \text{Ind}_{\wt{P}}^{ \wt{\Sp}_{2r}  } (\boxtimes_{1\le j \le r-1}  i(\chi_j)) \boxtimes  \pi_{ \chi_r }^{un} $$
and similarly,
$$\pi_\chi' = \text{Ind}_{\wt{P}}^{ \wt{\Sp}_{2r}  } (\boxtimes_{1\le j \le r-1}  i(\chi_j)) \boxtimes  \pi_{ \chi_r }'.$$
Now it follows from the equalities
$$\dim i(\chi_j) = n, \quad \dim \Wh( \pi_{ \chi_r }^{un} ) = \frac{n + 1}{ 2}, \quad \dim \Wh( \pi_{ \chi_r }') = \frac{n - 1}{ 2} $$
and Rodier's heredity that
$$\dim \Wh(\pi_\chi^{un}) = n^{r-1}\cdot \frac{n + 1}{ 2 }, \quad \dim \Wh(\pi_\chi') = n^{r-1}  \cdot \frac{n -1}{ 2 },$$
which agrees with Theorem \ref{T:Sp2r}
\end{rmk}

\vskip 5pt

\subsection{Double cover of $\SL_3$} 
Before we proceed, we recall some observations from \S \ref{S:formu}. By the tables in  \S \ref{T1} and  Corollary \ref{T:R-abel}, the group $R_\chi$ is always cyclic for all semisimple type except the $D_r$ case when  $r$ is even. Recall that 
$$\msc{A}(w, \chi)^*= \gamma(w, \chi) \cdot A(w, \chi)^*.$$
Assume $R_\chi$ is cyclic and let $\w$ be a generator; then $\sigma(\w), \sigma \in \Irr(R_\chi)$ are distinct and $\dim \Wh(\pi_\sigma), \sigma \in \Irr(R_\chi)$ are just the multiplicities of the distinct eigenvalues $\sigma(\w) \cdot \gamma(w, \chi)^{-1}$ of the polynomial 
$$\det \left( X \cdot \text{id} - A(w, \chi)^*  \right)= \det \left( X \cdot I_{\val{\msc{X}_{Q,n}}} - \mca{S}_\mfr{R}(w, i(\chi)) \right).$$
This will be the observation we apply to  the double cover $\wt{\SL}_3^{(2)}$ in this subsection. 

Let $\alpha_1^\vee, \alpha_2^\vee$ be the two simple co-roots of $\SL_3$:
$$\qquad 
\begin{picture}(2.7,0.2)(0,0)
\put(1,0){\circle{0.08}}
\put(1.5,0){\circle{0.08}}
\put(1.04,0){\line(1,0){0.42}}
\put(0.95,0.1){\footnotesize $\alpha_{1}^\vee$}
\put(1.45,0.1){\footnotesize $\alpha_{2}^\vee$}
\end{picture}
$$
\vskip 10pt
\noindent For convenience, we write $\w_i= \w_{\alpha_i}$ for $i=1, 2$.  Let $\alpha_3=\alpha_1^\vee + \alpha_2^\vee \in \Phi^+$. Let $Q: Y\to \Z$ be the unique Weyl-invariant quadratic form such that $Q(\alpha_1^\vee)=1$. Taking $n=2$, we get
$$Y_{Q,2}= Y_{Q,2}^{sc}= 2Y$$
and thus
$$\msc{X}_{Q,2} \simeq (\Z/2\Z) \oplus (\Z/2\Z).$$
The ordered set
$$\mfr{R}= \set{0, \alpha_1^\vee, \alpha_2^\vee, \alpha_3^\vee } \subset Y$$
is a set of representatives of $\msc{X}_{Q,2}$. There are two $W$-orbits
$$\mca{O}_0 =\set{0, \alpha_1^\vee, \alpha_2^\vee}, \quad \mca{O}_{\alpha_3^\vee}=\set{\alpha_3^\vee}.$$
 Let $\chi$ be a unitary unramified genuine character of $Z(\wt{T})$. Since the dual group of $\wt{\SL}_3^{(2)}$ is $\PGL_3$,  we see that $R_\chi^{sc} = R_\chi$ by Proposition \ref{P:bd}; moreover, $R_\chi$ is either trivial or $\Z/3\Z$.

Assume $R_\chi = \Z/3\Z$, we see that $\Phi_\chi = \emptyset$ and $R_\chi=W_\chi= \langle \w_1 \w_2 \rangle = \langle \w_2 \w_1 \rangle \subset W$, i.e. 
$${}^{\w_2 \w_1} \chi= \chi.$$
This implies that
\begin{equation} \label{val-chi}
\chi_{\alpha_1}= \zeta = \chi_{\alpha_2}, \text{ and } \chi_{\alpha_3} = \zeta^2,
\end{equation}
where $\zeta \in \C^\times$ is a primitive third-root of unity. For such $\chi$, one has the decomposition
$$I(\chi) = \pi_\chi^{un} \oplus \pi_1 \oplus \pi_2$$
of $I(\chi)$ into non-isomorphic irreducible components. We have $\dim \Wh(I(\chi))=4$, and the permutation representation 
$$\sigma^\msc{X}: R_\chi \longrightarrow  \perm(\msc{X}_{Q,2})$$
is such that in $\msc{X}_{Q,2}$:
$$\w_2\w_1[\alpha_3^\vee]= \alpha_3^\vee, \quad \w_2\w_1[0]= \alpha_1^\vee, \quad \w_2\w_1[\alpha_1^\vee]= \alpha_2^\vee, \quad  \w_2\w_1[\alpha_2^\vee]= \alpha_0^\vee.$$
We have $\Irr(R_\chi)=\set{\mathbbm{1}, \sigma, \sigma^2}$ where $\sigma$ is the generator given by
$$\sigma(\w_2 \w_1) =\zeta.$$
It then follows easily that
$$\sigma^\msc{X}= (2\cdot \mbm{1}) \oplus \sigma \oplus \sigma^2.$$
Thus, we could label constituents of $I(\chi)$ as
 $$\pi_\chi^{un} = \pi_{\mbm{1}} = \pi_{\sigma^0}, \quad \pi_1= \pi_{\sigma} \text{ and }  \pi_2= \pi_{\sigma^2}.$$
Since $R_\chi$ is cyclic and $\w_2 \w_1$ is a generator of $\Irr(R_\chi)$, to determine $\dim \Wh(\pi_{\sigma^i})$, it suffices to compute the characteristic polynomial of $A(w_2 w_1, \chi)^*$ which takes the form
$$\det \left( X\cdot {\rm id}- A(w_2 w_1, \chi)^* \right)=
\prod_{0 \le i \le 2} \left( X - \zeta^i \cdot \gamma(w_2 w_1, \chi)^{-1} \right)^{\dim \Wh(\pi_{\sigma^i})},$$
where 
$$\gamma(w_2 w_1, \chi)^{-1} = \frac{(1-q^{-1} \chi_{\alpha_1}^{-1}) (1- q^{-1} \chi_{\alpha_3})^{-1}}{ (1-\chi_{\alpha_1})(1-\chi_{\alpha_3}) } = \frac{1+ q^{-1}  +  q^{-2}}{3}.$$

Let $\mca{S}_\mfr{R}(w_2w_1, i(\chi))$ be the scattering matrix associated to the ordered set $\mfr{R}$ above. For simplicity of computation, we assume $\bbmu_4 \subset F^\times$, and hence $\varepsilon =1$. By using \eqref{SLCM1}, \eqref{SLCM2} and Theorem \ref{T:tau}, we obtain in this case an explicit form (again, using the short-hand notation $\chi_{\alpha_1}, \chi_{\alpha_2}, \chi_{\alpha_3}$):
$$\begin{aligned}
& \mca{S}_\mfr{R}(w_2w_1, i(\chi)) \\
=&
\left(
\begin{matrix}
\frac{(1-q^{-1})^2}{(1-\chi_{\alpha_1})(1-\chi_{\alpha_3}) }  & \g(-1) \gamma(\uchi_{\alpha_3})^{-1} &   \g(-1)  \frac{1-q^{-1}}{1-\chi_{\alpha_1}  }  & 0 \\
\g(1) \frac{1-q^{-1}}{ 1-\chi_{\alpha_3} }  &   \chi_{\alpha_1} \frac{1-q^{-1}}{1-\chi_{\alpha_1}  } \gamma(\uchi_{\alpha_3})^{-1} &  q^{-1} & 0  \\
\g(1) \gamma(\uchi_{\alpha_1})^{-1} & 0 &  \chi_{\alpha_3} \frac{1-q^{-1}}{1-\chi_{\alpha_3}  } \gamma(\uchi_{\alpha_1})^{-1} & 0 \\
0 &  0 & 0 & \chi_{\alpha_1} \chi_{\alpha_3} \gamma(\uchi_{\alpha_1})^{-1} \gamma(\uchi_{\alpha_3})^{-1} 
\end{matrix}
\right)
\end{aligned}
$$
A straightforward computation gives that
$$\begin{aligned}
&  \det\left( X \cdot I_4- \mca{S}_\mfr{R}(w_2w_1, i(\chi)) \right) \\
= & \left( X- \frac{1+q^{-1} + q^{-2}}{3} \right) \cdot \left( X^3- \left( \frac{1+q^{-1} + q^{-2}}{3} \right)^3 \right),
\end{aligned} $$
and thus
$$\det\left( X \cdot I_4 -  \msc{A}(w_2 w_1, \chi)^* \right)= (X- 1)^2 \cdot (X- \zeta ) \cdot (X- \zeta^2).$$
Therefore,
$$\dim \Wh(\pi_{\mbm{1}}) =2 \text{ and } \dim \Wh(\pi_{\sigma^i})=1 \text{ for } i =1, 2.$$
Clearly,
$$\sigma^{\rm Wh}  = (2\cdot \mbm{1}) \oplus \sigma \oplus \sigma^2.$$

\begin{prop} \label{T:SL3}
For $\wt{\SL}_3^{(2)}$ we have
$$\sigma^{\rm Wh} = \sigma^\msc{X} = (2\cdot \mbm{1}) \oplus \sigma \oplus \sigma^2.$$
Moreover, Conjecture \ref{MConj2}  holds.
\end{prop}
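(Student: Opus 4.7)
The plan is to leverage the explicit characteristic polynomial computation already carried out just above the proposition. Writing $\msc{A}(w_2w_1,\chi)^* = \gamma(w_2w_1,\chi)\cdot A(w_2w_1,\chi)^*$ and using the block decomposition in \eqref{DB}, the matrix $\msc{A}(w_2w_1,\chi)^*$ is represented by $\gamma(w_2w_1,\chi)\cdot \mca{S}_{\mfr{R}}(w_2w_1,i(\chi))$, which is block-diagonal with a $3\times 3$ block indexed by $\mca{O}_0$ and a $1\times 1$ block indexed by $\mca{O}_{\alpha_3^\vee}$. The characteristic polynomial computation shows
$$\det(X\cdot\text{id} - \msc{A}(w_2w_1,\chi)^*) = (X-1)^2 (X-\zeta)(X-\zeta^2),$$
and since $\w_2\w_1$ generates $R_\chi$, the multiplicities of the eigenvalues $\sigma^i(\w_2\w_1) = \zeta^i$ give $\sigma^{\rm Wh} = 2\cdot\mbm{1} \oplus \sigma\oplus \sigma^2$ by Theorem \ref{T:dW}.

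Next, I would compute $\sigma^\msc{X}$ from the twisted Weyl action. The $W$-orbit $\mca{O}_{\alpha_3^\vee}=\{\alpha_3^\vee\}$ is a singleton, contributing $\sigma^\msc{X}_{\mca{O}_{\alpha_3^\vee}} = \mbm{1}$. The orbit $\mca{O}_0 = \{0,\alpha_1^\vee,\alpha_2^\vee\}$ has size equal to $|R_\chi|=3$ and the generator $\w_2\w_1$ cycles through these three representatives (as recorded just before the proposition), so $\sigma^\msc{X}_{\mca{O}_0}$ is the regular representation of $\Z/3\Z$, equal to $\mbm{1}\oplus\sigma\oplus\sigma^2$. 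Summing gives $\sigma^\msc{X} = 2\cdot\mbm{1}\oplus\sigma\oplus\sigma^2$, matching $\sigma^{\rm Wh}$.

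To verify Conjecture \ref{MConj2} at the refined orbit-wise level, I would treat the two orbits separately. For $\mca{O}_{\alpha_3^\vee}$, I check that $\alpha_3^\vee$ represents an exceptional point in $\msc{X}_{Q,2}^{\rm exc}$: with $n=2$ and $Q(\alpha_i^\vee)=1$ one has $n_{\alpha_i}=2$ for each positive coroot, hence $\rho = \alpha_3^\vee$ and $\rho_{Q,n} = 2\alpha_3^\vee$, so $\rho - \rho_{Q,n} = -\alpha_3^\vee \equiv \alpha_3^\vee \pmod{Y_{Q,2}}$. Since $\wt{\SL}_3^{(2)}$ is not of metaplectic type, Theorem \ref{T:unWh} immediately gives $\sigma^{\rm Wh}_{\mca{O}_{\alpha_3^\vee}} = \sigma^\msc{X}_{\mca{O}_{\alpha_3^\vee}} = \mbm{1}_{R_\chi}$, which one can also read off from the $(4,4)$-entry of the scattering matrix above.

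For the large orbit $\mca{O}_0$, I would use that the total $\sigma^{\rm Wh}$ decomposes as $\sigma^{\rm Wh}_{\mca{O}_0}\oplus \sigma^{\rm Wh}_{\mca{O}_{\alpha_3^\vee}}$ by \eqref{E:dcT}, and subtract the known singleton contribution to obtain $\sigma^{\rm Wh}_{\mca{O}_0} = \mbm{1}\oplus\sigma\oplus\sigma^2 = \sigma^\msc{X}_{\mca{O}_0}$. Alternatively, one can verify directly from the explicit $3\times 3$ upper-left block of $\mca{S}_{\mfr{R}}(w_2w_1,i(\chi))$ that $\Tr\bigl(A(w_2w_1,\chi)^*_{\mca{O}_0}\bigr)=0 = |(\mca{O}_0)^{\w_2\w_1}|\cdot\gamma(w_2w_1,\chi)^{-1}$, which is the equivalent form of Conjecture \ref{MConj2} from the end of \S\ref{S:formu}. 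The only genuine effort is checking the trace vanishing of this $3\times 3$ block (and the analogous identity for $(\w_2\w_1)^2$, which follows from cocycle and complex conjugation), a routine simplification using \eqref{val-chi} that proves no harder than the full characteristic polynomial already computed.
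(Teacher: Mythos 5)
Your proof is correct and follows essentially the same route as the paper: read off $\sigma^{\rm Wh}$ from the characteristic polynomial computed just above, observe $\sigma^\msc{X}$ matches from the explicit permutation action, then for the orbit-wise refinement of Conjecture \ref{MConj2} handle $\mca{O}_{\alpha_3^\vee}$ via Theorem \ref{T:unWh} (using $\alpha_3^\vee\in\msc{X}_{Q,2}^{\rm exc}$) and deduce the $\mca{O}_0$ statement by subtraction. Your additional explicit check that $\rho - \rho_{Q,2} = -\alpha_3^\vee \equiv \alpha_3^\vee \pmod{Y_{Q,2}}$ is a useful detail the paper leaves implicit.
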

\begin{proof}
The equalities are clear. It suffices to prove that Conjecture \ref{MConj2} holds for the two orbits $\mca{O}_0$ and $\mca{O}_{\alpha_3^\vee}$, which is a priori stronger than the equalities. This either follows from a direct computation, or we argue alternatively by using the fact that $ \mca{O}_\msc{X} = \set{\mca{O}_0, \mca{O}_{\alpha_3^\vee}   }$ with $\alpha_3^\vee \in \msc{X}_{Q,2}^{\rm exc}$.  Indeed, by Theorem \ref{T:unWh}, $\sigma^{\rm Wh}_{ \mca{O}_{\alpha_3^\vee} } = \sigma^{\rm Wh}_{ \mca{O}_{\alpha_3^\vee} } =\mbm{1}$. However, since
$$\sigma^{\rm Wh}= \sigma^{\rm Wh}_{ \mca{O}_{\alpha_3^\vee} } \oplus \sigma^{\rm Wh}_{ \mca{O}_0 } = \sigma^\msc{X}_{ \mca{O}_{\alpha_3^\vee} } \oplus \sigma^\msc{X}_{ \mca{O}_0 } =\sigma^\msc{X},$$
it enforces that
$$\sigma^{\rm Wh}_{ \mca{O}_0 } = \sigma^{\rm Wh}_{ \mca{O}_0 } = \mbm{1} \oplus \sigma \oplus \sigma^2.$$
The proof is completed.
\end{proof}

\section{Two remarks} \label{S:2rmk}
In this section, we consider two examples to justify the necessary constraints imposed on $G$ and $\wt{G}$ in Conjecture \ref{MConj2}. First,  we  consider $\wt{\SO}_3^{(n)}$ and show that a naive analogous formula does not hold for general covers of semisimple groups which are not simply connected. Second, we consider the double cover of the simply-connected $\Spin_6 \simeq  \SL_4$, whose dual group is $\SL_4/\mu_2$, and show that analogous Conjecture \ref{MConj2} does not hold. This shows that it is necessary to require the cover $\wt{G}$ to be saturated.

\subsection{Covers of $\SO_3$} Let $Y=\Z\cdot e$ be the cocharacter lattice of $\SO_3$ with $\alpha^\vee=2e$  generating the co-root lattice $Y^{sc}$. Let $Q: Y\to \Z$ be the Weyl-invariant quadratic form such that $Q(e)=1$. Thus, $Q(\alpha^\vee)=4$. We get
$$\alpha_{Q,n}^\vee= \frac{n}{\text{gcd}(4, n)} \cdot \alpha^\vee.$$
On the other hand, 
$$Y_{Q,n} = \Z \cdot  \frac{n}{\text{gcd}(2, n)} e .$$
The equality ${}^{w_\alpha} \chi = \chi$ is equivalent to that
$$\chi\left( \wt{h}_\alpha(a^{n/\text{gcd}(2,n)})   \right)=1$$
for all $a\in F^\times$. 

\begin{lm}
Let $\chi$ be a unitary unramified genuine character of $Z(\wt{T})$. Then $R_\chi = W$ if and only if $4|n$ and $\uchi_\alpha$ is a non-trivial quadratic character. 
\end{lm}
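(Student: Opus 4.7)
\medskip
\noindent\emph{Proof plan.} The plan is to reduce the equality $R_\chi = W$ to two decoupled conditions on $\chi$, and then analyze each arithmetically in terms of the $2$-adic valuation of $n$.

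Since $W = \{1, \w_\alpha\}$ has order two, the equality $R_\chi = W$ amounts to $\w_\alpha \in R_\chi$. Under the splitting $R_\chi \simeq W_\chi \cap W(\Phi_\chi)$ from \S\ref{S:R-grp}, this is equivalent to the conjunction of (a) $\w_\alpha \in W_\chi$, i.e.\ ${}^{\w_\alpha}\chi = \chi$, and (b) $\w_\alpha(\Phi_\chi) = \Phi_\chi$. Since $\w_\alpha$ sends the unique positive root $\alpha$ to $-\alpha$, condition (b) is equivalent to $\Phi_\chi = \emptyset$, i.e.\ $\uchi_\alpha \ne \mbm{1}$.

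For condition (a), the paragraph immediately preceding the lemma already translates ${}^{\w_\alpha}\chi = \chi$ into the scalar constraint $\chi(\wt{h}_\alpha(a^m)) = 1$ for all $a \in F^\times$, where $m := n/\gcd(2,n)$. Recalling $n_\alpha = n/\gcd(n,4)$ and $\uchi_\alpha(a) = \chi(\wt{h}_\alpha(a^{n_\alpha}))$, I would rewrite this as $\uchi_\alpha(a)^{m/n_\alpha} = 1$ (after checking $n_\alpha \mid m$) and split into three cases. If $n$ is odd, then $m = n_\alpha = n$, so (a) forces $\uchi_\alpha = \mbm{1}$, which contradicts (b); thus $R_\chi \ne W$. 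If $n \equiv 2 \pmod 4$, then $\gcd(2,n) = \gcd(n,4) = 2$, so $m = n_\alpha = n/2$ and the same collapse occurs. Finally, if $4 \mid n$, then $m = n/2$ and $n_\alpha = n/4$, so $m/n_\alpha = 2$ and (a) becomes $\uchi_\alpha(a)^2 = 1$; combining with (b) gives exactly the asserted equivalence, namely that $\uchi_\alpha$ is a nontrivial quadratic character.

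The argument is essentially bookkeeping comparing the two integers $m = n/\gcd(2,n)$ and $n_\alpha = n/\gcd(n,4)$, so no step is a real obstacle. The only point worth being careful about is tracing through \eqref{F:W-act} to confirm that (a) reduces to a single scalar constraint at the generator $me$ of $Y_{Q,n}$; this uses that $Y_{Q,n} = m\Z e$ is of rank one and that $\angb{e}{\alpha} = 1$, so conjugation by $w_\alpha$ on a lift of $y(a) \in T_{Q,n}^\dag$ introduces only the single factor $\chi(\wt{h}_\alpha(a^m))$.
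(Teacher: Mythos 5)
Your proposal is correct and follows essentially the same route as the paper: reduce $R_\chi = W$ to the two conditions $\Phi_\chi = \emptyset$ (equivalently $\uchi_\alpha \ne \mathbbm{1}$) and $W_\chi = W$, translate the latter into $\chi(\wt{h}_\alpha(a^{n/\gcd(2,n)})) = 1$, and compare $n/\gcd(2,n)$ with $n_\alpha = n/\gcd(n,4)$. Your rewriting of the stabilizer constraint as $\uchi_\alpha^{m/n_\alpha} = \mathbbm{1}$ and the split of the $4 \nmid n$ case into $n$ odd versus $n \equiv 2 \pmod 4$ are merely slightly more explicit bookkeeping than the paper's terse two-case treatment; the substance is identical.
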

\begin{proof}
Clearly, $R_\chi =W$ if and only if $\Phi_\chi = \emptyset$ and $W_\chi= W$. We discuss case by case. First, if $4\nmid n$, then $\text{gcd}(4,n) = \text{gcd}(2,n)$, and in this case, $R_\chi=\set{1}$. Second, if $n=4m$, then $\alpha_{Q,n}^\vee=m \alpha^\vee$ and $n/\text{gcd}(2, n) =2m$. In this case, if $\uchi_\alpha$ is a non-trivial quadratic character, then we have $R_\chi= W$. 
\end{proof}

\begin{rmk}
For $G$ of adjoint type, if $\chi$ is a unitary unramified character of $T$, then $I(\chi)$ is always irreducible (see Corollary \ref{C:R=1}). The above result shows that this may fail for covers of groups of adjoint type.
\end{rmk}

\vskip 5pt

Now we assume that $n=4m$ and $\uchi_\alpha$ is a non-trivial quadratic character, i.e.
$$\chi_\alpha = \chi( \wt{h}_\alpha(\varpi^{n_\alpha}) )= -1.$$
In this case, $R_\chi = W$ and
$$I(\chi) = \pi_\chi^{un} \oplus \pi.$$
We have
$$Y_{Q,n}=Y_{Q,n}^{sc} =\Z \cdot \alpha_{Q,n}^\vee = \Z \cdot (m\alpha^\vee) = \Z \cdot (2me).$$
Therefore, $\wt{G}^\vee \simeq \SO_3$ is of adjoint type. It is clear that 
$$\msc{X}_{Q,n} \simeq \Z/ (2m)\Z$$
with the twisted Weyl action given by
$$\w_\alpha[ie]= (-i)e + 2e = (2-i)e.$$
One has $\val{ \mca{O}_\msc{X} }= m+1$, i.e., there are $m+1$ many $W$-orbits in $\msc{X}_{Q,n}$. Let $\mfr{R} \subset Y$ be the following set of representatives of $\msc{X}_{Q,n}$:
$$\mfr{R}= \set{ ie: -m + 1 \le i\le m}.$$
The two trivial $W$-orbits are
$$\mca{O}_{e}= \set{e}, \quad \mca{O}_{(-m+1)e}= \set{(-m+1)e};$$
while for all other $ie\in \mfr{R}$ with $2\le i \le m$, the orbit
$$\mca{O}_{ie}= \set{ie, (2-i)e} \subset \msc{X}_{Q,n}$$
is $W$-free. We thus have $\mfr{R}_{e}, \mfr{R}_{(-m+1)e}$ and $\mfr{R}_{ie} \subset \mfr{R}$ to represent the above three families of orbits.

\begin{prop} \label{P:SO3}
Assume $n=4m$ and $\uchi_\alpha$ is a non-trivial quadratic character. Then 
$$\sigma^{\rm Wh} = m \cdot \mbm{1}_W \oplus m\cdot \varepsilon_W, \text{ and } \sigma^\msc{X}= (m+1) \cdot \mbm{1}_W \oplus (m-1)\cdot \varepsilon_W.$$
Hence,
$$\dim \Wh(\pi_\chi^{un})= m = \dim \Wh(\pi).$$
\end{prop}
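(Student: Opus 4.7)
My plan is to prove the proposition by computing $\sigma^\msc{X}$ and $\sigma^{\rm Wh}$ separately, orbit by orbit, and then summing the contributions; both are representations of $R_\chi = \set{1, \w_\alpha} \simeq \Z/2\Z$. For $\sigma^\msc{X}$, the orbit data supplied before the statement reads off the permutation character immediately: the two trivial orbits $\mca{O}_e$ and $\mca{O}_{(-m+1)e}$ each contribute $\mbm{1}_W$, while each of the $(m-1)$ free orbits $\mca{O}_{ie} = \set{ie, (2-i)e}$ for $2 \le i \le m$ contributes the regular representation $\mbm{1}_W \oplus \varepsilon_W$. Summing gives $\sigma^\msc{X} = (m+1) \mbm{1}_W \oplus (m-1) \varepsilon_W$.

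For $\sigma^{\rm Wh}$, Theorem \ref{T:dW} provides the orbitwise decomposition $\sigma^{\rm Wh} = \bigoplus_{\mca{O}_y} \sigma^{\rm Wh}_{\mca{O}_y}$, with each summand determined by the block $\mca{S}_\mfr{R}(w_\alpha, i(\chi))_{\mca{O}_y}$ of the scattering matrix scaled by $\gamma(w_\alpha, \chi) = 2/(1+q^{-1})$. Since $\msc{A}(w_\alpha, \chi)^*$ is an involution (Proposition \ref{T:coc}), each summand $\sigma^{\rm Wh}_{\mca{O}_y}$ is determined by its value at $\w_\alpha$, hence by the trace of the corresponding block. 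I would treat the three orbit types separately.

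For a free orbit $\mca{O}_{ie}$, both diagonal entries come solely from $\tau^1$ in Theorem \ref{T:tau} since $ie \not\equiv (2-i)e \mod Y_{Q,n}$; combined with $k_{ie,\alpha} = 1$, $k_{(2-i)e,\alpha} = 0$ and $\chi_\alpha = -1$, these two entries cancel, so the block's trace vanishes. An involution of trace zero on a $2$-dimensional space has eigenvalues $\pm 1$, giving $\sigma^{\rm Wh}_{\mca{O}_{ie}} \simeq \mbm{1}_W \oplus \varepsilon_W$. For the orbit $\mca{O}_{(-m+1)e}$, a direct check shows $\w_\alpha[(-m+1)e] = (m+1)e = (-m+1)e + \alpha_{Q,n}^\vee$, so $(-m+1)e$ is an exceptional point, and Theorem \ref{T:unWh} yields $\sigma^{\rm Wh}_{\mca{O}_{(-m+1)e}} \simeq \mbm{1}_W$. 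The key calculation is for $\mca{O}_e$: since $y_\rho = e - \rho = 0$ at $y = e$, Theorem \ref{T:tau} collapses to $\tau^1(w_\alpha, \chi, \s_e, \s_e) = -(1-q^{-1})/2$ and $\tau^2(w_\alpha, \chi, \s_e, \s_e) = \vep^0 \cdot \g(0) = -q^{-1}$, summing to $-(1+q^{-1})/2$. Multiplying by $\gamma(w_\alpha, \chi)$ produces $\sigma^{\rm Wh}_{\mca{O}_e}(\w_\alpha) = -1$, so $\sigma^{\rm Wh}_{\mca{O}_e} \simeq \varepsilon_W$.

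Summing the orbit contributions gives $\sigma^{\rm Wh} = \varepsilon_W + \mbm{1}_W + (m-1)(\mbm{1}_W \oplus \varepsilon_W) = m \mbm{1}_W \oplus m \varepsilon_W$, and the Whittaker dimension formulas follow from $\dim \Wh(\pi_\sigma) = \angb{\sigma}{\sigma^{\rm Wh}}_{R_\chi}$. The substantive point is the asymmetry $\sigma^{\rm Wh}_{\mca{O}_e} \ne \sigma^\msc{X}_{\mca{O}_e}$ at the non-exceptional trivial orbit: the direct calculation at $\mca{O}_e$ produces $\varepsilon_W$ rather than $\mbm{1}_W$, which is what breaks $\sigma^{\rm Wh} \simeq \sigma^\msc{X}$ and shows that the naive analogue of Conjecture \ref{MConj2} fails for $\wt{\SO}_3^{(n)}$, since $\SO_3$ is not simply-connected.
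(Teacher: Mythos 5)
Your proof is correct and takes essentially the same route as the paper's: computing the trace of each scattering-matrix block $\mca{S}_\mfr{R}(w_\alpha, i(\chi))_{\mca{O}_y}$ from the $\tau^1/\tau^2$ formulas and reading off the multiplicities of $\mbm{1}_W$ and $\varepsilon_W$. Your appeal to Theorem \ref{T:unWh} for the exceptional orbit and your direct evaluation $\tau^1 + \tau^2 = -(1+q^{-1})/2$ at $\mca{O}_e$ (via $y_\rho = 0$, $k_{e,\alpha}=1$, $\g(0)=-q^{-1}$) reproduce exactly the entries the paper records, and the final summation is identical.
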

\begin{proof}
Choosing $\mfr{R}$ as above, the scattering matrix $\mca{S}_\mfr{R}(w_\alpha, i(\chi))$ is the block-diagonal matrix with blocks $\mca{S}_\mfr{R}(w_\alpha, i(\chi))_{\mca{O}_{ie}}$ for $i=-m+1$ and $1\le i \le m$. Here,
$$\mca{S}_\mfr{R}(w_\alpha, i(\chi))_{\mca{O}_{(-m+1)e}}= \gamma(w_\alpha, \chi)^{-1}= \frac{1-q^{-1} \chi_\alpha}{1-\chi_\alpha}= \frac{1+q^{-1}}{2}.$$
and
$$\mca{S}_\mfr{R}(w_\alpha, i(\chi))_{\mca{O}_{e}}=\chi_\alpha \cdot \gamma(w_\alpha, \chi)^{-1} = - \frac{1+q^{-1}}{2};$$
also, for $2\le i \le m$,
$$
\mca{S}_\mfr{R}(w_\alpha, i(\chi))_{\mca{O}_{ie}}=
\left(\begin{matrix}
\frac{1-q^{-1}}{1-\chi_\alpha} \chi_\alpha &  \g((i-1)4) \\
\g((1-i)4) & \frac{1-q^{-1}}{1-\chi_\alpha} 
\end{matrix}\right).
$$
It follows that for $2\le i \le m$,
$$
\sigma_{\mca{O}_{ie}}^{\rm Wh} = \sigma_{\mca{O}_{ie}}^\msc{X} = \mbm{1} \oplus \varepsilon,
$$
and
$$\sigma_{\mca{O}_{(-m+1)e}}^{\rm Wh} = \sigma_{\mca{O}_{(-m+1)e}}^\msc{X} =\mbm{1};$$
however,
$$\sigma_{\mca{O}_{e}}^{\rm Wh}  =  \varepsilon_W, \quad  \sigma_{\mca{O}_{e}}^\msc{X} =\mbm{1}.$$
The last result on Whittaker dimension follows from Theorem \ref{T:dW}.
\end{proof}

One may also prove Proposition \ref{P:SO3} by using the method described in \S \ref{S:formu}. That is, we can compute $\dim \Wh(\pi_\chi^{un})$ by showing that
\begin{enumerate}
\item[(i)] Conjecture \ref{C:inj} holds for $\wt{\SO}_3$, and
\item[(ii)] we have
$${\rm rank}(\nu_{\mfr{R}_{ie}}^\chi)=
\begin{cases}
1 & \text{ if } 2\le i \le m \text{ or } i=-m+1;\\
0 & \text{ if } i=1.
\end{cases}
$$
\end{enumerate}

Here (i) can be verified exactly in the same way as Proposition \ref{P:inj2}, and thus we omit the details. We discuss (ii) for the three cases $i=-m + 1$, $i=1$ and $2\le i \le m$ separately.
\begin{enumerate}
\item[$\bullet$]  First, since $(-m+1)e = \rho - \rho_{Q,n}$, it is the unique element in $Y_n^{\rm exc}$. In this case, the equality ${\rm rank}(\nu_{\mfr{R}_{(1-m)e}}^\chi)=1$ follows from Theorem \ref{T:un-gen}. 
\item[$\bullet$] Second, for $\mfr{R}_{e}= \set{e}$, we have ${\rm rank}(\nu_{\mfr{R}_{e}}^\chi)=1$ if and only if $\mca{W}^*_{\s_e}(\s_e) \ne 0$. A straightforward computation from \eqref{E:W*} gives that 
$$\mca{W}^*_{\s_e}(\s_e)= 0.$$
Thus, ${\rm rank}(\nu_{\mca{O}_{e}}^\chi)=0$.
\item[$\bullet$] Third, we deal with free $W$-orbits $\mca{O}_{ie}, 2\le i \le m$. Similar to the case of  $\wt{\SL}_2^{(n)}$,  we have ${\rm rank}(\nu^\chi_{\mfr{R}_{ie}})= {\rm rank}(\mca{M}_{\mfr{R}_{ie}})$ where
$$ \mca{M}_{\mfr{R}_{ie}}:= \left(\begin{matrix}
\mca{W}^*_{\s_{ie}}(\s_{ ie  })   &  \mca{W}^*_{\s_{ie}}(\s_{ (2-i)e  })  \\
\mca{W}^*_{\s_{(2-i)e}}(\s_{ ie  })  & \mca{W}^*_{\s_{(2-i)e}}(\s_{(2- i)e  }) 
\end{matrix}\right).$$
Again, since ${}^{w_\alpha} \chi = \chi$ and $\chi_\alpha= -1$, it follows from \eqref{E:W*} that
$$ \mca{M}_{\mfr{R}_{ie}}= \left(\begin{matrix}
q^{(i-2)/2}  &  q^{-(2+i)/2} \g((1-i)4) \\
q^{i/2} \g((i-1)4) & q^{-(2+i)/2}.
\end{matrix}\right).$$
Note that we have $1\le i-1 \le m-1$ and thus $\det(\mca{M}_{\mfr{R}_{ie}} )=0$. Clearly, this implies that ${\rm rank}( \mca{M}_{\mfr{R}_{ie}} )=1$. 
\end{enumerate}

Combining the above gives that $\dim \Wh(\pi_\chi^{un})= m$.  It follows from this example of $\wt{\SO}_3^{(n)}$ that a naive analogue of Conjecture \ref{MConj2} does not hold for coverings of a general semisimple group. Here the difference between $\msc{X}_{Q,n}^{\rm exc}$ and $(\msc{X}_{Q,n})^W$ plays a sensitive role and accounts for the failure. Indeed, in the case of $\wt{\SO}_3^{(n)}$, one has
$$\val{ \msc{X}_{Q,n}^{\rm exc} }=1 \text{ and } \val{ (\msc{X}_{Q,n})^W } =2.$$

\subsection{Double cover of $\Spin_6$} 
We consider in this subsection only double cover of $\Spin_6\simeq \SL_4$, though the phenomenon appears for 
general $2m$-fold cover of $\Spin_{2k}$ with $m$ and $k$ being both odd. For this reason, we would like to consider the situation from the perspective of spin groups.

Consider the Dynkin diagram of simple coroots for the simply-connected $G = \Spin_6$:
$$
\begin{picture}(6.7,0.4)(0,0)
\put(3,0){\circle{0.08}}
\put(3.5, 0.25){\circle{0.08}}
\put(3.5, -0.25){\circle{0.08}}
%
\put(3.00,0){\line(2,1){0.46}}
\put(3.00,0){\line(2,-1){0.46}}
%
\put(2.9,0.15){\footnotesize $\alpha_1^\vee$}
\put(3.5,0.35){\footnotesize $\alpha_2^\vee$}
\put(3.5,-0.4){\footnotesize $\alpha_3^\vee$}
\end{picture}
$$

\vskip 30pt
\noindent Let $Q$ be the Weyl-invariant quadratic form $Q$ of $Y$ such that $Q(\alpha_i^\vee)=1$ for all $1\le i \le 3$. Let $\wt{G}$ be the double cover of $G$ arising from $Q$. One has $Y_{Q,2}^{sc}= 2 \cdot Y$ and 
$$Y_{Q,2}= \set{ \sum_{i=1}^3 y_i \alpha_i^\vee:  2| y_i \text{ for all $i$ and } 2(y_1 + y_2 + y_3) }.$$
Thus, we get
$$\wt{G}^\vee = \SO_6$$
and that the principal endoscopic group $H$ for $\wt{G}$ is $\SO_6$. We have 
$$\msc{X}_{Q,2}= \set{0, \alpha_1^\vee, \alpha_2^\vee, \alpha_1^\vee + \alpha_2^\vee  }.$$
Note that $\alpha_2^\vee = \alpha_3^\vee \in \msc{X}_{Q,n}$. There are two $W$-orbits of $\msc{X}_{Q,2}$ represented by the following graph:
$$
\begin{tikzcd}
\alpha_1^\vee  \ar[loop above, "{\w_{\alpha_2}}"]  \ar[loop left, "{\w_{\alpha_3}}"] & & \alpha_2^\vee \ar[loop right, "{\w_{\alpha_1}}"] \\
&  0 \ar[lu, "{\w_{\alpha_1}}"]  \ar[ru, bend left=30, "{\w_{\alpha_2}}"] \ar[ru, bend right=30, "{\w_{\alpha_3}}"']
\end{tikzcd}
\qquad 
\begin{tikzcd}
\alpha_1^\vee + \alpha_2^\vee \ar[loop above, "{\w_{\alpha_1}}"]   \ar[loop right, "{\w_{\alpha_2}}"]   \ar[loop below, "{\w_{\alpha_3}}"] 
\end{tikzcd}
.
$$
We have $\msc{X}_{Q,n}= \mca{O}_0 \cup \mca{O}_{\alpha_1^\vee + \alpha_2^\vee}$.
\vskip 10pt

It then follows from \cite[Theorme 6.8]{Gol1} that the only nontrivial unramified $R_\chi$ is  $\set{1, \w=\w_{\alpha_{r-1}} \w_{\alpha_r}  }$ with 
$$\chi_{\alpha_2} = \chi_{\alpha_3} = -1.$$
A direct computation using \eqref{SLCM1}, \eqref{SLCM2} and Theorem \ref{T:tau} gives that
$$\begin{aligned}
\tau(w, \chi, \s_0, \s_0) + \tau(w , \chi, \s_{\alpha_2^\vee}, \s_{\alpha_2^\vee}) & = \gamma(w, \chi) - \gamma(w, \chi) = 0 \\
\tau(w, \chi, \s_{\alpha_1^\vee}, \s_{\alpha_1^\vee})  & = \gamma(w, \chi) \\
\tau(w, \chi, \s_{\alpha_1 + \alpha_2^\vee}, \s_{\alpha_1 + \alpha_2^\vee})  & = -  \gamma(w, \chi) .
\end{aligned} $$
Denoting $\Irr(R_\chi)= \set{\mbm{1}, \varepsilon}$, it then follows that
$$\sigma_{\mca{O}_0}^{\rm Wh}= (2 \cdot \mbm{1}) \oplus \varepsilon, \quad \sigma_{\mca{O}_{\alpha_1^\vee + \alpha_2^\vee}}^{\rm Wh} = \varepsilon.$$
In particular, writing $I(\chi) = \pi_\chi^{un} \oplus \pi$, we have
$$\dim \Wh(\pi_\chi^{un}) = 2, \text{ and }   \dim \Wh(\pi) = 2.$$
On the other hand, it is clear from above graph that
$$\sigma_{\mca{O}_0}^\msc{X}= 3 \cdot \mbm{1} , \quad \sigma_{\mca{O}_{\alpha_1^\vee + \alpha_2^\vee}}^\msc{X} = \mbm{1}.$$
We see that analogous Conjecture \ref{MConj2} does not hold in this case. The constraint that $\wt{G}^\vee$ is of adjoint type seems to be necessary.
\vskip 10pt

For a low-rank group and ``small" $R_\chi$, one should be able to compute and verify explicitly Conjecture \ref{MConj2}. However, for general $n$-fold covers of a simply-connected group $G$, in view of the difficulty highlighted in Remark \ref{R:obsat}, it is desirable to approach the problem from a more uniform and conceptual perspective. In any case, we will leave the investigation of this to a future work, as a continuation of the present paper.

\begin{bibdiv}
\begin{biblist}[\resetbiblist{9999999}]*{labels={alphabetic}}


\bib{Art84}{article}{
  author={Arthur, James},
  title={On some problems suggested by the trace formula},
  conference={ title={Lie group representations, II}, address={College Park, Md.}, date={1982/1983}, },
  book={ series={Lecture Notes in Math.}, volume={1041}, publisher={Springer, Berlin}, },
  date={1984},
  pages={1--49},
  review={\MR {748504}},
  doi={10.1007/BFb0073144},
}

\bib{Art89}{article}{
  author={Arthur, James},
  title={Unipotent automorphic representations: conjectures},
  note={Orbites unipotentes et repr\'esentations, II},
  journal={Ast\'erisque},
  number={171-172},
  date={1989},
  pages={13--71},
  issn={0303-1179},
  review={\MR {1021499}},
}

\bib{Art93}{article}{
  author={Arthur, James},
  title={On elliptic tempered characters},
  journal={Acta Math.},
  volume={171},
  date={1993},
  number={1},
  pages={73--138},
  issn={0001-5962},
  review={\MR {1237898}},
  doi={10.1007/BF02392767},
}

\bib{Art06}{article}{
  author={Arthur, James},
  title={A note on $L$-packets},
  journal={Pure Appl. Math. Q.},
  volume={2},
  date={2006},
  number={1, Special Issue: In honor of John H. Coates.},
  pages={199--217},
  issn={1558-8599},
  review={\MR {2217572}},
  doi={10.4310/PAMQ.2006.v2.n1.a9},
}

\bib{Art16}{article}{
  author={Arthur, James},
  title={The endoscopic classification of representations},
  conference={ title={Automorphic representations and $L$-functions}, },
  book={ series={Tata Inst. Fundam. Res. Stud. Math.}, volume={22}, publisher={Tata Inst. Fund. Res., Mumbai}, },
  date={2013},
  pages={1--22},
  review={\MR {3156849}},
  doi={10.1090/coll/061/01},
}

\bib{BG12}{article}{
  author={Ban, Dubravka},
  author={Goldberg, David},
  title={$R$-groups and parameters},
  journal={Pacific J. Math.},
  volume={255},
  date={2012},
  number={2},
  pages={281--303},
  issn={0030-8730},
  review={\MR {2928553}},
  doi={10.2140/pjm.2012.255.281},
}

\bib{BZ05}{article}{
  author={Ban, Dubravka},
  author={Zhang, Yuanli},
  title={Arthur $R$-groups, classical $R$-groups, and Aubert involutions for ${\rm SO}(2n+1)$},
  journal={Compos. Math.},
  volume={141},
  date={2005},
  number={2},
  pages={323--343},
  issn={0010-437X},
  review={\MR {2134270}},
  doi={10.1112/S0010437X04001113},
}

\bib{BM1}{article}{
  author={Barbasch, Dan},
  author={Moy, Allen},
  title={Whittaker models with an Iwahori fixed vector},
  conference={ title={Representation theory and analysis on homogeneous spaces}, address={New Brunswick, NJ}, date={1993}, },
  book={ series={Contemp. Math.}, volume={177}, publisher={Amer. Math. Soc., Providence, RI}, },
  date={1994},
  pages={101--105},
  review={\MR {1303602}},
  doi={10.1090/conm/177/01917},
}

\bib{Bor}{article}{
  author={Borel, A.},
  title={Automorphic $L$-functions},
  conference={ title={Automorphic forms, representations and $L$-functions}, address={Proc. Sympos. Pure Math., Oregon State Univ., Corvallis, Ore.}, date={1977}, },
  book={ series={Proc. Sympos. Pure Math., XXXIII}, publisher={Amer. Math. Soc., Providence, R.I.}, },
  date={1979},
  pages={27--61},
  review={\MR {546608}},
}

\bib{Bou}{book}{
  author={Bourbaki, Nicolas},
  title={Lie groups and Lie algebras. Chapters 4--6},
  series={Elements of Mathematics (Berlin)},
  note={Translated from the 1968 French original by Andrew Pressley},
  publisher={Springer-Verlag, Berlin},
  date={2002},
  pages={xii+300},
  isbn={3-540-42650-7},
  review={\MR {1890629}},
  doi={10.1007/978-3-540-89394-3},
}

\bib{BD}{article}{
  author={Brylinski, Jean-Luc},
  author={Deligne, Pierre},
  title={Central extensions of reductive groups by $\bold K_2$},
  journal={Publ. Math. Inst. Hautes \'Etudes Sci.},
  number={94},
  date={2001},
  pages={5--85},
  issn={0073-8301},
  review={\MR {1896177}},
  doi={10.1007/s10240-001-8192-2},
}

\bib{CS}{article}{
  author={Casselman, W.},
  author={Shalika, J.},
  title={The unramified principal series of $p$-adic groups. II. The Whittaker function},
  journal={Compositio Math.},
  volume={41},
  date={1980},
  number={2},
  pages={207--231},
  issn={0010-437X},
  review={\MR {581582}},
}

\bib{CO}{article}{
  author={Chinta, Gautam},
  author={Offen, Omer},
  title={A metaplectic Casselman-Shalika formula for ${\rm GL}_r$},
  journal={Amer. J. Math.},
  volume={135},
  date={2013},
  number={2},
  pages={403--441},
  issn={0002-9327},
  review={\MR {3038716}},
  doi={10.1353/ajm.2013.0013},
}

\bib{GG}{article}{
  author={Gan, Wee Teck},
  author={Gao, Fan},
  title={The Langlands-Weissman program for Brylinski-Deligne extensions},
  language={English, with English and French summaries},
  note={L-groups and the Langlands program for covering groups},
  journal={Ast\'erisque},
  date={2018},
  number={398},
  pages={187--275},
  issn={0303-1179},
  isbn={978-2-85629-845-9},
  review={\MR {3802419}},
}

\bib{Ga2}{article}{
  author={Gao, Fan},
  title={Distinguished theta representations for certain covering groups},
  journal={Pacific J. Math.},
  volume={290},
  date={2017},
  number={2},
  pages={333--379},
  doi={10.2140/pjm.2017.290.333},
}

\bib{Ga1}{article}{
  author={Gao, Fan},
  title={The Langlands-Shahidi L-functions for Brylinski-Deligne extensions},
  journal={Amer. J. Math.},
  volume={140},
  date={2018},
  number={1},
  pages={83--137},
  issn={0002-9327},
  doi={10.1353/ajm.2018.0001},
}

\bib{Ga5}{article}{
  author={Gao, Fan},
  title={Hecke $L$-functions and Fourier coefficients of covering Eisenstein series},
  status={preprint, available at https://sites.google.com/site/fangaonus/research},
}

\bib{Ga6}{article}{
  author={Gao, Fan},
  title={Kazhdan-Lusztig representations and Whittaker space of some genuine representations},
  status={Mathematische Annalen (2019, accepted), available at https://arxiv.org/abs/1903.06069},
}

\bib{GSS2}{article}{
  author={Gao, Fan},
  author={Shahidi, Freydoon},
  author={Szpruch, Dani},
  title={Local coefficients and gamma factors for principal series of covering groups},
  status={Memoirs of the AMS (2019, accepted), available at https://arxiv.org/abs/1902.02686},
}

\bib{GW}{article}{
  author={Gao, Fan},
  author={Weissman, Martin H.},
  title={Whittaker models for depth zero representations of covering groups},
  journal={Int. Math. Res. Not. IMRN},
  date={2019},
  number={11},
  pages={3580--3620},
  issn={1073-7928},
  review={\MR {3961710}},
  doi={10.1093/imrn/rnx235},
}

\bib{Gin4}{article}{
  author={Ginzburg, David},
  title={Non-generic unramified representations in metaplectic covering groups},
  journal={Israel J. Math.},
  volume={226},
  date={2018},
  number={1},
  pages={447--474},
  issn={0021-2172},
  review={\MR {3819699}},
  doi={10.1007/s11856-018-1702-4},
}

\bib{Gol1}{article}{
  author={Goldberg, David},
  title={Reducibility of induced representations for ${\rm Sp}(2n)$ and ${\rm SO}(n)$},
  journal={Amer. J. Math.},
  volume={116},
  date={1994},
  number={5},
  pages={1101--1151},
  issn={0002-9327},
  review={\MR {1296726}},
  doi={10.2307/2374942},
}

\bib{Gol11}{article}{
  author={Goldberg, David},
  title={On dual $R$-groups for classical groups},
  conference={ title={On certain $L$-functions}, },
  book={ series={Clay Math. Proc.}, volume={13}, publisher={Amer. Math. Soc., Providence, RI}, },
  date={2011},
  pages={159--185},
  review={\MR {2767516}},
}

\bib{KP}{article}{
  author={Kazhdan, D. A.},
  author={Patterson, S. J.},
  title={Metaplectic forms},
  journal={Inst. Hautes \'Etudes Sci. Publ. Math.},
  number={59},
  date={1984},
  pages={35--142},
  issn={0073-8301},
  review={\MR {743816}},
}

\bib{Key1}{article}{
  author={Keys, C. David},
  title={On the decomposition of reducible principal series representations of $p$-adic Chevalley groups},
  journal={Pacific J. Math.},
  volume={101},
  date={1982},
  number={2},
  pages={351--388},
  issn={0030-8730},
  review={\MR {675406}},
}

\bib{Key2}{article}{
  author={Keys, C. David},
  title={Reducibility of unramified unitary principal series representations of $p$-adic groups and class-$1$\ representations},
  journal={Math. Ann.},
  volume={260},
  date={1982},
  number={4},
  pages={397--402},
  issn={0025-5831},
  review={\MR {670188}},
  doi={10.1007/BF01457019},
}

\bib{Key3}{article}{
  author={Keys, C. David},
  title={$L$-indistinguishability and $R$-groups for quasisplit groups: unitary groups in even dimension},
  journal={Ann. Sci. \'Ecole Norm. Sup. (4)},
  volume={20},
  date={1987},
  number={1},
  pages={31--64},
  issn={0012-9593},
  review={\MR {892141}},
}

\bib{KS88}{article}{
  author={Keys, C. David},
  author={Shahidi, Freydoon},
  title={Artin $L$-functions and normalization of intertwining operators},
  journal={Ann. Sci. \'Ecole Norm. Sup. (4)},
  volume={21},
  date={1988},
  number={1},
  pages={67--89},
  issn={0012-9593},
  review={\MR {944102}},
}

\bib{KnSt2}{article}{
  author={Knapp, A. W.},
  author={Stein, E. M.},
  title={Singular integrals and the principal series. IV},
  journal={Proc. Nat. Acad. Sci. U.S.A.},
  volume={72},
  date={1975},
  pages={2459--2461},
  issn={0027-8424},
  review={\MR {0376964}},
}

\bib{Lds}{book}{
  author={Langlands, Robert P.},
  title={On the functional equations satisfied by Eisenstein series},
  series={Lecture Notes in Mathematics, Vol. 544},
  publisher={Springer-Verlag, Berlin-New York},
  date={1976},
  pages={v+337},
  review={\MR {0579181}},
}

\bib{LL79}{article}{
  author={Labesse, J.-P.},
  author={Langlands, R. P.},
  title={$L$-indistinguishability for ${\rm SL}(2)$},
  journal={Canad. J. Math.},
  volume={31},
  date={1979},
  number={4},
  pages={726--785},
  issn={0008-414X},
  review={\MR {540902}},
  doi={10.4153/CJM-1979-070-3},
}

\bib{LiJS1}{article}{
  author={Li, Jian-Shu},
  title={Some results on the unramified principal series of $p$-adic groups},
  journal={Math. Ann.},
  volume={292},
  date={1992},
  number={4},
  pages={747--761},
  issn={0025-5831},
  review={\MR {1157324}},
  doi={10.1007/BF01444646},
}

\bib{Li3}{article}{
  author={Li, Wen-Wei},
  title={La formule des traces pour les rev\^etements de groupes r\'eductifs connexes. II. Analyse harmonique locale},
  language={French, with English and French summaries},
  journal={Ann. Sci. \'Ec. Norm. Sup\'er. (4)},
  volume={45},
  date={2012},
  number={5},
  pages={787--859 (2013)},
  issn={0012-9593},
  review={\MR {3053009}},
  doi={10.24033/asens.2178},
}

\bib{Li5}{article}{
  author={Li, Wen-Wei},
  title={La formule des traces pour les rev\^etements de groupes r\'eductifs connexes. I. Le d\'eveloppement g\'eom\'etrique fin},
  language={French, with French summary},
  journal={J. Reine Angew. Math.},
  volume={686},
  date={2014},
  pages={37--109},
  issn={0075-4102},
  review={\MR {3176600}},
  doi={10.1515/crelle-2012-0015},
}

\bib{Luo3}{article}{
  author={Luo, Caihua},
  title={Knapp--Stein dimension theorem for covering groups},
  status={preprint, available at https://sites.google.com/site/chluonus/research},
}

\bib{Mc1}{article}{
  author={McNamara, Peter J.},
  title={Principal series representations of metaplectic groups over local fields},
  conference={ title={Multiple Dirichlet series, L-functions and automorphic forms}, },
  book={ series={Progr. Math.}, volume={300}, publisher={Birkh\"auser/Springer, New York}, },
  date={2012},
  pages={299--327},
  review={\MR {2963537}},
  doi={10.1007/978-0-8176-8334-413},
}

\bib{Mc2}{article}{
  author={McNamara, Peter J.},
  title={The metaplectic Casselman-Shalika formula},
  journal={Trans. Amer. Math. Soc.},
  volume={368},
  date={2016},
  number={4},
  pages={2913--2937},
  issn={0002-9947},
  review={\MR {3449262}},
  doi={10.1090/tran/6597},
}

\bib{Pat}{article}{
  author={Patterson, S. J.},
  title={Metaplectic forms and Gauss sums. I},
  journal={Compositio Math.},
  volume={62},
  date={1987},
  number={3},
  pages={343--366},
  issn={0010-437X},
  review={\MR {901396}},
}

\bib{Sha83}{article}{
  author={Shahidi, Freydoon},
  title={Some results on $L$-indistinguishability for ${\rm SL}(r)$},
  journal={Canad. J. Math.},
  volume={35},
  date={1983},
  number={6},
  pages={1075--1109},
  issn={0008-414X},
  review={\MR {738845}},
  doi={10.4153/CJM-1983-060-9},
}

\bib{Sha3}{article}{
  author={Shahidi, Freydoon},
  title={A proof of Langlands' conjecture on Plancherel measures; complementary series for $p$-adic groups},
  journal={Ann. of Math. (2)},
  volume={132},
  date={1990},
  number={2},
  pages={273--330},
  issn={0003-486X},
  review={\MR {1070599}},
  doi={10.2307/1971524},
}

\bib{Sha5}{article}{
  author={Shahidi, Freydoon},
  title={Arthur packets and the Ramanujan conjecture},
  journal={Kyoto J. Math.},
  volume={51},
  date={2011},
  number={1},
  pages={1--23},
  issn={2156-2261},
  review={\MR {2784745}},
  doi={10.1215/0023608X-2010-018},
}

\bib{She79}{article}{
  author={Shelstad, D.},
  title={Notes on $L$-indistinguishability (based on a lecture of R. P. Langlands)},
  conference={ title={Automorphic forms, representations and $L$-functions}, address={Proc. Sympos. Pure Math., Oregon State Univ., Corvallis, Ore.}, date={1977}, },
  book={ series={Proc. Sympos. Pure Math., XXXIII}, publisher={Amer. Math. Soc., Providence, R.I.}, },
  date={1979},
  pages={193--203},
  review={\MR {546618}},
}

\bib{She82}{article}{
  author={Shelstad, D.},
  title={$L$-indistinguishability for real groups},
  journal={Math. Ann.},
  volume={259},
  date={1982},
  number={3},
  pages={385--430},
  issn={0025-5831},
  review={\MR {661206}},
  doi={10.1007/BF01456950},
}

\bib{Sil1}{article}{
  author={Silberger, Allan J.},
  title={The Knapp-Stein dimension theorem for $p$-adic groups},
  journal={Proc. Amer. Math. Soc.},
  volume={68},
  date={1978},
  number={2},
  pages={243--246},
  issn={0002-9939},
  review={\MR {0492091}},
  doi={10.2307/2041781},
}

\bib{Sil1C}{article}{
  author={Silberger, Allan J.},
  title={Correction: ``The Knapp-Stein dimension theorem for $p$-adic groups''\ [Proc. Amer. Math. Soc. {\bf 68} (1978), no. 2, 243--246;\ MR {\bf 58} \#11245]},
  journal={Proc. Amer. Math. Soc.},
  volume={76},
  date={1979},
  number={1},
  pages={169--170},
  issn={0002-9939},
  review={\MR {534411}},
  doi={10.2307/2042939},
}

\bib{Sil-B}{book}{
  author={Silberger, Allan J.},
  title={Introduction to harmonic analysis on reductive $p$-adic groups},
  series={Mathematical Notes},
  volume={23},
  note={Based on lectures by Harish-Chandra at the Institute for Advanced Study, 1971--1973},
  publisher={Princeton University Press, Princeton, N.J.; University of Tokyo Press, Tokyo},
  date={1979},
  pages={iv+371},
  isbn={0-691-08246-4},
  review={\MR {544991}},
}

\bib{Ste16}{book}{
  author={Steinberg, Robert},
  title={Lectures on Chevalley groups},
  series={University Lecture Series},
  volume={66},
  publisher={American Mathematical Society, Providence, RI},
  date={2016},
  pages={vi+160},
  isbn={978-1-4704-3105-1},
}

\bib{Szp5}{article}{
  author={Szpruch, Dani},
  title={Symmetric genuine spherical Whittaker functions on $\overline {GSp_{2n}(F)}$},
  journal={Canad. J. Math.},
  volume={67},
  date={2015},
  number={1},
  pages={214--240},
  issn={0008-414X},
  review={\MR {3292701}},
  doi={10.4153/CJM-2013-033-5},
}

\bib{Szp6}{article}{
  author={Szpruch, Dani},
  title={On Shahidi local coefficients matrix},
  journal={Manuscripta Math.},
  volume={159},
  date={2019},
  number={1-2},
  pages={117--159},
  issn={0025-2611},
  review={\MR {3936136}},
  doi={10.1007/s00229-018-1052-x},
}

\bib{Tat}{article}{
  author={Tate, John T.},
  title={Fourier analysis in number fields, and Hecke's zeta-functions},
  conference={ title={Algebraic Number Theory (Proc. Instructional Conf., Brighton, 1965)}, },
  book={ publisher={Thompson, Washington, D.C.}, },
  date={1967},
  pages={305--347},
  review={\MR {0217026}},
}

\bib{We1}{article}{
  author={Weissman, Martin H.},
  title={Metaplectic tori over local fields},
  journal={Pacific J. Math.},
  volume={241},
  date={2009},
  number={1},
  pages={169--200},
  issn={0030-8730},
  review={\MR {2485462}},
  doi={10.2140/pjm.2009.241.169},
}

\bib{We3}{article}{
  author={Weissman, Martin H.},
  title={Split metaplectic groups and their L-groups},
  journal={J. Reine Angew. Math.},
  volume={696},
  date={2014},
  pages={89--141},
  issn={0075-4102},
  review={\MR {3276164}},
  doi={10.1515/crelle-2012-0111},
}

\bib{We6}{article}{
  author={Weissman, Martin H.},
  title={L-groups and parameters for covering groups},
  language={English, with English and French summaries},
  note={L-groups and the Langlands program for covering groups},
  journal={Ast\'erisque},
  date={2018},
  number={398},
  pages={33--186},
  issn={0303-1179},
  isbn={978-2-85629-845-9},
  review={\MR {3802418}},
}

\bib{Win}{article}{
  author={Winarsky, Norman},
  title={Reducibility of principal series representations of $p$-adic Chevalley groups},
  journal={Amer. J. Math.},
  volume={100},
  date={1978},
  number={5},
  pages={941--956},
  issn={0002-9327},
  review={\MR {517138}},
  doi={10.2307/2373955},
}

\end{biblist}
\end{bibdiv}

\end{document}